\documentclass[a4paper,11pt,oneside]{amsart}
\usepackage[english]{babel}
\usepackage{amssymb}
\usepackage{amsmath}
\usepackage{amsthm}
\usepackage{geometry}
\usepackage{enumitem}
\usepackage{caption}
\usepackage{url}
\usepackage{tikz}
\usepackage{hyperref}
\hypersetup{hidelinks}

\theoremstyle{plain}
\newtheorem{thm}{Theorem}[section]
\newtheorem*{thm2}{Theorem}
\newtheorem{lem}[thm]{Lemma}
\newtheorem{prop}[thm]{Proposition}
\newtheorem{cor}[thm]{Corollary}

\theoremstyle{definition}
\newtheorem{defn}[thm]{Definition}
\newtheorem{rem}[thm]{Remark}
\newtheorem{remark}[thm]{Remark}

\numberwithin{equation}{section}

\renewcommand{\a}{\alpha}
\renewcommand{\b}{\beta}
\renewcommand{\d}{\delta}
\newcommand{\e}{\varepsilon}
\newcommand{\Om}{\Omega}
\newcommand{\D}{\Phi}
\newcommand{\Dp}{\Phi^+}
\newcommand{\g}{\mathfrak g}
\newcommand{\bC}{\mathbb C}
\newcommand{\bN}{\mathbb N}
\newcommand{\bZ}{\mathbb Z}
\newcommand{\Qq}{\mathbb{Q}(q)}
\renewcommand{\O}{\mathcal{O}}
\newcommand{\exd}{\mathrm{d}}
\newcommand{\id}{\mathrm{id}}
\newcommand{\proj}{\mathrm{proj}}
\newcommand{\supp}{\mathrm{supp}}
\newcommand{\cone}{\mathrm{cone}}
\newcommand{\height}{\mathrm{ht}}
\newcommand{\inv}{^{-1}}
\newcommand{\mto}{\mapsto}
\newcommand{\sseq}{\subseteq}
\newcommand{\Uplus}{U_q(\mathfrak{n}^+)}
\newcommand{\Uminus}{U_q(\mathfrak{n}^-)}
\newcommand{\pair}[2]{\langle #1 , #2 \rangle}
\newcommand{\bfa}{\mathbf{a}}
\newcommand{\bfb}{\mathbf{b}}
\newcommand{\bfc}{\mathbf{c}}
\newcommand{\bfd}{\mathbf{d}}
\newcommand{\bfu}{\mathbf{u}}
\newcommand{\bfv}{\mathbf{v}}
\newcommand{\bfe}{\mathbf{e}}
\newcommand{\bfze}{\mathbf{0}}

\author[A. C.]{Alessandro Carotenuto}
\address[Alessandro Carotenuto]{Department of Mathematics, Faculty of Science, University of Zagreb\\
Bijeni\v{c}ka cesta 30\\ 10000 Zagreb\\ Croatia}
\email{acaroten91@gmail.com}

\author[C. H.]{Christophe Hohlweg}
\address[Christophe Hohlweg]{LaCIM et D\'epartement de Math\'ematiques\\ Universit\'e du Qu\'ebec \`a Montr\'eal\\ CP 8888 Succ. Centre-Ville\\
Montr\'eal, Qu\'ebec, H3C 3P8\\ Canada}
\email{hohlweg.christophe@uqam.ca}

\author[P. P.]{Paolo Papi}
\address[Paolo Papi]{Dipartimento di Matematica\\ Sapienza Universit\`a di Roma\\ P.le Aldo Moro 2, 00185 Roma\\ Italy}
\email{papi@mat.uniroma1.it}

\title{Convex orders in root systems and quantum tangent spaces}

\begin{document}

\begin{abstract}
We classify the reduced decompositions of the longest element $w_0$ of the Weyl group of a semisimple Lie algebra $\mathfrak{g}$ for which the corresponding Lusztig quantum root vectors span a quantum tangent space in $U_q(\mathfrak{g})$. We introduce a combinatorial condition on the convex order induced by a reduced decomposition, tangency, and prove that a reduced decomposition gives rise to a quantum tangent space if and only if it is tangent. The proof relies on a description of the coproduct of quantum root vectors in terms of the convex order, and on a projection in the right weak order attached to each positive root. Up to commutation, tangent reduced decompositions are obtained by concatenating the longest minimal coset representatives along a chain of cominuscule parabolic quotients; in particular there are none in types $E_8$, $F_4$ and $G_2$.
\end{abstract}
\maketitle

\section{Introduction}
The choice of a differential calculus for a noncommutative algebra $A$, viewed as the algebra of some class of functions on a noncommutative manifold,
is a foundational problem in the relatively young field of
noncommutative differential geometry, and a key feature of the theory. This constitutes a fundamental difference from classical differential geometry, where a manifold comes equipped with its own de Rham complex.
When a first order differential calculus $\Omega^1$ on $A$ is given, it always admits a unique maximal prolongation to a differential calculus $\Omega^\bullet$ over $A$. In most of the concrete examples, this reduces the problem to the one of finding a good choice for $\Omega^1$, as the right choice of a differential calculus turns out to be its maximal prolongation. There are however some notable exceptions to this rule, such as Woronowicz's bicovariant calculi for quantum groups \cite{WoroDC}.
In the setting of Drinfel'd--Jimbo quantum groups, and particularly for the quantum flag manifolds $\mathcal{O}_q(G/L_S)$ introduced by Soibelman \cite{SoibelmanQFM}, requiring covariance of the calculus allows for its local description through either the fundamental theorem of Hopf modules or its generalization to quantum homogeneous spaces provided by Takeuchi's equivalence. An influential result in this line of research was established by Heckenberger and Kolb, who gave a $q$-deformation of the classical de Rham complex of the flag manifolds of Hermitian symmetric type \cite{HKdR}, now known as the Heckenberger--Kolb calculi.
These are generalizations of Majid's left-covariant calculus for the Podle\'s sphere $\mathcal{O}_q(S^2)$ \cite{MajidPodles}, and are the essentially unique differential calculi which exhibit at the same time left covariance with respect to the coaction of $\mathcal{O}_q(G)$ and classical dimension of their local part. For these reasons, there is a widespread consensus that these objects are the right quantization of the classical de Rham complex of irreducible flag manifolds.
This consensus has in turn been reinforced by the many results on their noncommutative differential geometry obtained since then. These range from their K\"ahler geometry \cite{MarcoConj,MMF3}, to the theory of connections for their homogeneous vector bundles \cite{HVBQFM}, to their Borel--Weil theory \cite{BwGrass,CDOBBW}. We refer the reader interested in the noncommutative differential geometry of quantum homogeneous spaces to the comprehensive monograph \cite{BeggsMajid:Leabh}.

With most of the theory for quantum flag manifolds of irreducible type already well established, mathematicians have recently started to address the noncommutative differential geometry of quantum flag manifolds beyond the irreducible case, unveiling new interesting phenomena in quantum geometry. A meaningful contribution in this direction was recently given by \'O~Buachalla and Somberg in \cite{ROBPSLusz}, where they exhibit a quantization of the holomorphic calculus of the full flag manifolds of type $A_n$. A follow-up was presented in \cite{COBR25}, where the authors gave a quantization of the full de Rham complex of the full flag manifold of $SU_3$.

The main tools used in \cite{ROBPSLusz} are the quantum root vectors of the quantized enveloping algebra $U_q(\mathfrak{g})$ defined by Lusztig \cite{Lusztig88,Lusztig90_2,LusztisLeabh}. The authors exhibited two choices of a reduced decomposition of the longest element $w_0$ of the Weyl group of $\mathfrak{sl}_{n+1}$ for which the associated sets of quantum root vectors span quantum tangent spaces.
They then showed that the associated first order differential calculi admit maximal prolongations of classical dimension. Following the same approach, the first author, \'O~Buachalla and Pagani gave a quantization of the holomorphic calculus of the full flag manifold of type $B_2$. In this case only one of the two reduced decompositions of $w_0$ gives rise to a quantum tangent space \cite{COBP}.

It is important to note that not every choice of a reduced decomposition of $w_0$ gives a basis of a quantum tangent space. This was the original motivation for the present paper, whose main goal is to understand the combinatorial properties of the reduced decompositions of $w_0$ that give rise to quantum tangent spaces. At the level of the quantum coordinate algebra $\mathcal{O}_q(G)$, the notion of a quantum tangent space is very close to that of a right coideal in $U_q(\mathfrak{g})$. For this reason,
one has to understand how the coproduct of the quantum root vectors of $U_q(\mathfrak{g})$ depends on the choice of a reduced decomposition. This problem is far from trivial, since Lusztig's root vectors are defined by means of an action of the braid group $\mathcal{B}$ of $W$ on $U_q(\mathfrak{g})$ by algebra automorphisms which are not coalgebra maps.
The number of reduced decompositions of $w_0$ grows quickly with the rank of $\mathfrak{g}$; so does the number of their commutation classes, which is what matters here, and whose growth is understood only up to bounds even in the relatively simple case of $A_n$ \cite{FV11}.
For this reason, a direct approach---considering all the reduced decompositions of $w_0$ and computing the coproducts of the corresponding quantum root vectors---is computationally prohibitive, and a more conceptual framework is needed.

A number of authors have explicitly computed the coproducts of quantum root vectors. In \cite{Karchenko14} the author computes the coproducts of quantum root vectors for $U_q(\mathfrak{sp}_{2n})$ and $U_q(\mathfrak{so}_{2n})$, while in \cite{KV18} a coproduct formula for the quantum root vectors in type $G_2$ is given. In \cite{Krahmer03} Kr\"{a}hmer expressed the coproduct of quantum root vectors in terms of $l$-functionals.
In all these works, the quantum root vectors correspond to specific choices of the reduced decomposition of $w_0$.

In the present paper the notion of a convex order on the set of positive roots allows us to address the existence of quantum tangent spaces, as well as their classification, in combinatorial terms.
As shown in Section \ref{sec:coproduct}, the interplay between the convex orders on the set $\Phi^+$ of positive roots of $\mathfrak{g}$ and the coproduct of $U_q(\mathfrak{g})$ lies in the fact that, for the negative quantum root vector attached to $\beta\in \Phi^+$, the coproduct $\Delta: U_{q}(\mathfrak{g})\to U_{q}(\mathfrak{g}) \otimes U_{q}(\mathfrak{g})$ separates the components corresponding to the roots that precede $\beta$ in the convex order from the components corresponding to the roots that follow it, by placing them on the two sides of the tensor product. Furthermore the right weak order on $W$, through the projections introduced below, plays a central role in deciding whether the terms of the coproduct whose left tensor factor has PBW degree at least two vanish, which is what the existence of a quantum tangent space amounts to.

For a reduced decomposition $w_0=s_{i_1}\cdots s_{i_N}$ and $1\leq k\leq N$, let $w_k=s_{i_1}\cdots s_{i_k}$ and $\beta_k=s_{i_1}\cdots s_{i_{k-1}}(\alpha_{i_k})$. Denote by $\Phi(v)=\{\a\in\Dp\mid v^{-1}(\a)\in \D^-\}$ the inversion set of $v$. In Section \ref{sec:projection} we show that, with respect to the right weak order on $W$, the set
$\{v\leq_Rw_k \mid\beta_k\in \Phi(v)\}$
has a unique minimum $\overline{w}_k$, the projection $p(w_k,\beta_k)$ of Section \ref{sec:projection}; it is a join-irreducible element of the weak order, whose unique right descent root is $\beta_k$. The existence of this minimum is the first main step, and it is not automatic: the right weak order is a lattice, but a set of this kind need not have a minimum for an arbitrary root $\beta$. It does when $\beta$ is a right descent root of the element, that is $\beta=-w(\alpha_s)$ with $s$ simple and $\ell(ws)<\ell(w)$, as is the case for $\beta_k=-w_k(\alpha_{i_k})$. We prove this by means of Reading's theory of shards \cite{Re}: the reflecting hyperplanes of $W$ are cut into pieces, the shards, which are in bijection with the join-irreducible elements of the weak order (those with a unique right descent); for a right descent root $\beta$ of $w$ this bijection gives a unique minimum $p(w,\beta)$ of the set above, which is then the interval $[p(w,\beta),w]_R$ (Proposition \ref{p32}; this holds for every finite Coxeter group). Thus every reduced decomposition of $w_0$ comes with canonical elements $\overline{w}_1,\dots,\overline{w}_N$, and the conditions in the main theorem below are conditions on the sets $\Phi(\overline{w}_k)$.

We characterize the reduced decompositions corresponding to quantum tangent spaces in Theorem \ref{thm:combinatorialqts} of Section \ref{sec:qts}, which reads as follows.

\begin{thm2}
Let $w_0=s_{i_1}\cdots s_{i_N}$ be a reduced decomposition and $\beta_1,\dots,\beta_N$ the associated positive roots. The elements
$F^{\bf i}_{\beta_k}K_{\beta_k}$, $1\leq k\leq N$, where the $F^{\bf i}_{\beta_k}$ are the negative quantum root vectors, span a quantum
tangent space if and only if, for every $k$, the minimum $\overline{w}_k$ satisfies the
following two conditions:
\begin{enumerate}
\item[(TR1)] the roots of $\Phi(\overline{w}_k)$, listed in the order in which they occur in
$\beta_1,\dots,\beta_N$, form the sequence of positive roots of a reduced decomposition
of $\overline{w}_k$;
\item[(TR2)] if $\alpha\in \Phi(\overline{w}_k)$ and $\beta_k-\alpha$ is a nonzero sum of
simple roots, then no simple root occurring in $\beta_k-\alpha$ belongs to
$\Phi(\overline{w}_k)$.
\end{enumerate}
\end{thm2}
\noindent This theorem translates a question originating in noncommutative differential geometry into a problem in the combinatorics of root systems.
We use this combinatorial language to characterize, for all types of simple Lie algebras, the reduced decompositions of $w_0$ which satisfy the two properties given above.
\par A reduced decomposition satisfying (TR1) and (TR2) for every $k$ is called
tangent; this notion depends only on the commutation class of the decomposition, as do
the quantum root vectors themselves. Sufficiency holds in every type and is proved
through a stronger condition (TR2$'$), which excludes all the decompositions
$\beta_k=\sum_{p<k}a_p\beta_p+\sum_{q>k}b_q\beta_q$ with $\sum_p a_p\geq2$, the only
ones that could contribute to the relevant part of the coproduct (Propositions
\ref{prop:nobaddec} and \ref{prop:qtscriterion}). Every tangent reduced decomposition is
commutation equivalent to one satisfying (TR2$'$) (Corollary \ref{conj:C1}), and
conversely (TR2$'$) implies tangency (Theorem \ref{thm:tr2ptangent}).
\par Necessity is the harder direction. A non-tangent reduced decomposition always admits
decompositions of $\beta_k$ of the above kind, but one needs one of them to contribute a nonzero
coefficient. A decomposition
$\beta_k=\gamma_1+\gamma_2+\psi$ into positive roots with
$\gamma_1\leq\gamma_2<\beta_k<\psi$ and $\beta_k-\gamma_2$ a root will be called a
\emph{special triple} at $\beta_k$. For a special triple the
coefficient is a product of two structure constants of
$\mathfrak{g}$ and hence nonzero (Proposition \ref{prop:nonvanishing}), and we show that
a special triple occurs, in the given order itself, at the
first root at which tangency fails (Theorem \ref{thm:C2ABCDE}). The main
combinatorial input is a property of tangent roots valid in all types: if (TR1) and
(TR2) hold at $\beta_k$ and at all the roots preceding it, then $\Phi(\overline{w}_k)$ is abelian, and all its elements lie below $\beta_k$ in the root
poset (Proposition \ref{prop:sumfree}); more precisely, $\Phi(\overline{w}_k)$ consists of
the positive roots lying below $\beta_k$ whose coefficient on a suitable simple root equals
one (Lemma \ref{lem:principal}). The proof that a special triple occurs is uniform, and the type of the root system enters only through
the length of root strings: it covers every type except $G_2$ (Theorem
\ref{thm:C2ABCDE}), while in type $G_2$ special triples are exhibited directly (Proposition \ref{prop:FG}).
\par Tangent reduced decompositions are classified in Theorem \ref{t43}: up to
commutation they are exactly the canonical ones, obtained by concatenating the longest
minimal coset representatives along a chain of standard parabolic subgroups
$W=W_0\supset W_1\supset\dots\supset W_n=\{e\}$, each obtained from the previous one by
removing a simple reflection which is cominuscule in it. In particular a tangent
decomposition begins with a simple reflection $s$ such that $\alpha_s$ is cominuscule; in type $B_2$ only the
decomposition beginning with the long simple reflection is tangent, in agreement with the
example recalled above, and in types $E_8$, $F_4$ and $G_2$, which have no cominuscule
simple roots, there are no tangent decompositions at all.
\subsection{Summary of the paper}
The paper consists of two parts. The first one, Sections
\ref{sec:coxeter}--\ref{sec:reflorders},  concerns combinatorial properties  of finite Weyl groups
of all types. Section \ref{sec:coxeter} recalls reduced words, commutation classes,
inversion sets and the weak order, and Section \ref{sec:cominuscule} the root poset and
the longest coset representatives of maximal standard parabolic subgroups, with emphasis
on the cominuscule case. In Section \ref{sec:projection} we introduce the projection of
an element of $W$ along one of its right descent roots and show, by means of Reading's
theory of shards, that it is the unique minimum of the corresponding set and a
join-irreducible element of the weak order. Section \ref{sec:reflorders} contains the
theory of tangent convex orders: the conditions (TR1) and (TR2); the condition
(TR2$'$), which implies tangency (Theorem \ref{thm:tr2ptangent}); the classification of
tangent convex orders up to commutation (Theorem \ref{t43}); the fact that every tangent
convex order is commutation equivalent to one satisfying (TR2$'$) (Corollary
\ref{conj:C1}); and the behavior of projections at tangent roots (Proposition
\ref{prop:sumfree}).
\par The second part concerns quantum groups. Section \ref{sec:fodc} recalls covariant
first order differential calculi, quantum tangent spaces and Drinfel'd--Jimbo quantum
groups, and Section \ref{sec:qrv} Lusztig's quantum root vectors, which depend on the
reduced decomposition only through its commutation class. In Section \ref{sec:coproduct}
we describe the coproduct of quantum root vectors in terms of the convex order induced by
the reduced decomposition, and give a criterion for the non-vanishing of its
coefficients. Section \ref{sec:qts} translates the existence of a quantum tangent space
into the combinatorics of the first part, proves that tangency is sufficient in all
types, and reduces necessity to Theorem \ref{conj:C2}, a statement on root systems.
Finally Section \ref{sec:towards} proves Theorem \ref{conj:C2}, by a uniform argument in
all types other than $G_2$, and then in type $G_2$.
\vskip10pt
\centerline{\large \bf The first half of the title: convex orders in root systems}
\vskip10pt
\section{Weyl groups as Coxeter groups}\label{sec:coxeter}
Let $W$ be a finite Weyl group with set of simple reflections $S$, so that $(W,S)$ is a Coxeter system. Consider a root system $\D$ for $W$ in the reflection representation
$W\to GL(V)$, where $V$ is a Euclidean vector space with scalar product $(\cdot,\cdot)$.
For $\a\in\D$ we denote by $\a^\vee=2\a/(\a,\a)$ the corresponding coroot and we write
\begin{align*}
\langle\b,\a^\vee\rangle=\frac{2(\b,\a)}{(\a,\a)},\qquad \b\in V,
\end{align*}
so that $s_\a(\b)=\b-\langle\b,\a^\vee\rangle\a$; as $\D$ is the root system of a Weyl
group, $\langle\b,\a^\vee\rangle\in\mathbb Z$ for $\b\in\D$. Note that
$\langle\cdot,\cdot^\vee\rangle$ is linear in its first argument only. Let
$\Pi=\{\a_s\mid s\in S\}$ be a simple system. The set of positive roots is $\Dp=\cone_{\D}(\Pi)$, where
$\cone_{\D}(X)=\cone(X)\cap \D,$ and $\cone(X)$ is the cone over $X\subset V$. When $\D$ is irreducible, we denote by $\theta$ its highest root. We denote by $Q=\bZ\Pi$ the root lattice and by $Q^+=\bN\Pi$ the set of nonnegative integer combinations of simple roots (here $\bN=\{0,1,2,\dots\}$), so that $\Dp\subset Q^+$.
\par
In the rest of this section we survey some well-known properties of Coxeter groups; see for instance \cite{BB, Bo, GP, Hu} for general references.
\subsection{Reduced words, length and descents} Denote by $S^*$ the free monoid over the alphabet $S$. The product is given by concatenation. Any element $w\in W$ is a product
$w=s_1s_2\cdots s_k$ for some $s_i\in S$; the word $(s_1,s_2,\ldots ,s_k)\in S^*$ is a {\it reduced word for $w$} if $k$ is minimal for that property. In this case, we also say, by abuse of language, that
$w=s_1s_2\cdots s_k$ is a reduced word. The {\it length function} on $(W,S)$ is
\begin{align*}
\ell:\,\,\ &W\longrightarrow \mathbb N\\
&e\,\,\mapsto\,\, 0\\
&w\mapsto \min\{k\in \mathbb N^*\mid w=s_1s_2\cdots s_k,\,s_i\in S\}
\end{align*}
In short, if $w\in W$, then $\ell(w)$ is the length of any reduced word for $w$.\par
It is well-known that for any $w\in W$ and $s\in S$,
\[\ell(ws)=\ell(w)\pm 1\text{ and } \ell(sw)=\ell(w)\pm 1.\]
For $w\in W$, the {\it right descent set} of $w$ is $D_R(w)=\{s\in S\mid \ell(ws)<\ell(w)\}$, which is the set of simple reflections appearing as the last letter of some reduced word for $w$.
Similarly, the {\it left  descent set} of $w$ is $D_L(w)=\{s\in S\mid \ell(sw)<\ell(w)\}$, which consists of the initial letters of the reduced words for $w$.
\subsection{Commutation relations and fully commutative elements}\label{CRFC} It is well known, under the name of Matsumoto's theorem, that any two reduced words for the same element can be obtained from each other by a sequence of braid moves. The simplest move is the commutation, which we now discuss. Let $\sim$ be the reflexive and transitive closure of the following relation on $S^*$: $(s_1,s_2,\ldots,s_i,s_{i+1},\ldots,s_k)\sim (s_1,s_2,\ldots,s_{i+1},s_{i},\ldots,s_k)$ if $s_is_{i+1}=s_{i+1}s_i$ in $W$. We say that two reduced words
$s_1s_2\cdots s_k, r_1r_2\cdots r_k$ for $w$ are {\it equivalent up to commutation} if $(s_1,s_2,\ldots,s_k)\sim (r_1,r_2,\ldots,r_k)$ in $S^*$. \par An element $w\in W$ is
{\it fully commutative} if its commutation class is a singleton, i.e. any two reduced words for $w$ are equivalent up to commutation.

{\bf Example.} Let $W$ be of type $A_3$.
\begin{itemize}
\item $w=s_2s_1s_3s_2=s_2s_3s_1s_2$ is fully commutative;
\item $w=s_2s_1s_2s_3=s_1s_2s_1s_3=s_1s_2s_3s_1$ is not fully commutative.
\end{itemize}
\subsection{Inversion sets and descent-inversions}
Recall that the  inversion set of $w\in W$ is defined to be
\[\Phi(w)=\Dp\cap w(\D^-)=\{\a\in \Dp\mid w^{-1}(\a)\in \D^-\}.\]
It is well-known that if $s_1s_2\cdots s_k$ is a reduced expression for $w$, then \[\Phi(w)=\{\a_{s_1}, s_1(\a_{s_2}),\ldots, s_1\cdots s_{k-1}(\alpha_{s_k})\};\] in particular,
$|\Phi(w)|=\ell(w)$. The left and right descent sets correspond in $\Phi(w)$ to the following sets
\begin{align*}
\Phi^L(w)&=\Phi(w)\cap \Pi =\{\a_s\mid s\in D_L(w)\}\\
\Phi^R(w)&=\{-w(\a_s)\mid s\in D_R(w)\}\\&=\{\beta\in \Phi(w)\mid \exists s\in S, \Phi(w)\setminus \{\beta\}=\Phi(ws)\}.\end{align*}
The roots in $\Phi^R(w)$ are called {\it descent-inversions}.
\subsection{The weak order}
Let $u,w\in W$. We write $u\leq_Rw$ if there is a reduced word $(s_1,\ldots,s_k)$ for $w$ such that $(s_1,\ldots,s_i)$ is a reduced word for $u$ for some $i$, $0\leq i\leq k$. We say in such a case that {\it $u$ is a prefix of $w$}. The relation $\leq_R$ is a partial order on $W$ called the {\it (right) weak order} on $W$. Similarly, we define the
{\it left weak order} by $v\leq _Lw$ if there exists a reduced word for $v$ which is a suffix of a reduced word for $w$. We have
\[v\leq_L w \iff v^{-1}\leq _R w^{-1}.\]
Bj\"orner showed that the poset $(W,\leq_R)$ is a complete lattice when $W$  is finite: see \cite[Chapter 3]{BB}. In other words, any non-empty subset $X\subset W$ admits:
\begin{itemize}
\item a meet $\bigwedge_R X=\sup\{g\in W\mid g\leq_R x,\,\forall\,x\in X\}$;
\item a join $\bigvee_R X=\inf\{g\in W\mid g\geq_R x,\,\forall\,x\in X\}$.
\end{itemize}
The {\it longest element} of $W$ is $w_0=\bigvee_R S$. \par
The following well-known proposition summarizes useful properties of the weak order and inversion sets; see e.g. \cite{HL} for more details. We say that $w=uv$ is a {\it reduced product} if $\ell(w)=\ell(u)+\ell(v)$.
\begin{prop}\label{p11} Let $u,v,w\in W$.
\begin{enumerate}
\item If  $w=uv$ is a  reduced product, then $\Phi(w)=\Phi(u)\sqcup u(\Phi(v))$.
\item $u\leq_R w\iff \Phi(u)\subseteq \Phi(w)$.
\end{enumerate}
Moreover a subset $A\subseteq \Dp$ is an inversion set, i.e. $A=\Phi(w)$ for some $w\in W$, if and only if $A$ is biconvex, i.e. both $A$ and $\Dp\setminus A$ are convex\footnote{$B$ is convex if
$\cone_\D(X)\subseteq B$ for all $X\subseteq B$.}. In particular, the complement $\Dp\setminus \Phi(w)=\Phi(ww_0)$ of an inversion set is again an inversion set.
\end{prop}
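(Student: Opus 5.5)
The plan is to prove the three assertions of Proposition \ref{p11} in turn, using only the standard description of $\Phi(w)$ through a reduced word recalled above. The main tool is the elementary fact that for $s\in S$ and $w\in W$ we have $\ell(ws)>\ell(w)$ if and only if $w(\a_s)\in\Dp$. In that case $\Phi(ws)=\Phi(w)\sqcup\{w(\a_s)\}$.

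\emph{Part (1).} Concatenate a reduced word $s_1\cdots s_p$ of $u$ with a reduced word $t_1\cdots t_r$ of $v$. Since $\ell(w)=\ell(u)+\ell(v)$, the result is a reduced word for $w$. Its list of roots $\a_{s_1},\dots,s_1\cdots s_{p-1}(\a_{s_p})$, followed by $u(\a_{t_1}),\dots,u t_1\cdots t_{r-1}(\a_{t_r})$, is exactly $\Phi(u)$ followed by $u(\Phi(v))$. The union is disjoint because $|\Phi(w)|=\ell(w)$.

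\emph{Part (2).} If $u\le_R w$, write $w=u(u^{-1}w)$ as a reduced product and apply (1) to get $\Phi(u)\subseteq\Phi(w)$. For the converse I will argue by induction on $\ell(w)-\ell(u)$. Suppose $\Phi(u)\subseteq\Phi(w)$ with $u\neq w$. If $u\neq w$, then $\Phi(w)\not\subseteq\Phi(u)$, since $\Phi$ is injective ($\Phi(x)=\Phi(y)$ forces $\Phi(y^{-1}x)=\emptyset$ and hence $x=y$ by (1)). I will then find $s\in D_R(u^{-1}w)$ with $u s$ still satisfying $\Phi(us)\subseteq\Phi(w)$ and $\ell(us)=\ell(u)+1$. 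The natural choice is to pick $s$ with $\a_s\in\Phi(u^{-1}w)$, that is, $s\in D_L(u^{-1}w)$, which is nonempty. One checks that $u(\a_s)$ is positive and lies in $\Phi(w)$. The needed facts are $u(\a_s)\in\Dp$ and, via $(u^{-1}w)^{-1}(\a_s)<0$, that $w^{-1}u(\a_s)<0$; the first requires a short argument from $\Phi(u)\subseteq\Phi(w)$. Induction then gives $us\le_R w$, hence $u\le_R w$. This sign bookkeeping is the fiddly step.

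\emph{Biconvexity.} Necessity is immediate. If $\a,\b\in\Phi(w)$ and $\gamma=a\a+b\b\in\D$ with $a,b\ge0$, then $w^{-1}\gamma$ is a nonnegative combination of negative vectors, hence negative. The same argument applies to the complement. Also $\Dp\setminus\Phi(w)=\Phi(ww_0)$, because $w_0$ maps $\D^-$ to $\Dp$. Sufficiency is the main obstacle. Given a biconvex $A$, I proceed by induction on $|A|$. I will show that $A\neq\emptyset$ contains a simple root $\a_s$: a minimal-height element of $A$ that is not simple is a sum of a simple root and a positive root, and this contradicts convexity of the complement. I will then show that $s(A\setminus\{\a_s\})$ is again biconvex; this uses that $s$ permutes $\Dp\setminus\{\a_s\}$ and preserves convex combinations, with care at $\a_s$. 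By induction, $s(A\setminus\{\a_s\})=\Phi(v)$, and part (1) gives $A=\Phi(sv)$.
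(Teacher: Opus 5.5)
The paper does not prove this proposition. It quotes it as well known and refers to \cite{HL}, so your self-contained argument is the only proof on the table. It is the standard one. Parts (1) and (2), the necessity of biconvexity (which works verbatim for any number of summands, as the paper's definition requires) and the identity $\Dp\setminus\Phi(w)=\Phi(ww_0)$ are all correct.

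There are two small slips. First, injectivity of $\Phi$ is not a consequence of (1), since $x=y(y^{-1}x)$ need not be a reduced product. You do not need it separately: when $\Phi(u)\subseteq\Phi(w)$ and $u\neq w$, your inductive step already produces $u(\a_s)\in\Phi(w)\setminus\Phi(u)$. Second, the final appeal to (1) requires $sv$ to be reduced. This holds because $\a_s\notin s(A\setminus\{\a_s\})=\Phi(v)$.

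The step that needs more than you indicate is the claim that $A'=s(A\setminus\{\a_s\})$ is again biconvex.

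\emph{Convexity of $A'$.} This is transported by $s$ without trouble. A root in $\cone(A')$ is positive, so its image under $s$ is a root of $\cone_\D(A\setminus\{\a_s\})$ different from $\a_s$.

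\emph{Convexity of the complement.} The complement is $\{\a_s\}\sqcup s(\Dp\setminus A)$, and here transport fails. Because $s(\a_s)=-\a_s$, a root $\gamma=c_0\a_s+\sum_j c_j s(b_j)$ with $b_j\in\Dp\setminus A$ only gives $\sum_j c_j b_j=s(\gamma)+c_0\a_s$. With the paper's definition (cones over arbitrary finite subsets), this places no root of $\Dp\setminus A$ inside a cone over $A$. So assuming $s(\gamma)\in A$ yields no contradiction by transport alone. Saying ``with care at $\a_s$'' does not cover this.

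\emph{The fix.} Prove the formally stronger statement in which ``biconvex'' is replaced by ``closed and co-closed under sums of two roots''. This means: $\a,\b\in A$ and $\a+\b\in\D$ imply $\a+\b\in A$, and likewise for $\Dp\setminus A$. Biconvexity in the paper's sense implies this. Your minimal-height step already uses only this pairwise form.

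In the co-closedness check for $A'$, the only delicate case is $\gamma=\a_s+s(b)$ with $b\in\Dp\setminus A$. Then $b=s(\gamma)+\a_s$ with $s(\gamma)\in\Dp\setminus\{\a_s\}$. It is the closedness of $A$ itself, together with $\a_s\in A$, that rules out $s(\gamma)\in A$. So co-closedness of $A'$ is inherited from closedness of $A$, not from that of its complement.

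Your necessity argument then shows that every inversion set is biconvex in the paper's sense, so the two notions coincide. This pairwise form is also the one the paper actually relies on later. For instance, in Proposition \ref{prop:Bk} the biconvexity of $B_k$ is established by checking closedness and co-closedness.
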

\subsection{Standard parabolic subgroups and longest coset representatives}
Let $I\subseteq S$. The subgroup $W_I$ of $W$ generated by $I$ is called a {\it standard parabolic subgroup}. It is well-known that $(W_I,I)$ is also a Coxeter system with length function $\ell_{|W_I}$. The left and right weak order restrict to $W_I$:
\begin{itemize}
\item the longest element of $W_I$ is $w_{0,I}=\bigvee_R I=\bigvee_L I$.
\item $W_I$ is an interval for both the left and the right weak order:
\[W_I=[e,w_{0,I}]_L=[e,w_{0,I}]_R.\]
\end{itemize}
Moreover, $\Pi_I=\{\a_s\mid s\in I\}$ is a simple system for $(W_I,I)$ with root system $\D_I=\D\cap \mathbb R \Pi_I$ and positive root system
$\D_I^+=\Dp\cap \D_I$. The inversion set of $w\in W_I$ satisfies $\Phi(w)\subset \Dp_I$, and any reduced word for $w$ has all letters in $I$.\par
For $I\subset S$, the set of minimal left coset representatives of $W/W_I$ is
\[X_I=\{x\in W\mid \ell(xs)>\ell(x)\,\forall\,s\in I\}.\]
In short, any $w\in W$ has a unique decomposition into a reduced product $w=uv,\,u\in X_I,\,v\in W_I$. We recall here two useful statements; see for instance \cite{GP}, where they are stated for right cosets.
\begin{lem}[Deodhar's Lemma]\label{l12} Let $I\subset S$. If $x\in X_I$ and $s\in S$, then one of the following statements holds:
\begin{enumerate}
\item $s\in D_L(x)$ and $sx\in X_I$;
\item  $s\notin D_L(x)$ and $sx\in X_I$;
\item $s\notin D_L(x)$ and $sx=xr$ with $r\in I$.
\end{enumerate}
\end{lem}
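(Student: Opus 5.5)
We argue by a length-parity and exchange-condition analysis of the element $sx$, using the characterization $X_I=\{x\mid \ell(xr)>\ell(x)\ \forall r\in I\}$ together with the uniqueness of the reduced factorization $w=uv$ with $u\in X_I$, $v\in W_I$. We split according to whether $s\in D_L(x)$.

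First suppose $s\in D_L(x)$, so $\ell(sx)=\ell(x)-1$ and $x=s\cdot(sx)$ is a reduced product. If some $r\in I$ had $\ell(sxr)<\ell(sx)$, then by Proposition \ref{p11}(1) we would have $\Phi(sx)\supseteq\Phi(sxr)\sqcup\{-sx(\alpha_r)\}$. Equivalently, $sx$ has a reduced word ending in $r$, and prefixing $s$ gives a reduced word for $x$ ending in $r$; this word is reduced because $\ell(x)=\ell(sx)+1$. Hence $r\in D_R(x)$, contradicting $x\in X_I$. Therefore $sx\in X_I$, which is case (1).

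Now suppose $s\notin D_L(x)$, so $\ell(sx)=\ell(x)+1$. If $sx\in X_I$ we are in case (2). Otherwise there is $r\in I$ with $\ell(sxr)<\ell(sx)$, i.e. $\ell(sxr)=\ell(x)$. Since $x\in X_I$, we also have $\ell(xr)=\ell(x)+1$. Take a reduced word $s\,t_1\cdots t_m$ for $sx$, where $t_1\cdots t_m$ is a reduced word for $x$. By the exchange condition applied on the right with $r$, the element $sxr$ is obtained by deleting one letter of this word. If the deleted letter were some $t_j$, then $xr$ would equal $t_1\cdots\widehat{t_j}\cdots t_m$ up to the left factor $s$: precisely, $sxr=s\,t_1\cdots\widehat{t_j}\cdots t_m$, hence $xr=t_1\cdots\widehat{t_j}\cdots t_m$ has length at most $\ell(x)-1$, contradicting $\ell(xr)=\ell(x)+1$. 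So the deleted letter is $s$ itself, giving $sxr=x$, i.e. $sx=xr$ with $r\in I$, which is case (3).

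The only delicate point is this last application of the exchange condition: one must check that deleting a letter of the $x$-part forces $\ell(xr)<\ell(x)$, which contradicts minimality. Everything else is direct bookkeeping. Alternatively, one can phrase the same argument through inversion sets: $\Phi(sx)=\{\alpha_s\}\sqcup s\Phi(x)$, and the descent-inversion $-sx(\alpha_r)$ must be $\alpha_s$, since otherwise $-x(\alpha_r)\in\Phi(x)$. This yields $sx(\alpha_r)=-\alpha_s$, hence $sxr\,(sx)^{-1}=s$, that is, $sx=xr$. The three cases are clearly mutually exclusive, since they are distinguished by whether $s\in D_L(x)$ and, in the second situation, by whether $\ell(sxr)$ exceeds $\ell(sx)$ for every $r\in I$.
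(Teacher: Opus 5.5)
Your proof is correct: in the descent case your reduced-word argument amounts to $\ell(xr)\le\ell(sxr)+1<\ell(sx)+1=\ell(x)$, and in the non-descent case the exchange condition applied to $s\,t_1\cdots t_m$ does force the deleted letter to be $s$, giving $sx=xr$ with $r\in I$. The paper gives no proof of its own and only refers to \cite{GP}, where the right-coset version is proved by essentially this exchange-condition argument, so your approach is the standard one.
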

Let $I\subset S$. Introduce the element \[d_I=w_0w_{0,I},\]
called the longest coset representative of $W/W_I$. Its main properties are displayed in the following proposition.
\begin{prop} \label{p13}\
\begin{enumerate}
\item $X_I=[e,d_I]_L$ is an interval in the left weak order, i.e. any element of $X_I$ is a suffix of $d_I$.
\item ${d_I}^{-1}=d_{w_0Iw_0}=w_0d_Iw_0.$
\item  $d_IW_Id_I^{-1}=W_{w_0Iw_0}.$
\item $\Phi(d_I)=\Dp\setminus \Dp_{w_0Iw_0}=\Dp\setminus w_0(\D^-_I)$.
\item $D_R(d_I)=S\setminus I$ and  $D_L(d_I)=S\setminus w_0Iw_0$.
\end{enumerate}
\end{prop}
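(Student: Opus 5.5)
The plan is to derive all five items from the two standard facts that $w_0$ is an involution with $\ell(w_0u)=\ell(w_0)-\ell(u)$, $\Phi(w_0)=\Dp$ and $w_0(\Dp)=\D^-$, and that $w_{0,I}$ is an involution of $W_I$ with $\Phi(w_{0,I})=\Dp_I$ and $w_{0,I}(\Pi_I)=-\Pi_I$. Conjugation by $w_0$ is a length-preserving automorphism $c$ of $(W,S)$ with $c(S)=S$. It sends $W_I$ to $W_{w_0Iw_0}$ and $w_{0,I}$ to $w_{0,w_0Iw_0}$, and on roots $w_0(\Pi_I)=-\Pi_{w_0Iw_0}$. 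We will use these facts freely.

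For (2), compute directly: $d_I^{-1}=w_{0,I}w_0=w_0(w_0w_{0,I}w_0)=w_0w_{0,w_0Iw_0}=d_{w_0Iw_0}$. Similarly $w_0d_Iw_0=w_{0,I}w_0=d_I^{-1}$. For (3), write $d_IW_Id_I^{-1}=w_0w_{0,I}W_Iw_{0,I}w_0=w_0W_Iw_0=W_{w_0Iw_0}$.

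For (4), the product $w_0=d_Iw_{0,I}$ is reduced, since $\ell(d_I)=\ell(w_0)-\ell(w_{0,I})$. Proposition~\ref{p11}(1) then gives $\Dp=\Phi(d_I)\sqcup d_I(\Dp_I)$. Now $d_I(\Dp_I)=w_0w_{0,I}(\Dp_I)=w_0(\D^-_I)$, and this equals $\Dp_{w_0Iw_0}$ because $w_0$ maps $-\Pi_I$ onto $\Pi_{w_0Iw_0}$.

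For (5), right descents are read from $\Phi^R$. For $s\in I$ we have $d_I(\a_s)=w_0w_{0,I}(\a_s)=w_0(-\a_{s'})$ with $s'\in I$, which is a positive root, so $s\notin D_R(d_I)$. For $s\notin I$, the element $w_{0,I}(\a_s)$ is a positive root outside $\D_I$, because $w_{0,I}$ only permutes $\D^+\setminus\D^+_I$. Then $w_0$ makes it negative, so $s\in D_R(d_I)$. The left descent set follows by applying this to $d_I^{-1}=d_{w_0Iw_0}$ from (2), using $D_L(w)=D_R(w^{-1})$.

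For (1), first show $X_I\sseq[e,d_I]_L$. Take $x\in X_I$; since $x$ has no right descents in $I$, $\Phi(x)\cap x(\Dp_I)=\emptyset$. The main point is that $x$ is a suffix of $d_I$, meaning $d_I=yx$ reduced, which is equivalent to $\ell(d_Ix^{-1})=\ell(d_I)-\ell(x)$. The easiest route is through the maximal element of the coset. Since $xW_I$ has a unique minimal element $x$, the product $xw_{0,I}$ is reduced. Every element of $W$ is a suffix of $w_0$ in left weak order, so $w_0=z\,(xw_{0,I})$ reduced for some $z$. Hence $d_I=w_0w_{0,I}=zx$ with $\ell(zx)=\ell(w_0)-\ell(w_{0,I})=\ell(z)+\ell(x)$, because $\ell(z)=\ell(w_0)-\ell(x)-\ell(w_{0,I})$. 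So $x\leq_L d_I$.

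Conversely, if $d_I=yx$ is reduced, then any reduced word of $x$ is a suffix of a reduced word of $d_I$. If $xs<x$ for some $s\in I$, then $d_Is<d_I$, contradicting (5). Hence $x\in X_I$. The only real subtlety in the whole proof is the length bookkeeping in (1), and it is handled by the reduced factorization $w_0=z\cdot x\cdot w_{0,I}$.
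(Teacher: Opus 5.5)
Your proof is correct. The paper gives no proof of Proposition~\ref{p13}: it is recalled as a standard fact about parabolic quotients, with a pointer to Geck--Pfeiffer. So there is no argument of the authors to compare with.

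What you have written is a valid self-contained derivation. It uses only the basic properties of $w_0$ and $w_{0,I}$, Proposition~\ref{p11}(1), and the reduced factorization $w=uv$ with $u\in X_I$, $v\in W_I$, recalled just before Deodhar's Lemma. Your order of proof is non-circular: (2), (3), (4), then (5), whose left half uses (2), then (1), whose reverse inclusion uses (5).

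Two minor points of presentation in (1):
\begin{enumerate}
\item The reducedness of $xw_{0,I}$ for $x\in X_I$ is precisely the length-additivity $\ell(xv)=\ell(x)+\ell(v)$, for $v\in W_I$, of that factorization. It is best cited as such rather than inferred from the minimality of $x$ in its coset, which does not by itself yield it.
\item The remark $\Phi(x)\cap x(\Phi^+_I)=\emptyset$ is never used. It also holds for every $x\in W$, not just $x\in X_I$, since $x^{-1}$ maps $x(\Phi^+)$ into $\Phi^+$.
\end{enumerate}
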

The two following propositions are well-known (see for instance \cite[Chapter 2]{GP}).
\begin{prop}\label{p14} Let $I\subseteq J\subseteq S$. Then
\begin{itemize}
\item $X_I^J=X_I\cap W_J$ is the set of minimal coset representatives of $W_J/W_I$.
\item $X_I^J=[e,d^J_I]_L,$ where $d_I^J=w_{0,J}w_{0,I}.$
\end{itemize}
\end{prop}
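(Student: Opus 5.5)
The plan is to reduce both items of Proposition \ref{p14} to the absolute statements already available for $W$, applied to the finite Coxeter system $(W_J,J)$. Since $(W_J,J)$ is a Coxeter system whose length function is the restriction of $\ell$, and $W_J=[e,w_{0,J}]_R=[e,w_{0,J}]_L$, every notion involved (descents, reduced words, weak orders) computed inside $W_J$ agrees with the same notion computed in $W$. In particular the set of minimal left coset representatives of $W_J/W_I$, computed in $W_J$, is
\[\{x\in W_J\mid \ell(xs)>\ell(x)\ \forall s\in I\}=X_I\cap W_J .\]

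For the first item I will check that this set really is a set of coset representatives. Take $w\in W_J$ and write its decomposition in $W$ as $w=uv$ with $u\in X_I$, $v\in W_I$, and $\ell(w)=\ell(u)+\ell(v)$. Concatenating a reduced word for $u$ with one for $v$ gives a reduced word for $w$. Since $w\in W_J$, all letters of this word lie in $J$, so $u\in W_J$. Uniqueness follows from uniqueness in $W$. Hence $X_I\cap W_J$ is the set of minimal coset representatives of $W_J/W_I$.

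For the second item I will apply Proposition \ref{p13}(1) to the Coxeter system $(W_J,J)$ with parabolic subset $I\subseteq J$. The longest element of $W_J$ is $w_{0,J}$, so the longest coset representative is $w_{0,J}w_{0,I}=d_I^J$. Proposition \ref{p13}(1) then gives $X_I^J=[e,d_I^J]_L$, where the interval is taken in the left weak order of $W_J$. This interval coincides with the interval in the left weak order of $W$: suffixes of reduced words of elements of $W_J$ are reduced words in $W$ with letters in $J$, and conversely $v\leq_L d_I^J$ in $W$ forces $v\in W_J$ by the same letter argument.

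The only point needing care is this compatibility of intervals and representatives between $W_J$ and $W$. It reduces to the observation that reduced words of elements of $W_J$ use only letters from $J$, together with $\ell_{|W_J}$ being the length of $W_J$; both facts were recalled in the paragraph on standard parabolic subgroups. So I do not expect a real obstacle.
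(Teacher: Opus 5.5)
Your argument is correct. The paper gives no proof of its own and only points to the literature. Your reduction is the standard argument behind that reference: you apply Proposition \ref{p13}(1) to the Coxeter system $(W_J,J)$, and you check that lengths, reduced words and lower intervals of the left weak order agree in $W_J$ and in $W$, using that reduced words of elements of $W_J$ only involve letters of $J$.
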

\begin{prop}\label{p15} Let $w\in W$ and $I=D_R(w)$. Then $w=xw_{0,I}$ with $x\in X_I$. Moreover  if $J\subset S$ is such that $w=yw_{0,J}$ with $y\in X_J$, then $J\subseteq I$.
\end{prop}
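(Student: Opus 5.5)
We prove the two statements of Proposition \ref{p15} separately, using Proposition \ref{p13} and the parabolic factorization $w=uv$ with $u\in X_I$, $v\in W_I$.

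For the first statement, write $w=xv$ with $x\in X_I$ and $v\in W_I$; this is a reduced product. We show $v=w_{0,I}$. Take $s\in I$. By hypothesis $\ell(ws)<\ell(w)$. Since $vs\in W_I$, the product $x(vs)$ is the factorization of $ws$ for $W/W_I$, and it is also reduced. Hence
\[
\ell(x)+\ell(vs)=\ell(ws)<\ell(w)=\ell(x)+\ell(v),
\]
so $\ell(vs)<\ell(v)$. Therefore every $s\in I$ is a right descent of $v$ in $W_I$. An element of $W_I$ whose right descent set is all of $I$ is the longest element $w_{0,I}$. To see this, note that in the finite Coxeter system $(W_I,I)$ the right weak order interval is $W_I=[e,w_{0,I}]_R$. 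If $v\neq w_{0,I}$, then $v<_R w_{0,I}$, so some $s\in I$ satisfies $v<_R vs\le_R w_{0,I}$. This gives $\ell(vs)>\ell(v)$, a contradiction. Equivalently, $\Phi(v)=\Dp_I$ would follow, because otherwise a simple root $\a_s$ with $v^{-1}$-image positive exists. Thus $w=xw_{0,I}$ with $x\in X_I$.

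For the second statement, suppose $w=yw_{0,J}$ with $y\in X_J$. This is again a reduced product. For $s\in J$ we have $\ell(w_{0,J}s)<\ell(w_{0,J})$, so
\[
\ell(ws)=\ell(y)+\ell(w_{0,J}s)<\ell(y)+\ell(w_{0,J})=\ell(w).
\]
The first equality uses that $y(w_{0,J}s)$ is the parabolic factorization of $ws$, and such factorizations are reduced. Hence $s\in D_R(w)=I$, and so $J\subseteq I$.

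The only step that needs some care is the additivity of length for the factorization $w=uv$, $u\in X_I$, $v\in W_I$, which we apply to elements such as $ws$. This additivity is exactly the statement recalled before Lemma \ref{l12} that this decomposition is a reduced product. With that in hand, the rest of the argument is routine.
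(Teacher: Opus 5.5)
The paper gives no proof of this proposition; it quotes it as well known and refers to \cite[Chapter 2]{GP}. Your argument is correct and is the standard one. It uses only facts recalled in the paper: existence, uniqueness and reducedness of the factorization $w=uv$ with $u\in X_I$, $v\in W_I$; the equality $W_I=[e,w_{0,I}]_R$; and the fact that reduced words of elements of $W_I$ have all their letters in $I$. Both halves go through as written, including the degenerate case $I=\emptyset$.

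The one sentence to repair is the parenthetical ``Equivalently, $\Phi(v)=\Dp_I$ would follow\dots''. What you established is $v(\a_s)\in\D^-$ for all $s\in I$, and that is a statement about $\Phi(v^{-1})$, not $\Phi(v)$. Stated correctly, the remark reads $\Phi(v^{-1})=\Dp_I$, because $v$ maps $\Pi_I$, hence all of $\Dp_I\subseteq\cone(\Pi_I)$, into $\D^-$. As you phrased it, the justification also presupposes the fact being proved. You can simply delete it, since your weak-order argument already settles the step.
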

We end with a last proposition.
\begin{prop}\label{p16} Let $w\in W$ and $I\subseteq S$ be such that $d_I\leq_R w$. Then
\begin{enumerate}
\item There exists  $v\in W_I$ such that the product $w=d_Iv$ is reduced.
\item  Let $w=s_1\cdots s_k$ be any reduced word and let $1\leq i\leq k$ be such that $s_i\cdots s_k\in X_I$. Then $\beta_j=s_1\cdots s_{j-1}(\a_{s_j})\in \Phi(d_I)$ for all $i\leq j\leq k$.
\end{enumerate}
\end{prop}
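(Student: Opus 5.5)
The plan is to derive both parts from Proposition~\ref{p11} and Proposition~\ref{p13}, using the reduced factorization $w_0=d_Iw_{0,I}$.

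For (1), since $d_I\leq_R w$, there is a reduced word for $w$ having a reduced word of $d_I$ as a prefix. Hence $w=d_Iv$ with $\ell(w)=\ell(d_I)+\ell(v)$, and it remains to show $v\in W_I$. By Proposition~\ref{p11}(1) we have $\Phi(w)=\Phi(d_I)\sqcup d_I(\Phi(v))$, so $d_I(\Phi(v))\subseteq \Dp\setminus\Phi(d_I)$. By Proposition~\ref{p13}(4) this complement is $\Dp_{w_0Iw_0}$. By Proposition~\ref{p13}(3) (applied at the level of roots, $d_I(\D_I)=\D_{w_0Iw_0}$, since $d_I$ conjugates $W_I$ onto $W_{w_0Iw_0}$ and sends simple roots of $I$ to simple roots of $w_0Iw_0$), we get $\Phi(v)\subseteq \D_I\cap\Dp=\Dp_I$. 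The main point is then that an element whose inversion set lies in $\Dp_I$ belongs to $W_I$. To see this, argue by induction on $\ell(v)$. The first letter $s$ of a reduced word for $v$ satisfies $\a_s\in\Phi(v)\subseteq\Dp_I$, and since $\a_s$ is simple this forces $s\in I$. Moreover $\Phi(sv)=s(\Phi(v)\setminus\{\a_s\})\subseteq \Dp_I$, because $s\in W_I$ preserves $\D_I$ and permutes $\Dp_I\setminus\{\a_s\}$. Therefore $v\in W_I$.

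For (2), fix a reduced word $w=s_1\cdots s_k$ and an index $i$ with $x:=s_i\cdots s_k\in X_I$. Set $u=s_1\cdots s_{i-1}$, so that $w=ux$ is a reduced product. The roots $\beta_j$ for $j\geq i$ are exactly $u(\Phi(x))$, the second part of the disjoint union $\Phi(w)=\Phi(u)\sqcup u(\Phi(x))$. I will show that $u(\Phi(x))\cap\Dp_{w_0Iw_0}=\emptyset$; then Proposition~\ref{p13}(4) gives $u(\Phi(x))\subseteq\Phi(d_I)$, which is the claim.

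To get this disjointness, use part (1): write $w=d_Iv$ with $v\in W_I$ reduced, so that $\Phi(w)\cap\Dp_{w_0Iw_0}=d_I(\Phi(v))$ has cardinality $\ell(v)$. On the other hand, write $u=u'v'$ with $u'\in X_I$ and $v'\in W_I$. Then $w=u'(v'x)$. Since $x\in X_I$, the element $v'x$ decomposes as $x'v''$ with $x'\in X_I$ and $v''\in W_I$.

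The expected obstacle is controlling where the roots $u(\Phi(x))$ land. The cleanest route I would try is a length count. The number of roots of $\Phi(w)$ in $\Dp_{w_0Iw_0}$ equals $\ell(v)$, where $v$ is the $W_I$-component of $w$. Since $x\in X_I$ contributes nothing to this component (because $\ell(xs)>\ell(x)$ for all $s\in I$), that component is already fully determined by the prefix $u$. Equivalently, one shows $\Phi(u)\supseteq\Phi(w)\cap\Dp_{w_0Iw_0}$ by proving that $\Phi(u)\cap \Dp_{w_0Iw_0}$ and $\Phi(w)\cap\Dp_{w_0Iw_0}$ both have cardinality $\ell(v)$, via the projection $w\mapsto$ its $W_I$-part being monotone under right multiplication by elements of $X_I$.

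If this counting proves awkward, the fallback is induction on $k-i$ using Deodhar's Lemma~\ref{l12}. Apply it to $x=s_i(s_{i+1}\cdots s_k)$, noting that suffixes of elements of $X_I$ are in $X_I$ by Proposition~\ref{p13}(1). This gives $\beta_j\in\Phi(d_I)$ for $j>i$ by applying the induction hypothesis to the word with first letters $s_1\cdots s_i$. For $j=i$, write $\beta_i=u(\a_{s_i})$ and use the fact that case~(3) of Lemma~\ref{l12} cannot occur for the last step of building $x$ inside $X_I$.
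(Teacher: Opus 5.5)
Part (1) is fine. It is a mild variant of the paper's argument: the paper runs the same ``inversion set in $\Dp_J$ forces membership in $W_J$'' induction on $ww_0$ with $J=w_0Iw_0$, instead of transporting $\Phi(v)$ back by $d_I^{-1}$.

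Part (2), however, has a genuine gap. Your reduction to $u(\Phi(x))\cap\Dp_{w_0Iw_0}=\emptyset$ is correct, and so is its equivalent form $|\Phi(u)\cap\Dp_{w_0Iw_0}|=\ell(v)$. But that equality is just a restatement of the claim, and the mechanism you propose for it is false. When $x\in X_I$, the $W_I$-component of $ux$ is not determined by that of $u$. Nor is $|\Phi(u)\cap\Dp_{w_0Iw_0}|$ in general the length of the $W_I$-component of $u$.

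Take type $A_2$ with $I=\{s_1\}$, so $d_I=s_1s_2$ and $\Dp_{w_0Iw_0}=\{\a_2\}$. For $w=s_1s_2$, $u=s_1$, $x=s_2\in X_I$, the component of $u$ is $s_1$ while that of $w$ is $e$. Moreover $\Phi(u)\cap\Dp_{w_0Iw_0}=\emptyset$, so counting via the component of $u$ gives $1$ instead of $0$.

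The fallback does not repair this. The induction on $k-i$ pushes all the content into the step $j=i$. There you only invoke Deodhar's lemma for $x=s_i\cdots s_k$, which is a property of $x$ alone; the hypothesis $d_I\leq_R w$ never enters. Yet the conclusion fails without that hypothesis. In the same example take $w=s_2$ and $i=1$: then $s_2\in X_I$, but $\beta_1=\a_2\notin\Phi(d_I)=\{\a_1,\a_1+\a_2\}$.

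The missing idea is to use part (1) directly on the roots. Since $w=d_Iv$ and $u=wx^{-1}$, for $\gamma\in\Phi(x)$ you have $d_I^{-1}u(\gamma)=vx^{-1}(\gamma)=-v(\gamma')$, where $\gamma'=-x^{-1}(\gamma)\in\Phi(x^{-1})$. As $x\in X_I$, you have $x(\Dp_I)\subseteq\Dp$, that is $\Phi(x^{-1})\cap\Dp_I=\emptyset$. On the other hand $\Phi(v^{-1})\subseteq\Dp_I$ because $v\in W_I$. Hence $v(\gamma')\in\Dp$, so $d_I^{-1}(\beta_j)\in\D^-$ and $\beta_j\in\Phi(d_I)$ for every $j\geq i$.

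This is the paper's proof. It phrases the last step as the negativity of $d_I^{-1}(\beta_j)=vw^{-1}(\beta_j)=-vs_k\cdots s_{j+1}(\a_{s_j})$, which comes from the fact that $s_j\cdots s_kv^{-1}$ is a reduced product.
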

\begin{proof} (1) By definition of $\le_R$ we have $w=d_Iv$ with $\ell(w)=\ell(d_I)+\ell(v)$ for some $v\in W$, and we have to show that $v\in W_I$. By Proposition \ref{p11} and Proposition \ref{p13}(4), $\Phi(ww_0)=\Dp\setminus\Phi(w)\subseteq\Dp\setminus\Phi(d_I)=\Dp_{w_0Iw_0}$. An element $u\in W$ with $\Phi(u)\subseteq \Dp_J$ lies in $W_J$: if $u=t_1\cdots t_m$ is a reduced word, then $\a_{t_1}\in\Phi(u)\cap\Pi\subseteq\Pi_J$, so that $t_1\in J$, and $\Phi(t_1u)=t_1(\Phi(u)\setminus\{\a_{t_1}\})\subseteq\Dp_J$, so that one concludes by induction on $\ell(u)$. Hence $ww_0\in W_{w_0Iw_0}=w_0W_Iw_0$, so that $w\in w_0W_I=d_IW_I$ and $v\in W_I$.
\par (2) Let $i\leq j\leq k$. Then
\[d_I^{-1}(\beta_j)=vw^{-1}(\beta_j)=-vs_k\cdots s_{j+1}(\a_{s_j})\in \D^-,\]
since $s_j\cdots s_kv^{-1}$ is a reduced product. The latter claim follows since  $s_j\cdots s_k\in X_I$ (by Proposition \ref{p13}(1), $X_I$ being closed under taking suffixes) and $v^{-1}\in W_I$.
\end{proof}
\section{Longest coset representatives of maximal standard parabolic subgroups}\label{sec:cominuscule}
{\it In this section, we need $W$ to be irreducible and $\D$ to be a crystallographic root system. In the second part of the paper, $\D$ will arise as the root system of
a simple finite-dimensional complex Lie algebra $\g$ with respect to a fixed Cartan subalgebra $\mathfrak h$.} \par Recall that each positive root $\beta$ can be written as $\beta=\sum_{s\in S} n_s\a_s$ with $n_s\in \mathbb N$. The {\it height} of $\beta$ is $\height(\beta)=\sum_{s\in S} n_s\in \mathbb N^*:=\mathbb N\setminus\{0\}$. The support of $\beta$ is $\supp(\beta)=\{\a_s\mid n_s>0\}$. The coefficient of the simple root $\a_s\in \Pi$ in $\beta$ is denoted by $c_s(\beta)$ (in other words,
if $\beta=\sum_{s\in S} n_s\a_s$, then $c_s(\beta)=n_s$); more generally, $c_s(\mu)$ denotes the coefficient of $\a_s$ in any $\mu\in Q$.
\subsection{The root poset} The root poset $(\Dp,\leq _0)$ is defined as follows:
$\a\leq_0\beta$ if $\beta-\a\in Q^+$. The root poset is ranked by the height. The set of minimal elements is $\Pi$  and the highest root $\theta$ is the unique maximal element.
\begin{prop}\label{p21}
Let $s\in S$ and write $(s)=S\setminus \{s\}$. The inversion set of the longest coset representative $d_{(s)}$ of the maximal standard parabolic subgroup $W_{(s)}=W_{S\setminus\{s\}}$ is
\[\Phi(d_{(s)})=[-w_0(\a_s),\theta]_0,\]
an  interval in the root poset.
\end{prop}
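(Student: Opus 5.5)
By Proposition \ref{p13}(4) applied to $I=(s)$, we have $\Phi(d_{(s)})=\Dp\setminus w_0(\D^-_{w_0(s)w_0})$. Write $s'=w_0sw_0$, so that $w_0(s)w_0=(s')$ and $-w_0(\a_s)=\a_{s'}$. Then $w_0(\D^-_{w_0(s)w_0})$ is the positive system of the parabolic $W_{(s)}$, up to relabelling. More directly, $w_0$ maps $\D_{(s')}$ onto $\D_{(s)}$, so $\Dp_{w_0(s)w_0}=\Dp_{(s')}$. We therefore get
\[\Phi(d_{(s)})=\Dp\setminus\Dp_{(s')}=\{\beta\in\Dp\mid c_{s'}(\beta)\geq1\}.\]
The plan is thus to prove that the set of positive roots whose $\a_{s'}$-coefficient is positive equals the interval $[\a_{s'},\theta]_0$ in the root poset, where $\a_{s'}=-w_0(\a_s)$.

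The inclusion $[\a_{s'},\theta]_0\subseteq\{\beta\mid c_{s'}(\beta)\geq1\}$ is immediate. If $\a_{s'}\leq_0\beta$, then $\beta-\a_{s'}\in Q^+$, so $c_{s'}(\beta)\geq1$. Every positive root lies below $\theta$, since $\theta$ is the unique maximal element of the root poset.

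For the reverse inclusion, take $\beta\in\Dp$ with $c_{s'}(\beta)\geq1$; we must show $\beta-\a_{s'}\in Q^+$. This is clear from coefficients: $\beta=\sum n_t\a_t$ with all $n_t\geq0$ and $n_{s'}\geq1$, so $\beta-\a_{s'}$ has nonnegative coefficients. Note that $\leq_0$ is defined purely via $Q^+$ and does not require intermediate roots, so no chain argument is needed. Hence $\beta\in[\a_{s'},\theta]_0$.

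The only point needing care is bookkeeping in Proposition \ref{p13}(4), namely that $w_0(\D^-_I)$ for $I=(s)$ equals $\Dp_{w_0Iw_0}$. This holds because $w_0$ sends $\Pi_I$ to $-\Pi_{w_0Iw_0}$. We also use that $w_0Iw_0=S\setminus\{w_0sw_0\}$, which follows since conjugation by $w_0$ permutes $S$. Identifying $\a_{w_0sw_0}=-w_0(\a_s)$ then gives exactly the stated lower endpoint. I expect no genuine obstacle here; irreducibility is used only for the existence of the unique maximum $\theta$.
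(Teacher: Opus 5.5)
Your proof is correct and follows the paper's argument. Both use Proposition~\ref{p13}(4) to get $\Phi(d_{(s)})=\Dp\setminus\Dp_{(w_0sw_0)}$, identify this set with the positive roots having $-w_0(\a_s)$ in their support, and note that this is exactly $[-w_0(\a_s),\theta]_0$. One slip: your first displayed identity should read $\Dp\setminus w_0(\D^-_{(s)})$ rather than $\Dp\setminus w_0(\D^-_{w_0(s)w_0})$, and $w_0(\D^-_{(s)})=\Dp_{(w_0sw_0)}$ because $w_0(\a_r)=-\a_{w_0rw_0}$, as you state correctly in your last paragraph.
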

\begin{proof} We know from Proposition \ref{p13} that $\Phi(d_{(s)})=\Dp\setminus \Dp_{(w_0sw_0)}$.  Since $\Pi\cap (\Dp\setminus \Dp_{(w_0sw_0)})=\Pi\setminus \Pi_{(w_0sw_0)}=\{-w_0(\a_s)\}$, we have
that $\beta\in \Dp\setminus \Dp_{(w_0sw_0)}$ if and only if $-w_0(\a_s)=\a_{w_0sw_0}\in \supp(\beta)$. Therefore
\[\Phi(d_{(s)})=\{\beta\in \Dp\mid -w_0(\a_s)\in \supp(\beta)\}=[-w_0(\a_s),\theta]_0.\]
\end{proof}
\subsection{Cominuscule roots and fully commutative elements}
A simple root $\a_s\in \Pi$ is said to be {\it cominuscule} if $c_s(\theta)=1$; cominuscule roots are related to minuscule fundamental coweights: if $\a_s$ is cominuscule then the fundamental coweight $\varpi_s^\vee$ is minuscule (i.e., $W$ acts
 transitively on the weights of the irreducible highest weight $\g^\vee$-module with highest weight $\varpi_s^\vee$) and $W_{(s)}$ is the stabilizer of $\varpi_s^\vee$. We then say that $X_{(s)}$ is minuscule.
 It is readily seen that $\a_s$ is cominuscule if and only if $\a_{w_0sw_0}=-w_0(\a_s)$ is cominuscule.

 Stembridge proved in \cite[Theorem 6.1(b)]{St} the following statement.
 \begin{prop}[Stembridge]\label{p22} If $\a_s\in \Pi$ is cominuscule, then every element $u\in X_{(s)}$ is fully commutative. In particular, $d_{(s)}$ is fully commutative.
 \end{prop}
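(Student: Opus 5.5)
We prove the statement through inversion sets: the cominuscule condition makes a certain inversion set closed under no sums, whereas a non-commutation braid relation always creates a sum of two inversions that is again an inversion.

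\emph{Step 1: reduce to a forbidden factor.} Suppose $u\in X_{(s)}$ is not fully commutative. By Matsumoto's theorem (Section~\ref{CRFC}), any two reduced words of $u$ are connected by braid moves. Since $u$ has at least two commutation classes, at least one of these moves is not a commutation. Hence some reduced word of $u$ contains a consecutive factor $(t,r,t,\dots)$ of length $m=m(t,r)\ge 3$. Reversing words, the same holds for $u^{-1}$, and we argue with $u^{-1}$.

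\emph{Step 2: locate $\Phi(u^{-1})$.} By Proposition~\ref{p13}(1), $u\le_L d_{(s)}$, so $u^{-1}\le_R d_{(s)}^{-1}$. By Proposition~\ref{p13}(2), $d_{(s)}^{-1}=d_{(w_0sw_0)}$. Proposition~\ref{p11}(2) and Proposition~\ref{p21}, applied to $w_0sw_0$, then give
\[
\Phi(u^{-1})\subseteq\Phi(d_{(w_0sw_0)})=[\a_s,\theta]_0 .
\]
Every root in this interval has $c_s$ equal to $1$, because $1\le c_s(\beta)\le c_s(\theta)=1$ by cominusculity. Consequently no root of $\Phi(u^{-1})$ is the sum of two roots of $\Phi(u^{-1})$: such a sum would have $c_s=2$.

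\emph{Step 3: a braid factor produces such a sum.} Let $x=s_1\cdots s_{j-1}$ be the prefix preceding the braid factor in a reduced word of $u^{-1}$. The roots $\beta_j,\dots,\beta_{j+m-1}$ are $x$ applied to the inversion sequence of $w_{0,\{t,r\}}$, namely the full positive root system of the rank-two system $\Phi_{\{t,r\}}$. This rank-two system has type $A_2$, $B_2$ or $G_2$, and each of these contains positive roots with $\gamma_1+\gamma_2=\gamma_3$:
\[
\a_t+\a_r\ \text{in } A_2;\qquad (\a+\b)+\a=2\a+\b\ \text{in } B_2,\ \a\ \text{short};\qquad \text{the analogous relation with } \a \text{ short in } G_2 .
\]
Since $x$ is linear, the corresponding three roots of $\Phi(u^{-1})$ satisfy $x\gamma_1+x\gamma_2=x\gamma_3$, which contradicts Step 2. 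Hence $u$ is fully commutative. The special case $u=d_{(s)}$ gives the last assertion.

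The main obstacle is Step 1: making sure that non-full-commutativity really forces a consecutive braid factor with $m\ge3$ in some reduced word. This follows directly from Matsumoto's theorem, since a chain of braid moves that stays inside one commutation class would never leave it.
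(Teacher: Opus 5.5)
Your proof is correct. It is not the paper's route, because the paper gives no proof here at all: it quotes Stembridge \cite[Theorem 6.1(b)]{St}, whose setting is that of minuscule weights. The paper's bridge to that setting is its remark that $\varpi_s^\vee$ is minuscule with stabilizer $W_{(s)}$.

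Your argument is the root-side version of that picture, since $c_s(\beta)$ is the value of $\varpi_s^\vee$ on $\beta$. You locate $\Phi(u^{-1})$ inside $[\a_s,\theta]_0$ using only Propositions \ref{p11}, \ref{p13} and \ref{p21}, and note that $c_s\equiv1$ there. You then observe that a complete braid factor of length $m(t,r)\geq3$ in a reduced word of $u^{-1}$ puts the whole translate $x(\D^+_{\{t,r\}})$ into $\Phi(u^{-1})$. This yields three inversions with $\gamma_1+\gamma_2=\gamma_3$, which is incompatible with $c_s\equiv1$; the choice $\a_t+\a_r$ already works in all three rank-two types, so no case distinction is needed.

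Using the full braid factor, as you do, rather than just a factor $trt$ is essential. In $B_2$ with $\a_t$ long, the inversions of $trt$ are $\a_t,\a_t+\a_r,\a_t+2\a_r$, and no such relation occurs among them.

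What your route buys is self-containment and a more general statement. Your Step 3 shows that every $w\in W$ whose inversion set contains no relation $\gamma_1+\gamma_2=\gamma_3$ is fully commutative, for instance any $w$ with $\Phi(w)$ abelian. This applies directly to the projections of Proposition \ref{prop:sumfree}. Quoting Stembridge instead places the result inside his general theory of fully commutative and minuscule elements, whose heap description the paper needs anyway in Lemma \ref{l42}(c).
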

 {\bf Example.} Let $\D$ be a root system of type $B_3$, realized in $\mathbb R^3$ with the simple system $\Pi=\{\a_1=\varepsilon_1-\varepsilon_2,\ \a_2=\varepsilon_2-\varepsilon_3,\ \a_3=\varepsilon_3\}$ of \cite{Bo}.  Since $\theta=\a_1+2\a_2+2\a_3$, the only cominuscule root is $\a_1$. Observe that, since  $w_0=-\mathrm{Id}$, we have $w_0sw_0=s$ for all $s\in S$, so that by Proposition \ref{p21} the inversion sets $\Phi(d_{(s)})$ are the intervals $[\a_s,\theta]_0$ shown in Figure \ref{fig:B3}.
\begin{figure}[htb]
\centering
\begin{tikzpicture}[x=0.62cm,y=0.78cm,
  rin/.style={draw,rounded corners=2pt,fill=black!10,inner sep=1.6pt,font=\scriptsize},
  rout/.style={text=black!40,inner sep=1.6pt,font=\scriptsize},
  ein/.style={thick},eout/.style={black!30,densely dashed}]
\begin{scope}[xshift=0.00cm]
  \node[rout] (n001) at (4,0) {$001$};
  \node[rout] (n010) at (2,0) {$010$};
  \node[rin] (n100) at (0,0) {$100$};
  \node[rout] (n011) at (3,1) {$011$};
  \node[rin] (n110) at (1,1) {$110$};
  \node[rin] (n111) at (2,2) {$111$};
  \node[rout] (n012) at (4,2) {$012$};
  \node[rin] (n112) at (3,3) {$112$};
  \node[rin] (n122) at (3,4) {$122$};
  \draw[ein] (n100)--(n110);
  \draw[eout] (n010)--(n110);
  \draw[eout] (n010)--(n011);
  \draw[eout] (n001)--(n011);
  \draw[ein] (n110)--(n111);
  \draw[eout] (n011)--(n111);
  \draw[eout] (n011)--(n012);
  \draw[ein] (n111)--(n112);
  \draw[eout] (n012)--(n112);
  \draw[ein] (n112)--(n122);
  \node[font=\footnotesize] at (2,-1) {$\Phi(d_{(s_1)})=[\a_1,\theta]_0$};
\end{scope}
\begin{scope}[xshift=3.84cm]
  \node[rout] (n001) at (4,0) {$001$};
  \node[rin] (n010) at (2,0) {$010$};
  \node[rout] (n100) at (0,0) {$100$};
  \node[rin] (n011) at (3,1) {$011$};
  \node[rin] (n110) at (1,1) {$110$};
  \node[rin] (n111) at (2,2) {$111$};
  \node[rin] (n012) at (4,2) {$012$};
  \node[rin] (n112) at (3,3) {$112$};
  \node[rin] (n122) at (3,4) {$122$};
  \draw[eout] (n100)--(n110);
  \draw[ein] (n010)--(n110);
  \draw[ein] (n010)--(n011);
  \draw[eout] (n001)--(n011);
  \draw[ein] (n110)--(n111);
  \draw[ein] (n011)--(n111);
  \draw[ein] (n011)--(n012);
  \draw[ein] (n111)--(n112);
  \draw[ein] (n012)--(n112);
  \draw[ein] (n112)--(n122);
  \node[font=\footnotesize] at (2,-1) {$\Phi(d_{(s_2)})=[\a_2,\theta]_0$};
\end{scope}
\begin{scope}[xshift=7.69cm]
  \node[rin] (n001) at (4,0) {$001$};
  \node[rout] (n010) at (2,0) {$010$};
  \node[rout] (n100) at (0,0) {$100$};
  \node[rin] (n011) at (3,1) {$011$};
  \node[rout] (n110) at (1,1) {$110$};
  \node[rin] (n111) at (2,2) {$111$};
  \node[rin] (n012) at (4,2) {$012$};
  \node[rin] (n112) at (3,3) {$112$};
  \node[rin] (n122) at (3,4) {$122$};
  \draw[eout] (n100)--(n110);
  \draw[eout] (n010)--(n110);
  \draw[eout] (n010)--(n011);
  \draw[ein] (n001)--(n011);
  \draw[eout] (n110)--(n111);
  \draw[ein] (n011)--(n111);
  \draw[ein] (n011)--(n012);
  \draw[ein] (n111)--(n112);
  \draw[ein] (n012)--(n112);
  \draw[ein] (n112)--(n122);
  \node[font=\footnotesize] at (2,-1) {$\Phi(d_{(s_3)})=[\a_3,\theta]_0$};
\end{scope}
\end{tikzpicture}
\caption{The root poset of type $B_3$, with the labeling of the example
($\a_3$ short); the root $c_1\a_1+c_2\a_2+c_3\a_3$ is written $c_1c_2c_3$. In each copy the
framed roots form the interval $[\a_s,\theta]_0=\Phi(d_{(s)})$, whose cardinality is the length
of $d_{(s)}$.}\label{fig:B3}
\end{figure}

We have
\[
d_{(s_1)}=s_1s_2s_3s_2s_1, \qquad
d_{(s_2)}=s_2s_3s_1s_2s_3s_1s_2,\qquad
d_{(s_3)}=s_3s_2s_1s_3s_2s_3.\]
The element $d_{(s_1)}$ has a unique reduced word, hence it is fully commutative, while $d_{(s_2)}$ is not fully commutative, since $d_{(s_2)}=s_2s_3s_1s_2s_3s_1s_2=s_2s_3s_1s_2s_1s_3s_2=s_2s_3s_2s_1s_2s_3s_2$. Note that
the only two reduced words for $d_{(s_3)}$, namely $s_3s_2s_1s_3s_2s_3$ and $s_3s_2s_3s_1s_2s_3$, are in the same commutation class, so that $d_{(s_3)}$ is fully commutative even though $\a_3$ is not cominuscule in $B_3$
(the corresponding simple root is cominuscule in type $C_3$, which has the same Weyl group).
 \section{Projection along right descents}\label{sec:projection}
Let $w\in W$ and $\beta\in \Phi^R(w)$. Recall from Section \ref{sec:coxeter} that $\beta=-w(\a_s)$ for some right descent $s\in D_R(w)$. We consider the following subset of $W$:
\[P(w,\beta)=\{g\in W\mid g\leq_R w,\, \beta\in \Phi(g)\}.\]
Obviously $w\in P(w,\beta)$, hence $P(w,\beta)\ne \emptyset$.
\begin{defn}\label{d51} The {\it projection of $w$ along $\beta$} is the element
\[p(w,\beta)={\bigwedge}_R P(w,\beta)={\bigwedge}_R\{u\in W\mid u\leq_R w,\ \beta \in \Phi(u)\}.\]
\end{defn}
{\bf Example.} Let $\D$ be a root system of type $A_3$. Consider $w=s_2s_1s_2s_3=s_1s_2s_1s_3=s_1s_2s_3s_1$. We have
\[D_R(w)=\{s_1,s_3\}\quad\text{and}\quad \Phi^R(w)=\{s_1s_2(\a_1)=\a_2,\ s_1s_2(\a_3)=\a_1+\a_2+\a_3=\theta\}.\] The interval $[e,w]_R$ is shown in Figure \ref{fig:A3}.
\begin{figure}[htb]
\centering
\begin{tikzpicture}[x=1.2cm,y=1.05cm,
  el/.style={inner sep=2pt,font=\small},
  pmin/.style={draw,rounded corners=2pt,inner sep=2pt,font=\small},
  lab/.style={font=\scriptsize,inner sep=1pt}]
\node[el]   (e)    at (0,0)  {$e$};
\node[el]   (s1)   at (-1,1) {$s_1$};
\node[pmin] (s2)   at (1,1)  {$s_2$};
\node[el]   (s12)  at (-1,2) {$s_1s_2$};
\node[el]   (s21)  at (1,2)  {$s_2s_1$};
\node[pmin] (s123) at (-2,3) {$s_1s_2s_3$};
\node[el]   (s121) at (0,3)  {$s_1s_2s_1$};
\node[el]   (w)    at (-1,4) {$s_1s_2s_1s_3$};
\draw (e)--node[lab,below left]{$\a_1$}(s1);
\draw (e)--node[lab,below right]{$\a_2$}(s2);
\draw (s1)--node[lab,left]{$\a_1+\a_2$}(s12);
\draw (s2)--node[lab,right]{$\a_1+\a_2$}(s21);
\draw (s12)--node[lab,below left]{$\theta$}(s123);
\draw (s12)--node[lab,above left]{$\a_2$}(s121);
\draw (s21)--node[lab,above right]{$\a_1$}(s121);
\draw (s123)--node[lab,above left]{$\a_2$}(w);
\draw (s121)--node[lab,above right]{$\theta$}(w);
\end{tikzpicture}
\caption{The interval $[e,w]_R$ for $w=s_1s_2s_1s_3$ in type $A_3$. Each
covering relation $u\lessdot us$ is labeled by the root $u(\a_s)$, so that $\Phi(u)$ is the
set of labels along any chain from $e$ to $u$, and $P(w,\beta)$ consists of the elements $u$
whose chains from $e$ contain an edge labeled $\beta$. The framed elements are
$p(w,\a_2)=s_2$ and $p(w,\theta)=s_1s_2s_3$.}\label{fig:A3}
\end{figure}
Hence
\begin{align*}
&P(w,\a_2)=\{s_2,s_2s_1,s_1s_2s_1,s_1s_2s_1s_3\}=[s_2,s_1s_2s_1s_3]_R\text{ and } p(w,\a_2)=s_2,\\
&P(w,\theta)=\{s_1s_2s_3,s_1s_2s_1s_3\}=[s_1s_2s_3,s_1s_2s_1s_3]_R\text{ and } p(w,\theta)=s_1s_2s_3.
 \end{align*}
We observe in the above example that $p(w,\beta)\in P(w,\beta)$, which turns out to be true in general as a direct consequence of Reading's theory of shards \cite{Re}.

\begin{prop}\label{p32} Let $w\in W$ and $\beta\in \Phi^R(w)$. Then the projection $p(w,\beta)$ lies in $P(w,\beta)$, and it is join-irreducible with $\Phi^R(p(w,\beta))=\{\beta\}$. Moreover
\begin{itemize}
\item $P(w,\b)=[p(w,\b),w]_R$;
\item if $u\in P(w,\b)$ and $\Phi^R(u)=\{\b\}$, then $u=p(w,\b)$.
\end{itemize}\end{prop}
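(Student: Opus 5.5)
The plan is to use Reading's shard theory as a black box, in the following form. Each reflecting hyperplane $H_\beta$ ($\beta\in\Dp$) is cut into shards. For $u\in W$ and $\gamma\in\Phi^R(u)$, the wall of the chamber $uC$ separating it from $us$ (where $\gamma=-u(\a_s)$) lies in a unique shard $\Sigma(u,\gamma)\subseteq H_\gamma$.

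Reading proves three facts. First, shards are in bijection with join-irreducible elements $j$: the shard of $j$ is $\Sigma(j,\gamma_j)$, where $\Phi^R(j)=\{\gamma_j\}$. Second, for a fixed shard $\Sigma$, the set $U_\Sigma$ of elements $u$ having a lower wall in $\Sigma$ has a unique minimal element, namely the join-irreducible $j_\Sigma$ attached to $\Sigma$. Third, for every such $u$ we have $j_\Sigma\leq_R u$.

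Set $\Sigma=\Sigma(w,\beta)$ and $j=j_\Sigma$. Then $j\leq_R w$ and $\Phi^R(j)=\{\beta\}$, so $\beta\in\Phi(j)$ and $j\in P(w,\beta)$.

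The main step is to show that $j\leq_R g$ for every $g\in P(w,\beta)$. Let $g\in P(w,\beta)$. Choose a maximal chain $g=g_0\lessdot g_1\lessdot\cdots\lessdot g_m=w$ in $[g,w]_R$. Since $\beta\in\Phi(g)$, the root $\beta$ is never the label of a covering $g_{i}\lessdot g_{i+1}$; those labels are the roots of $\Phi(w)\setminus\Phi(g)$. I would argue by downward induction that each $g_i$ has $\beta$ as a descent-inversion, with lower wall in the same shard $\Sigma$.

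The induction step is the following. Suppose $u\lessdot us=u'$ with label $\gamma\neq\beta$, $\beta\in\Phi^R(u')$ with wall in $\Sigma$, and $\beta\in\Phi(u)$. First show that $\beta\in\Phi^R(u)$. The chambers $uC$ and $u'C$ are adjacent through $H_\gamma$. The wall of $u'C$ on $H_\beta$ and the wall through $H_\gamma$ meet in a codimension-two face. This face lies in a rank-two subarrangement, and all of the analysis happens in the corresponding dihedral parabolic. There, $u$ lies on the same side of $H_\beta$ as $u'$, and a dihedral computation shows that $uC$ still has a lower facet on $H_\beta$. Next, show that the two walls lie in the same shard. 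Shards are cut exactly along rank-two subarrangements where $\beta$ is not a basic root. Moving across $H_\gamma$ cuts the shard of $\beta$ only if $\beta$ is non-basic in that dihedral subarrangement. In that case, however, $\beta$ lies between the two basic roots, and one of them is $\gamma$. This forces $\beta\notin\Phi(u)$ or $\beta$ to not be a descent, which is a contradiction.

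So $g$ has a lower wall in $\Sigma$, and by the third fact $j\leq_R g$. Hence $j$ is a lower bound of $P(w,\beta)$ lying in $P(w,\beta)$, so $p(w,\beta)=j$. This gives the first three claims. The equality $P(w,\beta)=[p(w,\beta),w]_R$ then follows from Proposition \ref{p11}(2): if $p(w,\beta)\leq_R g\leq_R w$, then $\beta\in\Phi(p(w,\beta))\subseteq\Phi(g)$.

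For the last bullet, take $u\in P(w,\beta)$ with $\Phi^R(u)=\{\beta\}$. Then $u$ is join-irreducible, and by the chain argument its unique lower wall lies in $\Sigma$. By the bijection between shards and join-irreducibles, $u=j_\Sigma=p(w,\beta)$.

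The expected obstacle is the rank-two step in the induction: showing that descending along $\gamma\neq\beta$ keeps $\beta$ a descent-inversion and does not cross a shard cut. I would settle this by reducing to the dihedral parabolic $W_{\{\beta,\gamma\}}$ via the standard coset factorization, and checking it directly on the $2m$ chambers there.
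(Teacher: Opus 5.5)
Your main step fails. You claim that every $g\in P(w,\beta)$ keeps $\beta$ as a descent-inversion, with lower wall in $\Sigma$, and you argue this along a maximal chain of $[g,w]_R$. This is false, and the dihedral check you propose refutes it rather than proves it.

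Take the paper's example in type $A_3$: $w=s_1s_2s_1s_3$ and $\beta=\alpha_2$. Then $g=s_2s_1$ lies in $P(w,\alpha_2)$, since $\Phi(s_2s_1)=\{\alpha_2,\alpha_1+\alpha_2\}$. The interval $[g,w]_R$ is the single chain $s_2s_1\lessdot s_1s_2s_1\lessdot w$. Here $\alpha_2\in\Phi^R(s_1s_2s_1)=\{\alpha_1,\alpha_2\}$, but $\Phi^R(s_2s_1)=\{\alpha_1+\alpha_2\}$.

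The general mechanism is as follows. If $H_\beta$ and $H_\gamma$ are both lower walls of $u'C$, then $u'C$ is the top region of the rank-two subarrangement through their common codimension-two face, so $\beta,\gamma$ are its basic roots. Crossing $H_\gamma$ downward leads to a chamber whose other wall through that face is the next hyperplane in cyclic order. When the subarrangement has $m\geq3$ hyperplanes, that next hyperplane is non-basic, and $H_\beta$ is then not a wall of $uC$ at all. Your induction step goes through only in the commuting case $m=2$.

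Hence a general element of $P(w,\beta)$ is not an upper region of $\Sigma$, and your third fact cannot be applied to it. Your argument for the last bullet uses the same chain argument. That part is easy to repair once the minimum $p$ is known: if $p<_R u$, then $p\le_R us$ for the unique lower cover $us$ of $u$, so $\beta\in\Phi(us)=\Phi(u)\setminus\{\beta\}$, which is absurd.

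The missing ingredient is that $J(\Sigma)$ lies below every region $A\le w(C)$ with $H_\beta\in\mathcal S(A)$, not merely below the upper regions of $\Sigma$. This is exactly Reading's Lemma 3.5, which the paper invokes in addition to his Proposition 3.3.

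An equivalent lattice-theoretic route also works. With $\beta=-w(\alpha_s)$ one has $P(w,\beta)=\{g\le_R w\mid g\not\le_R ws\}=\{g\mid ws\vee g=w\}$. Join-semidistributivity of the weak order, namely $x\vee y=x\vee z\Rightarrow x\vee(y\wedge z)=x\vee y$ applied with $x=ws$, shows that this set is closed under meets. It therefore contains its own meet $p(w,\beta)$.

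The rest of your argument then goes through. Join-irreducibility with $\Phi^R(p(w,\beta))=\{\beta\}$ follows from minimality, since a lower cover of $p$ with label $\gamma\neq\beta$ would still lie in $P(w,\beta)$. The interval description follows from Proposition \ref{p11}(2), as you say.
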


\begin{proof} The proposition is a consequence of \cite[Proposition 3.3, Lemma 3.5]{Re}, stated in the language of shards. We explain how to translate the language of \cite{Re} into ours. We consider the Coxeter hyperplane arrangement
$\mathcal A_W=\{H_\a\mid \a\in \Dp\}$. The connected components of $V\setminus \bigcup_{\a\in\Dp}H_\a$ are called regions. The unique region $C$ bounded by the hyperplanes $H_\a$, $\a\in \Pi$, is called the fundamental region\footnote{Indeed, its closure is a fundamental domain for the action of $W$ on $V$.}, and the assignment $w\mapsto w(C)$ is a bijection between $W$ and the set of regions of $\mathcal A_W$. Let $R=w(C)$ be a region of
$\mathcal A_W$; the separation set of $R$ is $\mathcal S(R)=\{H_\a\mid H_\a \text{ separates }R\text{ from }C\}$. It is well known that
$\mathcal S(w(C))=\{H_\a\mid \a\in \Phi(w)\}$. The poset of regions is defined by $u(C)\leq w(C)$ if $\mathcal S(u(C))\subseteq \mathcal S(w(C))$, so that
$u(C)\leq w(C)$ if and only if $u\leq_Rw$, and the poset of regions is isomorphic to $(W,\leq_R)$. An element $w\in W$ is join-irreducible (i.e., $w=u\vee_Rv$ implies $u=w$ or $v=w$) if and only if $|D_R(w)|=1$. In terms of regions, this means that $w(C)$ has a unique lower facet, i.e. a unique facet lying in a hyperplane $H_{\b}\in \mathcal S(w(C))$, and then $\Phi^R(w)=\{\b\}$. The shards are obtained by cutting the hyperplanes of $\mathcal A_W$ into pieces, according to the simple hyperplanes of all dihedral reflection subgroups (see \cite[Section 3]{Re}). An upper region of a shard $\Sigma\subseteq H_\b$ is a region $R$ having a facet contained in $\Sigma$ and such that $H_\b\in\mathcal S(R)$. By \cite[Proposition 3.3]{Re}, every shard $\Sigma$ has a unique minimal upper region $J(\Sigma)$, which is join-irreducible, and $\Sigma\mapsto J(\Sigma)$ is a bijection between shards and join-irreducible elements of $(W,\leq_R)$.
\par Now let $\b\in\Phi^R(w)$. Then $H_\b\in\mathcal S(w(C))$ and $w(C)$ has a facet in $H_\b$; this facet lies in a unique shard $\Sigma\subseteq H_\b$, of which $w(C)$ is an upper region. By \cite[Lemma 3.5]{Re}, $J(\Sigma)$ is the unique minimal element of the set of regions $\{A\mid A\leq w(C)\text{ and } H_\b\in\mathcal S(A)\}$. In $(W,\leq_R)$, this says that
$P(w,\b)$ has a unique minimum $p$, namely the element with $p(C)=J(\Sigma)$; thus $p(w,\beta)=\bigwedge_RP(w,\b)=p\in P(w,\b)$. As $J(\Sigma)$ is join-irreducible and has a facet in $H_\b$ with $H_\b\in\mathcal S(J(\Sigma))$, we get $\Phi^R(p)=\{\b\}$. (This last fact also follows directly from minimality: if $\gamma\in\Phi^R(p)$ with $\gamma\neq\b$, the lower cover $ps$ of $p$ with $\Phi(ps)=\Phi(p)\setminus\{\gamma\}$ would lie in $P(w,\b)$.)
\par The first item of the ``moreover'' part follows, since $P(w,\b)$ is an up-set of $[e,w]_R$. For the second one, let $u\in P(w,\b)$ with $\Phi^R(u)=\{\b\}$, so that $u$ has a unique lower cover $us$ in the weak order, with $\Phi(us)=\Phi(u)\setminus\{\b\}$. If $p<_Ru$, then $p\leq_R us$, whence $\b\in \Phi(p)\subseteq \Phi(us)$, a contradiction. Hence $u=p$.
\end{proof}
\begin{rem} The proof does not use the hypothesis that $W$ is a Weyl group: the above proposition holds for every finite Coxeter group. We believe it to be true for infinite Coxeter groups as well.
\end{rem}
\begin{lem}\label{lem:commproj}
Let $w=s_1\cdots s_k s_{k+1}$ be a reduced word with $s_ks_{k+1}=s_{k+1}s_k$, let $\b=s_1\cdots s_{k-1}(\a_{s_k})$, and put $x=s_1\cdots s_k$. Then $\b\in \Phi^R(x)\cap \Phi^R(w)$ and $p(x,\b)=p(w,\b)$.
\end{lem}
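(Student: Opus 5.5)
First I verify the descent claims. For $x=s_1\cdots s_k$ we have $\b=s_1\cdots s_{k-1}(\a_{s_k})=-x(\a_{s_k})$, because $s_k(\a_{s_k})=-\a_{s_k}$. Since $s_k\in D_R(x)$, this gives $\b\in\Phi^R(x)$. For $w$, the relation $s_ks_{k+1}=s_{k+1}s_k$ lets us rewrite $w=s_1\cdots s_{k-1}s_{k+1}s_k$; this word is again reduced, of the same length, and ends in $s_k$. Hence $s_k\in D_R(w)$, and
\[
-w(\a_{s_k})=-s_1\cdots s_{k-1}s_{k+1}s_k(\a_{s_k})=s_1\cdots s_{k-1}s_{k+1}(\a_{s_k})=s_1\cdots s_{k-1}(\a_{s_k})=\b,
\]
using that $s_{k+1}(\a_{s_k})=\a_{s_k}$ for commuting simple reflections (the roots are orthogonal). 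So $\b\in\Phi^R(w)$, and both projections are defined.

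For the equality, I compare $P(x,\b)$ with $P(w,\b)$. Since $x\le_R w$, we have $P(x,\b)\subseteq P(w,\b)$. By Proposition \ref{p32}, $p(x,\b)\in P(x,\b)$ and $\Phi^R(p(x,\b))=\{\b\}$. Thus $p(x,\b)$ is an element of $P(w,\b)$ with unique descent-inversion $\b$. The second bullet of Proposition \ref{p32}, applied to $w$, then forces $p(x,\b)=p(w,\b)$. This is the whole argument; it needs neither shards directly nor any comparison of the two sets beyond the inclusion.

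The main obstacle is only the small verification that $\b$ is a descent-inversion of $w$. This depends on $s_{k+1}$ fixing $\a_{s_k}$, which holds because commuting distinct simple reflections have orthogonal simple roots ($m=2$ in the Coxeter matrix). The rest is a direct invocation of Proposition \ref{p32}.
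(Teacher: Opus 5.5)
Your proposal is correct and follows essentially the same route as the paper: check that $\b=-x(\a_{s_k})=-w(\a_{s_k})$ because $s_{k+1}$ fixes $\a_{s_k}$, then use $P(x,\b)\subseteq P(w,\b)$, $\Phi^R(p(x,\b))=\{\b\}$, and the second item of Proposition \ref{p32} applied to $w$. Your only addition is rewriting $w=s_1\cdots s_{k-1}s_{k+1}s_k$ to exhibit $s_k\in D_R(w)$ explicitly. This is harmless: reducedness already gives $s_k\neq s_{k+1}$, so the two roots are orthogonal, which is exactly what you need.
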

\begin{proof}
One has $-x(\a_{s_k})=\b$ and, since $s_{k+1}$ fixes $\a_{s_k}$, also $-w(\a_{s_k})=-xs_{k+1}(\a_{s_k})=\b$; so $\b$ is a descent-inversion of both. As $x\leq_R w$, we have $p(x,\b)\in P(x,\b)\subseteq P(w,\b)$, and $\Phi^R(p(x,\b))=\{\b\}$ by Proposition \ref{p32}. The second item of the ``moreover'' part of Proposition \ref{p32}, applied to $w$, gives $p(x,\b)=p(w,\b)$.
\end{proof}

We end this section by describing how to compute $p(w,\beta)$. Let $w\in W$ and $\b\in \Phi^R(w)$. Let $I=D_R(w)$, so by Proposition \ref{p15} there is $x\in X_I$ such that $w=xw_{0,I}$ is a reduced product.
\begin{itemize}
\item If $|I|=1$, we are done and $p(w,\beta)=w$ by Proposition \ref{p32}.
\item If $|I|\geq 2$, then, since $\b\in \Phi^R(w)$, there is $s\in D_R(w)=I$ such that $\beta=-w(\a_s)$. Let $r=w_{0,I}sw_{0,I}\in I$ and set $w'=xr$. Then $x(\a_r)=x(-w_{0,I}(\a_s))=-xw_{0,I}(\a_s)=-w(\a_s)=\b$, so that $\b=-w'(\a_r)\in\Phi^R(w')$. Therefore $w'\in P(w,\b)$ and $\ell(w')<\ell(w)$; moreover $p(w',\b)=p(w,\b)$, because $p(w,\b)\leq_R w'$ by Proposition \ref{p32}. We replace $w$ by $w'$ and iterate, until we reach an element $w'$ with $|D_R(w')|=1$, which is $p(w,\b)$.\end{itemize}
\par\noindent{\bf Example.} In type $A_3$ consider $w=s_2s_1s_2s_3s_2$. Then $I=D_R(w)=\{s_2,s_3\}$ and $\Phi^R(w)=\{\a_2+\a_3,\a_1\}$. To compute $p(w,\a_2+\a_3)$ observe that
$\a_2+\a_3=s_2s_1s_2s_3(\a_2)$, so that $s=s_2$. We have $w=s_2s_1\cdot s_2s_3s_2\in X_I\cdot w_{0,I}$. Since $-s_2s_3s_2(\a_2)=\a_3$, we have $r=s_3$ and we set $w'=s_2s_1s_3$. We have $D_R(w')=\{s_1,s_3\}$. Repeating the process we obtain $w''= s_2s_3=
p(w,\a_2+\a_3)$.
\section{Reflection orders}\label{sec:reflorders}

\subsection{Convex orders on positive roots}\label{sec:convex}
Recall from \cite{Dy} the following definition. Let $T$ denote the set of reflections of a Coxeter system $(W,S)$.
\begin{defn} A total order $<$ on $T$ is called a {\it reflection order} if, for
any dihedral reflection subgroup $W'$ of $W$ with canonical generators $r,s$, either $r<rsr<\ldots<srs<s$ or $s<srs<\ldots<rsr<r$.
\end{defn}
The following characterization is of crucial importance for us.
\begin{prop}[{\cite[(2.2)]{Dy}}]\label{pp} Identify reflections with positive roots. Then $<$ is a reflection order on $\Dp$ if and only if, whenever $\a,\b\in\Dp$ and $\gamma\in \cone_\D(\a,\b)\setminus\{\a,\b\}$, either $\a<\gamma<\b$ or $\b<\gamma<\a$.
\end{prop}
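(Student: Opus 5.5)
The plan is to reduce both directions to rank two, where the ordering of reflections in a dihedral reflection subgroup can be read off geometrically. First I recall the standard description of a dihedral reflection subgroup $W'$ with canonical generators $r,s$. Its positive roots $\Phi'^+=\{\gamma\in\Dp\mid s_\gamma\in W'\}$ all lie in the plane $P=\mathbb R\a_r+\mathbb R\a_s$ and in $\cone(\a_r,\a_s)$. Listing them by angle from $\a_r$ to $\a_s$ gives exactly the sequence of roots of $r,\ rsr,\ rsrsr,\dots,\ srs,\ s$. This is a direct rank-two computation: $\a_{rsr}=r(\a_s)$, $\a_{rsrsr}=rs(\a_r)$, and so on, and these are the successive roots obtained by rotating from $\a_r$ towards $\a_s$. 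Consequently, the reflection-order condition for $W'$ says precisely that $<$, restricted to $\Phi'^+$, is monotone (increasing or decreasing) with respect to the angular order on $\cone(\a_r,\a_s)$.

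For ($\Leftarrow$), assume the betweenness property and fix a dihedral $W'$ as above. For three roots $x,y,z\in\Phi'^+$ appearing in this angular order, we have $y\in\cone_\D(x,z)\setminus\{x,z\}$, because they lie on one open half-arc. The hypothesis then puts $y$ strictly between $x$ and $z$ in $<$. An elementary lemma finishes this direction: a total order on a finite linearly ordered set in which every middle element of every triple lies between the outer two is monotone. This follows by induction, comparing consecutive elements. Hence the reflections of $W'$ appear as $r<rsr<\dots<s$ or in reverse.

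For ($\Rightarrow$), take $\a,\b\in\Dp$ and $\gamma\in\cone_\D(\a,\b)\setminus\{\a,\b\}$. Since $\Phi$ is reduced and $\gamma$ is not $\a$ or $\b$, the roots $\a,\b$ are linearly independent, so they span a plane $P$. Then $\Phi_P=\Phi\cap P$ is a rank-two root subsystem, and the reflections $s_\delta$, $\delta\in\Phi_P$, generate a dihedral reflection subgroup $W'$. Its canonical generators are the reflections in the two simple roots of the positive system $\Phi_P\cap\Dp$, namely the two extreme rays of $\cone(\Phi_P\cap\Dp)$. Here I use Dyer's description of canonical generators via the unique simple system of $\Phi_P$ contained in $\Dp$. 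The roots $\a,\b,\gamma$ all lie in $\Phi_P\cap\Dp$, with $\gamma$ strictly between $\a$ and $\b$ in the angular order. Monotonicity from the first paragraph then gives $\a<\gamma<\b$ or $\b<\gamma<\a$.

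The main obstacle is the first step. One must justify that the positive roots of $W'$ are exactly those in $P\cap\Dp$ lying in $\cone(\a_r,\a_s)$, and that the alternating words enumerate them in angular order. In the ($\Rightarrow$) direction one must also check that $\Phi_P\cap\Dp$ yields a dihedral subgroup whose canonical simple roots bound a cone containing $\a,\b$. I would handle this by explicit rank-two geometry: a finite dihedral root system has its positive roots on a half-circle, and $s_r s_s$ acts as a rotation. I would cite Dyer for the identification of canonical generators.
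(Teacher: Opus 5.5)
Your argument is correct. The paper gives no proof of its own and simply cites Dyer's (2.2); your rank-two reduction is the standard argument behind that citation. You use monotonicity of $<$ along the angular order $\alpha_r, r(\alpha_s), rs(\alpha_r),\dots,\alpha_s$ of the positive roots of a dihedral reflection subgroup. For ``$\Leftarrow$'' you apply it to every such subgroup, and for ``$\Rightarrow$'' to the maximal one $W_P$ attached to the plane $P=\mathbb{R}\alpha+\mathbb{R}\beta$. The only external input, which you correctly flag and attribute to Dyer, is that the canonical generators of a dihedral reflection subgroup $W'$ are the reflections in the simple roots of the positive system $\{\gamma\in\Phi^+\mid s_\gamma\in W'\}$, so that this set lies in the cone spanned by those two roots.
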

\begin{rem} Reflection orders have appeared in the literature under various names (normal orders in \cite{AST79}, compatible orders in \cite{Papi}). In applications to Lie theory the most common name is {\it convex orders}, and we shall adhere to this terminology.
Note also that if $\D$ is crystallographic, then the condition displayed in Proposition \ref{pp} simplifies to: if $\a,\b\in\Dp$ and $\a+\b\in\Dp$ then either $\a<\a+\b<\b$ or $\b<\a+\b<\a$ (cf. \cite{Papi}).
\end{rem}

It is known, see e.g. \cite{Dy},  that convex  orders are in bijection with reduced words  for $w_0$:
\begin{itemize}
\item if $w_0=s_1s_2\cdots s_N$ is a reduced word for $w_0$, set
\begin{equation}\label{ordb}\beta_j=s_1s_2\cdots s_{j-1}(\a_{s_j}), \quad 1\leq j \leq N.\end{equation}
Then $\beta_1<\ldots<\b_N$ is a convex order. We will use this notation throughout the paper, and we will say that the order is {\it induced} by the chosen reduced word;
\item if $\beta_1<\ldots<\b_N$ is a convex order, then necessarily $\beta_1=\a_{s_1}\in\Pi$ for some $s_1\in S$. Then $s_1(\beta_2)=\a_{s_2}$ for some $s_2\in S$, and so on; it turns out that $w_0=s_1\cdots s_N$ is a reduced word inducing $<$.
\end{itemize}
Let $<$ be a convex order and $w_0=s_1s_2\cdots s_N$ the corresponding reduced word.
An {\it initial section of $<$} is a subset $A\subseteq \Dp$ such that $\a<\b$ for all $\a\in A$ and $\b\in \Dp\setminus A$. Since $<$ is a total order, an initial section is a down-set, hence an initial segment of $\b_1<\ldots<\b_N$: the initial sections of $<$ are precisely the sets $\Phi(s_1\cdots s_k)=\{\b_1,\ldots,\b_k\}$, $0\leq k\leq N$. In particular they are biconvex and, by \cite[Lemma 2.11]{Dy}, a subset $A\subseteq\Dp$ is an initial section of {\it some} convex order if and only if $A=\Phi(w)$ for some $w\in W$, i.e., if and only if $A$ is biconvex; in that case the restriction to $A$ of any convex order having $A$ as an initial section is induced, as in \eqref{ordb}, by a reduced word $r_1\cdots r_k$ for $w$.
We also say that {\it $w$ is an initial section of $<$} when $\Phi(w)$ is.
\par\noindent{\bf Example.} In type $A_3$, let $<$ be associated to the reduced word $w_0=s_1s_3s_2s_1s_3s_2$, so that $\Dp$ is ordered by
\[\a_1<\a_3<\theta<\a_2+\a_3<\a_1+\a_2<\a_2.
\]
Then $s_1s_3s_2$ is an initial section of $<$, since $\Phi(s_1s_3s_2)=\{\a_1,\a_3,\theta\}$ is ordered by $<$ as $\a_1<\a_3<\theta$, which is the sequence of roots of the reduced word $s_1s_3s_2$.
 But $s_3s_2s_1$ is not an initial section, since $\Phi(s_3s_2s_1)=\{\a_3,\a_2+\a_3,\theta\}$ does not contain $\a_1$, although $\a_1<\a_3$. (Moreover $s_3s_2s_1$ has a unique reduced expression, whose sequence of roots is $\a_3,\ \a_2+\a_3=s_3(\a_2),\ \theta=s_3s_2(\a_1)$, whereas $\theta<\a_2+\a_3$.)

\subsection{Tangent convex orders}\label{tco}
Let $<$ be a convex order and $w_0=s_1\cdots s_N$  be
the corresponding reduced word.
We write $x_0^<=e$ and for $1\leq k\leq N$
\begin{equation}\label{notaz} x_k^<=s_1\cdots s_k,\qquad \b_k^<=s_1\cdots s_{k-1}(\a_{s_{k}}).\end{equation}
Observe that $\b_k^<\in \Phi^R(x_k^<)$. We define the {\it projection of $\b\in \Dp$ according to $<$} by
\begin{equation}\label{proj}
p_<(\beta)=p(x_k^<,\b_k^<)\text{ if $\b=\b_k^< $ with $1\leq k\leq N$.}\end{equation}

\begin{defn} A convex order $<$ on $\Dp$ is {\it tangent} if it satisfies the following two conditions for every $\b\in\Dp$:
\begin{enumerate}
\item[(TR1)] $\Phi(p_<(\b))$, listed in increasing order with respect to $<$, is the sequence of positive roots associated with a reduced word of $p_<(\b)$; equivalently, every initial segment of $\Phi(p_<(\b))$ with respect to $<$ is the inversion set of an element of $W$;
\item[(TR2)] for any $\a\in \Phi(p_<(\beta))$ such that $\a<_0\b$ in the root poset $(\Dp,\leq_0)$, we have  \[\supp(\b-\a)\cap \Phi(p_<(\beta))=\emptyset.\]\end{enumerate}
\end{defn}
When the condition (TR1), resp. (TR2), is satisfied for a given $\b$, we say that it holds at $\b$. A reduced word (or reduced decomposition) of $w_0$ is called {\it tangent} if the convex order it induces is tangent.
\begin{remark} Condition (TR1) is weaker than requiring $\Phi(p_<(\b))$ to be an initial section of $<$ in the sense of Subsection \ref{sec:convex}, that is, a down-set: in Example (a) below $\Phi(p_<(\a_2+\a_3))=\{\a_2,\a_2+\a_3\}$ omits $\a_1+\a_2<\a_2+\a_3$. Moreover, by Proposition \ref{p32}, $\Phi^R(p_<(\beta))=\{\b\}$.\end{remark}
\begin{prop}\label{prop:comminv}
Let $<$ and $<'$ be the convex orders induced by two reduced words of $w_0$ which are equivalent up to commutation. Then $p_<(\b)=p_{<'}(\b)$ for every $\b\in\Dp$, and $<$ is tangent if and only if $<'$ is.
\end{prop}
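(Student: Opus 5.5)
It suffices to treat a single commutation move, since commutation equivalence is the reflexive–transitive closure of such moves. So let $w_0=s_1\cdots s_N$ and let $w_0=s'_1\cdots s'_N$ be obtained by swapping $s_j$ and $s_{j+1}$, where $s_js_{j+1}=s_{j+1}s_j$. First I will compare the root sequences. Since $s_{j+1}$ fixes $\a_{s_j}$ and vice versa, one has $\b'_i=\b_i$ for $i\neq j,j+1$, while
\[
\b'_j=s_1\cdots s_{j-1}(\a_{s_{j+1}})=\b_{j+1},\qquad \b'_{j+1}=s_1\cdots s_{j-1}s_{j+1}(\a_{s_j})=\b_j .
\]
The prefixes satisfy $x'_i=x_i$ for $i\ne j$, and $x'_j=s_1\cdots s_{j-1}s_{j+1}$. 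So $<'$ is obtained from $<$ by transposing the two adjacent roots $\b_j,\b_{j+1}$.

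Next I will show the projections agree. For $\b=\b_i$ with $i\neq j,j+1$ the prefix is unchanged, so $p_<(\b)=p_{<'}(\b)$ trivially. For $\b=\b_j$, the order $<$ uses $p(x_j,\b_j)$ and $<'$ uses $p(x'_{j+1},\b_j)=p(x_{j+1},\b_j)$. Lemma \ref{lem:commproj}, applied to $x_{j+1}=s_1\cdots s_js_{j+1}$, gives $p(x_j,\b_j)=p(x_{j+1},\b_j)$. For $\b=\b_{j+1}$, the order $<$ uses $p(x_{j+1},\b_{j+1})$ and $<'$ uses $p(x'_j,\b_{j+1})$. Applying Lemma \ref{lem:commproj} to the reduced word $s_1\cdots s_{j-1}s_{j+1}s_j$ of $x_{j+1}$ gives $p(x'_j,\b_{j+1})=p(x_{j+1},\b_{j+1})$. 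Hence $p_<=p_{<'}$.

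Condition (TR2) depends only on the set $\Phi(p_<(\b))$ and on the root poset, so it transfers immediately. For (TR1), the sets $\Phi(p(\b))$ coincide, and the two listings differ only when both $\b_j$ and $\b_{j+1}$ lie in $\Phi(p(\b))$; then they appear adjacently in both orders, swapped. I will use the equivalent formulation via initial segments. The initial segments of the two listings coincide except at the single cut between $\b_j$ and $\b_{j+1}$. There, if $A$ denotes the preceding segment, one listing has $A\cup\{\b_j\}$ and the other has $A\cup\{\b_{j+1}\}$.

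This step is the main obstacle. Suppose $A$ and $A\cup\{\b_j,\b_{j+1}\}$ are inversion sets, say $A=\Phi(u)$ and $A\cup\{\b_j\}=\Phi(us)$ under $<$, and $A\cup\{\b_j,\b_{j+1}\}=\Phi(ust)$. I must show that $A\cup\{\b_{j+1}\}$ is also an inversion set. Here $u(\a_s)=\b_j$ and $us(\a_t)=\b_{j+1}$, where $\b_j,\b_{j+1}$ are the roots $s_1\cdots s_{j-1}(\a_{s_j})$ and $s_1\cdots s_{j-1}(\a_{s_{j+1}})$ of mutually commuting reflections, so they are orthogonal. I would argue that no positive combination of $\b_j,\b_{j+1}$ is a root: such a root $\gamma=a\b_j+b\b_{j+1}$ would correspond under $s_1\cdots s_{j-1}$ to a root in the span of $\a_{s_j},\a_{s_{j+1}}$ of type $A_1\times A_1$, which is impossible. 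Hence the rank-two interval $[u,ust]_R$ is a square, $st=ts$, and $\Phi(ut)=A\cup\{\b_{j+1}\}$.

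Alternatively I would check biconvexity of $A\cup\{\b_{j+1}\}$ directly using Proposition \ref{p11}, since each of its elements lies in a biconvex neighbour and the only new sums involve $\b_j,\b_{j+1}$. Either argument yields (TR1) for $<'$ whenever it holds for $<$, and symmetry gives the converse.
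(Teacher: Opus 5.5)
Your reduction to a single commutation, the comparison of roots and prefixes, the two applications of Lemma \ref{lem:commproj}, and the remark on (TR2) are the same as in the paper's proof. Your handling of (TR1) also matches it in substance: only one cut of the listing changes, so it suffices to show that the rank-two interval $[u,ust]_R$ is a square. This is the paper's letter-swapping argument rewritten in terms of initial segments.

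The reason you give for the square, however, is a non sequitur. By Proposition \ref{pp}, a root in $\mathrm{cone}(\beta_j,\beta_{j+1})$ other than $\beta_j,\beta_{j+1}$ would lie strictly between them in $<$. So "no positive combination of $\beta_j,\beta_{j+1}$ is a root" holds for any two adjacent roots of any convex order, and it cannot force $st=ts$. For example, in type $A_2$ with $w_0=s_1s_2s_1$, the adjacent roots $\alpha_1<\alpha_1+\alpha_2$ have this property, yet $[e,s_1s_2]_R$ is a chain.

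What does force the square is the orthogonality you already established. Since $u$ is orthogonal,
\[
0=(\beta_j,\beta_{j+1})=(u(\alpha_s),us(\alpha_t))=(\alpha_s,s(\alpha_t))=-(\alpha_s,\alpha_t).
\]
Hence $st=ts$, $u(\alpha_t)=us(\alpha_t)=\beta_{j+1}$, and $\Phi(ut)=A\cup\{\beta_{j+1}\}$. This is exactly the paper's computation that $(\alpha_{t_m},t_m(\alpha_{t_{m+1}}))=0$ forces $(\alpha_{t_m},\alpha_{t_{m+1}})=0$.

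Your $A_1\times A_1$ argument could also be used, but only in its full form. You would need that $\Phi\cap\mathrm{span}(\beta_j,\beta_{j+1})=\{\pm\beta_j,\pm\beta_{j+1}\}$ for all combinations, not just positive ones. Combined with $\mathrm{span}(\beta_j,\beta_{j+1})=u(\mathrm{span}(\alpha_s,\alpha_t))$, this shows $\Phi_{\{s,t\}}$ has four roots.

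The same point affects your biconvexity alternative. There one must rule out $\beta_j=\nu+\beta_{j+1}$ with $\nu\in A$ (for closedness) and $\beta_{j+1}=\beta_j+\nu$ (for co-closedness). That is, one needs $\beta_j-\beta_{j+1}\notin\Phi$ and $\beta_{j+1}-\beta_j\notin\Phi$, which again come from orthogonality rather than from the positive-combination claim. With this one-line repair your proof is complete.
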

\begin{proof}
It suffices to treat the case in which the two words differ by one commutation, say the factor $s_ks_{k+1}$ of the first word, with $s_ks_{k+1}=s_{k+1}s_k$, is replaced by $s_{k+1}s_k$; then $\b^<_j=\b^{<'}_j$ and $x_j^<=x_j^{<'}$ for $j\neq k,k+1$, while $\b^<_k=\b^{<'}_{k+1}$, $\b^<_{k+1}=\b^{<'}_k$ and $x^<_{k+1}=x^{<'}_{k+1}$. By Lemma \ref{lem:commproj}, applied to $w=x_{k+1}^<$, we get $p_<(\b_k^<)=p(x_k^<,\b_k^<)=p(x^<_{k+1},\b_k^<)=p_{<'}(\b_k^<)$, and symmetrically for $\b^<_{k+1}$. Condition (TR2) depends only on the sets $\Phi(p_<(\b))$, which are now the same for the two orders. As for (TR1), the orders $<$ and $<'$ restricted to $\Phi(p_<(\b))$ either coincide or differ by the exchange of the consecutive roots $\b_k^<,\b^<_{k+1}$, which are orthogonal; if $t_1\cdots t_l$ is a reduced word of $p_<(\b)$ whose roots are increasing for $<$, with $\b_k^<,\b_{k+1}^<$ at positions $m,m+1$, then $(\a_{t_m},t_m(\a_{t_{m+1}}))=0$ forces $(\a_{t_m},\a_{t_{m+1}})=0$, so that $t_1\cdots t_{m-1}t_{m+1}t_mt_{m+2}\cdots t_l$ is a reduced word of $p_<(\b)$ whose roots are increasing for $<'$.
\end{proof}

\par\noindent{\bf Examples.} Let $\D$ be of type $A_3$.
\par
(a) Let $<$ be the order induced by $w_0=s_2s_1s_3s_2s_1s_3$, that is, $\a_2<\a_1+\a_2<\a_2+\a_3<\theta<\a_3<\a_1$. We verify that $<$ is tangent:
\begin{itemize}
\item $p_<(\a_2)=s_2$, for which (TR1) and (TR2) hold;
\item $p_<(\a_1+\a_2)=p(s_2s_1,\a_1+\a_2)=s_2s_1$, and again (TR1) and (TR2) hold;
\item $p_<(\a_2+\a_3)=p(s_2s_1s_3,\a_2+\a_3)=s_2s_3$. For (TR1) we have $\a_2<\a_2+\a_3$; moreover $\supp(\a_2+\a_3-\a_2)\cap \Phi(p_<(\a_2+\a_3))=\{\a_3\}\cap\{\a_2,\a_2+\a_3\}=\emptyset$, hence (TR2) holds;
\item $p_<(\theta)=p(s_2s_1s_3s_2,\theta)=s_2s_1s_3s_2$. Here (TR1) is readily satisfied, and $s_2s_1s_3s_2=d_{(s_2)}$, with $\Phi(d_{(s_2)})=[\a_2,\theta]_0$, so that (TR2) holds;
\item $p_<(\a_3)=s_3$ and $p_<(\a_1)=s_1$ clearly satisfy (TR1) and (TR2).
\end{itemize}
(b) Let $<'$ be the order induced by $w_0=s_1s_3s_2s_1s_3s_2$, that is, $\a_1<'\a_3<'\theta<'\a_2+\a_3<'\a_1+\a_2<'\a_2$. We have $p_{<'}(\theta)=p(s_1s_3s_2,\theta)=s_1s_3s_2$, so (TR1) is satisfied, but
$\supp(\theta-\a_1)=\{\a_2,\a_3\}$ and $\{\a_2,\a_3\}\cap \Phi(s_1s_3s_2)=\{\a_3\}$, hence (TR2) does not hold.
Therefore $<'$ is not a tangent convex order.

\subsection{\texorpdfstring{The condition (TR2$'$)}{The condition (TR2')}}\label{tr2prime}
The two conditions of the definition above are stated through the projection $p_<$. We
now introduce a condition which does not refer to it, and which will be the one actually
used in the second part of the paper. Throughout this subsection we fix a reduced word
${\bf i}=(i_1,\dots,i_N)$ for $w_0$ and write $\beta_1<\beta_2<\dots<\beta_N$ for the
convex order it induces, $N=|\Dp|$; we also write $w_k=s_{i_1}\cdots s_{i_k}$, so that
$\Phi(w_k)=\{\beta_1,\dots,\beta_k\}$.

\begin{defn}\label{def:tr2strong}
Let $<$ be the convex order induced by ${\bf i}$. We say that $<$ satisfies
{\rm (TR2$'$)} if for every $1\leq k\leq N$ and every $h<k$ such that
$\beta_k-\beta_h\in Q^+\setminus\{0\}$ one has
\begin{align*}
\supp(\beta_k-\beta_h)\cap\{\beta_1,\dots,\beta_{k-1}\}=\emptyset .
\end{align*}
\end{defn}

Condition (TR2$'$) is condition (TR2) with $\Phi(p_<(\beta_k))$ replaced by the
initial section $\{\beta_1,\dots,\beta_{k-1}\}$ of $<$; since
$p_<(\beta_k)\leq_R w_k$ and hence $\Phi(p_<(\beta_k))\subseteq \Phi(w_k)$, it is the
stronger of the two.
The following two facts are proved by an easy induction on the height; we leave the proofs to the reader.
\begin{lem}\label{lem:simpleprec}
Let $\gamma\in\Dp$.
\begin{enumerate}
\item[(a).] If $<$ is a convex order on $\Dp$, there exists a simple root
$\alpha\in\supp(\gamma)$ with $\alpha\leq\gamma$.
\item[(b).] If $u\in W$ and $\gamma\in \Phi(u)$, then $\Phi(u)$ contains a simple root
belonging to $\supp(\gamma)$.
\end{enumerate}
\end{lem}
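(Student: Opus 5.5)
We prove both parts by induction on the height $\height(\gamma)$. The base case is the same for both: if $\height(\gamma)=1$, then $\gamma$ is simple and $\gamma\in\supp(\gamma)$. So in (a) we may take $\alpha=\gamma$, and in (b) the set $\Phi(u)$ already contains the simple root $\gamma$. From now on assume $\height(\gamma)\geq 2$.

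For (a), we use the standard fact that a non-simple positive root in a crystallographic system decomposes as $\gamma=\gamma_1+\gamma_2$ with $\gamma_1,\gamma_2\in\Dp$. Indeed, there is a simple root $\alpha_s$ with $(\gamma,\alpha_s)>0$, so that $\gamma-\alpha_s\in\Dp$; take $\gamma_1=\alpha_s$ and $\gamma_2=\gamma-\alpha_s$. By the crystallographic form of Proposition \ref{pp} (see the Remark following it), $\gamma$ lies strictly between $\gamma_1$ and $\gamma_2$ in the order $<$. Hence one of the two, call it $\gamma_i$, satisfies $\gamma_i<\gamma$. Since $\height(\gamma_i)<\height(\gamma)$, the induction hypothesis gives a simple root $\alpha\in\supp(\gamma_i)$ with $\alpha\leq\gamma_i$. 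Then $\alpha\leq\gamma_i<\gamma$, and $\alpha\in\supp(\gamma_i)\subseteq\supp(\gamma)$ because $\gamma-\gamma_i\in Q^+$. This proves (a).

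For (b), we reduce to (a). Take any reduced word for $u$. The roots of $\Phi(u)$, listed in the order they occur in this word, form an initial section of some convex order $<$: this is the characterization of initial sections as inversion sets (\cite[Lemma 2.11]{Dy}) recalled in Subsection \ref{sec:convex}. Apply (a) to $\gamma$ and $<$ to get a simple root $\alpha\in\supp(\gamma)$ with $\alpha\leq\gamma$. Since $\gamma\in\Phi(u)$ and $\Phi(u)$ is a down-set for $<$, it follows that $\alpha\in\Phi(u)$.

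There is also a direct route for (b) that avoids convex orders. Write $\gamma=\gamma_1+\gamma_2$ as above. Since $\Dp\setminus\Phi(u)$ is convex (Proposition \ref{p11}), at least one $\gamma_i$ lies in $\Phi(u)$. The induction hypothesis applied to $\gamma_i$ then gives a simple root in $\Phi(u)\cap\supp(\gamma_i)\subseteq\Phi(u)\cap\supp(\gamma)$.

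The only step that is not purely formal is the decomposition $\gamma=\alpha_s+(\gamma-\alpha_s)$ of a non-simple positive root. This is classical (see \cite{Hu}). It follows from $(\gamma,\gamma)>0$: writing $\gamma=\sum n_t\alpha_t$ with $n_t\geq 0$, some simple root $\alpha_s$ satisfies $(\gamma,\alpha_s)>0$. Then $\gamma-\alpha_s$ is a root, and it is positive because $\gamma\neq\alpha_s$.
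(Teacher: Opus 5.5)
Your proof is correct and is exactly the ``easy induction on the height'' that the paper leaves to the reader: you split a non-simple $\gamma$ as $\alpha_s+(\gamma-\alpha_s)$, then use the convexity of $<$ for (a) and the convexity of $\Dp\setminus\Phi(u)$ for (b). Your alternative derivation of (b) from (a) is also valid; it uses the fact that $\Phi(u)$ is an initial section of some convex order, by \cite[Lemma 2.11]{Dy}.
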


\begin{prop}\label{prop:nobaddec}
The convex order $<$ induced by ${\bf i}$ satisfies {\rm (TR2$'$)} if and only if
 for no $1\leq k\leq N$ there is an identity
\begin{align}\label{eqn:baddec}
\beta_k=\sum_{p<k}a_p\beta_p+\sum_{q>k}b_q\beta_q,
\qquad a_p,b_q\in\bN,\qquad \sum_{p<k}a_p\geq2 .
\end{align}
\end{prop}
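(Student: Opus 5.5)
We prove both directions by contraposition; the main tools are convexity of initial sections and Lemma \ref{lem:simpleprec}.

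\emph{If (TR2$'$) fails, then a decomposition \eqref{eqn:baddec} exists.} Take $k$, $h<k$ with $\beta_k-\beta_h\in Q^+\setminus\{0\}$ and a simple root $\alpha_s\in\supp(\beta_k-\beta_h)\cap\{\beta_1,\dots,\beta_{k-1}\}$. The set $\Phi(w_{k-1})=\{\beta_1,\dots,\beta_{k-1}\}$ is biconvex, and its complement $\{\beta_k,\dots,\beta_N\}$ is convex.

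First we want a sum expressing $\beta_k$ with at least two summands from $\{\beta_p\}_{p<k}$. Write $\gamma=\beta_k-\beta_h\in Q^+$. If $\gamma$ is a root, it cannot lie in $\Phi(w_{k-1})$, since then $\beta_k=\beta_h+\gamma$ would lie in the convex set $\Phi(w_{k-1})$. So $\gamma=\beta_q$ with $q>k$, and the identity $\beta_k=\beta_h+\beta_q$ contradicts convexity of the order: by Proposition \ref{pp}, $\beta_k$ must lie between $\beta_h$ and $\beta_q$, which is fine, but it gives only one term on the left. We therefore need to use $\alpha_s$.

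The cleaner route is to write $\gamma$ as a sum of simple roots in which $\alpha_s$ occurs, with multiplicity, and then try $\beta_k=\beta_h+\alpha_s+\sum(\text{other simple roots})$. Each of the other simple roots lies either before or after $\beta_k$. Those before $\beta_k$ contribute to $\sum a_p$; those after it go to the $b_q$. Since $\beta_h$ and $\alpha_s$ both precede $\beta_k$, we get $\sum a_p\geq 2$, which is \eqref{eqn:baddec}. In fact no cleverness is needed: \eqref{eqn:baddec} allows arbitrary nonnegative integer combinations, and every simple root is some $\beta_j$ with $j\neq k$, provided $\beta_k$ is not simple. If $\beta_k$ were simple, then $\beta_k-\beta_h\in Q^+\setminus\{0\}$ would be impossible. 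So this direction is essentially immediate.

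\emph{If \eqref{eqn:baddec} holds, then (TR2$'$) fails.} This is the main obstacle. Given such an identity, we want $h<k$ with $\beta_k-\beta_h\in Q^+\setminus\{0\}$ and a simple root of $\supp(\beta_k-\beta_h)$ preceding $\beta_k$. The approach is to reduce the decomposition step by step while preserving the property that at least two summands lie in $\Phi(w_{k-1})$:
\begin{enumerate}
\item Replace each $\beta_q$ with $q>k$ by simple roots of its support, i.e.\ by its full expansion in $\Pi$. Each such simple root is either some $\beta_p$ with $p<k$, which only increases $\sum a_p$, or lies after $\beta_k$. After this step, every term after $\beta_k$ is simple.
\item If only one term lies before $\beta_k$, the remaining ones must all be simple roots after $\beta_k$. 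We avoid this by never expanding the $\beta_p$ with $p<k$, so at least two of them survive.
\item Pick one summand $\beta_h$ with $p=h<k$. Then $\beta_k-\beta_h$ is the sum of the remaining summands, which is nonzero because $\sum a_p\geq 2$, and lies in $Q^+$.
\item Another summand $\beta_{p'}$ with $p'<k$ occurs in this difference. By Lemma \ref{lem:simpleprec}(b), applied to $u=w_{k-1}$ and $\gamma=\beta_{p'}$, the set $\Phi(w_{k-1})$ contains a simple root in $\supp(\beta_{p'})\subseteq\supp(\beta_k-\beta_h)$.
\end{enumerate}
This simple root lies in $\{\beta_1,\dots,\beta_{k-1}\}$, so (TR2$'$) fails at $k$ with this $h$. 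Note that the expansion in step 1 is not actually needed: steps 3 and 4 apply directly to the original identity. The argument therefore reduces to the choice of $h$ and a single application of Lemma \ref{lem:simpleprec}(b), and I expect it to close cleanly.
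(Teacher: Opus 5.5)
Your proof is correct and is essentially the paper's argument. For one direction you expand $\beta_k-\beta_h$ in simple roots, using that $\beta_k$ is not simple. For the other you remove one preceding summand $\beta_h$ and find a preceding simple root in the support of a second preceding summand; the paper uses Lemma \ref{lem:simpleprec}(a) for this step, while you use part (b) with $u=w_{k-1}$, which is equivalent here. You should delete the abandoned detour about $\beta_k-\beta_h$ being a root, since the claim there that $\beta_k=\beta_h+\beta_q$ ``contradicts convexity'' is wrong. Steps 1--2 should also go, since, as you note yourself, they play no role.
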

\begin{proof}
Assume (TR2$'$) and suppose \eqref{eqn:baddec} holds for some $k$. Put
$\mu=\sum_{p<k}a_p\beta_p$ and $\nu=\sum_{q>k}b_q\beta_q$. Choose $p_0<k$ with
$a_{p_0}>0$. Since $\sum_{p<k}a_p\geq2$, the element $\mu-\beta_{p_0}$ is a
nonempty sum of positive roots, whence
$\beta_k-\beta_{p_0}=(\mu-\beta_{p_0})+\nu\in Q^+\setminus\{0\}$, and there is an
index $p_1<k$ such that $\beta_{p_1}$ occurs in $\mu-\beta_{p_0}$. All the
summands lie in $Q^+$, so no cancellation of supports occurs and
$\supp(\beta_{p_1})\subseteq\supp(\beta_k-\beta_{p_0})$. By Lemma
\ref{lem:simpleprec}(a) there is a simple root $\alpha_j\in\supp(\beta_{p_1})$ with
$\alpha_j\leq\beta_{p_1}<\beta_k$; then $\alpha_j$ lies in
$\supp(\beta_k-\beta_{p_0})\cap\{\beta_1,\dots,\beta_{k-1}\}$, contradicting
(TR2$'$).
\par Conversely, assume that (TR2$'$) fails, say
$\delta:=\beta_k-\beta_h\in Q^+\setminus\{0\}$ with $h<k$ and
$\alpha_{j_0}\in\supp(\delta)\cap\{\beta_1,\dots,\beta_{k-1}\}$. Then $\beta_k$ is
not simple: were it simple, no positive root would satisfy
$\beta_k-\beta_h\in Q^+\setminus\{0\}$. Writing $\delta=\sum_j c_j\alpha_j$ with
$c_j\in\bN$ and $c_{j_0}\geq1$, and observing that every simple root is one of the
$\beta_r$, with $r\neq k$ because $\beta_k$ is not simple, we obtain
\begin{align*}
\beta_k=\beta_h+\sum_j c_j\alpha_j ,
\end{align*}
a decomposition of $\beta_k$ into positive roots all different from $\beta_k$, at
least two of which, namely $\beta_h$ and $\alpha_{j_0}$, precede $\beta_k$.
Sorting the summands according to their position with respect to $\beta_k$ yields
an identity of the shape \eqref{eqn:baddec}.
\end{proof}

We call an identity of the shape \eqref{eqn:baddec} a {\it forbidden decomposition}
of $\beta_k$: thus Proposition \ref{prop:nobaddec} says that $<$ satisfies {\rm (TR2$'$)}
if and only if no $\beta_k$ admits a forbidden decomposition. In Subsection
\ref{sec:reduction} the same identity will be recorded by the pair of exponent vectors
$\bfa,\bfb\in\bN^N$, for which, with the notation of Subsection \ref{sec:pbwnot}, it reads $|\bfa|+|\bfb|=\beta_k$,
$\supp(\bfa)\subseteq[1,k-1]$, $\supp(\bfb)\subseteq[k+1,N]$ and $\sum_j a_j\geq2$.
\par
A subset $A\subseteq\Dp$ is {\it abelian}, or {\it sum free}, if no two of its elements
add up to a root, that is if $(A+A)\cap\D=\emptyset$; equivalently, $\bigoplus_{\a\in A}\g_\a$
is an abelian subalgebra of $\mathfrak n^+$.

\begin{prop}\label{prop:Bk}
Assume {\rm (TR2$'$)} and put $B_k=\Phi(w_k)\cap(\beta_k-Q^+)$. Then $B_k$ is abelian and biconvex, and
$\Phi(p_<(\beta_k))\subseteq B_k\subseteq\beta_k-Q^+$.
\end{prop}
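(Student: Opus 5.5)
The plan is to establish the properties in the order: abelian, then biconvex, then the inclusions. The last inclusion $B_k\subseteq\beta_k-Q^+$ holds by definition, and the other inclusion will follow from biconvexity and the minimality of the projection.

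\emph{Step 1: $B_k$ is abelian.} First we record what (TR2$'$) says about $B_k$. For $\gamma\in B_k$ with $\gamma\neq\beta_k$ we have $\gamma=\beta_h$ for some $h<k$ and $\beta_k-\gamma\in Q^+\setminus\{0\}$. By (TR2$'$), every simple root in $\supp(\beta_k-\gamma)$ lies outside $\{\beta_1,\dots,\beta_{k-1}\}$, i.e.\ it comes after $\beta_k$. Let $E=\Pi\cap\Phi(w_{k-1})$ be the set of ``early'' simple roots. Then every $\gamma\in B_k$ has the same coefficients as $\beta_k$ on every $\alpha\in E$.

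Now suppose $\gamma,\delta\in B_k$ and $\gamma+\delta\in\D$. By convexity of $\Phi(w_k)$ (Proposition \ref{p11}), $\gamma+\delta=\beta_j$ with $j\leq k$, and $\gamma,\delta\neq\beta_k$.
\begin{itemize}
\item If $j=k$, then $\beta_k=\gamma+\delta$ is a forbidden decomposition with $\sum a_p=2$, contradicting Proposition \ref{prop:nobaddec}.
\item If $j<k$, then $\beta_j\in\Phi(w_{k-1})$. By Lemma \ref{lem:simpleprec}(b), $\supp(\beta_j)$ contains some $\alpha\in E$. On $E$ the root $\beta_j$ has twice the coefficients of $\beta_k$, so $c_\alpha(\beta_j)=2c_\alpha(\beta_k)$ with $c_\alpha(\beta_k)=c_\alpha(\gamma)\geq1$.
\end{itemize}
I would then try to contradict this using convexity of the order on the pair $\beta_j$ and $\beta_k$. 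One route is to apply (TR2$'$) to suitable roots of the $\gamma$-string through $\beta_k$. Another is to produce from $\beta_j$ and the late simple roots in $\beta_k-\gamma$ a forbidden decomposition of $\beta_k$, for instance by writing $\beta_k=\gamma+(\beta_k-\gamma)$ and using that $\delta$ and $\beta_k-\gamma$ share no early support. I expect this step to be the main obstacle. A fallback is to pass to the rank-two or rank-three subsystem spanned by $\gamma$, $\delta$ and $\beta_k$, and check the few configurations there directly.

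\emph{Step 2: $B_k$ is biconvex.} In the crystallographic setting, convexity of $B_k$ is closure under sums that are roots. By Step 1 no two elements of $B_k$ sum to a root, so this condition is vacuous. For co-convexity, suppose $\gamma+\delta\in B_k$ with $\gamma,\delta\in\Dp$. Then $\gamma,\delta\leq_0\gamma+\delta\leq_0\beta_k$, so both lie in $\beta_k-Q^+$. Since $\Phi(w_k)$ is biconvex, at least one of $\gamma,\delta$ lies in $\Phi(w_k)$, hence in $B_k$. So $\Dp\setminus B_k$ is convex.

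\emph{Step 3: $\Phi(p_<(\beta_k))\subseteq B_k$.} By Proposition \ref{p11}, Step 2 gives $B_k=\Phi(v)$ for some $v\in W$. Since $B_k\subseteq\Phi(w_k)$, Proposition \ref{p11}(2) gives $v\leq_R w_k$. As $\beta_k\in B_k$, we get $v\in P(w_k,\beta_k)$. By Proposition \ref{p32}, $p_<(\beta_k)=p(w_k,\beta_k)\leq_R v$, and therefore $\Phi(p_<(\beta_k))\subseteq\Phi(v)=B_k$.
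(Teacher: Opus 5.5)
\textbf{Gap in Step 1.} Step 1 is the heart of the proposition, and it is left unfinished. You correctly reduce to $\gamma+\delta=\beta_j$ with $j<k$ and $c_\alpha(\beta_j)=2c_\alpha(\beta_k)\ge 2$ for some $\alpha\in E$. You then look for the contradiction at $\beta_k$, through root strings, forbidden decompositions of $\beta_k$, or rank-two/three case checks, and none of these is carried out.

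The contradiction lies at $\eta=\beta_j$ itself. The one missing idea is to choose $\alpha$ so that it precedes $\beta_j$, not merely $\beta_k$. Apply Lemma \ref{lem:simpleprec}(b) with $u=w_j$ instead of $u=w_{k-1}$; alternatively apply Lemma \ref{lem:simpleprec}(a) to $\eta$, or to the summand preceding $\eta$, as the paper does. Since $\beta_j=\gamma+\delta$ is not simple, this gives a simple root $\alpha=\beta_h\in\supp(\beta_j)$ with $h<j<k$. Then:
\begin{enumerate}
\item $\alpha\in E$, so $c_\alpha(\gamma)=c_\alpha(\delta)=c_\alpha(\beta_k)$.
\item This common value is $\ge 1$, because $\alpha\in\supp(\gamma)\cup\supp(\delta)$.
\item Hence $c_\alpha(\beta_j)\ge 2$, so $\beta_j-\alpha\in Q^+\setminus\{0\}$ contains $\alpha$ in its support, while $\alpha\in\{\beta_1,\dots,\beta_{j-1}\}$.
\end{enumerate}
This violates (TR2$'$) at $\beta_j$ with this $h$.

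\textbf{Consequences for Steps 2 and 3.} Without this argument, $B_k$ is not shown to be closed. Your Step 2 derives closedness from abelianness, and Step 3 needs biconvexity, so neither is yet justified. Once Step 1 is completed, Steps 2 and 3 coincide with the paper's argument: co-closedness from the biconvexity of $\Phi(w_k)$, then minimality of the projection via Proposition \ref{p32}.

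\textbf{Minor points.}
\begin{enumerate}
\item Your claim that $\gamma,\delta\neq\beta_k$ is not justified at that point: $\delta=\beta_k$ with $\gamma<\gamma+\beta_k<\beta_k$ is not excluded before the argument. It is harmless, though, since the equality of coefficients on $E$ holds trivially for $\beta_k$.
\item The case $j=k$ is already excluded by convexity of $<$, since both summands would precede their sum $\beta_k$. Your appeal to Proposition \ref{prop:nobaddec} there is also valid.
\end{enumerate}
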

\begin{proof}
Let $\gamma,\delta\in B_k$, i.e. $\gamma,\delta$ precede $\beta_k$ or are equal to it and
$\gamma,\delta\leq_0\beta_k$, and suppose $\eta:=\gamma+\delta\in\Dp$. By convexity $\eta$ lies between $\gamma$ and
$\delta$; relabeling if necessary, $\gamma<\eta<\delta$, so that $\gamma<\beta_k$. By Lemma \ref{lem:simpleprec}(a) there is a simple root
$\a_r\in\supp(\gamma)$ with $\a_r\leq\gamma$, so that $\a_r<\eta$ and $\a_r<\beta_k$.
Since $\gamma$ precedes $\beta_k$ and $\beta_k-\gamma\in Q^+\setminus\{0\}$,
condition (TR2$'$) at $\beta_k$ forbids any simple root of $\supp(\beta_k-\gamma)$ to
precede $\beta_k$; as $\a_r$ does, $\a_r\notin\supp(\beta_k-\gamma)$, that is
$c_r(\gamma)=c_r(\beta_k)$, where $c_r$ denotes the coefficient of $\a_r$. The same
argument with $\delta$ gives $c_r(\delta)=c_r(\beta_k)$, trivially so if $\delta=\beta_k$. Hence
$c_r(\eta)=c_r(\gamma)+c_r(\delta)=2c_r(\beta_k)\geq2$,
because $c_r(\gamma)\geq1$. Now apply (TR2$'$) at $\eta$ with the preceding root $\a_r$:
one has $\eta-\a_r\in Q^+\setminus\{0\}$ and $c_r(\eta-\a_r)=c_r(\eta)-1\geq1$, so that
$\a_r$ is a simple root of $\supp(\eta-\a_r)$ preceding $\eta$, contrary to (TR2$'$). Hence $B_k$ is abelian, and in
particular closed.
\par It is co-closed: if $\mu+\nu\in B_k$ with
$\mu,\nu\in\Dp$, then one of $\mu,\nu$ lies in $\Phi(w_k)$, the latter being biconvex, say $\mu$; and
$\beta_k-\mu=(\beta_k-\mu-\nu)+\nu\in Q^+$, so that $\mu\in B_k$. Hence $B_k$ is biconvex, and the
minimality of $\Phi(p_<(\beta_k))$ among the biconvex subsets of $\Phi(w_k)$ containing
$\beta_k$ (Proposition \ref{p32}) gives $\Phi(p_<(\beta_k))\subseteq B_k$.
\end{proof}
\begin{lem}\label{lem:tr1reform}
Condition {\rm (TR1)} holds at $\beta_k$ if and only if $\Phi(p_<(\beta_k))\cap \Phi(w_j)$ is
biconvex for every $j$. Moreover {\rm (TR1)} fails at $\beta_k$ if and only if there are
$\mu,\nu\in\Dp$ with
\begin{align}\label{eqn:tr1config}
\nu<\mu+\nu<\mu\leq\beta_k,\qquad \mu,\,\mu+\nu\in \Phi(p_<(\beta_k)),\qquad \nu\notin \Phi(p_<(\beta_k)).
\end{align}
\end{lem}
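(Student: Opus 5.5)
The plan is to prove the two statements in turn, writing $P=\Phi(p_<(\beta_k))$ throughout.

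\emph{First statement.} Suppose (TR1) holds, and let $t_1\cdots t_l$ be a reduced word of $p_<(\beta_k)$ whose root sequence is $P$ listed in $<$-increasing order. For each $j$ the set $P\cap\Phi(w_j)=P\cap\{\beta_1,\dots,\beta_j\}$ is an initial segment of $P$ with respect to $<$. It is therefore the inversion set of a prefix $t_1\cdots t_m$, and so it is biconvex by Proposition \ref{p11}.

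Conversely, suppose every $P\cap\Phi(w_j)$ is biconvex, hence an inversion set $\Phi(u_j)$. As $j$ runs from $0$ to $N$ these sets increase from $\emptyset$ to $P$. Each step adds at most one root, since $\Phi(w_j)\setminus\Phi(w_{j-1})=\{\beta_j\}$. By Proposition \ref{p11}(2) we get $u_{j-1}\leq_R u_j$, with equality or a cover. A cover $u_{j-1}\lessdot u_j$ has the form $u_j=u_{j-1}s$ with new inversion $u_{j-1}(\a_s)=\beta_j$. Concatenating these simple reflections produces a reduced word of $p_<(\beta_k)$ whose root sequence is $P$ in increasing $<$-order, which is (TR1). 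The same argument shows that the equivalence with ``every initial segment of $P$ is an inversion set'' is immediate.

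\emph{Second statement, existence of the configuration.} Suppose (TR1) fails. Then for some $j$ the set $A=P\cap\Phi(w_j)$ is not biconvex. Choose $j$ minimal, so that $A'=P\cap\Phi(w_{j-1})$ is biconvex and $A=A'\sqcup\{\beta_j\}$ with $\beta_j\in P$.

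$A$ is closed, being an intersection of closed sets. It must therefore fail co-closedness: there are positive roots $\nu,\mu$ with $\mu+\nu\in A$ but $\mu,\nu\notin A$. Since $A'$ is co-closed and $\mu+\nu\in A$, either $\mu+\nu=\beta_j$, or $\mu+\nu\in A'$ and then one of $\mu,\nu$ lies in $A'\subseteq A$, which is impossible. So $\mu+\nu=\beta_j$.

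Because $\Phi(w_j)$ and $P$ are co-closed, one of $\mu,\nu$ lies in $\Phi(w_j)$ and one lies in $P$. Neither lies in both, since neither lies in $A$. Label them so that $\mu\in P\setminus\Phi(w_j)$ and $\nu\in\Phi(w_j)\setminus P$. Then $\nu<\beta_j$, and $\mu>\beta_j$ since $\mu\notin\Phi(w_j)$. Moreover $\mu\in P\subseteq\Phi(w_k)$, so $\mu\leq\beta_k$. This gives $\nu<\mu+\nu<\mu\leq\beta_k$ with $\mu,\mu+\nu\in P$ and $\nu\notin P$, which is \eqref{eqn:tr1config}.

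\emph{Second statement, the configuration forces failure.} Given \eqref{eqn:tr1config}, let $j$ be the index with $\beta_j=\mu+\nu$. Then $\nu\in\Phi(w_j)$ and $\mu\notin\Phi(w_j)$, so neither lies in $P\cap\Phi(w_j)$. But their sum does, so $P\cap\Phi(w_j)$ is not co-closed and (TR1) fails by the first statement.

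The only delicate point is the labelling step, where co-closedness of $P$ and of $\Phi(w_j)$ has to be used separately. The rest is bookkeeping.
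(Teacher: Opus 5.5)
Your proof is correct and follows the paper's argument: $P\cap\Phi(w_j)$ is closed as an intersection of closed sets, so it can only fail co-closedness, and the labelling uses co-closedness of $P$ and of $\Phi(w_j)$ separately; the converse goes through $\beta_j=\mu+\nu$ exactly as in the paper. The differences are minor. You prove the equivalence of the two forms of (TR1), which the paper takes as part of the definition. Your choice of a minimal $j$ forces $\mu+\nu=\beta_j$, so $\nu<\mu+\nu$ follows directly, where the paper instead appeals to the convexity of $<$.
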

\begin{proof}
Every initial segment of $\Phi(p_<(\beta_k))$ with respect to $<$ is of the form
$\Phi(p_<(\beta_k))\cap \Phi(w_j)$, whence the first assertion. Such an intersection is closed,
being an intersection of closed sets, so it can fail to be biconvex only if it is not co-closed. If $\zeta=\mu+\nu$ lies in the intersection while neither $\mu$ nor $\nu$ does, then,
$\Phi(p_<(\beta_k))$ and $\Phi(w_j)$ being biconvex, one of $\mu,\nu$ lies in $\Phi(p_<(\beta_k))$ and
one of them lies in $\Phi(w_j)$; these cannot be the same root, so that, say,
$\mu\in \Phi(p_<(\beta_k))\setminus \Phi(w_j)$ and $\nu\in \Phi(w_j)\setminus \Phi(p_<(\beta_k))$. Since $\zeta\in\Phi(w_j)$ and $\mu\notin\Phi(w_j)$, we have $\zeta<\mu$, and the convexity of $<$, which places $\zeta$ between
$\mu$ and $\nu$, gives $\nu<\zeta<\mu$; finally $\mu\leq\beta_k$, as $\mu\in\Phi(p_<(\beta_k))\subseteq\Phi(w_k)$. This is \eqref{eqn:tr1config}. Conversely, given $\mu,\nu$ as in \eqref{eqn:tr1config}, let $\mu+\nu=\beta_j$: then $\mu+\nu$ lies in $\Phi(p_<(\beta_k))\cap\Phi(w_j)$, while $\mu$ and $\nu$ do not, so that this set is not co-closed.
\end{proof}

\begin{thm}\label{thm:tr2ptangent}
A convex order satisfying {\rm (TR2$'$)} is tangent. Consequently, if ${\bf i}$ is not a
tangent reduced decomposition, then some $\beta_k$ admits a forbidden decomposition.
\end{thm}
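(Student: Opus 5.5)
The second assertion follows from the first together with Proposition \ref{prop:nobaddec}. If ${\bf i}$ is not tangent, then (TR2$'$) must fail. Proposition \ref{prop:nobaddec} then produces a forbidden decomposition of some $\beta_k$. So everything reduces to showing that (TR2$'$) implies (TR1) and (TR2) at every $\beta_k$.

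(TR2) is immediate. By Proposition \ref{p32} we have $p_<(\beta_k)\leq_R w_k$, hence $\Phi(p_<(\beta_k))\subseteq\Phi(w_k)$. Let $\alpha\in\Phi(p_<(\beta_k))$ with $\alpha<_0\beta_k$. Then $\alpha=\beta_h$ for some $h<k$, and $\beta_k-\beta_h\in Q^+\setminus\{0\}$. Condition (TR2$'$) says that no simple root of $\supp(\beta_k-\alpha)$ lies in $\{\beta_1,\dots,\beta_{k-1}\}$. Nor can such a simple root equal $\beta_k$, which is not simple here. Hence no simple root of $\supp(\beta_k-\alpha)$ lies in $\Phi(w_k)\supseteq\Phi(p_<(\beta_k))$, which is (TR2) at $\beta_k$.

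For (TR1) I will argue by contradiction using Lemma \ref{lem:tr1reform}. If (TR1) fails at $\beta_k$, there are $\mu,\nu\in\Dp$ with $\nu<\mu+\nu<\mu\leq\beta_k$, $\mu,\mu+\nu\in\Phi(p_<(\beta_k))$ and $\nu\notin\Phi(p_<(\beta_k))$.

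By Proposition \ref{prop:Bk}, $\Phi(p_<(\beta_k))\subseteq B_k$, and $B_k$ is abelian. The roots $\mu$ and $\mu+\nu$ both lie in $B_k$, so $\mu+\nu\leq_0\beta_k$. Also $\nu$ precedes $\mu\leq\beta_k$, so $\nu\in\Phi(w_k)$, and $\beta_k-\nu=(\beta_k-\mu-\nu)+\mu\in Q^+$. Thus $\nu\in B_k$ as well. But then $\mu,\nu\in B_k$ with $\mu+\nu\in\Dp$, contradicting the abelianity of $B_k$. Therefore (TR1) holds.

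The only step I expect to require care is this last one, namely checking that $\nu$ really lands in $B_k$. This needs $\nu\in\Phi(w_k)$, which follows because $\Phi(w_k)$ is the initial section $\{\beta_1,\dots,\beta_k\}$ of $<$. It also needs $\nu\leq_0\beta_k$, which follows from $\mu+\nu\leq_0\beta_k$. Once that is verified, the argument uses only results already stated, principally Proposition \ref{prop:Bk}.
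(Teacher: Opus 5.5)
Your proof is correct and follows the paper's own argument. You obtain (TR2) directly from (TR2$'$) using $\Phi(p_<(\beta_k))\subseteq\Phi(w_k)$, and you obtain (TR1) by contradiction through Lemma \ref{lem:tr1reform} and the abelian set $B_k$ of Proposition \ref{prop:Bk}. The only cosmetic difference is that you get $\nu\in\Phi(w_k)$ straight from $\nu<\mu\leq\beta_k$, while the paper first notes that $\mu\neq\beta_k$.
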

\begin{proof}
Let $\a\in \Phi(p_<(\beta_k))$ with $\a<_0\beta_k$. Then $\beta_k$ is not simple, and $\a=\beta_h$ for some $h<k$,
because $\Phi(p_<(\beta_k))\subseteq \Phi(w_k)$ and $\a\neq\beta_k$; moreover $\beta_k\notin\supp(\beta_k-\a)$. Hence
$\supp(\beta_k-\a)\cap \Phi(p_<(\beta_k))\subseteq\supp(\beta_k-\a)\cap\{\beta_1,\dots,\beta_{k-1}\}=\emptyset$, and
(TR2) holds. If (TR1) failed at some $\beta_k$, Lemma \ref{lem:tr1reform} would provide $\mu,\nu\in\Dp$ with
$\nu<\mu+\nu<\mu\leq\beta_k$ and $\mu,\mu+\nu\in \Phi(p_<(\beta_k))\subseteq B_k$ (Proposition \ref{prop:Bk}). As
$\mu+\nu\leq_0\beta_k$, we have $\mu\neq\beta_k$; so $\mu$ and $\nu$ precede $\beta_k$, and $\nu\leq_0\mu+\nu\leq_0\beta_k$.
Thus $\mu,\nu\in B_k$ and $\mu+\nu\in\Dp$, contradicting the fact that $B_k$ is abelian. The last assertion follows from
Proposition \ref{prop:nobaddec}.
\end{proof}

We record two elementary facts on projections, which will be used repeatedly; the first of them is a direct
consequence of the biconvexity of projections, and neither of the two uses {\rm (TR2)}.

\begin{lem}\label{lem:proj}
Let $\beta=\beta_k$.
\begin{enumerate}
\item[(a).] If $\beta=\mu+\nu$ with $\mu,\nu\in\Dp$ and $\mu<\beta<\nu$, then
$\mu\in \Phi(p_<(\beta))$.
\item[(b).] If {\rm (TR1)} holds at $\beta$, then $\Phi(p_<(\gamma))\subseteq \Phi(p_<(\beta))$
for every $\gamma\in \Phi(p_<(\beta))$.
\end{enumerate}
\end{lem}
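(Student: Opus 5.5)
For (a), I will work directly from the biconvexity of inversion sets, via Proposition \ref{p11}. Write $p=p_<(\beta)$, so $\beta\in\Phi(p)$. Since $\Phi(p)$ is biconvex, its complement $\Dp\setminus\Phi(p)$ is closed. Hence from $\beta=\mu+\nu\in\Phi(p)$ at least one of $\mu,\nu$ lies in $\Phi(p)$.

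It therefore suffices to exclude the case $\nu\in\Phi(p)$ with $\mu\notin\Phi(p)$. But $\Phi(p)\subseteq\Phi(x_k^<)=\{\beta_1,\dots,\beta_k\}$, because $p\leq_R x_k^<$ (Proposition \ref{p32} and Proposition \ref{p11}(2)). Every element of $\Phi(p)$ is therefore $\leq\beta$. Since $\nu>\beta$, we get $\nu\notin\Phi(p)$, and hence $\mu\in\Phi(p)$. (Note that $\mu<\beta$ is automatic by convexity once $\nu>\beta$; only the ordering with respect to $\beta$ is used.)

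For (b), let $\gamma=\beta_j\in\Phi(p)$, so $j\leq k$. By (TR1) at $\beta$, the initial segment $\Phi(p)\cap\Phi(w_j)$ of $\Phi(p)$ with respect to $<$ is an inversion set, say $\Phi(u)$. This is the reformulation in Lemma \ref{lem:tr1reform}, and it gives $u\leq_R p$. The roots of $\Phi(u)$ are those of $\Phi(p)$ up to and including $\gamma$, listed in $<$-order, and by (TR1) they are the root sequence of a reduced word of $u$. Its last root is $\gamma$, so $\gamma\in\Phi^R(u)$. Moreover $\Phi(u)\subseteq\Phi(w_j)$, so $u\leq_R w_j=x_j^<$, and therefore $u\in P(x_j^<,\gamma)$.

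By Proposition \ref{p32}, $p_<(\gamma)=p(x_j^<,\gamma)$ is the minimum of $P(x_j^<,\gamma)$. Hence $p_<(\gamma)\leq_R u\leq_R p$, which gives $\Phi(p_<(\gamma))\subseteq\Phi(u)\subseteq\Phi(p)$.

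The main point to check is that $u$ really lies below $x_j^<$ in the weak order. This is immediate from Proposition \ref{p11}(2), since $\Phi(u)\subseteq\{\beta_1,\dots,\beta_j\}$. Notice also that we only need $\gamma\in\Phi(u)$ rather than $\gamma\in\Phi^R(u)$, since membership in $P$ requires only $\gamma\in\Phi(u)$. So the descent observation is not even needed, and no step should present real difficulty.
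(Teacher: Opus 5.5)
Your proposal is correct and follows essentially the same route as the paper. Part (a) is the same argument: the complement of $\Phi(p_<(\beta))$ is closed, and $\nu\notin\Phi(w_k)$. Part (b) is the paper's minimality argument, applying Proposition~\ref{p32} to the inversion set $\Phi(p_<(\beta))\cap\Phi(w_j)\ni\gamma$ inside $\Phi(w_j)$; you phrase it in weak-order terms through Proposition~\ref{p11}(2) instead of as ``smallest biconvex subset''.
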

\begin{proof}
(a). The set $\Phi(p_<(\beta))$ is biconvex, so $\Dp\setminus \Phi(p_<(\beta))$ is closed. As
$\nu>\beta$ we have $\nu\notin \Phi(w_k)\supseteq \Phi(p_<(\beta))$; if $\mu$ were outside
$\Phi(p_<(\beta))$ as well, closedness of the complement would give
$\beta=\mu+\nu\notin \Phi(p_<(\beta))$, which is false.
\par (b). Write $\gamma=\beta_j$ with $j\leq k$. By Lemma \ref{lem:tr1reform} the set
$\Phi(p_<(\beta))\cap \Phi(w_j)$ is biconvex; it is contained in $\Phi(w_j)$ and contains $\gamma$.
By Proposition \ref{p32}, $\Phi(p_<(\gamma))$ is the smallest biconvex subset of $\Phi(w_j)$
containing $\gamma$, whence the claim.
\end{proof}

In the language of the weak order, Lemma \ref{lem:proj}(b) says that, under (TR1) at
$\beta_k$, the join-irreducible element $p_<(\gamma)$ lies below $p_<(\beta_k)$ whenever
$\gamma$ is an inversion of the latter.

\subsection{Tangent convex orders and longest coset representatives}
\begin{prop}\label{p41} Let $W$ be a finite irreducible Weyl group, and let $a_1,\ldots,a_n$ be an enumeration of $S$. Consider
\[W_{(a_1)}=W_{S\setminus \{a_1\}},\ W_{(a_1,a_2)}=W_{S\setminus \{a_1,a_2\}},\ \ldots,\ W_{(a_1,\ldots,a_n)}=W_{S\setminus \{a_1,a_2,\ldots,a_{n}\}}=\{e\}.\]
Assume that, for all $i$, $\a_{a_i}$ is cominuscule in $W_{(a_1,\ldots,a_{i-1})}$, that is, in the irreducible component of $W_{(a_1,\ldots,a_{i-1})}$ containing $a_i$, where $W_{(a_1,\ldots,a_0)}:=W$. We consider the following reduced words:
\begin{align*}
d_{(a_1)}&=s_1\cdots s_{k_1},\\
d^{(a_1)}_{(a_1,a_2)}&=d^{S\setminus\{a_1\}}_{S\setminus \{a_1,a_2\}}=s_{k_1+1}\cdots s_{k_2},\\
&\vdots\\
d^{(a_1,\ldots,a_{n-2})}_{(a_1,\ldots,a_{n-1})}&=d^{\{a_{n-1},a_n\}}_{\{a_n\}}=s_{k_{n-2}+1}\cdots s_{k_{n-1}},\\
d^{(a_1,\ldots,a_{n-1})}_{(a_1,\ldots,a_{n})}&=a_n=s_{k_{n}}.\\
\end{align*}
Then $s_{k_i}=a_i$ and $k_n=N,\, s_{k_1}=w_0s_1w_0,\,s_{k_i+1}=w_{0,(a_1,\ldots,a_i)}s_{k_{i+1}} w_{0,(a_1,\ldots,a_i)}$ and
\begin{equation}\label{x}w_0=s_1\cdots s_N=d_{(a_1)}d^{(a_1)}_{(a_1,a_2)}\cdots d^{(a_1,\ldots,a_{n-1})}_{(a_1,\ldots,a_{n})}\end{equation}
is a reduced word for which the associated convex order is a tangent convex order.
\end{prop}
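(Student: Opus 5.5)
Two things must be proved: the combinatorial identities for the word in \eqref{x}, and tangency of the order it induces. For the first part we use Proposition \ref{p13} inside each parabolic subgroup. Put $J_i=S\setminus\{a_1,\dots,a_i\}$. Then $d^{J_{i-1}}_{J_i}=w_{0,J_{i-1}}w_{0,J_i}$, and telescoping gives $w_0=d_{J_1}d^{J_1}_{J_2}\cdots d^{J_{n-1}}_{J_n}$. This product is reduced because $\ell(w_{0,J_{i-1}})=\ell(d^{J_{i-1}}_{J_i})+\ell(w_{0,J_i})$. By Proposition \ref{p13}(5), applied in $W_{J_{i-1}}$, the only right descent of $d^{J_{i-1}}_{J_i}$ is $a_i$, which gives $s_{k_i}=a_i$. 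Likewise, its only left descent is $w_{0,J_{i-1}}a_iw_{0,J_{i-1}}$. This is the claimed conjugation formula for the first letter of each block, read with the appropriate index shift. The equality $k_n=N$ is a length count.

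For tangency we avoid working with projections directly. By Theorem \ref{thm:tr2ptangent} it suffices to verify (TR2$'$). Since each cominuscule $d$ is fully commutative (Proposition \ref{p22}), Proposition \ref{prop:comminv} shows that the choice of reduced words inside each block does not matter. Let $\beta_k$ lie in block $i$, so $\beta_k=w_{0,J_{i-1}}\cdots$. By Proposition \ref{p11}(1), the roots of block $i$ are $\Phi(d_{J_1}\cdots d^{J_{i-2}}_{J_{i-1}})$-translates. More precisely, the initial section before block $i$ is $\Dp\setminus\Dp_{J'_{i-1}}$ for the conjugate parabolic $J'_{i-1}=w_0J_{i-1}w_0$, and block $i$ consists of the roots of $\Dp_{J'_{i-1}}\setminus\Dp_{J'_i}$. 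By Proposition \ref{p21}, applied in the irreducible component, these are the roots of $\Dp_{J'_{i-1}}$ with coefficient $1$ on the cominuscule simple root $a'_i=w_0a_iw_0$ of $J'_{i-1}$.

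We now check (TR2$'$). Take $h<k$ with $\delta=\beta_k-\beta_h\in Q^+\setminus\{0\}$, and suppose $\alpha_j\in\supp(\delta)$ precedes $\beta_k$. Since $\beta_k\in\Dp_{J'_{i-1}}$, we have $\beta_h\le_0\beta_k$, so $\beta_h\in\Dp_{J'_{i-1}}$, and hence $h$ also lies in block $i$. Therefore $c_{a'_i}(\beta_h)=c_{a'_i}(\beta_k)=1$, and $\delta$ is supported in $J'_i$. But every simple root of $J'_i$ lies in $\Dp_{J'_i}$, which is ordered after block $i$. So $\alpha_j$ comes after $\beta_k$, a contradiction. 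The only place needing care is the bookkeeping that identifies the root blocks with the sets $\Dp_{J'_{i-1}}\setminus\Dp_{J'_i}$. We will establish this by induction on $i$, using Proposition \ref{p13}(3),(4) in each $W_{J_{i-1}}$ together with $w_{0,J_{i-1}}$-conjugation; this is the main obstacle.
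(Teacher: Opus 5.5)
Your argument is correct, but it follows a different route from the paper's. The paper proves Proposition \ref{p41} by induction on $|S|$ through Lemma \ref{l42}, in four steps:
\begin{itemize}
\item it shows $\Phi^R(d_{(a_1)})=\{\theta\}$, using the identity $w_{0,(a_1)}(\theta)=\a_{a_1}$, and hence $p_<(\theta)=d_{(a_1)}$;
\item it checks (TR2) on the first block by the cominuscule coefficient count;
\item it obtains (TR1) on the first block from Stembridge's full commutativity and the fact that the heap of a prefix is an order ideal;
\item it passes to $W_{(a_1)}$ by the transfer identity \eqref{toprove} for projections.
\end{itemize}
You never compute a projection. You identify the $i$-th block of roots as $\Dp_{J'_{i-1}}\setminus\Dp_{J'_i}$, check the stronger condition (TR2$'$) by comparing coefficients on $\a_{a'_i}$, and conclude with Theorem \ref{thm:tr2ptangent}. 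The proof of that theorem does not use Proposition \ref{p41}, so nothing is circular.

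In substance this is the paper's later Proposition \ref{prop:canonicalTR2p}, written non-inductively. Your argument shows that Proposition \ref{p41} already follows from that proposition together with Theorem \ref{thm:tr2ptangent}. Your route is shorter and more elementary: no heaps, no Stembridge, no transfer of projections. It also delivers (TR2$'$), which is what the quantum part actually uses. The paper's route instead gives explicit information on the projections of canonical orders, such as $p_<(\theta)=d_{(a_1)}$. That is the structure exploited in the converse direction (Lemmas \ref{l44}--\ref{l46}).

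Three small points.
\begin{itemize}
\item The bookkeeping you flag as the main obstacle needs no induction. Telescoping gives $s_1\cdots s_{k_i}=w_0w_{0,J_i}=d_{J_i}$. Proposition \ref{p13}(4) then says directly that the initial section through block $i$ is $\Dp\setminus\Dp_{w_0J_iw_0}$.
\item The appeal to Propositions \ref{p22} and \ref{prop:comminv} is superfluous. Your (TR2$'$) check only uses which block each root lies in, not the reduced words chosen inside the blocks.
\item You should justify that $\a_{a'_i}$ is cominuscule in $J'_{i-1}$. The map $-w_0$ permutes $\Pi$ isometrically, so it maps $\D_{J_{i-1}}$ onto $\D_{J'_{i-1}}$, sending $\a_{a_i}$ to $\a_{a'_i}$ and preserving coefficients.
\end{itemize}
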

We call such a convex order a {\it canonical tangent convex order}.
The proposition follows from the next lemma and induction on $|S|$; see Remark \ref{rem:reducible} for the parabolic subgroups which are not irreducible.
\begin{remark}\label{rem:reducible}
The definitions of Subsections \ref{sec:convex}--\ref{tr2prime}, together with the results proved there, make sense for an arbitrary finite Weyl group, since irreducibility was never used. If $W=W_1\times W_2$, with $\Dp=\Dp_1\sqcup\Dp_2$, a convex order $<$ on $\Dp$ restricts to convex orders $<_1$, $<_2$ on $\Dp_1$, $\Dp_2$, and its reduced word is a shuffle of reduced words inducing $<_1$ and $<_2$, hence it is equivalent up to commutation to their concatenation. Moreover $p_<(\b)=p_{<_1}(\b)\in W_1$ for $\b\in\Dp_1$, and similarly for $\Dp_2$, so that $<$ is tangent (resp. satisfies (TR2$'$)) if and only if both $<_1$ and $<_2$ are tangent (resp. satisfy (TR2$'$)); similarly, the reduced word of $<$ is equivalent up to commutation to a canonical one if and only if those of $<_1$ and $<_2$ are. This is how the inductions on $|S|$ in this section are to be understood when a parabolic subgroup $W_{(s)}$ is not irreducible: one applies the inductive hypothesis to its irreducible components.
\end{remark}
\begin{lem}\label{l42} Keep the same notation as in Proposition \ref{p41}.
\begin{enumerate}
\item[(a).] $-d_{(a_1)}(\a_{a_1})=\theta$, i.e. $\Phi^R(d_{(a_1)})=\{\theta\}$ and $\a_{s_1}=-w_0(\a_{a_1})$ is cominuscule.
\item[(b).] $\theta$ satisfies (TR1) and (TR2).
\item[(c).] For all $\b<\theta$, $\b$ satisfies (TR1) and (TR2).
\item[(d).] If $<_{(a_1)}$ is a tangent convex order on $\D^+_{(a_1)}$ given by the reduced word $w_{0,(a_1)}=r_{k_1+1}\cdots r_N$, then
$s_1\cdots s_{k_1}r_{k_1+1}\cdots r_N$ is a reduced word associated to a tangent convex order on $\Dp$.
\end{enumerate}
\end{lem}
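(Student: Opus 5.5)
For (a) I will use the shape of $\Phi(d_{(a_1)})$. By Proposition \ref{p21}, $\Phi(d_{(a_1)})=[-w_0(\a_{a_1}),\theta]_0$, and $\theta$ is the unique maximal element of the root poset, so $\theta\in\Phi(d_{(a_1)})$. By Proposition \ref{p13}(5), $D_R(d_{(a_1)})=\{a_1\}$, so $d_{(a_1)}$ is join-irreducible and $\Phi^R(d_{(a_1)})$ consists of the single root $-d_{(a_1)}(\a_{a_1})$. To identify it with $\theta$, note that $d_{(a_1)}\lessdot$-removing the last letter gives $\Phi(d_{(a_1)}a_1)=\Phi(d_{(a_1)})\setminus\{\b\}$, and this must again be biconvex. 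Since $\theta$ is the unique element of the interval that is never a summand of a sum of two roots inside it, removing any $\b\neq\theta$ would make the complement not closed. Concretely, $\b<_0\theta$ gives some $\g$ with $\b+\g$ a root and $c_{a}(\g)=0$, where $a=w_0a_1w_0$, so $\g$ and $\b$ lie in the complement while their sum does not. Hence $\b=\theta$. Finally, $\a_{s_1}$ is the unique simple root in $\Phi(d_{(a_1)})$, namely $-w_0(\a_{a_1})$, and it is cominuscule by the remark following the definition.

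For (b) I compute $p_<(\theta)$. By (a), $\theta=\b_{k_1}$ and $x_{k_1}=d_{(a_1)}$ is join-irreducible with descent-inversion $\theta$. The second item of Proposition \ref{p32} then gives $p_<(\theta)=d_{(a_1)}$. (TR1) holds trivially, since the roots of $\Phi(d_{(a_1)})$ are exactly $\b_1<\dots<\b_{k_1}$, the sequence of the reduced word itself. For (TR2), write $a=w_0a_1w_0$, which is cominuscule. Every $\a\in\Phi(d_{(a_1)})$ has $c_a(\a)\geq1$, and $c_a(\theta)=1$. Hence $c_a(\theta-\a)=0$, so $\supp(\theta-\a)$ contains only simple roots $\a_t$ with $t\neq a$. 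None of these lies in $[\a_a,\theta]_0$, which gives (TR2).

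For (c), take $\b=\b_j$ with $j<k_1$. Then $x_j$ is a prefix of $d_{(a_1)}$, so $\Phi(x_j)\subseteq[\a_a,\theta]_0$ and the same coefficient argument applies. Every element of $\Phi(p_<(\b))\subseteq\Phi(x_j)$ has $a$-coefficient exactly $1$, so $\supp(\b-\a)$ avoids $\a_a$ and therefore avoids $\Phi(p_<(\b))$; this is (TR2). In fact the stronger statement holds: the whole set $\{\b_1,\dots,\b_{j-1}\}$ has $a$-coefficient $1$, so the simple roots in $\supp(\b_j-\b_h)$ are never among the earlier roots, and the prefix $\b_1,\dots,\b_{k_1}$ satisfies (TR2$'$). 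With this, (TR1) can be obtained as in Theorem \ref{thm:tr2ptangent}: the proofs of Proposition \ref{prop:Bk} and Lemma \ref{lem:tr1reform} only use (TR2$'$) at indices $\leq k$, so they localize to $k\leq k_1$. Alternatively one can argue directly: $\Phi(d_{(a_1)})$ is abelian, since a sum of two of its roots would have $a$-coefficient $2$. Then a configuration as in \eqref{eqn:tr1config} would have $\mu,\mu+\nu$ both of $a$-coefficient $1$, forcing $c_a(\nu)=0$, and $\nu<\mu+\nu$ contradicts $\nu\notin\Phi(x_j)$ only if \ldots; this is the delicate point, and I expect the localized (TR2$'$) route to be the cleaner one.

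For (d), the roots after position $k_1$ are $d_{(a_1)}(\g)$ for $\g\in\Dp_{(a_1)}$, ordered by $<_{(a_1)}$. Conjugation by $d_{(a_1)}$ maps $\Dp_{(a_1)}$ onto $\Dp_{(w_0a_1w_0)}$, which by Proposition \ref{p13}(3) is again a standard parabolic positive system. For $\b=d_{(a_1)}(\g)$, I will show that $p_<(\b)$ is the image of $p_{<_{(a_1)}}(\g)$ transported into $W_{(a)}$. The tool is the interval identity $P(w,\b)=[p,w]_R$ (Proposition \ref{p32}), together with the fact that $\b$ has $a$-coefficient $0$, which forces the minimum to lie in $W_{(a)}$. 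This is the main obstacle. In any case (TR1) and (TR2) then transfer, because $\b$ and everything below it live in the parabolic subsystem $\Dp_{(a)}$, where the order is the transported tangent order.
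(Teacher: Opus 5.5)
Parts (a)--(c) are correct, and in two places you argue differently from the paper. In (a) the paper identifies $-d_{(a_1)}(\a_{a_1})$ with $\theta$ using the fact $w_{0,(a_1)}(\theta)=\a_{a_1}$ quoted from \cite{IM}. Your argument is self-contained: for $\b\neq\theta$, take a simple root $\a_j$ with $\b+\a_j\in\Dp$; necessarily $\a_j\neq\a_a$ because $c_a(\b)=c_a(\theta)=1$. Then $\b$ and $\a_j$ lie outside $\Phi(d_{(a_1)})\setminus\{\b\}$ while their sum lies inside, so the complement of that set is not closed. In (c) the paper gets (TR1) from Stembridge's full commutativity and heaps. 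Your localized (TR2$'$) route works. The direct route you abandoned also closes at once: in a configuration \eqref{eqn:tr1config} at $\b_j$ one has $\nu<\mu\leq\b_j$, so $\nu\in\Phi(x_j)\subseteq\Phi(d_{(a_1)})$ and $c_a(\nu)=1$, whereas $c_a(\nu)=c_a(\mu+\nu)-c_a(\mu)=0$.

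Part (d), however, is only announced, not proved. The identity
\[
\Phi(p_<(\b))=d\bigl(\Phi(p_{<_{(a_1)}}(\gamma))\bigr),\qquad \b=d(\gamma),\quad d=d_{(a_1)},
\]
which you label the main obstacle, is the entire content of (d). The membership $p_<(\b)\in W_{(a)}$ that you point to yields at best one inclusion. Your justification that ``$\b$ and everything below it live in $\Dp_{(a)}$'' does not help either: in the convex order all of $\Phi(d)$ precedes $\b$. What must be shown is exactly that $\Phi(p_<(\b))$ avoids $\Phi(d)$ and equals the transported projection.

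The missing step is a two-sided minimality argument; this is \eqref{toprove} in the paper.
\begin{enumerate}
\item[(i)] Write $x_j=d\,x'_j$, so that $\Phi(x_j)=\Phi(d)\sqcup d(\Phi(x'_j))$.
\item[(ii)] For $u\in W_{(a_1)}$ one has $\Phi(dud^{-1})=d(\Phi(u))$, since $dud^{-1}\in W_{(a)}$ and $d$ maps $\D^{\pm}_{(a_1)}$ onto $\D^{\pm}_{(a)}$.
\item[(iii)] Hence $d(\Phi(p_{<_{(a_1)}}(\gamma)))$ is an inversion set contained in $\Phi(x_j)$ and containing $\b$. Proposition \ref{p32} then gives $\Phi(p_<(\b))\subseteq d(\Phi(p_{<_{(a_1)}}(\gamma)))\subseteq\Dp_{(a)}$, so $p_<(\b)\in W_{(a)}$.
\item[(iv)] Consequently $d^{-1}(\Phi(p_<(\b)))$ is the inversion set of $d^{-1}p_<(\b)d\in W_{(a_1)}$. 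It lies in $\Phi(x'_j)$ and contains $\gamma$, so Proposition \ref{p32} applied in $W_{(a_1)}$ gives the reverse inclusion.
\end{enumerate}
Only after this does (TR1) transfer, by conjugating by $d$ a reduced word of $p_{<_{(a_1)}}(\gamma)$ whose roots increase. For (TR2) you also need that $d$ maps $\Pi_{(a_1)}$ onto $\Pi_{(a)}$, so that it preserves supports and the root poset.
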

\begin{proof} (a). Let $x=d^{(a_1)}_{(a_1,a_2)}\cdots d^{(a_1,\ldots,a_{n-1})}_{(a_1,\ldots,a_{n})}$ (cf. \eqref{x}). Since  $x\in W_{(a_1)}=W_{S\setminus\{a_1\}}$ and the product
$w_0=d_{(a_1)}x$ is reduced, one must have $\theta\in \Phi(d_{(a_1)})$. We know by Proposition \ref{p21} that $\Phi(d_{(a_1)})=[-w_0(\a_{a_1}),\theta]_0$.
Since $\Pi\cap [-w_0(\a_{a_1}),\theta]_0=\{ -w_0(\a_{a_1})\}$ and $\a_{s_1}\in \Phi(d_{(a_1)})$, we have $\a_{s_1}=-w_0(\a_{a_1})$. Since $\a_{a_1}$ is cominuscule in $\D$, also $\a_{s_1}$ is cominuscule. Now let $\b= -d_{(a_1)}(\a_{a_1})$. Since $D_R(d_{(a_1)})=\{a_1\}$ by Proposition \ref{p13}(5), we have $\Phi^R(d_{(a_1)})=\{\b\}$. Now
$-d_{(a_1)}^{-1}(\theta)=-w_{0,(a_1)}w_0(\theta)=w_{0,(a_1)}(\theta)=\a_{a_1}$, where the last equality follows, e.g., from \cite{IM}, $\a_{a_1}$ being cominuscule; hence $d_{(a_1)}(\a_{a_1})=-\theta$ and $\b=\theta$.

\par (b). We have $p_<(\theta)=d_{(a_1)}$ since $\Phi^R(d_{(a_1)})=\{\theta\}$ by (a) and Proposition \ref{p32}. Now $\theta$ satisfies (TR1) by construction, and it satisfies (TR2) because $\a_{s_1}$ is
cominuscule: every $\b\in[\a_{s_1},\theta]_0=\Phi(d_{(a_1)})$ has $c_{s_1}(\b)=c_{s_1}(\theta)=1$,
so that $\supp(\theta-\b)\subseteq \Pi\setminus\{\a_{s_1}\}$, while every element of
$\Phi(d_{(a_1)})$ has $\a_{s_1}$ in its support. So $\Phi(d_{(a_1)})\cap \supp(\theta-\b)=\emptyset$ for all $\b\in \Phi(d_{(a_1)})$.
\par
(c). Let $\b<\theta$, so that $\b\in \Phi(d_{(a_1)})$. We first show that $\b$ satisfies (TR1). Let $x_i=s_1\cdots s_i$, where $\b=s_1\cdots s_{i-1}(\a_{s_i})$, so that $i\leq k_1$. Then $p_<(\b)\leq _R x_i$, i.e. $p_<(\b)u=x_i$, in reduced form, for some $u\in W$. Since $d_{(a_1)}$ is fully commutative by Proposition \ref{p22}, so is its prefix $x_i$. Therefore the orders on $\Phi(x_i)$ induced by the reduced words of $x_i$ are exactly the linear extensions of the heap of $x_i$ \cite[\S 3]{St}; the heap of the prefix $p_<(\b)$ is an order ideal of it, and the restriction of a linear extension to an order ideal is a linear extension. Hence the restriction of $<$ to $\Phi(p_<(\b))$ is induced by a reduced word of $p_<(\b)$, which is (TR1) at $\b$.
Condition (TR2) at $\b$ follows as in part (b): $\Phi(p_<(\b))\subseteq \Phi(d_{(a_1)})=[\a_{s_1},\theta]_0$,
so every $\a\in \Phi(p_<(\b))$ has $c_{s_1}(\a)=1=c_{s_1}(\b)$ and therefore
$\supp(\b-\a)\subseteq\Pi\setminus\{\a_{s_1}\}$, which is disjoint from $\Phi(d_{(a_1)})$.\par
(d). By (b) and (c), it remains to check (TR1) and (TR2) at the roots $\b\notin\Phi(d)$, where $d=d_{(a_1)}$. Such a root is $\b=\b_{k_1+j}=s_1\cdots s_{k_1}r_{k_1+1}\cdots r_{k_1+j-1}(\a_{r_{k_1+j}})=d(\b')$ for some $1\leq j\leq N-k_1$, where $\b'=r_{k_1+1}\cdots r_{k_1+j-1}(\a_{r_{k_1+j}})$ is the $j$-th root for the order $<_{(a_1)}$. Recall that $d(\D^+_{(a_1)})=\Dp_{(w_0a_1w_0)}=\Dp_{(s_1)}$ by Proposition \ref{p13}; more precisely, $d$ maps $\Pi_{(a_1)}$ onto $\Pi_{(s_1)}$, hence it is an isomorphism of root systems from $\D_{(a_1)}$ onto $\D_{(s_1)}$ which preserves the root posets and the supports, and $dW_{(a_1)}d^{-1}=W_{(s_1)}$. By the transfer identity \eqref{toprove}, established in the proof of Theorem \ref{t43} below by an argument which applies verbatim here, $\Phi(p_<(\b))=d\bigl(\Phi(p_{<_{(a_1)}}(\b'))\bigr)$. If $t_1\cdots t_l$ is a reduced word of $p_{<_{(a_1)}}(\b')$ whose roots are listed increasingly by $<_{(a_1)}$, then $(dt_1d^{-1})\cdots(dt_ld^{-1})$ is a reduced word of $p_<(\b)=dp_{<_{(a_1)}}(\b')d^{-1}$ whose roots, namely the images under $d$ of the previous ones, are listed increasingly by $<$; so (TR1) at $\b'$ implies (TR1) at $\b$. Similarly, (TR2) at $\b$ is the image under $d$ of (TR2) at $\b'$.
\end{proof}
\subsection{Classification of tangent convex orders}\label{sec:classif}
In this subsection we prove the following theorem.
\begin{thm}\label{t43} Let $<$ be a convex order with reduced word $w_0=s_1\cdots s_N$. Then $<$ is a tangent convex order if and only if $s_1\cdots s_N$ is equivalent up to commutation to a reduced word which starts with $s_1$ and induces a canonical tangent convex order. Moreover, in this case $\a_{s_1}$ is cominuscule.
\end{thm}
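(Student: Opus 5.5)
The plan is to prove the two directions separately and then argue by induction on $|S|$, using Remark \ref{rem:reducible} whenever a parabolic subgroup $W_{(a_1)}$ is reducible.

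\emph{Sufficiency.} This is immediate. A canonical tangent convex order is tangent by Proposition \ref{p41}. Tangency is an invariant of the commutation class by Proposition \ref{prop:comminv}.

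\emph{Necessity, step 1: identify $p_<(\theta)$.} Let $<$ be tangent, write $\theta=\beta_m$ and $p=p_<(\theta)$. First, $\Phi(p)\cap\Pi$ consists of a single root $\a_t$. It is nonempty by Lemma \ref{lem:simpleprec}(b). If it contained two simple roots $\a_t\neq\a_u$, then $\a_t<_0\theta$ and $c_u(\theta-\a_t)=c_u(\theta)\geq1$, since $\theta$ has full support. Thus $\a_u\in\supp(\theta-\a_t)\cap\Phi(p)$, contradicting (TR2) at $\theta$.

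Second, $\a_t$ is cominuscule. If $c_t(\theta)\geq2$, then $\a_t\in\supp(\theta-\a_t)\cap\Phi(p)$, again violating (TR2).

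Third, every $\gamma\in\Phi(p)$ contains $\a_t$ in its support. Indeed, $\Phi(p)$ contains a simple root of $\supp(\gamma)$ by Lemma \ref{lem:simpleprec}(b), and that root can only be $\a_t$. Put $a_1=w_0tw_0$ and $d=d_{(a_1)}$. Proposition \ref{p21} then gives $\Phi(p)\subseteq[\a_t,\theta]_0=\Phi(d)$, so $p\leq_R d$ by Proposition \ref{p11}. By Lemma \ref{l42}(a), $\Phi^R(d)=\{\theta\}$, so $d$ is join-irreducible. Since $p\in P(d,\theta)$ and $\Phi^R(p)=\{\theta\}$, the second item of Proposition \ref{p32} applied to $w=d$ forces $p=d$. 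Finally, (TR1) at $\theta$ says that the restriction of $<$ to $\Phi(d)$ is induced by a reduced word of $d$. Since $\beta_1$ is simple and must lie in $\Phi(d)$ once $\Phi(d)$ is an initial section (step 2), we get $\a_{s_1}=\a_t$, which is cominuscule. This gives the last assertion of the theorem.

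\emph{Necessity, step 2: move $\Phi(d)$ to the front by commutations.} This is the step I expect to be the main obstacle. I want to show that the word is commutation equivalent to one in which $\Phi(d)$ is an initial section. Consider any position where $\beta_j\notin\Phi(d)$ is immediately followed by $\beta_{j+1}\in\Phi(d)$; note that $\beta_{j+1}\leq\theta$. Since $\beta_j$ and $\beta_{j+1}$ are adjacent, convexity (Proposition \ref{pp}) leaves no root of $\cone_\D(\beta_j,\beta_{j+1})$ strictly between them. Hence $\beta_j+\beta_{j+1}\notin\D$, so $(\beta_j,\beta_{j+1})\geq0$.

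Suppose $(\beta_j,\beta_{j+1})>0$. Then $\eta=\beta_{j+1}-\beta_j$ is a root, necessarily positive: otherwise $\beta_j=\beta_{j+1}+(-\eta)$ would sit between $\beta_{j+1}$ and $-\eta$, forcing $-\eta<\beta_j$, and biconvexity of $\Phi(d)$ would be violated by tracking $\a_t$-coefficients. From $\beta_{j+1}=\beta_j+\eta$ with $\beta_j<\beta_{j+1}<\eta$, Lemma \ref{lem:proj}(a) gives $\beta_j\in\Phi(p_<(\beta_{j+1}))$. By Lemma \ref{lem:proj}(b), using (TR1) at $\theta$, we have $\Phi(p_<(\beta_{j+1}))\subseteq\Phi(d)$. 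This contradicts $\beta_j\notin\Phi(d)$.

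So $(\beta_j,\beta_{j+1})=0$. As in the proof of Proposition \ref{prop:comminv}, the corresponding letters commute and can be swapped. Iterating this bubbling, we reach a commutation-equivalent word of the form (reduced word of $d$)$\cdot$(reduced word of $v$), with $v\in W_{(a_1)}$ by Proposition \ref{p16}(1).

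\emph{Necessity, step 3: induction.} Write $<_{(a_1)}$ for the order on $\Dp_{(a_1)}$ obtained by pulling back the tail of the word via $d$. The transfer identity \eqref{toprove}, to be proved here, is $\Phi(p_<(d(\b')))=d\bigl(\Phi(p_{<_{(a_1)}}(\b'))\bigr)$. I would prove it by noting that $d$ conjugates $W_{(a_1)}$ onto $W_{(s_1)}$ and, for prefixes $x=dy$, identifies $P(x,d(\b'))$ with $dP(y,\b')$. Then (TR1) and (TR2) transfer exactly as in Lemma \ref{l42}(d), so $<_{(a_1)}$ is tangent. By induction on $|S|$, its word is commutation equivalent to a canonical one, and prefixing the reduced word of $d$ yields a canonical tangent word for $<$.
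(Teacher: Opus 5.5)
Up to the final claim $\a_{s_1}=\a_t$, your steps 1 and 2 are correct and are a legitimate variant of Lemmas \ref{l44}--\ref{l46}. You obtain $p_<(\theta)=d$ from the inclusion $\Phi(p)\subseteq[\a_t,\theta]_0$ plus the uniqueness clause of Proposition \ref{p32}, instead of proving the reverse inclusion. You also move the roots of $\Phi(d)$ to the left, with orthogonality supplied by Lemma \ref{lem:proj}, whereas the paper pushes the letters lying outside $d_{(s_i)}$ to the right end of $x_i$. (Positivity of $\eta$ is simply $c_t(\eta)=c_t(\b_{j+1})-c_t(\b_j)=1-0$.) Two steps, however, do not go through as written.

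\emph{The first letter.} Your derivation of $\a_{s_1}=\a_t$ is circular. Step 2 produces \emph{some} word in the commutation class in which $\Phi(d)$ is an initial section, and that word starts with $t$. But nothing you wrote prevents the bubbling from moving roots of $\Phi(d)$ in front of the original $\b_1=\a_{s_1}$. Excluding this is exactly the content of ``starts with $s_1$'' and of the last assertion of the theorem.

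This can be repaired inside your framework. Each pair $\alpha<\gamma$ with $\alpha\notin\Phi(d)$ and $\gamma\in\Phi(d)$ is exchanged exactly once. At that moment the two roots are adjacent in a tangent order whose projection at $\theta$ is still $d$ (Proposition \ref{prop:comminv}), so your argument gives $(\alpha,\gamma)=0$. If $s_1\neq t$, then $\a_{s_1}$ would be orthogonal to all of $[\a_t,\theta]_0$. Take a chain from $\a_t$ to $\theta$ with simple-root steps, as in the proof of Lemma \ref{lem:principal}. Some step equals $\a_{s_1}$ because $c_{s_1}(\theta-\a_t)\geq1$, whence $(\a_{s_1},\a_{s_1})=0$, which is absurd. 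The paper reaches the same kind of contradiction at the end of the proof of Lemma \ref{l46}, via $d\in W_{S\setminus\{s_1\}}$.

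\emph{The transfer identity.} Your justification is false as stated. Every element of $dP(y,\b')$ lies above $d$ in the weak order. So the minimum of that set is $d\,p_{<_{(a_1)}}(\b')$, whose inversion set is $\Phi(d)\sqcup d\bigl(\Phi(p_{<_{(a_1)}}(\b'))\bigr)$, contradicting the identity you want to prove. Reading it as the conjugate $dP(y,\b')d^{-1}$ does not help either: that is a proper subset of $P(x,d(\b'))$, since it does not contain $x$.

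What has to be shown is that the minimum of $P(x,d(\b'))$ avoids $\Phi(d)$, i.e.\ lies in $W_{(t)}$. The paper does this by proving that $d(A)$ is biconvex in $\Dp$ for every biconvex $A\subseteq\Dp_{(a_1)}$; this uses $c_t\equiv1$ on $\Phi(d)$ and $c_t\equiv0$ on $d(\Dp_{(a_1)})$.

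A shorter route, in the spirit of your step 1, also works. Put $u=d\,p_{<_{(a_1)}}(\b')\,d^{-1}$. Then $\Phi(u)=d\bigl(\Phi(p_{<_{(a_1)}}(\b'))\bigr)\subseteq\Phi(x)$. Moreover $d$ maps $\Pi_{(a_1)}$ onto $\Pi_{(t)}$, and $u\in W_{(t)}$ sends $\a_t$ to a positive root, so $\Phi^R(u)=\{d(\b')\}$. The uniqueness clause of Proposition \ref{p32} then gives $u=p(x,d(\b'))$.
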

In particular there are no tangent convex orders in types $E_8$, $F_4$ and $G_2$, which have no
cominuscule simple root. The rest of this subsection is devoted to the proof of the theorem.
\par Let $1\leq i\leq N$ be such that $\theta=\b_i^<$. If (TR1) holds at $\theta$, we let $p_<(\theta)=r_1\cdots r_p$ be the reduced word whose sequence of roots is $\Phi(p_<(\theta))$ listed in increasing order with respect to $<$. Since $p_<(\theta)\leq _R x_i$ by definition, there exist $r_{p+1},\ldots,r_i\in S$ such that
$x_i=r_1\cdots r_i$ is a reduced word; a suitable choice of these letters will be made in Lemma \ref{l46}.\par

\begin{lem}\label{l44} Let $<$ be a tangent convex order. Then
\[p_<(\theta)=d_{(w_0r_1w_0)},\qquad \Phi(p_<(\theta))=[\a_{r_1},\theta]_0,\]
and both $\a_{r_1}$ and $-w_0(\a_{r_1})$ are cominuscule.
\end{lem}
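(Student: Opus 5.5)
The plan is to read off the lemma from conditions (TR1) and (TR2) at $\theta$ alone, and then to identify $\Phi(p_<(\theta))$ with the interval of Proposition \ref{p21}. Put $p=p_<(\theta)$. By (TR1) at $\theta$, $p=r_1\cdots r_p$ is a reduced word whose roots are $\Phi(p)$ in increasing $<$-order. In particular the first of these roots, $\a_{r_1}$, is a simple root lying in $\Phi(p)$.

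\emph{Step 1: $\a_{r_1}$ is cominuscule.} If $\D$ has rank one everything is trivial, so assume $\theta\notin\Pi$. Apply (TR2) at $\theta$ to $\a=\a_{r_1}$, which satisfies $\a_{r_1}<_0\theta$. It gives $\supp(\theta-\a_{r_1})\cap\Phi(p)=\emptyset$. Since $\a_{r_1}\in\Phi(p)$, we get $\a_{r_1}\notin\supp(\theta-\a_{r_1})$, that is $c_{r_1}(\theta)=c_{r_1}(\a_{r_1})=1$. Hence $\a_{r_1}$ is cominuscule. By the remark in Section \ref{sec:cominuscule}, $-w_0(\a_{r_1})=\a_{w_0r_1w_0}$ is then cominuscule as well.

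\emph{Step 2: $\Phi(p)\subseteq[\a_{r_1},\theta]_0$.} Let $\a\in\Phi(p)$ with $\a\neq\theta$, so $\a<_0\theta$. Again by (TR2) at $\theta$, $\a_{r_1}\notin\supp(\theta-\a)$, so $c_{r_1}(\a)=c_{r_1}(\theta)=1$. Thus every root of $\Phi(p)$ has $\a_{r_1}$ in its support, which by Proposition \ref{p21} means it lies in $[\a_{r_1},\theta]_0$. As a by-product, $\a_{r_1}$ is the only simple root in $\Phi(p)$.

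\emph{Step 3: the reverse inclusion.} We use downward induction in the root poset starting from $\theta\in\Phi(p)$. Let $\gamma\in[\a_{r_1},\theta]_0$ with $\gamma\ne\theta$. Since the root poset is ranked and every positive root is connected to $\theta$ by a chain of covering relations, there is a simple root $\a_s$ with $\gamma+\a_s\in[\a_{r_1},\theta]_0$. Because $c_{r_1}(\gamma+\a_s)=1=c_{r_1}(\gamma)$, we have $s\neq r_1$, so $\a_s\notin\Phi(p)$ by Step 2. Assume inductively $\gamma+\a_s\in\Phi(p)$. Since $\Dp\setminus\Phi(p)$ is closed (Proposition \ref{p11}) and $\a_s$ lies in it, $\gamma$ cannot lie in it too, so $\gamma\in\Phi(p)$. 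Hence $\Phi(p)=[\a_{r_1},\theta]_0$.

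\emph{Step 4: identification of $p$.} Proposition \ref{p21}, applied to $s=w_0r_1w_0$ (so that $-w_0(\a_s)=\a_{r_1}$), gives $\Phi(d_{(w_0r_1w_0)})=[\a_{r_1},\theta]_0$. An element of $W$ is determined by its inversion set, so $p=d_{(w_0r_1w_0)}$.

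The only delicate point is Step 3. There one must make sure that each root of the interval can be reached from $\theta$ by subtracting simple roots different from $\a_{r_1}$ while staying inside the root system. This is the standard fact that $\theta$ is the unique maximal element of the ranked poset $(\Dp,\leq_0)$, together with the fact that the coefficient $c_{r_1}$ is constant, equal to $1$, on the interval.
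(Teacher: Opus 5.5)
Your proof is correct and follows essentially the same route as the paper. (TR2) at $\theta$, applied to $\a_{r_1}$ and to the other elements of $\Phi(p_<(\theta))$, gives cominuscularity and the inclusion $\Phi(p_<(\theta))\subseteq[\a_{r_1},\theta]_0$; the reverse inclusion comes from the convexity of $\Dp\setminus\Phi(p_<(\theta))$, which contains every simple root other than $\a_{r_1}$. The only cosmetic difference is that the paper gets the reverse inclusion in one stroke, by writing $\theta\in\cone_\D(\b,\a_t;\ t\neq r_1)$, whereas you unpack the same fact into a downward induction along a chain of simple-root steps using only closedness under sums of two roots.
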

\begin{proof} The rank one case being trivial, we may assume that $\theta$ is not simple. By Proposition \ref{p21} we just have to show that $\Phi(p_<(\theta))=[\a_{r_1},\theta]_0$. Let $\b\in \Phi(p_<(\theta))$, so that $\b\leq_0\theta$, $\theta$ being the highest root. Assume $\b\notin [\a_{r_1},\theta]_0$; then $\b\neq\theta$ and
$\a_{r_1}\notin \supp(\b)$. So $\a_{r_1}\in \supp(\theta-\b)$, and by (TR2) we have $\a_{r_1}\notin \Phi(p_<(\theta))$, which contradicts $\Phi(p_<(\theta))=\{\a_{r_1},r_1(\a_{r_2}),\ldots\}$. So $\b \geq_0 \a_{r_1}$, and
$\Phi(p_<(\theta))\subseteq [\a_{r_1},\theta]_0$. Note now that, since $\a_{r_1}, \theta \in \Phi(p_<(\theta))$ and $\a_{r_1}<_0\theta$, (TR2) gives $\supp(\theta-\a_{r_1})\cap \Phi(p_<(\theta))=\emptyset$. Hence
$c_{r_1}(\theta)=1$, since otherwise $\a_{r_1}\in \supp(\theta-\a_{r_1})\cap \Phi(p_<(\theta))$; therefore $\a_{r_1}$, and hence also $-w_0(\a_{r_1})$, is cominuscule. Moreover $\supp(\theta-\a_{r_1})=\Pi_{S\setminus \{r_1\}}$, as $\theta$ has full support. Now let $\b\in [\a_{r_1},\theta]_0\setminus \Phi(p_<(\theta))$. Since
$\a_{r_1}\in \Phi(p_<(\theta))$ and $\supp(\theta-\a_{r_1})=\Pi_{S\setminus \{r_1\}}$, condition (TR2) gives again $\Pi_{S\setminus \{r_1\}}\subseteq \Dp\setminus  \Phi(p_<(\theta))$. As $c_{r_1}(\b)=1=c_{r_1}(\theta)$, we have $\theta=\b+\sum_{t\ne r_1}n_t\a_t$ with $n_t\in\bN$, so that $\theta\in
\cone_\D(\b,\a_t; t\ne r_1)\subseteq \Dp\setminus \Phi(p_<(\theta))$, since $\Dp\setminus \Phi(p_<(\theta))$ is biconvex by Proposition \ref{p11}. So $\theta\notin \Phi(p_<(\theta))$, which is a contradiction.
\end{proof}
\begin{lem}\label{l45} We have $w_0r_1w_0=r_p=s_i$. So $p_<(\theta)=d_{(s_i)}.$
\end{lem}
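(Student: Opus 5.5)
The plan is to identify the unique right descent of $p_<(\theta)$ in two ways: once from the word $r_1\cdots r_p$, and once from the structure of $x_i$ as a reduced product $d_{(t)}v$. Throughout, write $p=p_<(\theta)$ and $t=w_0r_1w_0$, so that Lemma \ref{l44} gives $p=d_{(t)}$.

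\emph{Step 1: $r_p=t$.} By Proposition \ref{p13}(5), $D_R(d_{(t)})=S\setminus (S\setminus\{t\})=\{t\}$. By Proposition \ref{p32}, $\Phi^R(p)=\{\theta\}$, so $\theta=-d_{(t)}(\a_t)$. On the other hand $r_1\cdots r_p$ is a reduced word of $p$, so its last letter $r_p$ lies in $D_R(p)=\{t\}$. Hence $r_p=t=w_0r_1w_0$.

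\emph{Step 2: $s_i=t$.} Since $p\leq_R x_i$, Proposition \ref{p16}(1), applied with $I=S\setminus\{t\}$, gives a reduced product $x_i=d_{(t)}v$ with $v\in W_{(t)}$. Because $\theta=\b_i^<=-x_i(\a_{s_i})$, we obtain
\[d_{(t)}v(\a_{s_i})=-\theta=d_{(t)}(\a_t),\]
and therefore $v(\a_{s_i})=\a_t$. Suppose $s_i\neq t$. Then $\a_{s_i}\in\D_{(t)}$, and $v\in W_{(t)}$ preserves $\D_{(t)}=\D\cap\mathbb R\Pi_{(t)}$, so $\a_t\in\D_{(t)}$. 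This is impossible, since $\a_t\notin\mathbb R\Pi_{(t)}$. Hence $s_i=t$.

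\emph{Conclusion.} Combining the two steps gives $w_0r_1w_0=r_p=s_i$, and then Lemma \ref{l44} yields $p_<(\theta)=d_{(s_i)}$.

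The only delicate point is to avoid comparing the positions of $\theta$ in the two different words $r_1\cdots r_p$ and $s_1\cdots s_i$ directly. The factorization through $W_{(t)}$ in Step 2 handles this cleanly, so no genuine obstacle is expected.
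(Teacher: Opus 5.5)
Your proof is correct and follows the paper's argument. You identify $r_p$ as the unique right descent of $d_{(w_0r_1w_0)}$, then use the factorization $x_i=d_{(t)}v$ with $v\in W_{(t)}$ from Proposition \ref{p16} to force $v(\a_{s_i})=\a_t$, hence $s_i=t$. The only difference is how you get $d_{(t)}(\a_t)=-\theta$: you take it from Proposition \ref{p32} (the unique descent-inversion of $p_<(\theta)$ is $\theta$), whereas the paper uses the cominuscule identity $w_{0,(t)}(\theta)=\a_t$ from \cite{IM}, so your version is slightly more self-contained.
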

\begin{proof} We have $w_0r_1w_0=r_p$ by Proposition \ref{p13}(5), since $D_R(d_{(w_0r_1w_0)})=\{w_0r_1w_0\}$ and $d_{(w_0r_1w_0)}=r_1\cdots r_p$ is reduced. Now
$d_{(r_p)}=p_<(\theta)\leq_R  x_i$ by definition. So by Proposition \ref{p16}, $x_i=d_{(r_p)}v$ in reduced form with $v\in W_{(r_p)}$. Now, as in the proof  of Lemma \ref{l42},  we have, by \cite{IM},
\[-\a_{r_p}=d^{-1}_{(r_p)}(\theta)=vx_i^{-1}(\theta)=v(-\a_{s_i}),\]
since $\theta=s_1\cdots s_{i-1}(\a_{s_i})=-x_i(\a_{s_i})$.
Then $\a_{r_p}=v(\a_{s_i})$. Since $v\in W_{(r_p)}=W_{S\setminus\{r_p\}}$, one must have $\a_{s_i}\notin \Pi_{S\setminus\{r_p\}}$, so $s_i=r_p$.
\end{proof}
\begin{lem}\label{l46} Let $<$ be a tangent convex order. For a suitable choice of $r_{p+1},\ldots,r_i$, the reduced words $r_1\cdots r_i$ and $s_1\cdots s_i$ are equivalent up to commutation. Moreover $r_1=s_1$.
\end{lem}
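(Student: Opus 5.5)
The plan is to bubble-sort the word $s_1\cdots s_i$ by commutations until the letters whose roots lie in $\Phi(p_<(\theta))$ all come first. The remaining letters then give the suitable choice of $r_{p+1},\dots,r_i$. Throughout, $\beta_j=\beta_j^<$ for $j\le i$ and $\theta=\beta_i$. By Lemmas \ref{l44} and \ref{l45},
\[\Phi(p_<(\theta))=\Phi(d_{(s_i)})=[\a_{r_1},\theta]_0=\{\gamma\in\Dp\mid c_{r_1}(\gamma)=1\},\]
where $\a_{r_1}$ is cominuscule. The last description holds because $c_{r_1}(\theta)=1$ and every root above $\a_{r_1}$ has $c_{r_1}\ge1$. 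By (TR1), these roots appear inside $\beta_1<\dots<\beta_i$ in the order of the reduced word $r_1\cdots r_p$. Every other root $\beta_j$ with $j<i$ has $c_{r_1}(\beta_j)=0$, that is, $\beta_j\in\Dp_{(r_1)}$.

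The reduction is as follows. Suppose some $j<i$ has $\beta_j\notin\Phi(p_<(\theta))$ and $\beta_{j+1}\in\Phi(p_<(\theta))$. I claim that $s_js_{j+1}=s_{j+1}s_j$. Granting the claim, swapping these two letters gives a commutation-equivalent reduced word for $x_i$. It exchanges $\beta_j$ and $\beta_{j+1}$ in the induced sequence and leaves all other roots unchanged. The last letter is untouched, since $\beta_i=\theta\in\Phi(p_<(\theta))$. Iterating this step, a bubble sort on the number of inversions between the two classes of positions, produces a commutation-equivalent word $s'_1\cdots s'_i$ for $x_i$ whose first $p$ roots are exactly $\Phi(p_<(\theta))$, in the same relative order as before. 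Hence $s'_1\cdots s'_p$ is a reduced word of $p_<(\theta)$ with root sequence $\Phi(p_<(\theta))$ listed in $<$-order, so it is $r_1\cdots r_p$. Putting $r_{p+1}\cdots r_i=s'_{p+1}\cdots s'_i$ gives the first assertion. Each step must also keep the hypotheses in force. By Proposition \ref{prop:comminv}, each commutation preserves the projections $p_<$ and tangency, so we may pass to the new word without loss.

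To prove the claim, I use that a non-commuting adjacent pair $s_j,s_{j+1}$ spans, via $x_{j-1}$, a rank-two subsystem $x_{j-1}(\D_{\{s_j,s_{j+1}\}})$. The roots $\beta_j,\beta_{j+1}$ are its first two positive roots in the convex order, and the remaining positive roots of this subsystem are positive combinations $a\beta_j+b\beta_{j+1}$ with $a,b\ge1$ lying strictly after $\beta_{j+1}$. The simplest such root is $s_{j}$-conjugate, namely $x_{j-1}s_js_{j+1}(\a_{s_j})$. Its coefficient on $\a_{r_1}$ is $a\cdot0+b\cdot1=b\ge1$, so it is at least $1$. If $b\ge2$, the cominuscule bound $c_{r_1}\le c_{r_1}(\theta)=1$ is already violated. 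If $b=1$, the root $\gamma=a\beta_j+\beta_{j+1}$ lies in $[\a_{r_1},\theta]_0=\Phi(p_<(\theta))\subseteq\Phi(x_i)$, and it comes strictly after $\beta_{j+1}$. I then expect to derive a contradiction from convexity of $<$ together with (TR1) at $\theta$. Specifically, $\gamma$ and $\beta_{j+1}$ are in $\Phi(p_<(\theta))$ while $\beta_j$ is not, with $\beta_j<\beta_{j+1}<\gamma$. This is a configuration of the type excluded by Lemma \ref{lem:tr1reform}, possibly after decomposing $\gamma-\beta_{j+1}$ into roots, or failing that, by (TR2) applied at $\gamma$. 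This dihedral case analysis is the step I expect to be the main obstacle. In particular, the case where the relevant coefficient $a$ is $2$ or $3$, in non-simply-laced rank-two subsystems, needs care to produce exactly the forbidden pattern.

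Finally, for $r_1=s_1$: after the reordering, the first letter of the new word is $r_1$. If originally $\beta_1=\a_{s_1}\notin\Phi(p_<(\theta))$, the bubbling moved $s_1$ to the right past $r_1$, so $s_1$ commutes with $r_1$. The word obtained is commutation equivalent to the original and starts with $r_1$. By Proposition \ref{prop:comminv}, we may replace the original word by it without changing $p_<$ or tangency. Making this normalisation, we get $s_1=r_1$, which is the form used in Theorem \ref{t43}.
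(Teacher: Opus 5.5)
Your bubble-sort framework is sound: commutations preserve $p_<$ and tangency by Proposition \ref{prop:comminv}, and the sorted prefix must be $r_1\cdots r_p$. The problem is that the claim everything rests on is not proved, and the dihedral argument you sketch for it cannot work. A root $\gamma=a\beta_j+b\beta_{j+1}$ with $a,b\geq1$ lies in $\cone_\Phi(\beta_j,\beta_{j+1})$, so by Proposition \ref{pp} it would lie strictly between the adjacent roots $\beta_j,\beta_{j+1}$. Hence no such root exists. (When $s_j,s_{j+1}$ do not commute, $(\beta_j,\beta_{j+1})=-(\alpha_{s_j},\alpha_{s_{j+1}})>0$, and the roots of the subsystem following $\beta_{j+1}$ have the form $b\beta_{j+1}-a\beta_j$.)

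The right object is the difference. Since $(\beta_j,\beta_{j+1})>0$, $\gamma:=\beta_{j+1}-\beta_j$ is a root. It has $c_{r_1}(\gamma)=1-0=1$, so $\gamma$ is positive and lies in $[\alpha_{r_1},\theta]_0=P$. Convexity applied to $\beta_{j+1}=\beta_j+\gamma$ gives $\beta_{j+1}<\gamma$. Then $(\nu,\mu)=(\beta_j,\gamma)$ is exactly the configuration \eqref{eqn:tr1config} of Lemma \ref{lem:tr1reform} at $\theta$. Concretely, the initial segment $P\cap\Phi(x_{j+1})$ contains $\beta_{j+1}$ but neither $\beta_j$ nor $\gamma$, so it is not co-closed. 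This contradicts (TR1) at $\theta$, which still holds for each commuted order by Proposition \ref{prop:comminv}.

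With this step supplied, your sorting is a valid variant of the paper's proof. The paper instead uses Deodhar's Lemma to pick the last letter whose root lies outside $P$ and moves it to the end. It obtains orthogonality from $\beta_{k+1}=s_{\beta_k}(\beta_{k+1})+m\beta_k$ together with Lemma \ref{lem:proj}(b), rather than from cominuscularity.

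The second gap is $r_1=s_1$. This is a statement about the given word, and Theorem \ref{t43} uses it in that form: the canonical word must start with the original $s_1$, which is how $\alpha_{s_1}$ is shown to be cominuscule. Replacing the word by a commutation-equivalent one beginning with $r_1$ proves nothing. You must instead rule out $\beta_1=\alpha_{s_1}\notin P$.

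In that case the sorting carries $s_1$ past every letter of $r_1\cdots r_p$. By the claim, $\alpha_{s_1}$ is then orthogonal to, hence different from, every $\alpha_{r_m}$. So $d_{(s_i)}=r_1\cdots r_p\in W_{S\setminus\{s_1\}}$, contradicting $\theta\in\Phi(d_{(s_i)})$, since $\theta$ has full support.

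A minor slip: the last letter is not untouched when $j+1=i$. In that case $\theta$ itself moves and ends at position $p$, but nothing in the argument depends on this.
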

\begin{proof} If $p=i$, i.e., $p_<(\theta)=x_i=s_1\cdots s_i$, then the two words have the same sequence of roots, hence they coincide, since a reduced word is determined by its sequence of roots.
Assume $p<i$. We have $s_1\cdots s_{i-1}(\a_{s_i})=\b_i^<=\theta$. Since $s_i\in X_{(s_i)}$, while $x_i=d_{(s_i)}v$ with $v\neq e$ by Lemma \ref{l45}, so that $x_i\notin X_{(s_i)}$, there is $1\leq k\leq i-1$ such that
$u=s_{k+1}\cdots s_i\in X_{(s_i)}$ and $s_ku\notin X_{(s_i)}$. Since $s_ku$ is a reduced product, Deodhar's Lemma gives $s_ku=ut$ with $t\in S\setminus\{s_i\}$; note that $u(\a_t)=\a_{s_k}$.
We show that $s_ks_j=s_js_k$ for all $k+1\leq j\leq i$,
that is, $(\a_{s_k}, \a_{s_j})=0$ for all $k+1\leq j\leq i$.
\par
We first show that $(\b_k^<,\b_{k+1}^<)=0$. Indeed, we know that
\[s_{\b_k^<}(\b_{k+1}^<)=\b_{k+1}^<-\langle \b_{k+1}^<,(\b_{k}^<)^\vee\rangle\b_{k}^<.\]
Note that \[s_{\b_{k}^<}(\b_{k+1}^<)=s_1\cdots s_{k-1} s_k s_{k-1} s_{k-2}\cdots s_1 s_1 \cdots s_k(\a_{s_{k+1}})=s_1\cdots s_{k-1}(\a_{s_{k+1}}) \in \Dp\]
since $s_1\cdots s_{k-1}s_{k+1}$ is reduced. Indeed, $x_i=s_1\cdots s_{k}\cdots s_{i}= s_1\cdots \widehat{s_{k}}\cdots s_{i}t$ which is reduced, so that the prefix
$s_1\cdots s_{k-1}s_{k+1}$ is reduced too. Also note that $(\b_k^<,\b_{k+1}^<)=-(\a_{s_k},\a_{s_{k+1}})\geq 0$.\par
Assume $(\b_{k+1}^<,\b_{k}^<)>0$, so that $m:=\langle \b_{k+1}^<,(\b_{k}^<)^\vee\rangle\geq 1$. Then in the decomposition
\[\b_{k+1}^<= s_{\b_{k}^<}(\b_{k+1}^<)+m\,\b_{k}^<\]
the summands on the right-hand side are positive roots and
$\b_{k}^<<\b_{k+1}^<<s_{\b_{k}^<}(\b_{k+1}^<)$ by Proposition \ref{pp}. Let $z=p_<(\b_{k+1}^<)$. By biconvexity of $\Phi(z)$, either
$\b_k^<$ or $s_{\b_{k}^<}(\b_{k+1}^<)$ lies in $\Phi(z)$, since $\b_{k+1}^<\in \Phi(z)$. But $s_{\b_k^<}(\b^<_{k+1})$ follows $\b^<_{k+1}$, hence does not lie in $\Phi(x_{k+1})\supseteq \Phi(z)$; so $\b_k^<\in \Phi(z)$ (cf. Lemma \ref{lem:proj}(a)). Now $\b_{k+1}^<\in \Phi(d_{(s_i)})$ by Proposition \ref{p16} and Lemma \ref{l45}.  Since $<$ is tangent, (TR1) holds at $\theta$, and $d_{(s_i)}=p_<(\theta)$ by Lemma \ref{l45}; so Lemma \ref{lem:proj}(b) gives $\Phi(z)=\Phi(p_<(\b^<_{k+1}))\subseteq \Phi(p_<(\theta))=\Phi(d_{(s_i)})$, that is $z\leq_R d_{(s_i)}$, and therefore
$\b_k^<\in \Phi(z)\subseteq \Phi(d_{(s_i)})$. But $y:=s_1\cdots \widehat{s_{k}}\cdots s_{i}=x_it$ lies in $x_iW_{(s_i)}=d_{(s_i)}W_{(s_i)}$, so that $d_{(s_i)}\leq_R y$, $d_{(s_i)}$ being the minimal representative of this coset; moreover $\Phi(y)=\Phi(x_i)\setminus\{-x_i(\a_t)\}=\Phi(x_i)\setminus\{\b^<_k\}$, as $-x_i(\a_t)=s_1\cdots s_{k-1}u(\a_t)=\b^<_k$.
So $\Phi(d_{(s_i)})\subseteq \Phi(x_i)\setminus\{\b^<_k\}$, a contradiction. We conclude that $(\b_k^<,\b_{k+1}^<)=0$ and
$s_ks_{k+1}=s_{k+1}s_k$. So $s_1\cdots s_ks_{k+1}\cdots s_i$ is equivalent up to commutation to $s_1\cdots s_{k+1}s_k\cdots s_i$. Now we repeat the argument with the convex order obtained by switching $\b_k^<$ and $\b_{k+1}^<$, which is again tangent by Proposition \ref{prop:comminv}; note that $x_i$, $\theta$ and $p_<(\theta)=d_{(s_i)}$ are unchanged, and that $s_k$ now occupies the position $k+1$, with $s_ks_{k+2}\cdots s_i=s_{k+2}\cdots s_it$. We obtain that $s_1\cdots s_k\cdots s_i$ and $s_1\cdots \widehat{s_k}\cdots s_is_k$ are equivalent up to commutation, and that $\a_{s_k}$ is orthogonal to $\a_{s_{k+1}},\dots,\a_{s_i}$. Now $\theta$ is the last root of the reduced word $s_1\cdots \widehat{s_k}\cdots s_i$, of the element $x_is_k$, whose projection along $\theta$ is again $d_{(s_i)}$, and we conclude by induction on $\ell(x_i)-\ell(d_{(s_i)})$.\par
What we obtain is a sequence of indices $k_1<\dots<k_q$, with $k_q=k$, such that
\[x_i=s_1\cdots s_i\sim\underbrace{s_1\cdots \widehat{s_{k_1}}\cdots \widehat{s_{k_2}}\cdots \widehat{s_{k_q}}\cdots s_i}_{=r_1\cdots r_p}\ \underbrace{s_{k_1}s_{k_2}\cdots s_{k_q}}_{=v\in W_{(s_i)}},\]
where the first underbraced word is a reduced word of $d_{(s_i)}$ whose sequence of roots is $\Phi(d_{(s_i)})$ listed increasingly with respect to $<$ (commutations do not change the root attached to a letter), hence it coincides with $r_1\cdots r_p$; we choose $r_{p+1}\cdots r_i=s_{k_1}\cdots s_{k_q}$. Finally, if $k_1=1$, then, by the argument above applied at the step of the induction in which $s_1$ is moved, $\a_{s_1}$ is orthogonal to $\a_{s_j}$ for every letter $s_j$ of the word $s_2\cdots \widehat{s_{k_2}}\cdots \widehat{s_{k_q}}\cdots s_i$, which is then a reduced word of $d_{(s_i)}$; in particular none of these letters equals $s_1$. Hence $d_{(s_i)}\in W_{S\setminus\{s_1\}}$, which is a contradiction because $\Phi(d_{(s_i)})\ni\theta$ and $\theta$ has full support. Thus $k_1>1$, so that $s_1$ is the first letter of $r_1\cdots r_p$, i.e. $r_1=s_1$.
\end{proof}
\begin{proof}[Proof of Theorem \ref{t43}]
The ``if'' part follows from Proposition \ref{p41} and Proposition \ref{prop:comminv}. For the ``only if'' part, let $<$ be tangent. By Lemma \ref{l46},
$w_0=s_1\cdots s_i s_{i+1}\cdots s_N$ is equivalent up to commutation to
\[\underbrace{r_1\cdots r_p}_{=d_{(s_i)}} \underbrace{r_{p+1}\cdots r_i s_{i+1}\cdots s_N}_{=w_{0,(s_i)}\in W_{(s_i)}}=w_0.\]
The associated convex order $<'$ is tangent by Proposition \ref{prop:comminv}. Let now $<_1$ be the convex order on $\Dp_{S\setminus\{s_i\}}$ induced by the reduced word $r_{p+1}\cdots r_i s_{i+1}\cdots s_N$ of $w_{0,(s_i)}$. It is enough to show that $<_1$ is tangent, and to proceed by induction on $|S|$ (see Remark \ref{rem:reducible}).\par
To simplify notation, we write $<$ for $<'$, write $d=d_{(s_i)}$, and index the roots of $<_1$ so that $\b_j^<=d(\b_j^{<_1})$ for $j\ge p+1$; similarly $x_j^<=d\,x_j^{<_1}$. Then (TR1) and (TR2) for $<_1$ follow from those for $<$, as in the proof of Lemma \ref{l42}(d), once we show that
\begin{equation}\label{toprove}\Phi(p_<(\b_j^<))=d_{(s_i)}\left(\Phi(p_{<_1}(\b_j^{<_1}))\right),\quad\text{ equivalently }\quad
p_<(\b_j^<)=d_{(s_i)}\,p_{<_1}(\b_j^{<_1})\,d_{(s_i)}^{-1}.\end{equation}
Let $A\subseteq\Dp_{(s_i)}$
be biconvex in $\D_{(s_i)}$. Then $d(A)$ is biconvex in $\Dp$: it is closed, because
$d(\mu)+d(\nu)=d(\mu+\nu)\in\Dp$ forces $\mu+\nu\in\Dp_{(s_i)}$ and hence $\mu+\nu\in A$; and its complement is
closed, because if $\sigma+\tau\in d(A)$ with $\sigma,\tau\in\Dp$, then $\sigma,\tau\notin \Phi(d)$, as every element of $\Phi(d)$
has $c_{s_1}=1$ while $c_{s_1}$ vanishes on $d(\Dp_{(s_i)})=\Dp_{(s_1)}$, so that $\sigma=d(\sigma')$ and $\tau=d(\tau')$ with
$\sigma',\tau'\in\Dp_{(s_i)}$ and $\sigma'+\tau'\in A$, whence $\sigma'\in A$ or $\tau'\in A$.
Apply this to $A=\Phi(p_{<_1}(\b_j^{<_1}))$: the set $d(A)$ is a biconvex subset of
$\Phi(x_j)=\Phi(d)\sqcup d(\Phi(x_j^{<_1}))$ containing $\b_j^<$, so the minimality of $\Phi(p_<(\b_j^<))$ among
such sets gives $\Phi(p_<(\b_j^<))\subseteq d(A)$. In particular $\Phi(p_<(\b_j^<))\subseteq d(\Dp_{(s_i)})$,
so that $d^{-1}\left(\Phi(p_<(\b_j^<))\right)$ is a biconvex subset of $\Phi(x_j^{<_1})$ containing
$\b_j^{<_1}$, and the minimality of $\Phi(p_{<_1}(\b_j^{<_1}))$ gives the reverse inclusion. This proves
\eqref{toprove}.
\end{proof}

\subsection{\texorpdfstring{Tangency and (TR2$'$)}{Tangency and (TR2')}}\label{tangvstr2p}

\begin{prop}\label{prop:canonicalTR2p}
Every canonical tangent convex order satisfies {\rm (TR2$'$)}.
\end{prop}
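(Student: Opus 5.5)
We argue by induction on $|S|$, following the recursive structure of the canonical word \eqref{x}. Write $w_0=d\,w_{0,(a_1)}$ with $d=d_{(a_1)}=s_1\cdots s_{k_1}$, and let $<_1$ be the canonical order on $\Dp_{(a_1)}$ induced by the remaining letters. By Remark \ref{rem:reducible}, the induction hypothesis (applied componentwise) gives that $<_1$ satisfies (TR2$'$). The roots of $<$ split into the initial block $\Phi(d)=[\a_{s_1},\theta]_0$ (Proposition \ref{p21} and Lemma \ref{l42}(a)) and the final block $d(\Dp_{(a_1)})=\Dp_{(s_1)}$. On $\Dp_{(a_1)}$ the map $d$ is an isomorphism of root systems onto $\D_{(s_1)}$ that preserves supports and the root poset, and it carries $<_1$ to the restriction of $<$. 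The element $\a_{s_1}$ is cominuscule by Lemma \ref{l42}(a), so every root in the first block has $c_{s_1}=1$, and every root in the second block has $c_{s_1}=0$.

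We now fix $k$ and $h<k$ with $\b_k-\b_h\in Q^+\setminus\{0\}$ and check $\supp(\b_k-\b_h)\cap\{\b_1,\dots,\b_{k-1}\}=\emptyset$. The case $\b_k\in\Dp_{(s_1)}$, $\b_h\in\Phi(d)$ cannot occur, because $c_{s_1}(\b_k-\b_h)=-1$. There are three remaining cases.

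\emph{Case 1: $\b_k,\b_h\in\Phi(d)$.} Both roots have $c_{s_1}=1$, so $\supp(\b_k-\b_h)\subseteq\Pi\setminus\{\a_{s_1}\}$. Since $\Phi(d)\cap\Pi=\{\a_{s_1}\}$, none of these simple roots lies in the first block; as $\b_k$ is in the first block, none of them precedes $\b_k$.

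\emph{Case 2: $\b_k\in\Phi(d)$, $\b_h\in\Dp_{(s_1)}$.} This needs $h<k$ with $\b_h$ in the later block, which is impossible, since all of $\Phi(d)$ precedes $\Dp_{(s_1)}$.

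\emph{Case 3: $\b_k,\b_h\in\Dp_{(s_1)}$.} Write $\b_k=d(\b'_k)$ and $\b_h=d(\b'_h)$. Because $d$ preserves supports (it maps $\Pi_{(a_1)}$ onto $\Pi_{(s_1)}$), we get $\supp(\b_k-\b_h)=d(\supp(\b'_k-\b'_h))\subseteq\Pi_{(s_1)}$. The roots preceding $\b_k$ are $\Phi(d)$, which contains no simple root of $\Pi_{(s_1)}$, together with the $d$-images of the roots preceding $\b'_k$ for $<_1$. Hence the condition reduces exactly to (TR2$'$) for $<_1$ at $\b'_k$, which holds by induction.

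We expect the main care to be needed in Case 3. There one must verify that $d$ really sends simple roots of $\Pi_{(a_1)}$ to simple roots, which is Proposition \ref{p13}(3) together with the fact that $d$ is the minimal representative, as already used in Lemma \ref{l42}(d). One must also verify that the reducible case of $W_{(a_1)}$ is handled by Remark \ref{rem:reducible}. The other two cases are immediate once $\a_{s_1}$ is known to be cominuscule.
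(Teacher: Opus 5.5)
Your proof is correct and follows the paper's argument. Both use induction on $|S|$ and split $\Dp$ into the initial section $\Phi(d)=\{\gamma\mid c_{s_1}(\gamma)=1\}$, where the value one comes from $\a_{s_1}$ being cominuscule, and the final section $\Dp_{(s_1)}$, on which the order is the $d$-transport of a canonical order of $W_{(a_1)}$. The only difference is that the paper argues through the equivalent forbidden-decomposition form of (TR2$'$) (Proposition \ref{prop:nobaddec}) and reaches the contradiction $1=c_{s_1}(\beta_k)\geq\sum_p a_p\geq2$ in the first block, whereas you check the support condition directly using $\Phi(d)\cap\Pi=\{\a_{s_1}\}$.
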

\begin{proof}
We argue by induction on $|S|$, the statement being empty when $S=\emptyset$. Let $<$ be
the convex order attached to the reduced word \eqref{x}, and set $d=d_{(a_1)}$. By Lemma
\ref{l42}(a) the simple root $\a_{s_1}$ is cominuscule, and by Proposition \ref{p13}(4) the
initial section $\Phi(d)$ of $<$ is
\begin{align*}
\Phi(d)=\Dp\setminus\Dp_{(s_1)}=\{\gamma\in\Dp\mid c_{s_1}(\gamma)>0\},
\end{align*}
$c_{s_1}(\gamma)$ denoting the coefficient of $\a_{s_1}$ in $\gamma$; the complementary
final section is $\Dp_{(s_1)}$, on which $<$ restricts to a canonical tangent convex
order of $W_{(s_1)}$. Since $\a_{s_1}$ is cominuscule, $c_{s_1}$ takes only the values
$0$ and $1$ on $\Dp$, so that $c_{s_1}\equiv1$ on $\Phi(d)$.
\par Suppose now that some $\beta_k$ admitted a forbidden decomposition
\begin{align*}
\beta_k=\sum_{p<k}a_p\beta_p+\sum_{q>k}b_q\beta_q,\qquad \sum_{p<k}a_p\geq2 .
\end{align*}
If $\beta_k\in \Phi(d)$ then every $\beta_p$ with $p<k$ lies in $\Phi(d)$ as well, $\Phi(d)$ being
an initial section, so that $c_{s_1}(\beta_p)=1$ for all of them. Comparing the
coefficients of $\a_{s_1}$ on the two sides we get
\begin{align*}
1=c_{s_1}(\beta_k)=\sum_{p<k}a_p\,c_{s_1}(\beta_p)+\sum_{q>k}b_q\,c_{s_1}(\beta_q)
\geq\sum_{p<k}a_p\geq2,
\end{align*}
which is absurd. If instead $\beta_k\notin \Phi(d)$, then $c_{s_1}(\beta_k)=0$; since all
the summands lie in $Q^+$, every one of them has $c_{s_1}=0$, that is, every one of them
lies in $\Dp_{(s_1)}$. The displayed identity is therefore a forbidden decomposition for
the order induced on $\Dp_{(s_1)}$, which is excluded by the inductive hypothesis.
\end{proof}

\begin{cor}\label{conj:C1}
A commutation class of reduced decompositions of $w_0$ contains a tangent convex order if and only if it
contains a convex order satisfying {\rm (TR2$'$)}; in that case all its convex orders are tangent.
\end{cor}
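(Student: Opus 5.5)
The plan is to combine three results already proved in this section: the classification Theorem \ref{t43}, the commutation invariance Proposition \ref{prop:comminv}, and the two implications relating tangency and (TR2$'$), namely Theorem \ref{thm:tr2ptangent} and Proposition \ref{prop:canonicalTR2p}. The first step is to record that tangency is a property of the whole commutation class. By Proposition \ref{prop:comminv}, if one reduced word in a class induces a tangent convex order, then so does every word of that class. This already gives the final clause of the corollary, namely that all convex orders of such a class are tangent, once we know the class contains a tangent one.

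For the direction ``(TR2$'$) $\Rightarrow$ tangent'', suppose the class contains a reduced word whose convex order satisfies (TR2$'$). By Theorem \ref{thm:tr2ptangent} that order is tangent, so the class contains a tangent convex order. By the first step, every order in the class is then tangent.

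For the converse, suppose the class contains a tangent convex order $<$, with reduced word $s_1\cdots s_N$. By Theorem \ref{t43}, this word is equivalent up to commutation to a reduced word inducing a canonical tangent convex order, that is, one of the shape \eqref{x} attached to a chain of cominuscule removals as in Proposition \ref{p41}. This canonical word lies in the same commutation class. By Proposition \ref{prop:canonicalTR2p}, its convex order satisfies (TR2$'$), so the class contains an order satisfying (TR2$'$).

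The only point needing care is bookkeeping rather than new mathematics. We must make sure that ``induces a canonical tangent convex order'' in Theorem \ref{t43} means exactly the hypothesis of Proposition \ref{prop:canonicalTR2p}, namely a word of the form \eqref{x}. When intermediate parabolic subgroups are reducible, a canonical word is only defined up to shuffling components. Here we invoke Remark \ref{rem:reducible}: (TR2$'$) is checked componentwise, and a shuffle is commutation equivalent to the concatenation, so Proposition \ref{prop:canonicalTR2p} still applies to the representative produced by Theorem \ref{t43}.

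We should also not claim that every order in the class satisfies (TR2$'$). That condition is not commutation invariant, since it refers to the full initial section $\{\beta_1,\dots,\beta_{k-1}\}$. The corollary asserts only existence, so no such claim is needed.
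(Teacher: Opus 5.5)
Your proof is correct and is essentially the paper's argument. The ``if'' direction is Theorem \ref{thm:tr2ptangent}, the converse combines Theorem \ref{t43} with Proposition \ref{prop:canonicalTR2p}, and the final clause is Proposition \ref{prop:comminv}. Your side remark is also right: (TR2$'$) is not itself commutation invariant, for instance in type $A_3$ the canonical word $s_1s_2s_3s_1s_2s_1$ satisfies it while the commutation-equivalent $s_1s_2s_1s_3s_2s_1$ violates it at $\theta$.
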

\begin{proof}
The ``if'' part is Theorem \ref{thm:tr2ptangent}. Conversely, by Theorem \ref{t43} a tangent convex order is
commutation equivalent to a canonical one, to which Proposition \ref{prop:canonicalTR2p} applies. The last
assertion is Proposition \ref{prop:comminv}.
\end{proof}

\subsection{Projections at tangent roots}\label{projtangent}
The inclusion $\Phi(p_<(\beta_k))\subseteq\beta_k-Q^+$ of Proposition \ref{prop:Bk} holds under a weaker hypothesis than (TR2$'$), namely (TR1) and (TR2) at the root under consideration and at all the roots preceding it, and its conclusion can be sharpened.
\begin{prop}\label{prop:sumfree}
Assume that {\rm (TR1)} and {\rm (TR2)} hold at $\beta_1,\dots,\beta_k$. Then
$\Phi(p_<(\beta_k))$ is abelian and $\Phi(p_<(\beta_k))\subseteq\beta_k-Q^+$.
\end{prop}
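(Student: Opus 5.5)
We argue by induction on $k$, proving both assertions together. For $k=1$ the root $\beta_1$ is simple, so $p_<(\beta_1)=s_1$ and $\Phi(p_<(\beta_1))=\{\beta_1\}$, and there is nothing to prove. Assume $k>1$, put $P=\Phi(p_<(\beta_k))$, and assume the statement for all $\beta_j$ with $j<k$. These roots satisfy (TR1) and (TR2) at $\beta_1,\dots,\beta_j$. We shall use three facts repeatedly:
\begin{itemize}
\item by Lemma \ref{lem:proj}(b), since (TR1) holds at $\beta_k$, we have $\Phi(p_<(\gamma))\subseteq P$ for every $\gamma\in P$;
\item by the inductive hypothesis, $\Phi(p_<(\gamma))$ is abelian and contained in $\gamma-Q^+$ for every $\gamma\in P\setminus\{\beta_k\}$, since such $\gamma$ precede $\beta_k$;
\item by Lemma \ref{lem:simpleprec}(b), applied to $u=p_<(\gamma)$, the set $P$ contains a simple root of $\supp(\gamma)$ for every $\gamma\in P$.
\end{itemize}

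The strategy imitates the proof of Proposition \ref{prop:Bk}, with (TR2$'$) replaced by (TR2) at the roots of $P$. Set $P'=P\cap(\beta_k-Q^+)$. The goal is to show that $P'$ is abelian and biconvex. Once this is known, the minimality of $P$ among biconvex subsets of $\Phi(w_k)$ containing $\beta_k$ (Proposition \ref{p32}) gives $P\subseteq P'$. Hence $P=P'$, which yields both the inclusion $P\subseteq\beta_k-Q^+$ and the abelian property.

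Co-closedness of $P'$ is the easy part, exactly as in Proposition \ref{prop:Bk}. If $\mu+\nu\in P'$, then one of $\mu,\nu$, say $\mu$, lies in $P$, because $P$ is biconvex. Moreover $\mu\leq_0\mu+\nu\leq_0\beta_k$, so $\mu\in P'$. Since an abelian set is automatically closed, it remains to prove that $P'$ is abelian.

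So let $\mu,\nu\in P'$ with $\eta=\mu+\nu\in\Dp$; then $\eta\in P$ by closedness of $P$. There are two cases.
\begin{itemize}
\item If $\eta=\beta_k$, apply (TR2) at $\beta_k$ with $\alpha=\mu$. It gives $\supp(\nu)\cap P=\emptyset$. This contradicts the third fact above applied to $\gamma=\nu$.
\item If $\eta\neq\beta_k$, then $\eta$ precedes $\beta_k$. By convexity $\eta$ lies between $\mu$ and $\nu$; say $\mu<\eta<\nu$. Lemma \ref{lem:proj}(a), applied at $\eta$, gives $\mu\in\Phi(p_<(\eta))$. By the inductive hypothesis $\Phi(p_<(\eta))\subseteq\eta-Q^+$ and is abelian.
\end{itemize}

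The main obstacle is to exclude the second case. We do not know a priori that $\eta\leq_0\beta_k$, so we must compare coefficients. Choose a simple root $\alpha_r\in\Phi(p_<(\mu))\subseteq P$ with $\alpha_r\in\supp(\mu)$; it exists by the third fact applied to $p_<(\mu)$. (TR2) at $\beta_k$ with $\alpha=\mu$ gives $c_r(\mu)=c_r(\beta_k)$. Next, (TR2) at $\eta$ with $\alpha=\mu$ is available because $\mu\in\Phi(p_<(\eta))$ and $\mu<_0\eta$. It gives $\supp(\nu)\cap\Phi(p_<(\eta))=\emptyset$. Then $\alpha_r\notin\supp(\nu)$, since $\alpha_r\in\Phi(p_<(\mu))\subseteq\Phi(p_<(\eta))$ by Lemma \ref{lem:proj}(b) at $\eta$. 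On the other hand, a simple root of $\supp(\nu)$ lies in $P$ by the third fact applied to $\nu$. Call it $\alpha_t$. Then $\alpha_t\notin\Phi(p_<(\eta))$.

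I would first try to show that $\nu\in\Phi(p_<(\eta))$, or else that $\alpha_t$ is forced into $\Phi(p_<(\eta))$. The tool is biconvexity of $\Phi(p_<(\eta))$ along a chain $\nu=\alpha_t+\dots$. This would yield a contradiction with (TR2) at $\eta$. If that fails, the fallback is to apply (TR2) at $\beta_k$ with $\alpha=\nu$ and $\alpha_t$. This gives $c_t(\nu)=c_t(\beta_k)$. Together with $c_r(\mu)=c_r(\beta_k)$, it should force $\eta=\mu+\nu$ to exceed $\beta_k$ on some coefficient. That in turn should contradict $\eta\in P$ via (TR2) at $\beta_k$ applied to a suitable element of $P'$ below $\eta$.
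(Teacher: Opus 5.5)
Your setup matches the paper's: $P'=P\cap(\beta_k-Q^+)$ is co-closed, you show it is abelian, and you conclude by minimality. Everything you actually derive is correct, including the case $\eta=\beta_k$, the choice of $\alpha_r\in\Phi(p_<(\mu))\subseteq\Phi(p_<(\eta))$, and the consequences $c_r(\mu)=c_r(\beta_k)\geq1$ and $\alpha_r\notin\supp(\nu)$. However, the case $\eta\neq\beta_k$ is never closed, and none of the continuations you propose works as stated:
\begin{itemize}
\item Showing $\nu\in\Phi(p_<(\eta))$ is impossible. Since $\nu>\eta=\beta_j$, we have $\nu\notin\Phi(w_j)\supseteq\Phi(p_<(\eta))$.
\item You give no mechanism for forcing $\alpha_t$ into $\Phi(p_<(\eta))$. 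You have just shown it is not there, so achieving this would already be the contradiction.
\item In the fallback, the equalities $c_t(\nu)=c_t(\beta_k)$ and $c_r(\mu)=c_r(\beta_k)$ do not by themselves make $\eta$ exceed $\beta_k$ on any coefficient; you would also need $c_t(\mu)\geq1$.
\item More importantly, (TR2) at $\beta_k$ says nothing directly about $\eta$. It only concerns elements of $P$ lying below $\beta_k$ in the root poset, and that is exactly what is in question for $\eta$. The contradiction has to come from (TR2) at $\eta$, which needs a simple root lying in $\Phi(p_<(\eta))$. So $\alpha_t$, which is in $P$ but not in $\Phi(p_<(\eta))$, is a dead end.
\end{itemize}

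The missing observation is that (TR2) at $\beta_k$ controls the coefficient of \emph{every} simple root of $P$ on \emph{every} element of $P'$, not only the simple roots in that element's support. If $\alpha\in P'$ with $\alpha\neq\beta_k$, and $\alpha_s\in P$ is simple, then $\alpha_s\notin\supp(\beta_k-\alpha)$, i.e. $c_s(\alpha)=c_s(\beta_k)$. Apply this to $\nu$ and to your own $\alpha_r$, which lies in $P$. You get $c_r(\nu)=c_r(\beta_k)$, trivially so if $\nu=\beta_k$. Since $c_r(\beta_k)=c_r(\mu)\geq1$, this contradicts $\alpha_r\notin\supp(\nu)$, which you obtained from (TR2) at $\eta$, and the proof is complete.

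This is precisely the paper's argument. There it is phrased as $c_r(\eta)=2c_r(\beta_k)\geq2$, followed by (TR2) at $\eta$ with $\alpha=\alpha_r$. That formulation also shows $\eta\neq\beta_k$ automatically, so your separate case $\eta=\beta_k$ is unnecessary. The induction on $k$ is unnecessary too: you never use the inductive hypothesis, only (TR1) and (TR2) at $\beta_k$ and at $\eta$.
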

\begin{proof}
Put $P^-=\Phi(p_<(\beta_k))\cap(\beta_k-Q^+)$, a subset of $\Phi(w_k)$ containing $\beta_k$,
which is co-closed by the argument given in the proof of Proposition \ref{prop:Bk}. We
show that no two elements of $P^-$ add up to a root. Suppose $\gamma,\delta\in P^-$ and
$\eta=\gamma+\delta\in\Dp$. Then $\eta\in \Phi(p_<(\beta_k))$, the latter being closed, and
convexity places $\eta$ strictly between $\gamma$ and $\delta$; label them so that
$\gamma<\eta<\delta$. Since $\eta\leq\beta_k$, we have $\gamma\neq\beta_k$.
\par By Lemma \ref{lem:proj}(a), applied to $\eta=\gamma+\delta$, we have
$\gamma\in \Phi(p_<(\eta))$, so that Lemma \ref{lem:simpleprec}(b), applied to $p_<(\eta)$,
yields a simple root $\a_r\in \Phi(p_<(\eta))\cap\supp(\gamma)$; and
$\Phi(p_<(\eta))\subseteq \Phi(p_<(\beta_k))$ by Lemma \ref{lem:proj}(b), since
$\eta\in \Phi(p_<(\beta_k))$. As $\beta_k-\gamma\in Q^+\setminus\{0\}$, condition (TR2) at
$\beta_k$ forces $\a_r\notin\supp(\beta_k-\gamma)$, that is $c_r(\gamma)=c_r(\beta_k)$,
where $c_r$ denotes the coefficient of $\a_r$; and $c_r(\gamma)\geq1$. The same argument
gives $c_r(\delta)=c_r(\beta_k)$, trivially so if $\delta=\beta_k$. Hence
$c_r(\eta)=2c_r(\beta_k)\geq2$, so that $\eta\neq\beta_k$ and therefore $\eta<\beta_k$,
and (TR1), (TR2) hold at $\eta$. But $\a_r\in \Phi(p_<(\eta))$, $\a_r<_0\eta$ and
$\a_r\in\supp(\eta-\a_r)$, because $c_r(\eta-\a_r)\geq1$; this contradicts (TR2) at
$\eta$.
\par Thus $P^-$ is closed, and hence biconvex. The minimality of $\Phi(p_<(\beta_k))$ among
the biconvex subsets of $\Phi(w_k)$ containing $\beta_k$ gives
$\Phi(p_<(\beta_k))\subseteq P^-$, so that $\Phi(p_<(\beta_k))=P^-$ and both assertions follow.
\end{proof}

\begin{lem}\label{lem:nosum}
Let $\mu<\beta_k$ with $\supp(\mu)\subseteq\supp(\beta_k)$, and assume that
$\eta=\mu+\beta_k$ is a root. If {\rm (TR1)} holds at $\eta$ then {\rm (TR2)} fails at
$\eta$. In particular, if {\rm (TR1)} and {\rm (TR2)} hold at $\beta_1,\dots,\beta_{k-1}$,
no such $\mu$ exists.
\end{lem}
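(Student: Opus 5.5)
Suppose (TR1) holds at $\eta=\mu+\beta_k$. The plan is to exhibit, inside $\Phi(p_<(\eta))$, a simple root occurring in $\eta-\alpha$ for some $\alpha\in\Phi(p_<(\eta))$ with $\alpha<_0\eta$. That directly contradicts (TR2) at $\eta$.

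\emph{Positioning $\eta$ and applying the earlier lemmas.} Since $\mu<\beta_k$ and $\eta=\mu+\beta_k$, convexity (Proposition \ref{pp}) gives $\mu<\eta<\beta_k$. Write $\eta=\beta_j$ with $j<k$. Lemma \ref{lem:proj}(a), applied to $\eta=\mu+\beta_k$ with $\mu<\eta<\beta_k$, gives $\mu\in\Phi(p_<(\eta))$. By Lemma \ref{lem:simpleprec}(b), $\Phi(p_<(\mu))$ contains a simple root $\alpha_r\in\supp(\mu)$. Because (TR1) holds at $\eta$, Lemma \ref{lem:proj}(b) gives $\Phi(p_<(\mu))\subseteq\Phi(p_<(\eta))$. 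Hence $\alpha_r\in\Phi(p_<(\eta))$. Alternatively, Lemma \ref{lem:simpleprec}(b) can be applied directly to $p_<(\eta)$ and $\mu$; this already produces such an $\alpha_r$ and does not use (TR1). In either case we obtain $\alpha_r\in\Phi(p_<(\eta))\cap\supp(\mu)$.

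\emph{The contradiction.} Take $\alpha=\mu$. Then $\mu\in\Phi(p_<(\eta))$ and $\eta-\mu=\beta_k\in Q^+\setminus\{0\}$, so $\mu<_0\eta$. The hypothesis $\supp(\mu)\subseteq\supp(\beta_k)$ gives $\alpha_r\in\supp(\beta_k)=\supp(\eta-\mu)$. So $\alpha_r$ lies in $\supp(\eta-\mu)\cap\Phi(p_<(\eta))$, and (TR2) fails at $\eta$. As noted, this step works even without (TR1). The implication ``if (TR1) at $\eta$ then (TR2) fails at $\eta$'' is therefore immediate, and (TR1) is only needed to keep the framing of the statement.

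\emph{The ``in particular''.} Assume (TR1) and (TR2) hold at $\beta_1,\dots,\beta_{k-1}$, and suppose such a $\mu$ existed. The root $\eta$ precedes $\beta_k$, so (TR1) and (TR2) both hold at $\eta$. This contradicts the first part. The only point requiring care is the convexity placement $\mu<\eta<\beta_k$, which is what puts $\eta$ among the earlier roots. It is forced by Proposition \ref{pp}, since $\eta$ must lie between $\mu$ and $\beta_k$ and $\mu<\beta_k$. I expect no real obstacle here; the main check is that Lemma \ref{lem:proj}(a) applies with the summand $\mu$ preceding $\eta$ and $\beta_k$ following it, which the convexity placement provides.
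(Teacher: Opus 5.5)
Your proof is correct and is essentially the paper's: convexity gives $\mu<\eta<\beta_k$, Lemma~\ref{lem:proj}(a) puts $\mu$ in $\Phi(p_<(\eta))$, and a simple root $\alpha_r\in\supp(\mu)\subseteq\supp(\beta_k)$ lying in $\Phi(p_<(\eta))$ then makes (TR2) fail at $\eta$ with $\alpha=\mu$. Your side remark is also right: applying Lemma~\ref{lem:simpleprec}(b) directly to $p_<(\eta)$, as the paper itself does in the proof of Proposition~\ref{prop:sumfree}, bypasses Lemma~\ref{lem:proj}(b), so the hypothesis that (TR1) holds at $\eta$ is not actually needed.
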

\begin{proof}
Convexity gives $\mu<\eta<\beta_k$, so that $\mu\in \Phi(p_<(\eta))$ by Lemma \ref{lem:proj}(a). By
Lemma \ref{lem:simpleprec}(b) there is a simple root $\a_c\in \Phi(p_<(\mu))\cap\supp(\mu)$, and
Lemma \ref{lem:proj}(b) at $\eta$ gives $\a_c\in \Phi(p_<(\eta))$. Now $\mu<_0\eta$, with
$\eta-\mu=\beta_k$, and $\a_c\in\supp(\beta_k)\cap \Phi(p_<(\eta))$, contrary to (TR2) at
$\eta$. The last assertion follows because $\eta$ precedes $\beta_k$.
\end{proof}

\vskip5pt
\centerline{\bf\large The second half of the title: quantum tangent spaces}
\vskip5pt
\section{Preliminaries on first order differential calculi and Drinfel'd--Jimbo quantum groups}\label{sec:fodc}
In this section we briefly recall some general definitions and results on covariant first order differential calculi over Hopf algebras. Here and in what follows all algebras are assumed to be unital and defined over $\mathbb{C}$, with the exception of $U_q(\mathfrak{g})$ and its subalgebras, which are defined over $\mathbb{Q}(q)$ (Subsection \ref{subsection:DJQG}). For Hopf algebras we denote by $\Delta$ and $\e$ the coproduct and the counit, respectively.
\subsection{Covariant first order differential calculi and quantum tangent spaces}
Let $B$ be an algebra. A {\em first-order differential calculus (fodc)} over $B$ is a pair $(\Om^1,\exd)$ consisting of a $B$-bimodule $\Omega^1$ and a linear map $\exd: B \to \Omega^1$ such that the {\em Leibniz rule} holds
\begin{align*}
\exd(ab)=a(\exd b)+(\exd a)b,&  & a,b \in B,
\end{align*}
and for which $\Om^1$ is generated as a left $B$-module by those elements of the form~$\exd b$, for~$b \in B$. The {\em universal first-order differential calculus} over $B$ is the pair
$(\Om^1_u(B), \exd_u)$, where $\Om^1_u(B)$ is the kernel of the multiplication $m_B: B \otimes B \to B$ endowed
with the obvious bimodule structure, while the differential $\exd_u$ is defined by
\begin{align*}
\exd_u: B \to \Omega^1_u(B), & & b \mto 1 \otimes b - b \otimes 1.
\end{align*}
Every fodc over $B$ is of the form $\left(\Omega^1_u(B)/N, \,\proj \circ \exd_u\right)$ for some $B$-sub-bimodule $N$ of $\Omega^1_u(B)$, where $\proj:\Omega^1_u(B) \to \Omega^1_u(B)/N$ is
the canonical quotient map. This establishes a bijective correspondence between (isomorphism classes of) first order differential calculi over $B$ and sub-bimodules of $\Omega^1_u(B)$.

Any fodc admits an extension to a differential calculus $\Omega^\bullet(B)$, that is, to a differential graded algebra with $\Omega^0(B)=B$ which is generated in degree $0$ as a differential graded algebra
(see for example \cite[\textsection 1.5]{BeggsMajid:Leabh} for details).

For a Hopf algebra $A$, we say that a fodc $(\Omega^1(A),\exd)$ over $A$ is left covariant if $\Omega^1(A)$ is a left $A$-comodule and $\exd:A\rightarrow\Omega^1(A)$ is a map of left $A$-comodules. Right covariance and bicovariance of a fodc are defined analogously.
If $\Omega^1(A)$ is a left-covariant fodc over $A$, then it is naturally an object of the category ${}^A_A\mathrm{Mod}_A$ and, by the fundamental theorem of Hopf modules,
\begin{align*}
    \Omega^1(A)\cong A \otimes \frac{A^+}{I},
\end{align*}
where $A^+= \ker(\e)$ and $I\subseteq A^+$ is a right ideal of $A$.

Denote by $A^\circ$ the Hopf dual of $A$. A subspace $T\subseteq A^\circ$ is a \textit{left quantum tangent space} for $A$ if
\begin{align*}
    \Delta(T)\subseteq (T\oplus \mathbb{C}) \otimes A^\circ .
\end{align*}
Equivalently, $\mathbb{C}\oplus T$ is a right coideal of $A^\circ$; this is the condition used in Proposition \ref{prop:qtscriterion}. One has an analogous definition of right quantum tangent spaces for right-covariant differential calculi. In the following we will mainly focus on left quantum tangent spaces and, unless otherwise specified, the term quantum tangent space will always mean left quantum tangent space.

There is a one-to-one correspondence between finite-dimensional quantum tangent spaces for $A$ and left-covariant fodc $A\otimes A^+/I$ over $A$ with $\dim A^+/I<\infty$. More precisely, given a quantum tangent space $T$,
\begin{align*}
    I=A^+\cap\ker(T),\qquad \ker(T):=\bigcap_{X \in T} \ker(X),
\end{align*}
is the right ideal of $A$ contained in $A^+$ which defines the fodc corresponding to $T$.
\subsection{Quantized enveloping algebras} \label{subsection:DJQG}
Let $\mathfrak{g}$ be a finite-dimensional complex simple Lie algebra of rank $l$. We fix a Cartan subalgebra $\mathfrak h$ and a set of simple roots $\Pi = \{\alpha_1, \dots, \alpha_l\}$ of the corresponding root system $\D$, and we denote by $\Phi^+$ the corresponding set of positive roots. We use the numbering of the simple roots of \cite{Bo}, and we normalize the invariant form so that $(\alpha,\alpha)=2$ for the short roots $\alpha$. We put $q_i:=q^{\frac{(\alpha_i,\alpha_i)}{2}}$, and we denote by $a_{ij}=\langle\alpha_j,\alpha_i^\vee\rangle$ the entries of the Cartan matrix.
We denote by $U_q(\mathfrak{g})$ the Drinfel'd--Jimbo quantized enveloping algebra, that is, the $\mathbb{Q}(q)$-algebra, $q$ being an indeterminate, generated by $E_i,F_i,K_i,K^{-1}_i$, for $i = 1, \dots, l$, subject to the relations
\begin{gather*}
 K_iE_j = q_i^{a_{ij}} E_j K_i, \quad K_iF_j= q_i^{-a_{ij}} F_j K_i, \quad K_i K_j = K_j K_i, \quad K_iK_i^{-1} = K_i^{-1}K_i = 1,\\
 E_iF_j - F_jE_i = \d_{ij}\frac{K_i - K\inv_{i}}{q_i-q_i^{-1}},
\end{gather*}
along with the quantum Serre relations, which we omit (see for example \cite[\textsection 6.1.2]{KSLeabh} for details). When a numerical value of $q$ is needed, as for the quantum coordinate algebra and the differential calculi below, $q$ is specialized to some $q_0\in\mathbb{C}^\times$ which is not a root of unity; see the beginning of Section \ref{sec:qts}.
We recall that $U_q(\mathfrak{g})$ is a Hopf algebra with coproduct and counit defined on the generators by
\begin{align}\label{coproduct}
   \Delta(K_i)=K_i \otimes K_i, \quad\Delta(E_i)= E_i \otimes K_i + 1 \otimes E_i, \quad \Delta(F_i)= F_i \otimes 1 + K^{-1}_i \otimes F_i,
\end{align}
and
\begin{align*}
   \e(K_i)=1, \quad\e(E_i)= \e(F_i)= 0.
\end{align*}
We denote by $U_q(\mathfrak{n}^+)$ and $U_q(\mathfrak{n}^-)$ the subalgebras of $U_q(\mathfrak{g})$ generated by the $E_i$ and $F_i$ respectively. Similarly we denote by $U_q(\mathfrak{b}^+)$ and $U_q(\mathfrak{b}^-)$ the Hopf subalgebras of $U_q(\mathfrak{g})$ generated respectively by $\{E_i,K^{\pm 1}_i\}$ and $\{F_i,K^{\pm 1}_i\}$. For $\mu=\sum_{i=1}^{l} n_i\alpha_i\in Q$ we set
\begin{align*}
    K_\mu=\prod_{i=1}^{l} K_i^{n_i},
\end{align*}
so that $K_\mu^{-1}=K_{-\mu}$; in particular $K_\beta$ is defined for every $\beta\in\Dp$.
We will denote by $\{\varpi_1, \dots, \varpi_l\}$ the set of fundamental weights of $\g$, by ${\mathcal P}$ the $\bZ$-span of the fundamental weights (the weight lattice) and by ${\mathcal P}^+$ the $\mathbb{Z}_{\geq 0}$-span of the fundamental weights; the root lattice $Q$ and the cone $Q^+$ were introduced in Section \ref{sec:coxeter}. For any $\mu \in \mathcal{P}^+$, we denote by $V_{\mu}$ the associated finite-dimensional irreducible type-$1$ representation of $U_q(\mathfrak{g})$.
We denote by $_{U_q(\mathfrak{g})}\mathbf{type}_1$ the full subcategory of ${U_q(\mathfrak{g})}$-modules whose objects are finite sums of type-$1$ modules $V_\lambda$, for $\lambda \in \mathcal{P}^+$.
\subsection{Quantum coordinate algebra}
Let $V$ be a finite-dimensional $U_q(\mathfrak{g})$-module, $v \in V$, and $f \in V^*$, the linear dual of $V$. Consider the function $c^{\textrm{\tiny $V$}}_{f,v}:U_q(\mathfrak{g}) \to \bC$ defined by $c^{\textrm{\tiny $V$}}_{f,v}(X) := f\big(X(v)\big)$.
The {\em coordinate ring} of $V$ is the subspace
\begin{align*}
C(V) := \text{span}_{\mathbb{C}}\!\left\{ c^{\textrm{\tiny $V$}}_{f,v} \mid v \in V, \, f \in V^*\right\} \sseq U_q(\mathfrak{g})^*.
\end{align*}
It is easily checked that $C(V)$ is contained in $U_q(\mathfrak{g})^\circ$, the Hopf dual of $U_q(\mathfrak{g})$. A Hopf subalgebra of $U_q(\mathfrak{g})^\circ$ is given by
\begin{align*}
\O_q(G) := \bigoplus_{\lambda \in \mathcal{P}^+} C(V_{\lambda}).
\end{align*}
We call $\O_q(G)$ the {\em quantum coordinate algebra of $G$}, where $G$ is the compact, simply-connected, simple Lie group having $\mathfrak{g}$ as its complexified Lie algebra.

\section{Quantum root vectors and reduced expressions}\label{sec:qrv}
In this section we recall some general facts on the quantum root vectors introduced by Lusztig in \cite{Lusztig88,Lusztig90_2} (see also \cite{LusztisLeabh}).
\subsection{\texorpdfstring{Quantum root vectors in $U_q(\mathfrak{g})$}{Quantum root vectors in Uq(g)}}
Let $W$ be the Weyl group of $\mathfrak{g}$. Recall that we have fixed the notation $\Pi=\{\a_1,\ldots,\a_{l}\}$. Unlike in the first part of the paper, we write
$s_i$ for the simple reflection $s_{\a_i}$. It will be convenient to encode reduced words as integer sequences ${\bf i}=(i_1,\ldots,i_k)$, so that saying that ${\bf i}$ is a reduced word for $w$ means that
$w=s_{i_1}\cdots s_{i_k}$.\par
Let moreover $\mathcal B$ be the braid group associated to $W$, i.e. the group whose presentation is obtained from the Coxeter presentation of $W$ with generators $S=\{s_1,\ldots,s_l\}$ by dropping the relations
$s^2_i=e$, $1\leq i\leq l$.
We denote by $T_1, \dots,T_l$ the generators of $\mathcal B$.
We recall the following well-known result (see e.g. \cite[Ch.~8]{Ja}).
\begin{thm}\label{thm:Lusztigquantumrep}
There exists an action of $\mathcal{B}$ on $U_q(\mathfrak{g})$ by algebra automorphisms, given on generators by:
\begin{equation*}
T_i(K_j) = K_{j} K_i^{-a_{ij}}, \ \ T_i(E_i) = -F_{i}K_{i}, \ \ T_i(F_i)= -K_{i}^{-1}E_{i},
\end{equation*}
\begin{equation*}
T_i(E_j) = \sum_{t=0}^{-a_{ij}} (-1)^{t}q_i^{-t}(E_i)^{(-a_{ij}-t)}E_j(E_i)^{(t)}, \ \ i\neq j,
\end{equation*}
\begin{equation*}
T_i(F_j) = \sum_{t=0}^{-a_{ij}} (-1)^{t}q_i^{t}(F_i)^{(t)}F_j(F_i)^{(-a_{ij}-t)}, \ \ i\neq j,
\end{equation*}
where
\begin{equation*}
(E_i)^{(n)} := E_i^n /[n]_{q_i}!, \quad (F_i)^{(n)} := F_i^n / [n]_{q_i}!.
\end{equation*}
\end{thm}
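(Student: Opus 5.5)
The plan is to follow Lusztig's route through integrable modules, as in \cite[Ch.~8]{Ja}. This avoids checking the quantum Serre relations and the braid relations directly on generators, where the rank two case $G_2$ would be very heavy. We use three facts.
\begin{enumerate}
\item[(i)] Every finite-dimensional type-$1$ module $V$ carries operators $T_i^V$.
\item[(ii)] The formulas of the theorem are exactly what one gets by conjugating with these operators.
\item[(iii)] The direct sum of all $V_\lambda$, $\lambda\in\mathcal P^+$, is a faithful $U_q(\mathfrak g)$-module.
\end{enumerate}

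First, on a type-$1$ module $V$ with weight space decomposition, we define for $v$ of weight $\mu$ with $n=\langle\mu,\a_i^\vee\rangle$
\[
T_i^V(v)=\sum_{a-b+c=n}(-1)^b q_i^{\,ac-b}\,F_i^{(a)}E_i^{(b)}F_i^{(c)}v .
\]
This sum is finite because $E_i,F_i$ act locally nilpotently. Working inside $U_{q_i}(\mathfrak{sl}_2)$-isotypic pieces, we show that $T_i^V$ is invertible, with an inverse given by an explicit formula of the same shape. We also show that $T_i^V$ maps $V_\mu$ to $V_{s_i\mu}$. This step is a computation in finite-dimensional $U_q(\mathfrak{sl}_2)$-modules. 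One checks it on the simple modules $L(m)$ using their standard basis.

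Next, we establish the intertwining identity
\[
T_i^V(xv)=T_i(x)\,T_i^V(v)
\]
for each generator $x\in\{E_j,F_j,K_j^{\pm1}\}$. For $j=i$ and for the $K_j$ this is again an $\mathfrak{sl}_2$ computation. For $j\neq i$, we restrict to the subalgebra generated by $E_i,F_i,K_i$ and $E_j$ (resp.\ $F_j$). The element $\operatorname{ad}$-generated by $E_j$ under $E_i$ spans an $\mathfrak{sl}_2$-module of dimension $1-a_{ij}$. On it, $T_i^V$ acts through the $\mathfrak{sl}_2$ formula, which produces the displayed sums $\sum_t(-1)^tq_i^{-t}E_i^{(-a_{ij}-t)}E_jE_i^{(t)}$. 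Granting the identity, we set $T_i(x):=T_i^{V}\circ x\circ (T_i^{V})^{-1}$ on $\bigoplus_\lambda V_\lambda$. By (iii) this defines an algebra automorphism of $U_q(\mathfrak g)$, and the identity shows it agrees with the stated formulas on generators. Hence those formulas respect all relations, including the Serre relations, with no separate check needed.

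Finally, the braid relations $T_iT_jT_i\cdots=T_jT_iT_j\cdots$ (with $m_{ij}$ factors) follow from the same relations for the operators $T_i^V,T_j^V$ on every $V$, again by faithfulness. The operators only involve the rank two subalgebra $U_q(\mathfrak g_{ij})$, so we reduce to rank two, i.e.\ types $A_1\times A_1$, $A_2$, $B_2$, $G_2$. There one proves the relation on each simple module: both sides intertwine the action twisted by the same element, and both map the extremal weight vector of weight $\lambda$ to the same vector of weight $w_{0,\{i,j\}}\lambda$. The vectors are compared by an explicit computation along a reduced path of extremal vectors, and Schur's lemma then gives equality. I expect this last step to be the main obstacle, especially in type $G_2$, where the normalization of the extremal vectors must be tracked carefully. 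The intertwining identity for $j\neq i$ is the second delicate point, since the signs and powers $q_i^{\pm t}$ must match exactly.
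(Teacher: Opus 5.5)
Your route (braid operators on integrable modules, faithfulness of $\bigoplus_\lambda V_\lambda$, reduction to rank two) is the one of Jantzen's Chapter~8, which is all the paper offers for this statement. However, two steps fail as written.

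\textbf{The normalization of your module operator.} Your operator realizes $T_i^{-1}$, not $T_i$. Take the $U_{q_i}(\mathfrak{sl}_2)$-module $L(2)$ with basis $v_0$, $v_1=F_iv_0$, $v_2=F_i^{(2)}v_0$. Your formula gives $T_i^V(v_0)=v_2$, $T_i^V(v_1)=-q_i^{-2}v_1$ and $T_i^V(v_2)=q_i^{-2}v_0$. Hence $T_i^V(E_iv_1)=[2]_{q_i}v_2$, while $-F_iK_i\,T_i^V(v_1)=q_i^{-2}[2]_{q_i}v_2$. In fact your operator conjugates $E_i$ to $-K_i^{-1}F_i=T_i^{-1}(E_i)$ and $F_i$ to $-E_iK_i=T_i^{-1}(F_i)$. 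So the identity $T_i^V(xv)=T_i(x)\,T_i^V(v)$ with the stated $T_i(x)$ is false. The repair is either of the following:
\begin{itemize}
\item use the inverse operator $v\mapsto\sum_{-a+b-c=n}(-1)^bq_i^{\,b-ac}E_i^{(a)}F_i^{(b)}E_i^{(c)}v$ (Lusztig's $T''_{i,1}$);
\item equivalently, set $T_i(x)=(T_i^V)^{-1}x\,T_i^V$.
\end{itemize}
Separately, for $j\neq i$ the sentence ``on it, $T_i^V$ acts through the $\mathfrak{sl}_2$ formula'' is not yet an argument. The operator $T_i^V$ acts on $V$, not on elements of $U_q(\mathfrak g)$, so this case needs an actual computation, e.g.\ on vectors $F_i^{(s)}v$ with $E_iv=0$.

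\textbf{The braid relations.} Here the argument is circular. Put $A=T_i^VT_j^V\cdots$ and $B=T_j^VT_i^V\cdots$, with $m_{ij}$ factors each. What you know is $A\,x\,A^{-1}=T_iT_j\cdots(x)$ and $B\,x\,B^{-1}=T_jT_i\cdots(x)$ on $V$. So $B^{-1}A$ is a module endomorphism of every $V$, and Schur's lemma applies, only if the automorphisms $T_iT_j\cdots$ and $T_jT_i\cdots$ of $U_q(\mathfrak g)$ already coincide. That is exactly the relation you are trying to deduce. Agreement of $A$ and $B$ on an extremal weight vector says nothing about the two twists.

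What is missing is an identity proved in $U_q(\mathfrak g)$ itself. Typically this is $T_iT_j\cdots(E_k)=E_{k'}$ and $T_iT_j\cdots(F_k)=F_{k'}$, for the alternating product of $m_{ij}-1$ factors, whenever the corresponding element of $W$ maps $\alpha_k$ to a simple root $\alpha_{k'}$. From it the braid relations on generators follow at once. For instance, for $m_{ij}=3$ one gets $T_iT_jT_i(E_i)=T_iT_j(-F_iK_i)=-F_jK_j=T_j(E_j)=T_jT_iT_j(E_i)$, and no module-level braid relation is needed for the theorem. Proving that rank-two identity (as in Lusztig's Chapter~39) is precisely the work, heavy in type $G_2$, that your plan was designed to avoid. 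Schur's lemma cannot replace it.
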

In \cite{Lusztig88} Lusztig used this action to define root vectors in $U_q(\mathfrak{g})$ for every $\beta \in \Phi^+$ as follows.
\begin{defn}\label{defn:qrootvectors}
    Let {\bf i} be a reduced word for $w_0$ and let $\b_j$ be as in \eqref{ordb}. The elements
    \begin{align}
    E^{\bf i}_{\beta_1}= E_{i_1}, \quad  E^{\bf i}_{\beta_k}=T_{i_1}\dots T_{i_{k-1}} ( E_{i_k})
\end{align}
and
\begin{align}\label{eqn:qrootvectorsdefnpos}
    F^{\bf i}_{\beta_1}= F_{i_1}, \quad  F^{\bf i}_{\beta_k}=T_{i_1}\dots T_{i_{k-1}} ( F_{i_k})
\end{align}
for $k \in \{1, \dots, N\},$ where $N=|\Dp|$, are called respectively the positive and negative quantum root vectors corresponding to $\beta_k.$
\end{defn}
In what follows, for $u\in W$ with reduced expression $u=s_{i_1}\dots s_{i_k}$ we denote by $T_{u}\in \mathcal{B}$ the element $T_u=T_{i_1}\dots T_{i_k},$  which is well-defined, i.e., independent of the reduced expression of $u$,  by
Matsumoto's theorem recalled at the beginning of Subsection~\ref{CRFC}.

It is clear from the definition that the total order on $\Phi^+$ induced by {\bf i} gives a total order on the set of positive (resp. negative) quantum root vectors.
The following result is well known; see e.g. \cite[\S 6.2.3, Prop.~23]{KSLeabh}.
\begin{prop}\label{prop:qrootweights}
    All quantum root vectors $E^{\bf i}_{\beta_k}$ and $F^{\bf i}_{\beta_k}$ belong to $U_q(\mathfrak{n}^+)$ and $U_q(\mathfrak{n}^-)$, respectively, and they satisfy
    \begin{align*}
        K_iE^{\bf i}_{\beta_k}K_i^{-1}= q^{(\alpha_i,\beta_k)}E^{\bf i}_{\beta_k}, \quad K_iF^{\bf i}_{\beta_k}K_i^{-1}= q^{-(\alpha_i,\beta_k)}F^{\bf i}_{\beta_k}.
    \end{align*}
\end{prop}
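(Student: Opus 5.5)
The plan is to reduce to the case of a single braid generator, using that each $T_i$ is an algebra automorphism with a controlled effect on weights, and then to use Lusztig's theorem that $T_w(E_j)\in U_q(\mathfrak n^+)$ whenever $\ell(ws_j)>\ell(w)$.

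\emph{Weight statement.} First I record how $T_i$ acts on the adjoint weight spaces. An element $X$ has weight $\mu\in Q$ if $K_jXK_j^{-1}=q^{(\alpha_j,\mu)}X$ for all $j$. From the formula $T_i(K_j)=K_jK_i^{-a_{ij}}$ one reads off $T_i(K_\lambda)=K_{s_i(\lambda)}$. Since $T_i$ is an automorphism,
\[
K_\lambda\,T_i(X)\,K_\lambda^{-1}=T_i\bigl(K_{s_i\lambda}XK_{s_i\lambda}^{-1}\bigr)=q^{(s_i\lambda,\mu)}T_i(X)=q^{(\lambda,s_i\mu)}T_i(X).
\]
Hence $T_i$ sends weight $\mu$ to weight $s_i(\mu)$. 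Iterating, $T_{i_1}\cdots T_{i_{k-1}}(E_{i_k})$ has weight $s_{i_1}\cdots s_{i_{k-1}}(\alpha_{i_k})=\beta_k$. Similarly $F^{\bf i}_{\beta_k}$ has weight $-\beta_k$. Specialising to $\lambda=\alpha_i$ gives the displayed commutation relations.

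\emph{Membership in $U_q(\mathfrak n^\pm)$.} This is the substantive part. I would prove, by induction on $\ell(w)$, that if $\ell(ws_j)>\ell(w)$ then $T_w(E_j)\in U_q(\mathfrak n^+)$. The reduced word for $w_0$ gives $\ell(s_{i_1}\cdots s_{i_{k-1}}s_{i_k})=k$, so the claim applies to every $E^{\bf i}_{\beta_k}$.

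The standard route is through the rank-two case, following Lusztig and Jantzen, Ch.~8.

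1. For $i\neq j$ and $w$ in the dihedral parabolic $W_{\{i,j\}}$ with $\ell(ws_j)>\ell(w)$, one checks directly that $T_w(E_j)$ is a polynomial in $E_i,E_j$. For types $A_1\times A_1$, $A_2$, $B_2$ and $G_2$ this is a finite verification, using the explicit formulas of Theorem \ref{thm:Lusztigquantumrep} and the Serre relations. It includes the identity $T_w(E_j)=E_{j'}$ when $w\alpha_j=\alpha_{j'}$.

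2. For general $w$, write $w=w'u$ with $u\in W_{\{i,j\}}$, where $s_i$ is a right descent of $w$. The aim is to make $w'$ shorter and to satisfy $\ell(w's_i)>\ell(w')$ and $\ell(w's_j)>\ell(w')$; this uses the coset decomposition of Proposition \ref{p15}.

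3. Then $T_w(E_j)=T_{w'}\bigl(T_u(E_j)\bigr)$. By step 1, $T_u(E_j)$ is a polynomial in $E_i,E_j$. Induction applies to $T_{w'}(E_i)$ and $T_{w'}(E_j)$, and $T_{w'}$ is multiplicative, so $T_w(E_j)\in U_q(\mathfrak n^+)$.

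The statement for $F$ follows by applying the antiautomorphism/involution $\omega$, which swaps $E_i$ and $F_i$, inverts the $K_i$, and intertwines $T_i$ with a conjugate of itself up to signs and powers of $K$. That the correction factors disappear in the end can be seen from the weight and triangular decomposition.

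\emph{Main obstacle.} I expect the main obstacle to be step 1 in rank two (especially $G_2$), together with showing that the length conditions propagate in step 2. Since the statement is quoted as well known, in the paper it suffices to cite \cite[\S 8.20--8.21]{Ja} for the membership claim and to give only the weight computation explicitly.
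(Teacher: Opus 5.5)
Your proposal is correct and takes the same route as the paper, which gives no proof of its own and simply cites the standard result \cite[\S 6.2.3, Prop.~23]{KSLeabh}. Your weight computation via $T_i(K_\lambda)=K_{s_i\lambda}$, the rank-two induction for membership in $U_q(\mathfrak{n}^{\pm})$, and the passage to $F$ through $\omega$ up to a weight-dependent scalar are the standard argument behind that citation (cf.\ \cite[Ch.~8]{Ja}), with one small slip: your step 2 needs the general parabolic factorization $w=w'u$ with $w'\in X_{\{i,j\}}$, $u\in W_{\{i,j\}}$ (recalled just before Lemma \ref{l12}), not Proposition \ref{p15}, which only treats $I=D_R(w)$.
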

\begin{remark}\label{p74}
    Let $\mathbf{i}$ and $\mathbf{i}'$ be two reduced words for $w_0$ lying in the same
    commutation class. Then $E^{\bf i}_{\beta}=E^{{\bf i}'}_{\beta}$ and $F^{\bf i}_{\beta}=F^{{\bf i}'}_{\beta}$ for every
    $\beta\in\Dp$; in particular the two words determine the same set of quantum
    root vectors, $\{F^{\bf i}_{\beta_k}\}=\{F^{{\bf i}'}_{\beta_k}\}$. This is
    immediate from the facts that $T_iT_j=T_jT_i$, $T_j(E_i)=E_i$ and $T_j(F_i)=F_i$ whenever $(\alpha_i,\alpha_j)=0$; see Theorem \ref{thm:Lusztigquantumrep}, \cite[Ch.~8]{Ja} or
    \cite[Ch.~39]{LusztisLeabh}.
\end{remark}

\begin{remark}\label{rem:projrv}
The projections of Section \ref{sec:projection} give a shortcut for computing quantum root
vectors. Write $p_<(\beta_k)=p'\,s_j$ with $\ell(p')=\ell(p_<(\beta_k))-1$; then
$F^{\bf i}_{\beta_k}=T_{p'}(F_j)$ and $E^{\bf i}_{\beta_k}=T_{p'}(E_j)$.
Indeed $\Phi(p')=\Phi(p_<(\beta_k))\setminus\{\beta_k\}\subseteq \Phi(w_{k-1})$, so that $w_{k-1}=p'u$
with $\ell(w_{k-1})=\ell(p')+\ell(u)$, and $u(\a_{i_k})=p'^{-1}(\beta_k)=\a_j$, whence $T_u(F_{i_k})=F_j$ and
$T_u(E_{i_k})=E_j$ \cite[Ch.~8]{Ja}.
\end{remark}

We end this subsection by recalling the following result on the noncommutative differential geometry of quantum flag manifolds, proved in \cite{ROBPSLusz}, which served as a motivating example for this paper.
\begin{thm}[{\cite{ROBPSLusz}}]
    Let $\mathfrak{g}$ be a simple Lie algebra of type $A_n$. Consider the following two reduced words for $w_0$:
    \begin{align*}
        {\bf i}&=(1,2,\ldots,n,1,2,\ldots,n-1,\ldots,1,2,1),
\\
        {\bf j}&=(n,n-1,\ldots,1,n,n-1,\ldots,2,\ldots, n,n-1,n).
    \end{align*}
    Then each of the sets $\{E^{\bf i}_\beta\mid \b\in \Dp\}$ and $\{E^{\bf j}_\beta\mid \b\in \Dp\}$ spans a quantum tangent space for $\mathcal{O}_q(SU_{n+1})$.
\end{thm}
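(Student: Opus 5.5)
The plan is to deduce the statement from the combinatorics of the first half of the paper, plus a coproduct formula of Levendorski\u{\i}--Soibelman type for the $E^{\bf i}_\beta$. The first step is to recognize ${\bf i}$ as a canonical tangent convex order in the sense of Proposition \ref{p41}. In type $A_n$ every simple root is cominuscule, since $\theta=\a_1+\dots+\a_n$. Take $a_1=n$. Then $-w_0(\a_n)=\a_1$ and, by Proposition \ref{p21}, $\Phi(d_{(n)})=[\a_1,\theta]_0=\{\a_1,\a_1+\a_2,\dots,\theta\}$. These are exactly the roots of the prefix $s_1s_2\cdots s_n$, which ends with $s_n=a_1$ as Proposition \ref{p41} requires. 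The parabolic $W_{(n)}$ is of type $A_{n-1}$ on $\{1,\dots,n-1\}$, and the next block $s_1\cdots s_{n-1}$ is again $d^{(n)}_{(n,n-1)}$. Iterating, with $a_i=n-i+1$, ${\bf i}$ is literally the word \eqref{x}. The word ${\bf j}$ is the image of ${\bf i}$ under the diagram automorphism $i\mapsto n+1-i$, so it is canonical with $a_i=i$. The automorphism also induces a Hopf automorphism of $U_q(\mathfrak g)$ intertwining the braid group actions, so it suffices to treat ${\bf i}$. By Proposition \ref{prop:canonicalTR2p}, ${\bf i}$ then satisfies (TR2$'$), and by Proposition \ref{prop:nobaddec} no $\beta_k$ has a forbidden decomposition.

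Next I would establish the coproduct formula for the positive root vectors. I would show by induction on $k$, writing $E_{\beta_k}=T_{w_{k-1}}(E_{i_k})$ and using the quasi-$R$-matrix factorization $T_w\otimes T_w\circ\Delta\circ T_w^{-1}=R_w^{-1}\Delta(\cdot)R_w$ (in whichever normalization matches \eqref{coproduct}), that
\[
\Delta(E^{\bf i}_{\beta_k})-E^{\bf i}_{\beta_k}\otimes K_{\beta_k}-1\otimes E^{\bf i}_{\beta_k}
\]
is a combination of terms $E^{\bfa}\otimes E^{\bfb}K_{|\bfa|}$. Here $E^{\bfa},E^{\bfb}$ are ordered PBW monomials with $|\bfa|+|\bfb|=\beta_k$, and the supports of $\bfa$ and $\bfb$ lie on opposite sides of $k$ in the convex order. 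For the $E$'s I expect that the left factor collects the roots following $\beta_k$, mirroring the $F$ picture described in the introduction; the precise side has to be determined from the formula. Every term whose left factor has PBW degree $\geq2$ then yields a decomposition of $\beta_k$ with at least two summands on one side of $\beta_k$.

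If the side is the one excluded by (TR2$'$), Proposition \ref{prop:nobaddec} kills all such terms directly. Otherwise I would run the same coefficient-counting argument as in the proof of Proposition \ref{prop:canonicalTR2p}, but on final sections. The final section $\Dp_{(1)}$ of the first block has $c_{1}\equiv0$, and the initial block has $c_1\equiv1$. If $\beta_k$ is in the first block, at most one summand can have $c_1=1$ while the others lie later, and one inducts inside $W_{(n)}$. If $\beta_k$ lies in $\Dp_{(1)}$, all summands lie there and one also inducts. This gives $\Delta(T)\subseteq (T\oplus\bC\cdot K)\otimes U_q(\mathfrak g)$ with $T=\mathrm{span}\{E^{\bf i}_\beta\}$, up to the appropriate twist by $K$'s.

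Finally, I would transfer this to $\O_q(SU_{n+1})$. Elements of $U_q(\mathfrak g)$ pair with $\O_q(G)$, so the coideal inclusion is exactly the defining condition of a left quantum tangent space, possibly after multiplying by the grouplike $K_\beta^{\pm1}$, which preserves the coideal property. I expect the main obstacle to be the second step: getting the exact coproduct formula, including which side of the tensor product receives the preceding roots and the placement of the Cartan factors. Everything else is bookkeeping with Propositions \ref{p41} and \ref{prop:canonicalTR2p}.
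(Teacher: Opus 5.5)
The paper does not prove this statement. It is quoted from \cite{ROBPSLusz}, and the paper only deduces, via Theorem~\ref{thm:combinatorialqts}, the analogous statement for the $F_\beta K_\beta$. Your first step, that both words are canonical, is correct. Your guess about the side is also correct: by \eqref{eqn:dualE}, Lemma~\ref{lem:transfer} and Proposition~\ref{prop:key} with $X=F$, $\tilde c_{\bfa,\bfb}\neq0$ forces $\supp(\bfa)\subseteq[k+1,N]$ and $\supp(\bfb)\subseteq[1,k-1]$. The extreme left factors of $\Delta E_{\beta_k}$ are already $E_{\beta_k}$ and $1$, so no $K$-twist is needed; multiplying by $K_\beta$ would in fact destroy the condition. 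What you must show is that $\tilde c_{\bfa,\bfb}=0$ whenever $\sum_j a_j\geq2$, i.e.\ whenever at least two summands \emph{follow} $\beta_k$.

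The gap is in your third step. In the order induced by ${\bf i}$ itself, such decompositions do exist, so no coefficient counting can exclude them. For $n=3$, ${\bf i}=(1,2,3,1,2,1)$ induces $\a_1<\a_1+\a_2<\theta<\a_2<\a_2+\a_3<\a_3$. Here $\theta=\a_1+\a_2+\a_3$ has the two summands $\a_2,\a_3$ after $\theta$. Counting $c_1$ only tells you that $\a_1$ is the unique summand with $c_1=1$, and since $\theta\notin\Dp_{(1)}$ there is nothing to induct on. The corresponding term $\tilde c\,E_{\a_2}E_{\a_3}\otimes E_{\a_1}K_{\a_2+\a_3}$ does vanish, but not for any reason visible in this order.

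The missing idea is to change the representative of the commutation class. The root vectors and the PBW monomials depend only on the class (Remark~\ref{p74}; root vectors exchanged by a commutation commute), so it suffices to treat a convenient word ${\bf i}'$ commutation equivalent to ${\bf i}$. Write $\epsilon_{ab}=\a_a+\dots+\a_b$. The ${\bf i}$-order is lexicographic in $(a,b)$, while the reverse of the ${\bf j}$-order is lexicographic in $(b,a)$. These two orders agree on every pair of non-orthogonal roots. Adjacent orthogonal roots correspond to commuting letters, since $(\beta_k,\beta_{k+1})=-(\a_{i_k},\a_{i_{k+1}})$. Hence the reverse of the ${\bf j}$-order is induced by a word ${\bf i}'$ commutation equivalent to ${\bf i}$; for $n=3$ this is ${\bf i}'=(1,2,1,3,2,1)$, which puts $\a_2$ before $\theta$ and kills the term above. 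For ${\bf i}'$, a decomposition of $\beta_k$ with at least two summands after $\beta_k$ is a forbidden decomposition for the reversed order, that is, for ${\bf j}$. Such decompositions are excluded by Propositions~\ref{prop:canonicalTR2p} and~\ref{prop:nobaddec}. With this replacement your argument closes. The case of ${\bf j}$ is symmetric, since the reverse of the ${\bf i}$-order is commutation equivalent to the ${\bf j}$-order.
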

\begin{remark} Both {\bf i} and {\bf j} are tangent reduced decompositions in the sense of Subsection \ref{tco}: they induce canonical tangent convex orders. Hence, by Theorem \ref{thm:combinatorialqts}, the elements $F^{\bf i}_\beta K_\beta$ (resp. $F^{\bf j}_\beta K_\beta$), $\beta\in\Dp$, span quantum tangent spaces as well. Note that \cite{ROBPSLusz} works with the positive quantum root vectors $E^{\bf i}_\beta$, whereas the quantum tangent spaces considered in this paper are spanned by the negative ones multiplied by $K_\beta$.\end{remark}
In the same article the authors showed that the corresponding calculi induce two fodc on the full quantum flag manifold, which are suitable $q$-deformations of the holomorphic calculus on the classical full flag manifold of type $A_n$.

\section{The coproduct of quantum root vectors and convex orders}\label{sec:coproduct}
In this section we describe the coproduct of the quantum root vectors
$\{F^{\bf i}_{\beta}\}$ associated to the choice of a reduced decomposition
${\bf i}$ of $w_0$ in terms of the convex order induced by ${\bf i}$ itself:
the coproduct separates the root vectors that precede a given
$F^{\bf i}_{\beta_k}$ from the ones that follow it, by placing them in the left and in the
right tensor factor, respectively.
\par The result is in substance known, being equivalent to the assertion that, for every $k$,
the quantum Schubert cell attached to $w_k=s_{i_1}\cdots s_{i_k}$, multiplied by the subalgebra $U^0$ generated by the $K_i^{\pm1}$, is a right
coideal subalgebra of $U_q(\mathfrak{g})$, while the subalgebra generated by the root vectors attached to $\beta_{k},\dots,\beta_N$ is a left
coideal subalgebra; see \cite{LS,DCKP} and, for
case-free proofs of far-reaching generalizations in the setting of Nichols
algebras, \cite{HS}. We give a short proof based on the Drinfeld pairing
between $U_{q}(\mathfrak{b}^+)$ and $U_{q}(\mathfrak{b}^-)$. The pairing exchanges the coproduct of $\Uminus$
with the product of $\Uplus$, so that the coefficients of
$\Delta F^{\bf i}_{\beta_k}$ \emph{are}, up to nonzero factors, structure constants of the
multiplication of PBW monomials in $\Uplus$; the required triangularity then
follows from the fact that the subalgebras generated by consecutive quantum
root vectors possess their own PBW bases. The argument is uniform in the type---in particular, type $G_2$ needs no separate treatment---and it yields as a
by-product the explicit non-vanishing criterion of
Corollary~\ref{cor:criterion}, which is what is actually used in Section \ref{sec:qts}.

\subsection{Notation}\label{sec:pbwnot}
Throughout this section ${\bf i}=(i_1,\dots,i_N)$ is a fixed reduced word for
$w_0$, $N=|\Dp|$, and $\beta_k=\beta^{\bf i}_k$, so that
$\beta_1<_{\bf i}\cdots<_{\bf i}\beta_N$ is the convex order induced by
${\bf i}$. We write $E_{\beta_k}, F_{\beta_k}$ for the quantum root vectors of
Definition \ref{defn:qrootvectors}, dropping the superscript ${\bf i}$, which is fixed once and for
all. For $\bfa=(a_1,\dots,a_N)\in\bN^{N}$ we put
\begin{align*}
E^{\bfa}:=E_{\beta_1}^{a_1}\cdots E_{\beta_N}^{a_N},\qquad
F^{\bfa}:=F_{\beta_1}^{a_1}\cdots F_{\beta_N}^{a_N},
\end{align*}
\begin{align*}
|\bfa|:=\sum_{j=1}^{N}a_j\beta_j\in Q^+,\qquad
\supp(\bfa):=\{j\mid a_j\neq0\}\subseteq[1,N],
\end{align*}
and we let $\bfe_k\in\bN^N$ be the $k$-th coordinate vector, so that
$F^{\bfe_k}=F_{\beta_k}$ and $|\bfe_k|=\beta_k$. For $J\subseteq[1,N]$ we
denote by $\bfa|_J$ the vector obtained from $\bfa$ by setting $a_j=0$ for
$j\notin J$. We freely use the PBW theorem: $\{E^{\bfa}\}$ and $\{F^{\bfa}\}$,
$\bfa\in\bN^N$, are $\Qq$-bases of $\Uplus$ and $\Uminus$, respectively
\cite{LusztisLeabh,DCKP}. Finally, for $\mu\in Q^+$ we let
$\Uplus_{\mu}$ and $\Uminus_{-\mu}$ be the weight spaces, so that
$E^{\bfa}\in\Uplus_{|\bfa|}$ and $F^{\bfa}\in\Uminus_{-|\bfa|}$ by
Proposition \ref{prop:qrootweights}.

\subsection{The shape of the coproduct}
The following elementary lemma is what makes the coefficients of Theorem
\ref{copro} well defined.

\begin{lem}\label{lem:shape}
For every $\mu\in Q^+$,
\begin{align*}
\Delta\bigl(\Uminus_{-\mu}\bigr)&\subseteq
\bigoplus_{\mu_1+\mu_2=\mu}\Uminus_{-\mu_1}K^{-1}_{\mu_2}\otimes \Uminus_{-\mu_2},
\\
\Delta\bigl(\Uplus_{\mu}\bigr)&\subseteq
\bigoplus_{\mu_1+\mu_2=\mu}\Uplus_{\mu_1}\otimes \Uplus_{\mu_2}K_{\mu_1},
\end{align*}
the sums being over $\mu_1,\mu_2\in Q^+$.
\end{lem}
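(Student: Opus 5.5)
The plan is to verify the inclusions on algebra generators and then propagate them through products, using that $\Delta$ is an algebra map. I treat the statement for $\Uminus$; the one for $\Uplus$ is entirely symmetric. Put
\begin{align*}
M_\mu:=\bigoplus_{\mu_1+\mu_2=\mu}\Uminus_{-\mu_1}K^{-1}_{\mu_2}\otimes \Uminus_{-\mu_2}.
\end{align*}
The first step is to observe that $\Uminus=\bigoplus_{\mu\in Q^+}\Uminus_{-\mu}$. Each weight space $\Uminus_{-\mu}$ is spanned by the monomials $F_{j_1}\cdots F_{j_r}$ with $\alpha_{j_1}+\dots+\alpha_{j_r}=\mu$. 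This holds because $\Uminus$ is generated by the $F_i$, each $F_i$ has weight $-\alpha_i$, and the weight grading is compatible with the product (the defining relations of $\Uminus$ are homogeneous). It therefore suffices to prove $\Delta(F_{j_1}\cdots F_{j_r})\in M_\mu$ for such monomials.

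For a single generator, the formula \eqref{coproduct} gives
\begin{align*}
\Delta(F_i)=F_i\otimes 1+K_i^{-1}\otimes F_i.
\end{align*}
The first term lies in the summand $(\mu_1,\mu_2)=(\alpha_i,0)$, with $K_0=1$. The second lies in the summand $(\mu_1,\mu_2)=(0,\alpha_i)$, since $1\in\Uminus_0$ and $K^{-1}_{\alpha_i}=K_i^{-1}$. Hence $\Delta(\Uminus_{-\alpha_i})\subseteq M_{\alpha_i}$, and trivially $\Delta(\Uminus_0)\subseteq M_0$.

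The key step is multiplicativity: $M_\mu\cdot M_\nu\subseteq M_{\mu+\nu}$. Take $xK^{-1}_{\mu_2}\otimes y$ in $M_\mu$ and $x'K^{-1}_{\nu_2}\otimes y'$ in $M_\nu$. Their product is
\begin{align*}
xK^{-1}_{\mu_2}x'K^{-1}_{\nu_2}\otimes yy'.
\end{align*}
By the relations $K_iF_jK_i^{-1}=q_i^{-a_{ij}}F_j$, moving $K^{-1}_{\mu_2}$ to the right past $x'$ produces only a nonzero scalar in $\Qq$. The product then becomes a scalar multiple of $xx'K^{-1}_{\mu_2+\nu_2}\otimes yy'$, where $xx'\in\Uminus_{-(\mu_1+\nu_1)}$ and $yy'\in\Uminus_{-(\mu_2+\nu_2)}$. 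This element lies in $M_{\mu+\nu}$, because $(\mu_1+\nu_1)+(\mu_2+\nu_2)=\mu+\nu$. Induction on $r$ then gives $\Delta(F_{j_1}\cdots F_{j_r})\in M_\mu$.

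For $\Uplus$ the same argument applies with $\Delta(E_i)=E_i\otimes K_i+1\otimes E_i$, now commuting the $K$'s to the right in the second tensor factor. No step is a real obstacle. The only point needing care is the bookkeeping in the multiplicativity step: one must check that the $K$-factor attached to the left tensor factor in the $\Uminus$ case, or to the right tensor factor in the $\Uplus$ case, is indexed by the weight of the other tensor factor. This indexing is additive under products, so it is preserved.
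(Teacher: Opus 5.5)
Your proposal is correct and is the same argument as the paper's: check both inclusions on the generators using \eqref{coproduct}, then show they are preserved under products, since commuting $K_\lambda$ past a weight vector only produces a power of $q$. Your explicit check that the $K$-index attached to one tensor factor equals the weight of the other factor, and adds up under products, is precisely what the paper's one-line proof leaves to the reader.
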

\begin{proof}
Both inclusions hold for the generators by \eqref{coproduct}, and they are preserved by products, since
$K_\lambda y=q^{-(\lambda,\nu)}yK_\lambda$ for $y\in \Uminus_{-\nu}$ and
$K_\lambda x=q^{(\lambda,\nu)}xK_\lambda$ for $x\in \Uplus_{\nu}$.
\end{proof}

By Lemma \ref{lem:shape} and the PBW theorem there are uniquely determined
$c_{\bfa,\bfb},\ \tilde c_{\bfa,\bfb}\in\Qq$ such that
\begin{align}
\Delta F_{\beta_k}&=\sum_{\substack{\bfa,\bfb\in\bN^N\\ |\bfa|+|\bfb|=\beta_k}}
c_{\bfa,\bfb}\;F^{\bfa}K^{-1}_{|\bfb|}\otimes F^{\bfb}, \label{eqn:defc}
\\
\Delta E_{\beta_k}&=\sum_{\substack{\bfa,\bfb\in\bN^N\\ |\bfa|+|\bfb|=\beta_k}}
\tilde c_{\bfa,\bfb}\;E^{\bfa}\otimes E^{\bfb}K_{|\bfa|}. \label{eqn:defct}
\end{align}
In this notation, Theorem \ref{copro} is precisely the assertion that
$c_{\bfa,\bfb}=0$ unless $\supp(\bfa)\subseteq[1,k-1]$ and
$\supp(\bfb)\subseteq[k+1,N]$, apart from the two extreme terms.

\begin{lem}\label{lem:extreme}
$c_{\bfa,\bfze}=\delta_{\bfa,\bfe_k}$ and
$c_{\bfze,\bfb}=\delta_{\bfb,\bfe_k}$. Consequently the terms of
\eqref{eqn:defc} with $\bfa=\bfze$ or $\bfb=\bfze$ are exactly
$K^{-1}_{\beta_k}\otimes F_{\beta_k}$ and $F_{\beta_k}\otimes 1$.
\end{lem}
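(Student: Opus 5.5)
The plan is to apply the counit identities $(\e\otimes\id)\Delta=\id=(\id\otimes\e)\Delta$ to the expansion \eqref{eqn:defc} and compare coefficients using the PBW theorem.

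\emph{The terms with $\bfb=\bfze$.} Apply $\id\otimes\e$ to \eqref{eqn:defc}. Since $\e(F^{\bfb})=0$ for $\bfb\neq\bfze$ and $\e(1)=1$, only the terms with $\bfb=\bfze$ survive. For these $|\bfb|=0$, so $K^{-1}_{|\bfb|}=1$, and we obtain
\begin{align*}
F_{\beta_k}=\sum_{|\bfa|=\beta_k}c_{\bfa,\bfze}\,F^{\bfa}.
\end{align*}
The monomials $F^{\bfa}$ form a basis of $\Uminus$, and $F_{\beta_k}=F^{\bfe_k}$. Comparing coefficients gives $c_{\bfa,\bfze}=\delta_{\bfa,\bfe_k}$.

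\emph{The terms with $\bfa=\bfze$.} Apply $\e\otimes\id$. Now only the terms with $\bfa=\bfze$ survive, and $|\bfb|=\beta_k$ for each of them. Since $\e(K^{-1}_{\beta_k})=1$, we get
\begin{align*}
F_{\beta_k}=\sum_{|\bfb|=\beta_k}c_{\bfze,\bfb}\,F^{\bfb},
\end{align*}
and PBW again gives $c_{\bfze,\bfb}=\delta_{\bfb,\bfe_k}$.

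\emph{The resulting terms.} The two extreme terms of \eqref{eqn:defc} are therefore $F_{\beta_k}\otimes 1$, coming from $(\bfe_k,\bfze)$, and $K^{-1}_{\beta_k}\otimes F_{\beta_k}$, coming from $(\bfze,\bfe_k)$. These two terms are distinct, because $\beta_k\neq0$.

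The only point requiring care is the well-definedness of the coefficients. By Lemma \ref{lem:shape} and PBW, the elements $F^{\bfa}K^{-1}_{|\bfb|}\otimes F^{\bfb}$ are linearly independent, so the $c_{\bfa,\bfb}$ are unique and the comparison of coefficients above is legitimate. There is no real obstacle beyond this observation.
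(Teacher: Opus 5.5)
Your proposal is correct and is essentially the paper's own proof: apply $\id\otimes\e$ and $\e\otimes\id$ to \eqref{eqn:defc}, use $\e(F^{\bfb})=\delta_{\bfb,\bfze}$ and $\e(K_\mu)=1$, and compare coefficients in the PBW basis. Your extra remark on the uniqueness of the $c_{\bfa,\bfb}$ is a fair addition; strictly it rests on the PBW basis of $U_q(\mathfrak{b}^-)$ rather than on Lemma \ref{lem:shape}, which only supplies existence.
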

\begin{proof}
Apply $\id\otimes\e$ to \eqref{eqn:defc}. Since
$\e(F^{\bfb})=\delta_{\bfb,\bfze}$ and $\e(K_\mu)=1$, the
counit axiom gives $\sum_{\bfa}c_{\bfa,\bfze}F^{\bfa}=F_{\beta_k}$, whence
$c_{\bfa,\bfze}=\delta_{\bfa,\bfe_k}$ by the PBW theorem. Applying
$\e\otimes\id$ gives $\sum_{\bfb}c_{\bfze,\bfb}F^{\bfb}=F_{\beta_k}$
in the same way.
\end{proof}

\begin{lem}\label{lem:nok}
Let $\bfa,\bfb\in\bN^N\setminus\{\bfze\}$ with $|\bfa|+|\bfb|=\beta_k$. Then
$k\notin\supp(\bfa)\cup\supp(\bfb)$.
\end{lem}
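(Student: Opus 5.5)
Since $|\bfa|+|\bfb|=\beta_k$ and both vectors are nonzero, the argument is a weight computation in the root lattice; no coproduct property is needed. Suppose, to the contrary, that $k\in\supp(\bfa)$; the case $k\in\supp(\bfb)$ is symmetric. Write $|\bfa|=a_k\beta_k+\sum_{j\neq k}a_j\beta_j$ with $a_k\geq1$. The identity $|\bfa|+|\bfb|=\beta_k$ then reads
\begin{align*}
(a_k-1)\beta_k+\sum_{j\neq k}a_j\beta_j+\sum_{j}b_j\beta_j=0 .
\end{align*}
Every term on the left is a nonnegative integer multiple of a positive root, hence lies in $Q^+$. Since $Q^+$ is a pointed cone (a nonzero element of $Q^+$ has positive height), a sum of elements of $Q^+$ vanishes only if each summand vanishes. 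This forces $a_k=1$, $a_j=0$ for $j\neq k$, and $b_j=0$ for all $j$. Thus $\bfb=\bfze$, contradicting the hypothesis.

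Concretely, I would apply the height function $\height$, extended linearly to $Q$. It is strictly positive on $Q^+\setminus\{0\}$, so the height of the left-hand side is a sum of nonnegative integers, each of which must therefore be zero.

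The symmetric case goes the same way: if $b_k\geq1$, the same computation gives $\bfa=\bfze$, again a contradiction. I expect no real obstacle here. The only points to note are that the roots $\beta_j$ are pairwise distinct and all positive, and that nothing about the convex order is used.
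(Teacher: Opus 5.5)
Your proposal is correct and follows essentially the same route as the paper: assuming $a_k\geq1$, you write $0=(|\bfa|-\beta_k)+|\bfb|$ as a sum of elements of $Q^+$, and positivity of the height forces $\bfb=\bfze$, which is exactly the paper's argument. The distinctness of the $\beta_j$ that you mention is not actually needed; only their positivity is used.
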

\begin{proof}
If $a_k\geq1$ then $|\bfa|-\beta_k\in Q^+$, so that
$0=(|\bfa|-\beta_k)+|\bfb|$ is a sum of two elements of $Q^+$, forcing
$|\bfb|=0$, i.e.\ $\bfb=\bfze$, a contradiction. The same argument applies to
$\bfb$.
\end{proof}

Thus, for a non-extreme term of \eqref{eqn:defc}, the condition
``$\supp(\bfa)\subseteq[1,k-1]$ and $\supp(\bfb)\subseteq[k+1,N]$'' is
equivalent to ``$\supp(\bfa)\cap[k+1,N]=\emptyset$ and
$\supp(\bfb)\cap[1,k-1]=\emptyset$''.

\subsection{The Drinfeld pairing}
We recall the pairing between the two Borel parts of $U_q(\mathfrak{g})$; see
\cite[Ch.~6]{Ja}, \cite[\S 6.3]{KSLeabh},
\cite[Ch.~1 and \S 3.1]{LusztisLeabh}, \cite{Ta}. Since several
normalizations coexist in the literature, we state the properties we use in the
form adapted to the coproduct \eqref{coproduct}.

\begin{prop}\label{prop:pairing}
There is a bilinear form
$\pair{\cdot}{\cdot}\colon U_{q}(\mathfrak{b}^+)\times U_{q}(\mathfrak{b}^-)\to\Qq$ such that, for all
$x,x'\in U_{q}(\mathfrak{b}^+)$, $y,y'\in U_{q}(\mathfrak{b}^-)$, $\mu,\nu\in Q$ and $i,j$:
\begin{enumerate}
\item[\rm(D1)] $\pair{x}{yy'}=\pair{\Delta(x)}{y\otimes y'}$;
\item[\rm(D2)] $\pair{xx'}{y}=\pair{x\otimes x'}{\Delta^{\mathrm{op}}(y)}$;
\item[\rm(D3)] $\pair{K_\mu}{K_\nu}=q^{(\mu,\nu)}$,
$\pair{K_\mu}{F_j}=0$, $\pair{E_i}{K_\mu}=0$;
\item[\rm(D4)] $\pair{E_i}{F_j}=\delta_{ij}d_i$ with $d_i\in\Qq^{\times}$
(e.g.\ $d_i=(q_i^{-1}-q_i)^{-1}$).
\end{enumerate}
Moreover $\pair{\Uplus_{\mu}K_{\lambda}}{\Uminus_{-\nu}K_{\eta}}=0$ whenever
$\mu\neq\nu$, and the restriction
\[\pair{\cdot}{\cdot}\colon \Uplus_{\mu}\times\Uminus_{-\mu}\to\Qq\] is
nondegenerate for every $\mu\in Q^+$.
\end{prop}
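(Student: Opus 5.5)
The plan is to construct the pairing in the standard way: as the Hopf pairing of Drinfeld's quantum double, in the normalization adapted to \eqref{coproduct}. Then (D1)--(D4) become the defining properties, and weight orthogonality and nondegeneracy follow from known results. Concretely, I would follow \cite[Ch.~6]{Ja}, where for the coproduct $\Delta(E_i)=E_i\otimes K_i+1\otimes E_i$ there is a unique bilinear form $U_q(\mathfrak b^+)\times U_q(\mathfrak b^-)\to\Qq$. It is characterized by (D1) and (D2), with $\Delta^{\mathrm{op}}$ in (D2) accounting for the convention of $\Delta(F_i)=F_i\otimes1+K_i^{-1}\otimes F_i$, and by the values on generators prescribed in (D3) and (D4).

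First, existence. I would define the form on generators and extend it using (D1) and (D2). This requires checking compatibility with the defining relations of $U_q(\mathfrak b^\pm)$: the $K$-commutation relations, which reduce to $q$-power bookkeeping using (D3), and the quantum Serre relations. For the Serre relations I would not redo the computation. Instead, I would cite \cite[6.8--6.12]{Ja} or \cite[Ch.~1]{LusztisLeabh}, where the form is first built on the free algebra $\mathbf f'$ and the Serre elements are shown to lie in its radical. If the paper's normalization differs from the cited one, I would transport the form through the Hopf (anti)automorphism $E_i\mapsto E_i$, $F_i\mapsto F_i$, $K_i\mapsto K_i^{\pm1}$ (or the relevant flip) that matches the two coproducts. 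The scalars $d_i$ would be adjusted by rescaling generators, and the only thing to record is that they stay nonzero.

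Second, weight orthogonality, which I would prove by induction on degree. By (D1) together with Lemma \ref{lem:shape}, $\langle x,yy'\rangle$ for weight vectors decomposes into pairings of lower-degree weight vectors. The $K$-part contributes only the scalar $q^{(\cdot,\cdot)}$ from (D3). By (D3) and (D4), a pairing in degree one vanishes unless the weights match, and weights add under multiplication, so $\langle \Uplus_\mu K_\lambda,\Uminus_{-\nu}K_\eta\rangle=0$ for $\mu\neq\nu$.

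Third, nondegeneracy on $\Uplus_\mu\times\Uminus_{-\mu}$. This is the main obstacle. I expect the cleanest route is to identify the restricted form, up to the nonzero factors $d_i$ and $q$-powers, with Lusztig's form $(\cdot,\cdot)$ on $\mathbf f$, whose radical is exactly the ideal generated by the Serre relations \cite[Prop.~1.4.3 and \S33.1]{LusztisLeabh}. Nondegeneracy then follows because $\Uplus\cong\Uminus\cong\mathbf f$. Alternatively, I could cite \cite[Prop.~6.21]{Ja} directly, or use the orthogonality of PBW bases, $\langle E^{\bfa},F^{\bfb}\rangle=\delta_{\bfa\bfb}\cdot(\text{nonzero})$ \cite[Prop.~38.2.3]{LusztisLeabh}. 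The PBW orthogonality gives nondegeneracy at once, since the Gram matrix is diagonal with nonzero entries, and finite-dimensionality of the weight spaces finishes the argument.
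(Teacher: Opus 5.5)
Your plan is essentially the paper's own treatment: the paper gives no independent argument but recalls this pairing, together with weight orthogonality and nondegeneracy, from \cite{Ja}, \cite{KSLeabh}, \cite{LusztisLeabh}, restated in the normalization adapted to \eqref{coproduct}. The one step to correct is the change of normalization: as the remark after the proposition explains, the usual convention differs from (D1)--(D2) by composing the second argument with the inverse antipode $S^{-1}$, and the relabelling $E_i\mapsto E_i$, $F_i\mapsto F_i$, $K_i\mapsto K_i^{\pm1}$ you suggest does not convert one convention into the other.
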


\begin{remark}
Some references state (D1)--(D2) with $\Delta$ and $\Delta^{\mathrm{op}}$
interchanged, together with $\pair{K_\mu}{K_\nu}=q^{-(\mu,\nu)}$; the two sets of conventions differ by composing the second argument with the inverse of the antipode. Given the coproduct \eqref{coproduct}, the value of $\pair{K_\mu}{K_\nu}$ in (D3) is forced by (D2) and by the relation $K_\mu E_i=q^{(\mu,\alpha_i)}E_iK_\mu$: by (D2), $\pair{K_\mu E_i}{F_i}=d_i$, while $\pair{E_iK_\mu}{F_i}=d_i\pair{K_\mu}{K_{-\alpha_i}}$.
Nothing below depends on the value of the constants $d_i$.
\end{remark}

\begin{lem}\label{lem:Ktwist}
Let $x\in \Uplus_{\mu}$, $y\in \Uminus_{-\mu}$ and $\lambda,\eta\in Q$. Then
\begin{align*}
\pair{x}{yK_{\eta}}=q^{(\mu,\eta)}\pair{x}{y},\qquad
\pair{xK_{\lambda}}{y}=q^{-(\lambda,\mu)}\pair{x}{y}.
\end{align*}
\end{lem}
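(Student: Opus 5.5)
The plan is to check both identities on the PBW-type spanning elements first and then extend by linearity. Write $x=\sum x_{(1)}\otimes x_{(2)}$ informally, and use (D1)/(D2) together with (D3). For the first identity, (D1) gives
\begin{align*}
\pair{x}{yK_\eta}=\pair{\Delta(x)}{y\otimes K_\eta}.
\end{align*}
By Lemma \ref{lem:shape}, $\Delta(x)\in\bigoplus_{\mu_1+\mu_2=\mu}\Uplus_{\mu_1}\otimes\Uplus_{\mu_2}K_{\mu_1}$. The weight-orthogonality in Proposition \ref{prop:pairing} gives $\pair{\Uplus_{\mu_1}}{y}=0$ unless $\mu_1=\mu$. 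For the second factor we also need $\pair{\Uplus_{\mu_2}K_{\mu_1}}{K_\eta}=0$ unless $\mu_2=0$; this is the same orthogonality statement, since $K_\eta\in\Uminus_{0}K_\eta$. Hence only the component $\mu_1=\mu$, $\mu_2=0$ survives. That component is $x\otimes K_\mu$: apply $\id\otimes\e$ and use the counit axiom, exactly as in Lemma \ref{lem:extreme}, noting that $\Uplus_0K_\mu=\Qq K_\mu$. We therefore obtain
\begin{align*}
\pair{x}{yK_\eta}=\pair{x}{y}\pair{K_\mu}{K_\eta}=q^{(\mu,\eta)}\pair{x}{y}
\end{align*}
by (D3).

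For the second identity we argue symmetrically with (D2):
\begin{align*}
\pair{xK_\lambda}{y}=\pair{x\otimes K_\lambda}{\Delta^{\mathrm{op}}(y)}.
\end{align*}
By Lemma \ref{lem:shape}, $\Delta(y)\in\bigoplus\Uminus_{-\mu_1}K^{-1}_{\mu_2}\otimes\Uminus_{-\mu_2}$, so $\Delta^{\mathrm{op}}(y)\in\bigoplus\Uminus_{-\mu_2}\otimes\Uminus_{-\mu_1}K^{-1}_{\mu_2}$. Pairing $x$ with the first factor forces $\mu_2=\mu$. Pairing $K_\lambda$ with the second factor forces $\mu_1=0$, again by orthogonality. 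The surviving term of $\Delta(y)$ is $K^{-1}_{\mu}\otimes y$, identified via $\e\otimes\id$ as in Lemma \ref{lem:extreme}. So $\Delta^{\mathrm{op}}(y)$ contributes $y\otimes K_{-\mu}$, and
\begin{align*}
\pair{xK_\lambda}{y}=\pair{x}{y}\pair{K_\lambda}{K_{-\mu}}=q^{-(\lambda,\mu)}\pair{x}{y}.
\end{align*}

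The only delicate point is the identification of the extreme component of the coproduct. One needs $\Uplus_0=\Qq$, so that the $\mu_2=0$ summand has the form $x'\otimes K_\mu$ with $x'=x$ forced by the counit. One also needs the orthogonality statement to cover pairings of $\Uplus_\nu K$ against pure $K_\eta$ for $\nu\neq0$. Both follow from Proposition \ref{prop:pairing} as stated. No sign or normalization issues arise beyond those fixed by (D3), which we have already checked to be consistent with the coproduct \eqref{coproduct}.
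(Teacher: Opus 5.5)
Your proposal is correct and follows the paper's proof essentially verbatim. In both, (D1) resp.\ (D2) and the shape of the coproduct (Lemma \ref{lem:shape}) are combined with weight orthogonality to isolate the single surviving summand $x\otimes K_\mu$ resp.\ $y\otimes K_{-\mu}$, and (D3) then finishes the computation. Your counit argument, using $\Uplus_0=\Uminus_0=\Qq$, spells out the identification of that extreme summand, which the paper only asserts; the opening remark about checking on PBW elements and extending by linearity is unnecessary, since you argue directly for arbitrary $x,y$, and your $\sum x_{(1)}\otimes x_{(2)}$ should be $\Delta(x)$.
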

\begin{proof}
By (D1), $\pair{x}{yK_\eta}=\pair{\Delta x}{y\otimes K_\eta}$. By Lemma
\ref{lem:shape} we may write
\[\Delta x=\sum_{\mu_1+\mu_2=\mu}x_{\mu_1}\otimes x'_{\mu_2}K_{\mu_1},\,
x_{\mu_1}\in \Uplus_{\mu_1}, x'_{\mu_2}\in \Uplus_{\mu_2}.\] By the weight
orthogonality in Proposition \ref{prop:pairing}, the second tensor slot pairs
nontrivially with $K_\eta$ only if $\mu_2=0$, and the corresponding summand is
$x\otimes K_{\mu}$. Hence
$\pair{x}{yK_\eta}=\pair{x}{y}\pair{K_\mu}{K_\eta}=q^{(\mu,\eta)}\pair{x}{y}$.
The second identity is proved in the same way using (D2) and the first
inclusion of Lemma \ref{lem:shape}: the surviving summand of
$\Delta^{\mathrm{op}}(y)$ is $y\otimes K^{-1}_{\mu}$, and
$\pair{K_\lambda}{K_{-\mu}}=q^{-(\lambda,\mu)}$.
\end{proof}

The second ingredient is the orthogonality of the PBW bases. It is the reason
why the whole argument works, and it is exactly the point at which the choice
of the convex order $<_{\bf i}$ enters.

\begin{prop}\label{prop:orth}
There are constants $\kappa_{\bfa}\in\Qq^{\times}$, $\bfa\in\bN^N$, such that
\begin{align*}
\pair{E^{\bfa}}{F^{\bfb}}=\delta_{\bfa,\bfb}\,\kappa_{\bfa},\qquad
\bfa,\bfb\in\bN^{N},
\end{align*}
where both monomials are taken in the order induced by the \emph{same} reduced
word ${\bf i}$.
\end{prop}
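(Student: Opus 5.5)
The plan is to reduce the statement to a property of the braid group action and of the pairing, by induction on $N$ through the factorization of PBW monomials. Write $E^{\bfa}=E_{\beta_1}^{a_1}\,T_{i_1}\bigl(E'^{\bfa'}\bigr)$, where $\bfa'=(a_2,\dots,a_N)$ and $E'^{\bfa'}$ is the monomial built from the word ${\bf i}'=(i_2,\dots,i_N,i_1^*)$, with $s_{i_1^*}=w_0s_{i_1}w_0$. This word is a reduced word for $w_0$, so its root vectors satisfy $E^{\bf i}_{\beta_k}=T_{i_1}(E^{{\bf i}'}_{\beta'_{k-1}})$, and similarly for $F$. The key structural facts are the following. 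First, $T_{i_1}(E'^{\bfa'})$ lies in the subalgebra $U^+[s_{i_1}]$ of elements $x\in\Uplus$ with $T_{i_1}^{-1}(x)\in\Uplus$. Second, $\Uplus=\bigoplus_n E_{i_1}^n\,U^+[s_{i_1}]$, and analogously on the negative side. These are standard consequences of the PBW theorem already quoted, see \cite[Ch.~8]{Ja}, \cite[Ch.~38]{LusztisLeabh}.

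The first main step is a factorization of the pairing:
\begin{align*}
\pair{E_{i_1}^{a}x}{F_{i_1}^{b}y}=\delta_{ab}\,\pair{E_{i_1}^a}{F_{i_1}^a}\,\pair{x}{y},\qquad x\in U^+[s_{i_1}],\ y\in U^-[s_{i_1}].
\end{align*}
To prove it, apply (D2) to split $E_{i_1}^a\cdot x$, and then (D1)/(D2) to expand $\Delta^{\mathrm{op}}(F_{i_1}^b y)$. The crucial input is that elements of $U^-[s_{i_1}]$ have coproduct whose relevant components pair to zero against $E_{i_1}$-powers. Concretely, one uses Lusztig's characterization of $U^+[s_i]$ as the kernel of the skew derivation $r_i$ (or ${}_ir$), where $\pair{E_i x}{y}$ and $\pair{x E_i}{y}$ are computed through the derivations $r_i,{}_ir$ on the $F$-side \cite[Ch.~1, 38.1]{LusztisLeabh}. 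With this, all cross terms vanish, and the weight orthogonality of Proposition \ref{prop:pairing} kills the terms with $a\ne b$. The rank-one value $\pair{E_i^a}{F_i^a}\ne0$ follows from (D1), (D4) and the $q$-binomial formula for $\Delta(F_i^a)$, giving $\prod [m]$-type factors that are nonzero in $\Qq$.

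The second step is the compatibility of the pairing with $T_{i_1}$ on $U^\pm[s_{i_1}]$: for $x\in U^+[s_{i_1}]$ and $y\in U^-[s_{i_1}]$ we have $\pair{T_{i_1}^{-1}x}{T_{i_1}^{-1}y}=\pair{x}{y}$, up to an explicit convention-dependent nonzero scalar (cf.\ \cite[38.2.1]{LusztisLeabh}, \cite[8.30]{Ja}). Combining the two steps gives
\begin{align*}
\pair{E^{\bfa}}{F^{\bfb}}=\delta_{a_1b_1}\,c\,\pair{E_{i_1}^{a_1}}{F_{i_1}^{a_1}}\;\pair{E'^{\bfa'}}{F'^{\bfb'}},
\end{align*}
where the monomials on the right are formed with ${\bf i}'$ but involve only its first $N-1$ root vectors. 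We iterate this $N$ times, each time cycling the word. Keeping track of the fact that only prefixes of the successive words occur, the recursion terminates after $N$ steps and yields $\delta_{\bfa,\bfb}\kappa_{\bfa}$, with $\kappa_{\bfa}$ a product of nonzero rank-one factors.

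The main obstacle is the $T_i$-invariance of the pairing on $U^\pm[s_i]$, together with the vanishing of the cross terms. Both depend delicately on the normalization conventions (D1)--(D4), which differ from Lusztig's by the antipode twist mentioned in the Remark. I would first prove the statements in Lusztig's normalization, where they are literally \cite[38.2]{LusztisLeabh}, and then transport them through the twist. This transport is harmless, because the twist only rescales weight-homogeneous pairings by powers of $q$ (Lemma \ref{lem:Ktwist}), so nonvanishing and orthogonality are preserved.
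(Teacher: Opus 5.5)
Your outline is the classical proof of Lusztig's Proposition 38.2.3: the splitting $\Uplus=\bigoplus_n E_i^n\,(\Uplus\cap T_i(\Uplus))$, invariance of the form under $T_i$ on these subalgebras, and induction along the word. The paper gives no argument beyond citing 38.2.3 and asserting that the forms agree up to normalization and $v=q^{-1}$, so your route coincides with the cited one. Your factorization step is correct, and in the paper's conventions it is even exact. By (D2) and then (D1), $\pair{E_i^ax}{F_i^by}=\delta_{ab}\pair{E_i^a}{F_i^a}\pair{x}{y}$ holds as soon as two conditions hold, one on each side. First, $\Delta(x)$ has no component whose left factor has weight $n\alpha_i$ with $n>0$. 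Second, $\Delta(y)$ has no component whose right factor has weight $-n\alpha_i$ with $n>0$. These are Lusztig's kernel characterization of $\mathbf{f}[i]$, transported as described below; you can check them on $T_1(E_2)$ and $T_1(F_2)$ in type $A_2$.

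The step that fails as written is the transport in your last paragraph. Composing the second argument with $S^{-1}$ is not a rescaling, because $S^{-1}$ is an anti-automorphism. Up to factors depending only on the weight, which is all Lemma \ref{lem:Ktwist} controls, $\pair{x}{S^{-1}(y)}$ is the paper's form with the letters of the $F$-monomials reversed. That form is not orthogonal on the PBW bases of a common word. Take ${\bf i}=(1,2,1)$ in type $A_2$, so that $E_{\beta_1}E_{\beta_3}=E_1E_2$ and $F_{\beta_2}=F_2F_1-qF_1F_2$. From $\pair{E_1E_2}{F_1F_2}=d_1d_2$ and $\pair{E_1E_2}{F_2F_1}=qd_1d_2$ you get $\pair{E_1E_2}{F_{\beta_2}}=0$. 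However, $\pair{E_1E_2}{S^{-1}(F_{\beta_2})}$ is a nonzero multiple of $(1-q^2)d_1d_2$. Moreover, Lusztig's form lives on $\mathbf{f}$, and no antipode enters the comparison at all.

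The correct dictionary has three parts.

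(i) On $\Uplus_\mu\times\Uminus_{-\mu}$, $\pair{\cdot}{\cdot}$ equals Lusztig's form at $v=q^{-1}$ under $E_i\mapsto\theta_i$, $F_i\mapsto\theta_i$ (letters kept in the same order), times a factor depending only on $\mu$. To see this, compare $x\mapsto\sum q^{-(\mu_1,\mu_2)}x_{(1)}\otimes x_{(2)}$, which comes from Lemma \ref{lem:shape}, (D1) and Lemma \ref{lem:Ktwist}, with Lusztig's twisted coproduct $r$.

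(ii) The paper's $T_i$ on $\Uplus$ corresponds to one of Lusztig's symmetries $T''_{i,e}$.

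(iii) Let $\omega$ be the automorphism with $E_i\leftrightarrow F_i$ and $K_i\mapsto K_i^{-1}$. Then $\omega T_i\omega=T_i\circ D_i$, where $D_i$ multiplies the weight-$\mu$ space by $(-q_i)^{-\langle\mu,\alpha_i^\vee\rangle}$. Hence $\omega(E^{\bfa})\in\Qq^\times F^{\bfa}$, and both arguments of the pairing become, up to scalars, the same PBW elements of $\mathbf{f}$.

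With this dictionary your invariance holds in the form $\pair{T_ix}{T_iy}=c(\mu)\pair{x}{y}$, with $c(\mu)\neq0$ depending only on the weight. For example, $\pair{T_1E_2}{T_1F_2}=(q-q^{-1})d_1\pair{E_2}{F_2}$. That is all your recursion uses, so the argument closes. The paper simply applies the same dictionary to 38.2.3 directly rather than to 38.2.1.
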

This is classical: see \cite[Prop.~38.2.3]{LusztisLeabh} for Lusztig's form on
$\mathbf f$, which is related to the restriction of $\pair{\cdot}{\cdot}$ to
$\Uplus_{\mu}\times\Uminus_{-\mu}$ by a normalization and the substitution $v=q^{-1}$, as well as \cite{LS}
and \cite[\S 4]{Ki}; a treatment adapted to arbitrary elements of the braid
monoid is given in \cite{Ya}. Only $\kappa_{\bfa}\neq0$ is used below.

\subsection{Interval subalgebras}
Let $X\in\{E,F\}$ and write $X_j:=X_{\beta_j}$.

\begin{lem}\label{lem:interval}
For $1\leq m\leq n\leq N$ let $A^{X}[m,n]\subseteq U_q(\mathfrak{g})$ denote
the subalgebra generated by $X_{\beta_m},\dots,X_{\beta_n}$. Then
\begin{align*}
\{\,X^{\bfc}\ \mid\ \supp(\bfc)\subseteq[m,n]\,\}
\end{align*}
is a $\Qq$-basis of $A^{X}[m,n]$. Moreover, for every $1\leq m<N$
multiplication induces a $\Qq$-linear isomorphism
\begin{align*}
A^{X}[1,m]\otimes A^{X}[m+1,N]\ \xrightarrow{\ \sim\ }\ A^{X}[1,N],
\end{align*}
and every PBW monomial factors accordingly as
$X^{\bfc}=X^{\bfc|_{[1,m]}}\,X^{\bfc|_{[m+1,N]}}$.
\end{lem}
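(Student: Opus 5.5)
The plan is to reduce everything to the braid group action, using the fact that the quantum root vectors attached to a consecutive block of roots are images under some $T_u$ of the root vectors of a shorter reduced word. Fix $m\le n$ and put $u=s_{i_1}\cdots s_{i_{m-1}}$. Then $X_{\beta_j}=T_u\bigl(X'_{\beta'_{j-m+1}}\bigr)$ for $m\le j\le n$, where the $X'$ are the root vectors of the reduced word $(i_m,\dots,i_n)$ of $v=s_{i_m}\cdots s_{i_n}$. Since $T_u$ is an algebra automorphism, $A^X[m,n]=T_u\bigl(A'^X[1,n-m+1]\bigr)$. The first claim therefore reduces to the case $m=1$: for a reduced word $(i_1,\dots,i_n)$ of an arbitrary $w\in W$, the ordered monomials $X^{\bfc}$ with $\supp(\bfc)\subseteq[1,n]$ form a basis of the subalgebra generated by $X_{\beta_1},\dots,X_{\beta_n}$.

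For $m=1$, linear independence is immediate, since these monomials are part of the PBW basis of $\Uplus$ (resp. $\Uminus$) once the word is completed to a reduced word of $w_0$; any completion can be used, because the first $n$ root vectors do not depend on the completion. The substantive point is spanning, i.e.\ that the linear span $U[w]$ of these monomials is closed under multiplication. Here I would invoke the Levendorskii--Soibelman commutation formula \cite{LS,DCKP}: for $j<l$,
\[
X_{\beta_l}X_{\beta_j}-q^{\pm(\beta_j,\beta_l)}X_{\beta_j}X_{\beta_l}\in\operatorname{span}\{X^{\bfc}\mid \supp(\bfc)\subseteq[j+1,l-1]\}.
\]
With this relation, a standard induction on the degree, and inside a fixed degree on the inversion count of the word of indices, rewrites any product of generators with indices in $[1,n]$ as a combination of ordered monomials with indices still in $[1,n]$. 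The reason is that each rewriting step only introduces indices strictly between the two being exchanged. This step is the main obstacle, and I would take the Levendorskii--Soibelman formula as known rather than reprove it. If an independent argument were wanted, I would try the characterization $U[w]=\Uplus\cap T_w(U_q(\mathfrak{n}^-)U^0)$ (De Concini--Kac--Procesi), which is manifestly a subalgebra. Applying $T_u$ to the case $m=1$ then gives the basis $\{X^{\bfc}\mid\supp(\bfc)\subseteq[m,n]\}$ of $A^X[m,n]$.

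For the factorization, the map $A^X[1,m]\otimes A^X[m+1,N]\to A^X[1,N]$ sends the basis element $X^{\bfc'}\otimes X^{\bfc''}$, with $\supp(\bfc')\subseteq[1,m]$ and $\supp(\bfc'')\subseteq[m+1,N]$, to the ordered monomial $X^{\bfc'+\bfc''}$. This is just the definition of ordered monomials, since every index of $\bfc'$ precedes every index of $\bfc''$, and it is exactly the stated identity $X^{\bfc}=X^{\bfc|_{[1,m]}}X^{\bfc|_{[m+1,N]}}$. By the first part the source has basis $\{X^{\bfc'}\otimes X^{\bfc''}\}$ and the target has basis $\{X^{\bfc}\}$ with $\supp(\bfc)\subseteq[1,N]$. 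The map is a bijection between these two bases, hence a linear isomorphism.
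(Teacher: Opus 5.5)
Your route is the paper's. It too conjugates by $T_{w_{m-1}}$ to reduce to the quantum Schubert cell of $v=s_{i_m}\cdots s_{i_n}$, takes linear independence from the PBW theorem, and gets the factorization by matching bases. Where you invoke the Levendorskii--Soibelman formula, the paper simply cites \cite[Prop.~2.2]{DCKP}, which states exactly that the ordered monomials form a basis of the subalgebra they generate. (Once you use that formula, the reduction via $T_u$ is unnecessary: applied to $\mathbf i$ itself, it keeps every correction term inside $[m,n]$.)

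The one step you argue yourself fails as stated. Induction on the number of factors and then on the number of inversions is not well founded, because correction terms need not have fewer factors.

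Take type $B_2$ with $\mathbf i=(1,2,1,2)$, i.e.\ $\beta_1=\alpha_1<\beta_2=\alpha_1+\alpha_2<\beta_3=\alpha_1+2\alpha_2<\beta_4=\alpha_2$. Put $u=E_1E_2-q^{-2}E_2E_1=E_{\beta_2}$ and $v=uE_2-E_2u$. Then $v$ is a nonzero multiple of $E_{\beta_3}$, by the same formula for the pair $\beta_2,\beta_4$, since $E_{\beta_4}=E_2$ and $(\beta_2,\beta_4)=0$. The Serre relations give $E_1u=q^2uE_1$, and then $E_1v-vE_1=(q^2-1)u^2$. So $E_{\beta_3}E_{\beta_1}-E_{\beta_1}E_{\beta_3}$ is a nonzero multiple of $E_{\beta_2}^2$. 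Consequently, rewriting $E_{\beta_3}E_{\beta_1}E_{\beta_1}$ produces $E_{\beta_2}E_{\beta_2}E_{\beta_1}$, which has three factors and two inversions, exactly as before.

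The standard remedy is to compare words first by the multiplicity vector $(n_1,\dots,n_n)$ of their indices in lexicographic order, and only then by inversions. Exchanging an adjacent pair $X_{\beta_l}X_{\beta_j}$ with $j<l$ keeps the vector and removes one inversion. Each correction monomial has all its indices in $(j,l)$, so it leaves $n_s$ unchanged for $s<j$ and lowers $n_j$ by one. Since the weight is fixed, only finitely many vectors occur, and the rewriting terminates. With this repair, or by citing \cite[Prop.~2.2]{DCKP} directly, your argument is complete.
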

\begin{proof}
Linear independence is clear, the displayed set being a subset of the PBW basis
of $U_q(\mathfrak{n}^{\pm})$. For the spanning property, set
$w_{m-1}=s_{i_1}\cdots s_{i_{m-1}}$ and $v=s_{i_m}\cdots s_{i_n}$, so that
$(i_m,\dots,i_n)$ is a reduced word for $v$; denote by
$X^{v}_{\gamma_1},\dots,X^{v}_{\gamma_{n-m+1}}$ the quantum root vectors
attached to it. Definition \ref{defn:qrootvectors} gives, for $1\leq r\leq n-m+1$,
\begin{align*}
X_{\beta_{m-1+r}}
=T_{i_1}\cdots T_{i_{m-1}}\bigl(T_{i_m}\cdots T_{i_{m+r-2}}(X_{i_{m+r-1}})\bigr)
=T_{w_{m-1}}\bigl(X^{v}_{\gamma_r}\bigr),
\end{align*}
$T_{w_{m-1}}$ being well defined by Matsumoto's theorem. Since $T_{w_{m-1}}$ is
an algebra automorphism of $U_q(\mathfrak{g})$ by Theorem
\ref{thm:Lusztigquantumrep}, we get
$A^{X}[m,n]=T_{w_{m-1}}\bigl(U^{\pm}[v]\bigr)$, where $U^{\pm}[v]$ is the
quantum Schubert cell of \cite{DCKP,LusztisLeabh}. By \cite[Prop.~2.2]{DCKP}
(see also \cite[Prop.~40.2.1]{LusztisLeabh}) the ordered monomials in the
$X^{v}_{\gamma_r}$ form a basis of $U^{\pm}[v]$; applying the algebra
automorphism $T_{w_{m-1}}$ and using the displayed identity gives the claim.
The last two assertions are immediate from the PBW theorem.
\end{proof}

We can now prove the combinatorial heart of the matter.

\begin{prop}\label{prop:key}
Let $\bfu,\bfv\in\bN^{N}$ and write
$X^{\bfu}X^{\bfv}=\sum_{\bfc}\lambda_{\bfc}X^{\bfc}$ in the PBW basis. Fix
$k\in[1,N]$.
\begin{enumerate}
\item If $\supp(\bfu)\cap[1,k-1]\neq\emptyset$, then every $\bfc$ with
$\lambda_{\bfc}\neq0$ satisfies $\supp(\bfc)\cap[1,k-1]\neq\emptyset$.
\item If $\supp(\bfv)\cap[k+1,N]\neq\emptyset$, then every $\bfc$ with
$\lambda_{\bfc}\neq0$ satisfies $\supp(\bfc)\cap[k+1,N]\neq\emptyset$.
\end{enumerate}
In particular, if $\supp(\bfu)\cap[1,k-1]\neq\emptyset$ or
$\supp(\bfv)\cap[k+1,N]\neq\emptyset$, then $\lambda_{\bfe_k}=0$, that is,
$X_{\beta_k}$ does not occur in $X^{\bfu}X^{\bfv}$.
\end{prop}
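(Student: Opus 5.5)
We derive both parts from the factorization of Lemma \ref{lem:interval}. For (1), write $X^{\bfu}X^{\bfv}$ as follows, using $m=k-1$ (if $k=1$ the hypothesis is vacuous):
\[
X^{\bfu}=X^{\bfu|_{[1,k-1]}}X^{\bfu|_{[k,N]}} .
\]
The factor $X^{\bfu|_{[k,N]}}X^{\bfv}$ lies in $A^X[1,N]$. By the isomorphism $A^X[1,k-1]\otimes A^X[k,N]\to A^X[1,N]$ we expand it as $\sum_{\bfd,\bfe}\mu_{\bfd,\bfe}X^{\bfd}X^{\bfe}$ with $\supp(\bfd)\subseteq[1,k-1]$ and $\supp(\bfe)\subseteq[k,N]$. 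Hence
\[
X^{\bfu}X^{\bfv}=\sum_{\bfd,\bfe}\mu_{\bfd,\bfe}\,\bigl(X^{\bfu|_{[1,k-1]}}X^{\bfd}\bigr)X^{\bfe}.
\]
Each product $X^{\bfu|_{[1,k-1]}}X^{\bfd}$ lies in $A^X[1,k-1]$. By the first part of Lemma \ref{lem:interval} it is a combination $\sum_{\bfc'}\nu_{\bfc'}X^{\bfc'}$ with $\supp(\bfc')\subseteq[1,k-1]$, and then $X^{\bfc'}X^{\bfe}=X^{\bfc'+\bfe}$ is itself a PBW monomial.

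The key point is that no such $\bfc'$ equals $\bfze$. Weights are homogeneous: $X^{\bfu|_{[1,k-1]}}X^{\bfd}$ has weight $\pm(|\bfu|_{[1,k-1]}|+|\bfd|)$, and this is nonzero because $\bfu|_{[1,k-1]}\neq\bfze$ and all roots are positive. So the component in $\Qq\cdot 1$ vanishes, and every $\bfc'$ with $\nu_{\bfc'}\ne0$ is nonzero with support in $[1,k-1]$. The final expansion is $\sum\mu_{\bfd,\bfe}\nu_{\bfc'}X^{\bfc'+\bfe}$. Its monomials are distinct PBW basis elements, possibly repeated across summands, and each one has $\supp\cap[1,k-1]\supseteq\supp(\bfc')\neq\emptyset$. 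Collecting coefficients in the PBW basis, every $\bfc$ with $\lambda_{\bfc}\ne0$ is of this form, which proves (1).

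Part (2) is symmetric. Factor $X^{\bfv}=X^{\bfv|_{[1,k]}}X^{\bfv|_{[k+1,N]}}$, and rewrite $X^{\bfu}X^{\bfv|_{[1,k]}}\in A^X[1,N]$ as $\sum X^{\bfd}X^{\bfe}$ with $\supp(\bfd)\subseteq[1,k]$ and $\supp(\bfe)\subseteq[k+1,N]$, via $A^X[1,k]\otimes A^X[k+1,N]$. Then expand $X^{\bfe}X^{\bfv|_{[k+1,N]}}\in A^X[k+1,N]$ in PBW monomials supported in $[k+1,N]$; these are nonzero by the same weight argument. For the final assertion, $\bfe_k$ has support $\{k\}$, which is disjoint from both $[1,k-1]$ and $[k+1,N]$, so $\lambda_{\bfe_k}=0$.

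The main obstacle is only bookkeeping. We must make sure that products like $X^{\bfc'}X^{\bfe}$ are genuinely PBW monomials, which holds because $\supp(\bfc')\subseteq[1,k-1]$ lies entirely before $\supp(\bfe)\subseteq[k,N]$, so the factorization clause of Lemma \ref{lem:interval} applies. We must also justify the nonvanishing of the weight, using that the interval subalgebras are spanned by weight vectors (Proposition \ref{prop:qrootweights}).
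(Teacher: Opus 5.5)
Your proposal is correct and follows essentially the same argument as the paper. You split off $\bfu|_{[1,k-1]}$ (resp.\ $\bfv|_{[k+1,N]}$), expand the remaining product in the PBW basis factored as in Lemma \ref{lem:interval}, re-expand inside the interval subalgebra $A^X[1,k-1]$ (resp.\ $A^X[k+1,N]$), and use the nonzero weight $|\bfu|_{[1,k-1]}|+|\bfd|$ to exclude the constant monomial.
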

\begin{proof}
(1) Write $\bfu=\bfu'+\bfu''$ with $\bfu'=\bfu|_{[1,k-1]}\neq\bfze$ and
$\bfu''=\bfu|_{[k,N]}$, so that $X^{\bfu}=X^{\bfu'}X^{\bfu''}$ and
\begin{align*}
X^{\bfu}X^{\bfv}=X^{\bfu'}\bigl(X^{\bfu''}X^{\bfv}\bigr).
\end{align*}
Expand $X^{\bfu''}X^{\bfv}=\sum_{\bfc}\lambda'_{\bfc}X^{\bfc}$ and split
$X^{\bfc}=X^{\bfc'}X^{\bfc''}$, where $\bfc'=\bfc|_{[1,k-1]}$ and
$\bfc''=\bfc|_{[k,N]}$, by Lemma \ref{lem:interval}. Then
$X^{\bfu'}X^{\bfc}=\bigl(X^{\bfu'}X^{\bfc'}\bigr)X^{\bfc''}$ and
$X^{\bfu'}X^{\bfc'}\in A^{X}[1,k-1]$, so that, again by Lemma
\ref{lem:interval},
$X^{\bfu'}X^{\bfc'}=\sum_{\bfd}\mu_{\bfd}X^{\bfd}$ with
$\supp(\bfd)\subseteq[1,k-1]$; moreover $\bfd\neq\bfze$ whenever
$\mu_{\bfd}\neq0$, because $|\bfd|=|\bfu'|+|\bfc'|\neq0$. Finally
$X^{\bfd}X^{\bfc''}=X^{\bfd+\bfc''}$ is already a PBW monomial, since
$\supp(\bfd)\subseteq[1,k-1]$ and $\supp(\bfc'')\subseteq[k,N]$. Hence every
PBW monomial occurring in $X^{\bfu}X^{\bfv}$ has support meeting $[1,k-1]$.
\par (2) Symmetrically, write $\bfv=\bfv'+\bfv''$ with $\bfv'=\bfv|_{[1,k]}$
and $\bfv''=\bfv|_{[k+1,N]}\neq\bfze$, so that
$X^{\bfu}X^{\bfv}=\bigl(X^{\bfu}X^{\bfv'}\bigr)X^{\bfv''}$. Expanding
$X^{\bfu}X^{\bfv'}=\sum_{\bfc}\lambda'_{\bfc}X^{\bfc}$, splitting
$X^{\bfc}=X^{\bfc'}X^{\bfc''}$ with $\bfc'=\bfc|_{[1,k]}$,
$\bfc''=\bfc|_{[k+1,N]}$, and using that
$X^{\bfc''}X^{\bfv''}\in A^{X}[k+1,N]$ expands into monomials $X^{\bfd}$ with
$\bfd\neq\bfze$ and $\supp(\bfd)\subseteq[k+1,N]$, we conclude as before that
$X^{\bfc'}X^{\bfd}=X^{\bfc'+\bfd}$ is a PBW monomial whose support meets
$[k+1,N]$.
\par The last assertion follows because $\supp(\bfe_k)=\{k\}$ meets neither
$[1,k-1]$ nor $[k+1,N]$.
\end{proof}

\subsection{The coproduct of quantum root vectors}

\begin{lem}\label{lem:dual}
With the notation of \eqref{eqn:defc} and \eqref{eqn:defct}, for all
$\bfa,\bfb\in\bN^{N}$ with $|\bfa|+|\bfb|=\beta_k$ one has
\begin{align}
\pair{E^{\bfb}E^{\bfa}}{F_{\beta_k}}
&=q^{-(|\bfa|,|\bfb|)}\,\kappa_{\bfa}\kappa_{\bfb}\,c_{\bfa,\bfb},
\label{eqn:dualF}
\\
\pair{E_{\beta_k}}{F^{\bfa}F^{\bfb}}
&=q^{-(|\bfa|,|\bfb|)}\,\kappa_{\bfa}\kappa_{\bfb}\,\tilde c_{\bfa,\bfb}.
\label{eqn:dualE}
\end{align}
\end{lem}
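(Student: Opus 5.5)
We compute the left-hand side of \eqref{eqn:dualF} directly from the axioms of Proposition \ref{prop:pairing}. By (D2),
\[
\pair{E^{\bfb}E^{\bfa}}{F_{\beta_k}}=\pair{E^{\bfb}\otimes E^{\bfa}}{\Delta^{\mathrm{op}}(F_{\beta_k})}.
\]
We then substitute the expansion \eqref{eqn:defc}. Applying $\Delta^{\mathrm{op}}$ swaps the tensor factors, so this becomes
\[
\sum_{\bfa',\bfb'}c_{\bfa',\bfb'}\,\pair{E^{\bfb}}{F^{\bfb'}}\,\pair{E^{\bfa}}{F^{\bfa'}K^{-1}_{|\bfb'|}}.
\]
By the weight orthogonality in Proposition \ref{prop:pairing}, only terms with $|\bfb'|=|\bfb|$ and $|\bfa'|=|\bfa|$ survive. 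In the second factor we strip off the $K$ using Lemma \ref{lem:Ktwist}. With $\mu=|\bfa|$ and $\eta=-|\bfb|$ it gives
\[
\pair{E^{\bfa}}{F^{\bfa'}K^{-1}_{|\bfb|}}=q^{-(|\bfa|,|\bfb|)}\pair{E^{\bfa}}{F^{\bfa'}}.
\]
Proposition \ref{prop:orth} now forces $\bfa'=\bfa$ and $\bfb'=\bfb$, with the values $\kappa_{\bfa}$ and $\kappa_{\bfb}$. What remains is $q^{-(|\bfa|,|\bfb|)}\kappa_{\bfa}\kappa_{\bfb}c_{\bfa,\bfb}$, which is \eqref{eqn:dualF}.

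For \eqref{eqn:dualE} we argue symmetrically, using (D1):
\[
\pair{E_{\beta_k}}{F^{\bfa}F^{\bfb}}=\pair{\Delta(E_{\beta_k})}{F^{\bfa}\otimes F^{\bfb}}.
\]
Inserting \eqref{eqn:defct} gives
\[
\sum_{\bfa',\bfb'}\tilde c_{\bfa',\bfb'}\,\pair{E^{\bfa'}}{F^{\bfa}}\,\pair{E^{\bfb'}K_{|\bfa'|}}{F^{\bfb}}.
\]
The second form of Lemma \ref{lem:Ktwist}, with $\lambda=|\bfa'|$ and $\mu=|\bfb|$, produces the factor $q^{-(|\bfa|,|\bfb|)}$ once the weights are matched. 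Orthogonality of PBW bases then isolates $\bfa'=\bfa$ and $\bfb'=\bfb$.

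The only delicate point is the order of the factors. On the $F$ side, $\Delta^{\mathrm{op}}$ makes $E^{\bfb}$ pair with the right tensor factor $F^{\bfb'}$ and $E^{\bfa}$ pair with the $K$-twisted left factor; this is why the product on the left of \eqref{eqn:dualF} is $E^{\bfb}E^{\bfa}$ rather than $E^{\bfa}E^{\bfb}$. We also need the pairing on tensor products to be defined factorwise, $\pair{x\otimes x'}{y\otimes y'}=\pair{x}{y}\pair{x'}{y'}$, which is implicit in (D1)--(D2). The weight bookkeeping in Lemma \ref{lem:Ktwist} is legitimate because every surviving term is homogeneous of the stated weights. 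Since $q^{-(|\bfa|,|\bfb|)}\kappa_{\bfa}\kappa_{\bfb}\neq0$, each $c_{\bfa,\bfb}$ is, up to a nonzero factor, the coefficient $\lambda_{\bfe_k}$ of $E_{\beta_k}$ in the PBW expansion of $E^{\bfb}E^{\bfa}$. That is how Proposition \ref{prop:key} will be brought to bear afterwards.
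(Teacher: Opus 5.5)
Your proposal is correct and follows the paper's proof essentially step by step. You apply (D2) to $E^{\bfb}\otimes E^{\bfa}$ against $\Delta^{\mathrm{op}}F_{\beta_k}$, use Lemma \ref{lem:Ktwist} to extract $q^{-(|\bfa|,|\bfb|)}$, and use PBW orthogonality (Proposition \ref{prop:orth}) to isolate $(\bfa',\bfb')=(\bfa,\bfb)$, with the mirror argument via (D1) and the second identity of Lemma \ref{lem:Ktwist} for \eqref{eqn:dualE}.
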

\begin{proof}
For \eqref{eqn:dualF}, apply (D2) to $x=E^{\bfb}$, $x'=E^{\bfa}$ and
$y=F_{\beta_k}$:
\begin{align*}
\pair{E^{\bfb}E^{\bfa}}{F_{\beta_k}}
=\pair{E^{\bfb}\otimes E^{\bfa}}{\Delta^{\mathrm{op}}F_{\beta_k}}
=\sum_{\bfa',\bfb'}c_{\bfa',\bfb'}\,
\pair{E^{\bfb}}{F^{\bfb'}}\;\pair{E^{\bfa}}{F^{\bfa'}K^{-1}_{|\bfb'|}}.
\end{align*}
By Proposition \ref{prop:orth} the first factor is
$\delta_{\bfb,\bfb'}\kappa_{\bfb}$; by Lemma \ref{lem:Ktwist} and Proposition
\ref{prop:orth} the second one equals
$q^{-(|\bfa'|,|\bfb'|)}\delta_{\bfa,\bfa'}\kappa_{\bfa}$. Only the summand
$(\bfa',\bfb')=(\bfa,\bfb)$ survives, which is \eqref{eqn:dualF}. The proof of
\eqref{eqn:dualE} is identical, using (D1) and the second identity of Lemma
\ref{lem:Ktwist}.
\end{proof}

\begin{lem}\label{lem:transfer}
Write $E^{\bfb}E^{\bfa}=\sum_{\bfc}\lambda_{\bfc}E^{\bfc}$ and
$F^{\bfa}F^{\bfb}=\sum_{\bfc}\tilde\lambda_{\bfc}F^{\bfc}$. Then
\begin{align*}
\pair{E^{\bfb}E^{\bfa}}{F_{\beta_k}}=\kappa_{\bfe_k}\lambda_{\bfe_k},\qquad
\pair{E_{\beta_k}}{F^{\bfa}F^{\bfb}}=\kappa_{\bfe_k}\tilde\lambda_{\bfe_k}.
\end{align*}
\end{lem}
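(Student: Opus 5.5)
The plan is to expand the left-hand sides in the PBW basis and then pair term by term. Orthogonality of PBW bases (Proposition \ref{prop:orth}) should isolate the coefficient of $E_{\beta_k}$, respectively $F_{\beta_k}$.

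For the first identity, write $E^{\bfb}E^{\bfa}=\sum_{\bfc}\lambda_{\bfc}E^{\bfc}$. The product lies in $\Uplus_{|\bfa|+|\bfb|}=\Uplus_{\beta_k}$ by Proposition \ref{prop:qrootweights}, so only $\bfc$ with $|\bfc|=\beta_k$ occur. By bilinearity,
\begin{align*}
\pair{E^{\bfb}E^{\bfa}}{F_{\beta_k}}=\sum_{\bfc}\lambda_{\bfc}\pair{E^{\bfc}}{F^{\bfe_k}}.
\end{align*}
Proposition \ref{prop:orth} gives $\pair{E^{\bfc}}{F^{\bfe_k}}=\delta_{\bfc,\bfe_k}\kappa_{\bfe_k}$, since both monomials are built from the same reduced word ${\bf i}$. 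Hence the sum collapses to $\kappa_{\bfe_k}\lambda_{\bfe_k}$.

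The second identity is handled in exactly the same way. Expand $F^{\bfa}F^{\bfb}=\sum_{\bfc}\tilde\lambda_{\bfc}F^{\bfc}$ in the PBW basis of $\Uminus$. Then pair with $E_{\beta_k}=E^{\bfe_k}$ and use $\pair{E^{\bfe_k}}{F^{\bfc}}=\delta_{\bfe_k,\bfc}\kappa_{\bfe_k}$.

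There is no real obstacle. The only point to check is that the expansions are finite sums in the PBW bases with $\Qq$-coefficients, which the PBW theorem guarantees, so bilinearity applies directly. No $K$-factors appear, so Lemma \ref{lem:Ktwist} is not needed. Combined with Lemma \ref{lem:dual}, the result expresses $c_{\bfa,\bfb}$ through $\lambda_{\bfe_k}$, the coefficient of $E_{\beta_k}$ in $E^{\bfb}E^{\bfa}$. Proposition \ref{prop:key} then controls that coefficient, and this is where the lemma will be applied.
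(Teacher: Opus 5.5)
Your proposal is correct and follows the paper's approach: the paper's proof is the one-line ``immediate from Proposition~\ref{prop:orth} and bilinearity,'' which is exactly your expansion in the PBW basis followed by term-by-term pairing against $F^{\bfe_k}$, respectively $E^{\bfe_k}$. Your weight remark and your observation that Lemma~\ref{lem:Ktwist} is not needed are harmless extras.
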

\begin{proof}
Immediate from Proposition \ref{prop:orth} and bilinearity.
\end{proof}

\begin{thm}\label{copro}
The coproduct of $F^{\bf i}_{\beta_k}$ is given by
\begin{align}\label{eqn:coproductqroot}
&\Delta F^{\bf i}_{\beta_k} =
\\
&F^{\bf i}_{\beta_k} \otimes 1 +
\sum_{\substack{p_1\leq\dots \leq p_r< k < q_1\leq \dots \leq q_s,\ r,s\geq1 \\
\beta_{p_1}+\dots + \beta_{p_r}+\beta_{q_1}+ \dots + \beta_{q_s}=\beta_k}}
c_{p_1,\dots ,p_r}^{q_1, \dots ,q_s}
F^{\bf i}_{\beta_{p_1}}\dots F^{\bf i}_{\beta_{p_r}} K^{-1}_{\mathbf q}\otimes
F^{\bf i}_{\beta_{q_1}}\dots F^{\bf i}_{\beta_{q_s}} \nonumber
\\
&\qquad+ K_{\beta_k}^{-1}\otimes F^{\bf i}_{\beta_k}, \nonumber
\end{align}
where $K^{-1}_{\mathbf q}:=K^{-1}_{\beta_{q_1}}\dots K^{-1}_{\beta_{q_s}}$ and
$c_{p_1,\dots ,p_r}^{q_1, \dots ,q_s}\in\Qq$. These coefficients are in fact
Laurent polynomials in $q$, the coproduct preserving the integral form of
\cite[\S 1]{DCKP} in which the PBW monomials form a $\mathbb{Z}[q,q^{-1}]$-basis.
Equivalently, in the notation of \S \ref{sec:pbwnot},
\begin{align*}
\Delta F_{\beta_k}
= F_{\beta_k}\otimes 1
+\sum_{\substack{\bfa,\bfb\in\bN^{N}\setminus\{\bfze\}\\
\supp(\bfa)\subseteq[1,k-1],\ \supp(\bfb)\subseteq[k+1,N]\\
|\bfa|+|\bfb|=\beta_k}}
c_{\bfa,\bfb}\;F^{\bfa}K^{-1}_{|\bfb|}\otimes F^{\bfb}
+K^{-1}_{\beta_k}\otimes F_{\beta_k}.
\end{align*}
\end{thm}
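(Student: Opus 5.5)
The plan is to work with the expansion \eqref{eqn:defc}, whose coefficients $c_{\bfa,\bfb}$ are well defined by Lemma \ref{lem:shape} and the PBW theorem. The extreme terms ($\bfa=\bfze$ or $\bfb=\bfze$) are identified by Lemma \ref{lem:extreme}: they give exactly $K^{-1}_{\beta_k}\otimes F_{\beta_k}$ and $F_{\beta_k}\otimes 1$. It then remains to show that for $\bfa,\bfb\neq\bfze$ with $|\bfa|+|\bfb|=\beta_k$ one has $c_{\bfa,\bfb}=0$ unless $\supp(\bfa)\subseteq[1,k-1]$ and $\supp(\bfb)\subseteq[k+1,N]$. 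By Lemma \ref{lem:nok}, neither support contains $k$. So, as remarked after that lemma, the claim reduces to showing $c_{\bfa,\bfb}=0$ whenever $\supp(\bfa)\cap[k+1,N]\neq\emptyset$ or $\supp(\bfb)\cap[1,k-1]\neq\emptyset$.

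For this I would dualize. Lemma \ref{lem:dual} gives
\[
\pair{E^{\bfb}E^{\bfa}}{F_{\beta_k}}=q^{-(|\bfa|,|\bfb|)}\kappa_{\bfa}\kappa_{\bfb}c_{\bfa,\bfb},
\]
and the factor multiplying $c_{\bfa,\bfb}$ is nonzero by Proposition \ref{prop:orth}. Lemma \ref{lem:transfer} identifies the left side with $\kappa_{\bfe_k}\lambda_{\bfe_k}$, where $\lambda_{\bfe_k}$ is the coefficient of $E_{\beta_k}$ in the PBW expansion of the product $E^{\bfb}E^{\bfa}$. Hence $c_{\bfa,\bfb}=0$ if and only if $E_{\beta_k}$ does not occur in $E^{\bfb}E^{\bfa}$.

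Now apply Proposition \ref{prop:key} with $X=E$, $\bfu=\bfb$ and $\bfv=\bfa$. If $\supp(\bfb)\cap[1,k-1]\neq\emptyset$, part (1) shows that every monomial in $E^{\bfb}E^{\bfa}$ has support meeting $[1,k-1]$. If $\supp(\bfa)\cap[k+1,N]\neq\emptyset$, part (2) shows that every such monomial meets $[k+1,N]$. In either case $\lambda_{\bfe_k}=0$, hence $c_{\bfa,\bfb}=0$. Collecting the surviving terms and rewriting $F^{\bfa}K^{-1}_{|\bfb|}\otimes F^{\bfb}$ as ordered products $F_{\beta_{p_1}}\cdots F_{\beta_{p_r}}K^{-1}_{\mathbf q}\otimes F_{\beta_{q_1}}\cdots F_{\beta_{q_s}}$ yields \eqref{eqn:coproductqroot}. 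The weight condition $\sum\beta_{p}+\sum\beta_{q}=\beta_k$ is just $|\bfa|+|\bfb|=\beta_k$.

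For integrality of the coefficients, I would note that $\Delta$ preserves the De Concini--Kac--Procesi integral form, which is generated over $\mathbb{Z}[q,q^{-1}]$ by $E_i,F_i,K_i^{\pm1}$ and the relevant divided-difference elements, and which is stable under the $T_i$. The PBW monomials (together with the $K_\mu$) are a $\mathbb{Z}[q,q^{-1}]$-basis of this form. Expanding $\Delta F_{\beta_k}$ in the tensor-square basis therefore gives Laurent polynomial coefficients.

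The main obstacle is bookkeeping of conventions: matching the order $E^{\bfb}E^{\bfa}$ against $\Delta^{\mathrm{op}}$ in (D2), which has already been absorbed into Lemma \ref{lem:dual}. The integrality statement rests on the cited results of \cite{DCKP} and is not proved from scratch here.
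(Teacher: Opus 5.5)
Your proposal is correct and follows the paper's proof essentially verbatim. You use Lemmas~\ref{lem:shape} and~\ref{lem:extreme} for the expansion and the extreme terms, Lemmas~\ref{lem:dual} and~\ref{lem:transfer} to identify $c_{\bfa,\bfb}$ with the coefficient of $E_{\beta_k}$ in $E^{\bfb}E^{\bfa}$, Proposition~\ref{prop:key} with $X=E$, $\bfu=\bfb$, $\bfv=\bfa$, and Lemma~\ref{lem:nok} for the index $k$; the only difference is that you phrase the argument contrapositively.

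Like the paper, you settle integrality only by citing \cite{DCKP}. One small inaccuracy: the Cartan part of that form also contains $E_iF_i-F_iE_i=(K_i-K_i^{-1})/(q_i-q_i^{-1})$, so the $K_\mu$ are not a $\mathbb{Z}[q,q^{-1}]$-basis of it. A short extra step is therefore needed to read off the coefficients of $F^{\bfa}K^{-1}_{|\bfb|}\otimes F^{\bfb}$, for instance first sending $K_i\mapsto 1$ in the Cartan part of the left tensor factor.
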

\begin{proof}
By Lemma \ref{lem:shape} the expansion \eqref{eqn:defc} exists and is unique,
and by Lemma \ref{lem:extreme} its terms with $\bfa=\bfze$ or $\bfb=\bfze$ are
exactly the two extreme terms displayed above. So let $\bfa,\bfb\neq\bfze$ with
$|\bfa|+|\bfb|=\beta_k$ and assume $c_{\bfa,\bfb}\neq0$. Writing
$E^{\bfb}E^{\bfa}=\sum_{\bfc}\lambda_{\bfc}E^{\bfc}$, Lemmas \ref{lem:dual} and
\ref{lem:transfer} give
\begin{align*}
\kappa_{\bfe_k}\lambda_{\bfe_k}
=q^{-(|\bfa|,|\bfb|)}\kappa_{\bfa}\kappa_{\bfb}c_{\bfa,\bfb}\neq0,
\end{align*}
so that $E_{\beta_k}$ occurs in the PBW expansion of $E^{\bfb}E^{\bfa}$.
Applying Proposition \ref{prop:key} with $X=E$, $\bfu=\bfb$, $\bfv=\bfa$ we
conclude that
\begin{align*}
\supp(\bfb)\cap[1,k-1]=\emptyset\qquad\text{and}\qquad
\supp(\bfa)\cap[k+1,N]=\emptyset .
\end{align*}
By Lemma \ref{lem:nok} moreover $k\notin\supp(\bfa)\cup\supp(\bfb)$, whence
$\supp(\bfa)\subseteq[1,k-1]$ and $\supp(\bfb)\subseteq[k+1,N]$, as claimed.
\end{proof}

\subsection{A criterion for the non-vanishing of the coefficients}
The identity \eqref{eqn:dualF} combined with Lemma \ref{lem:transfer} gives the following result, which is what is
actually needed in Section \ref{sec:qts}.

\begin{cor}\label{cor:criterion}
Let $\bfa,\bfb\in\bN^{N}\setminus\{\bfze\}$ with $|\bfa|+|\bfb|=\beta_k$. Then
\begin{align*}
c_{\bfa,\bfb}\neq0
\iff
E_{\beta_k}\ \text{occurs in the PBW expansion of }\ E^{\bfb}E^{\bfa}.
\end{align*}
\end{cor}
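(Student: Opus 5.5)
The corollary follows by chaining Lemma \ref{lem:dual} with Lemma \ref{lem:transfer}, and the only point to check is that every scalar factor relating the two sides is nonzero. Fix $\bfa,\bfb\in\bN^N\setminus\{\bfze\}$ with $|\bfa|+|\bfb|=\beta_k$. Expand $E^{\bfb}E^{\bfa}=\sum_{\bfc}\lambda_{\bfc}E^{\bfc}$ in the PBW basis attached to ${\bf i}$. The phrase ``$E_{\beta_k}$ occurs in the PBW expansion'' then means precisely $\lambda_{\bfe_k}\neq0$.

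First, by \eqref{eqn:dualF} of Lemma \ref{lem:dual},
\begin{align*}
\pair{E^{\bfb}E^{\bfa}}{F_{\beta_k}}=q^{-(|\bfa|,|\bfb|)}\,\kappa_{\bfa}\kappa_{\bfb}\,c_{\bfa,\bfb}.
\end{align*}
Second, by Lemma \ref{lem:transfer}, the same pairing equals $\kappa_{\bfe_k}\lambda_{\bfe_k}$. Lemma \ref{lem:transfer} is just bilinearity together with the orthogonality $\pair{E^{\bfc}}{F^{\bfe_k}}=\delta_{\bfc,\bfe_k}\kappa_{\bfe_k}$ of Proposition \ref{prop:orth}. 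Equating the two expressions gives
\begin{align*}
\kappa_{\bfe_k}\lambda_{\bfe_k}=q^{-(|\bfa|,|\bfb|)}\,\kappa_{\bfa}\kappa_{\bfb}\,c_{\bfa,\bfb}.
\end{align*}

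The three constants $\kappa_{\bfa},\kappa_{\bfb},\kappa_{\bfe_k}$ are nonzero by Proposition \ref{prop:orth}. The factor $q^{-(|\bfa|,|\bfb|)}$ is a unit in $\Qq$. Hence $c_{\bfa,\bfb}\neq0$ if and only if $\lambda_{\bfe_k}\neq0$, which is the claimed equivalence.

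There is no real obstacle here; the content was already absorbed into Lemma \ref{lem:dual}. The only care needed is bookkeeping of conventions. In the left-hand side the order of the factors is reversed, $E^{\bfb}E^{\bfa}$ rather than $E^{\bfa}E^{\bfb}$, because (D2) pairs with $\Delta^{\mathrm{op}}$. Also, the $K$-twist only contributes the invertible power of $q$ recorded in Lemma \ref{lem:Ktwist}.
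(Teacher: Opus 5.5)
Your proposal is correct and is essentially the paper's own argument: the corollary is obtained by equating \eqref{eqn:dualF} of Lemma~\ref{lem:dual} with the expression $\kappa_{\bfe_k}\lambda_{\bfe_k}$ from Lemma~\ref{lem:transfer}. The equivalence then follows because $\kappa_{\bfa},\kappa_{\bfb},\kappa_{\bfe_k}$ are nonzero and $q^{-(|\bfa|,|\bfb|)}$ is a unit in $\Qq$.
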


\section{\texorpdfstring{Reduced decompositions of $w_0$ and quantum tangent spaces}{Reduced decompositions of w0 and quantum tangent spaces}}\label{sec:qts}
In the previous section we have established the role of convex orders in the coproduct of quantum root vectors.
In this section we prove that the reduced decompositions that give rise to a quantum tangent space are precisely the tangent reduced decompositions introduced in Subsection \ref{tco}.
Throughout, as in Section \ref{sec:coproduct}, $q$ is an indeterminate and
$U_q(\mathfrak{g})$ is taken over $\Qq$, so that the coefficients $c_{\bfa,\bfb}$ are
Laurent polynomials in $q$. The results below remain valid after specializing $q$ to a fixed
$q_0\in\mathbb{C}^\times$ which is not a root of unity, provided $q_0$ is not a zero of the finitely many nonzero coefficients involved in the argument; this holds for all but finitely many
such $q_0$ and, these coefficients being Laurent polynomials with rational coefficients, for every transcendental
$q_0$.
\subsection{The combinatorial properties of a reduced decomposition associated to a quantum tangent space}
The next proposition reduces the quantum tangent space condition to a question about
PBW monomials: by Corollary \ref{cor:criterion}, it becomes a statement about the
multiplication of PBW monomials in $\Uplus$.

\begin{prop}\label{prop:qtscriterion}
The set $\{F^{\bf i}_{\beta_k}K_{\beta_k}\mid 1\leq k \leq N\}$ spans a quantum
tangent space if and only if, for every $1\leq k\leq N$ and every pair
$\bfa,\bfb\in\bN^{N}\setminus\{\bfze\}$ with $|\bfa|+|\bfb|=\beta_k$ and
$\sum_{j}a_j\geq2$, the quantum root vector $E_{\beta_k}$ does \emph{not} occur in
the PBW expansion of $E^{\bfb}E^{\bfa}$.
\end{prop}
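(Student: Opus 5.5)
We compute $\Delta(F_{\beta_k}K_{\beta_k})$ explicitly from Theorem \ref{copro} and read off the quantum tangent space condition term by term. Since $\Delta K_{\beta_k}=K_{\beta_k}\otimes K_{\beta_k}$, multiplying \eqref{eqn:defc} on the right gives
\begin{align*}
\Delta(F_{\beta_k}K_{\beta_k})=F_{\beta_k}K_{\beta_k}\otimes K_{\beta_k}+\sum_{\bfa,\bfb}c_{\bfa,\bfb}\,F^{\bfa}K^{-1}_{|\bfb|}K_{\beta_k}\otimes F^{\bfb}K_{\beta_k}+1\otimes F_{\beta_k}K_{\beta_k}.
\end{align*}
Here $\bfa,\bfb\neq\bfze$, $\supp(\bfa)\subseteq[1,k-1]$, $\supp(\bfb)\subseteq[k+1,N]$, and $|\bfa|+|\bfb|=\beta_k$. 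Using $K^{-1}_{|\bfb|}K_{\beta_k}=K_{|\bfa|}$, the left factor of a middle term is $F^{\bfa}K_{|\bfa|}$.

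The condition is that $\mathbb{C}\oplus T$ be a right coideal, i.e. $\Delta(T)\subseteq(T\oplus\mathbb{C})\otimes U_q(\mathfrak g)$ with $T=\mathrm{span}\{F_{\beta_j}K_{\beta_j}\}$. The left factors of the extreme terms are $F_{\beta_k}K_{\beta_k}\in T$ and $1\in\mathbb{C}$. When $\sum_j a_j=1$, i.e. $\bfa=\bfe_p$, the left factor is $F_{\beta_p}K_{\beta_p}\in T$.

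The remaining step is to show that the middle terms with $\sum_j a_j\geq2$ must vanish individually. Write $\Delta(F_{\beta_k}K_{\beta_k})$ in the basis $\{F^{\bfa}K_\mu\otimes F^{\bfb}K_\nu\}$ of $U_q(\mathfrak g)\otimes U_q(\mathfrak g)$. This family is linearly independent by the triangular decomposition and the PBW theorem. Also $T\oplus\mathbb{C}$ is spanned by the basis vectors $F^{\bfe_j}K_{\beta_j}$ and $1$.

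So $\Delta(F_{\beta_k}K_{\beta_k})$ lies in $(T\oplus\mathbb{C})\otimes U_q(\mathfrak g)$ iff every basis element occurring with nonzero coefficient has its left factor in $T\oplus\mathbb{C}$. Since the pairs $(\bfa,\bfb)$ are distinct, the middle terms carry distinct basis elements. For $\sum a_j\geq2$, the element $F^{\bfa}K_{|\bfa|}$ is a PBW monomial of degree at least two, hence not in $T\oplus\mathbb{C}$. Therefore the condition for a given $k$ is equivalent to $c_{\bfa,\bfb}=0$ for all $\bfa,\bfb$ as in Theorem \ref{copro} with $\sum a_j\geq2$.

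By Theorem \ref{copro}, $c_{\bfa,\bfb}$ vanishes automatically outside the support constraints. Hence this vanishing is equivalent to $c_{\bfa,\bfb}=0$ for all $\bfa,\bfb\neq\bfze$ with $|\bfa|+|\bfb|=\beta_k$ and $\sum a_j\geq2$. Corollary \ref{cor:criterion} turns each of these into the statement that $E_{\beta_k}$ does not occur in the PBW expansion of $E^{\bfb}E^{\bfa}$. Ranging over all $k$ gives the proposition.

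The main point requiring care is the independence argument: one must check that the $K$-factors do not merge distinct terms. They do not, because $(\bfa,\bfb)$ determines the $F$-parts, and the PBW basis times $U^0$ is a basis. One must also confirm that "spans a quantum tangent space" is exactly this right coideal condition, with $T$ and $\mathbb{C}$ as described. The identification $K_{|\bfa|}$ versus a general $K_\mu$ is harmless, since any $F^{\bfa}K_\mu$ with $\sum a_j\ge2$ lies outside $T\oplus\mathbb{C}$.
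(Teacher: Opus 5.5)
Your proof is correct and follows essentially the same route as the paper. Both expand $\Delta(F_{\beta_k}K_{\beta_k})$ via Theorem \ref{copro}, use linear independence of PBW monomials times $K$'s to force $c_{\bfa,\bfb}=0$ whenever $\sum_j a_j\geq2$, and conclude with Corollary \ref{cor:criterion}; the paper separates terms by fixing $\bfb$ and using weight spaces, while you use the product family directly, and calling $\{F^{\bfa}K_\mu\otimes F^{\bfb}K_\nu\}$ a basis of $U_q(\mathfrak g)\otimes U_q(\mathfrak g)$ is loose, but linear independence is all your argument needs.
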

\begin{proof}
Put $T=\mathrm{span}_{\Qq}\{F_{\beta_m}K_{\beta_m}\}$. Multiplying
\eqref{eqn:coproductqroot} by $\Delta(K_{\beta_k})=K_{\beta_k}\otimes K_{\beta_k}$, and
using $K^{-1}_{|\bfb|}K_{\beta_k}=K_{\beta_k-|\bfb|}=K_{|\bfa|}$ on the middle terms
together with $K^{-1}_{\beta_k}K_{\beta_k}=1$ on the last one, we obtain
\begin{align}\label{eqn:coproKbeta}
\Delta\bigl(F^{\bf i}_{\beta_k}K_{\beta_k}\bigr)
= F^{\bf i}_{\beta_k}K_{\beta_k}\otimes K_{\beta_k}
+\sum_{\bfa,\bfb}c_{\bfa,\bfb}\,F^{\bfa}K_{|\bfa|}\otimes F^{\bfb}K_{\beta_k}
+1\otimes F^{\bf i}_{\beta_k}K_{\beta_k},
\end{align}
the sum being over the pairs $\bfa,\bfb\in\bN^N\setminus\{\bfze\}$ of Theorem
\ref{copro}. By definition $T$ is a quantum tangent space if and only if
$\Delta(T)\subseteq(T\oplus\Qq)\otimes U_q(\mathfrak{g})$, that is, if and only if every
left tensor factor of \eqref{eqn:coproKbeta} lies in $T\oplus\Qq$. The
two extreme summands contribute $F_{\beta_k}K_{\beta_k}\in T$ and $1\in\Qq$, so
only the middle sum matters.
\par The elements $F^{\bfb}K_{\beta_k}$, $\bfb\in\bN^N$, are linearly
independent, $\{F^{\bfb}\}$ being a PBW basis and $K_{\beta_k}$ being invertible;
hence the condition has to hold separately for each $\bfb$. Fix such a $\bfb$. All
the $\bfa$ paired with it satisfy $|\bfa|=\beta_k-|\bfb|=:\mu$, so that they share
the same factor $K_{\mu}$ and the corresponding left tensor factor is
\begin{align*}
\Bigl(\sum_{|\bfa|=\mu}c_{\bfa,\bfb}F^{\bfa}\Bigr)K_{\mu},
\qquad \mu\in Q^+\setminus\{0\}.
\end{align*}
Now $\bigl(T\oplus\Qq\bigr)\cap \Uminus_{-\mu}K_{\mu}$ equals
$\Qq\,F_{\beta_p}K_{\beta_p}$ if $\mu=\beta_p\in\Dp$, and $0$ otherwise: a nonzero
scalar has $\Uminus$-weight $0\neq-\mu$, while
$F_{\beta_m}K_{\beta_m}\in \Uminus_{-\beta_m}K_{\beta_m}$ forces $\beta_m=\mu$.
Since the $F^{\bfa}$ with $|\bfa|=\mu$ are linearly independent, and since
$\bfa=\bfe_p$ is the unique such $\bfa$ with $\sum_j a_j=1$, the displayed element
lies in $T\oplus\Qq$ if and only if $c_{\bfa,\bfb}=0$ for every $\bfa$ with
$\sum_j a_j\geq2$; no cancellation between different $\bfa$ is possible.
Corollary \ref{cor:criterion} now translates $c_{\bfa,\bfb}=0$ into the
non-occurrence of $E_{\beta_k}$ in $E^{\bfb}E^{\bfa}$.
\end{proof}

By Theorem \ref{copro}, the criterion of Proposition \ref{prop:qtscriterion} is satisfied as
soon as no $\beta_k$ admits a decomposition of the shape \eqref{eqn:baddec}, that is, by Proposition \ref{prop:nobaddec}, as soon as (TR2$'$) holds.

\begin{cor}\label{cor:tangentimpliesqts}
If the convex order induced by ${\bf i}$ satisfies {\rm (TR2$'$)}, then
$\{F^{\bf i}_{\beta_k}K_{\beta_k}\mid 1\leq k\leq N\}$ spans a quantum tangent
space.
\end{cor}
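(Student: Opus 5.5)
The corollary follows by combining Theorem \ref{copro}, Proposition \ref{prop:nobaddec} and Proposition \ref{prop:qtscriterion}, so the plan is mostly bookkeeping. By Proposition \ref{prop:qtscriterion}, it suffices to show the following: for every $k$ and every pair $\bfa,\bfb\in\bN^N\setminus\{\bfze\}$ with $|\bfa|+|\bfb|=\beta_k$ and $\sum_j a_j\geq 2$, the root vector $E_{\beta_k}$ does not occur in $E^{\bfb}E^{\bfa}$. By Corollary \ref{cor:criterion}, this is the same as requiring $c_{\bfa,\bfb}=0$ for all such pairs. The equivalence goes through Lemma \ref{lem:dual}, which relates the pairing to $c_{\bfa,\bfb}$, and uses that the constants $\kappa$ are nonzero.

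The argument is by contradiction. Fix such a pair and assume $c_{\bfa,\bfb}\neq0$. Theorem \ref{copro} then forces $\supp(\bfa)\subseteq[1,k-1]$ and $\supp(\bfb)\subseteq[k+1,N]$. Writing out $|\bfa|+|\bfb|=\beta_k$ coordinatewise gives
\[\beta_k=\sum_{p<k}a_p\beta_p+\sum_{q>k}b_q\beta_q,\qquad \sum_{p<k}a_p=\sum_j a_j\geq2.\]
This is exactly a forbidden decomposition \eqref{eqn:baddec} of $\beta_k$. However, Proposition \ref{prop:nobaddec} states that (TR2$'$) excludes forbidden decompositions. Hence $c_{\bfa,\bfb}=0$, the criterion of Proposition \ref{prop:qtscriterion} holds, and $T$ is a quantum tangent space.

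There is no real obstacle. The one point to check is that the condition $\sum_j a_j\geq2$ in Proposition \ref{prop:qtscriterion} matches the condition $\sum_{p<k}a_p\geq 2$ in \eqref{eqn:baddec}. It does, because $\supp(\bfa)\subseteq[1,k-1]$ once $c_{\bfa,\bfb}\neq0$.

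One can also argue more directly, without Corollary \ref{cor:criterion}. The proof of Proposition \ref{prop:qtscriterion} shows that the left tensor factors are in $T\oplus\Qq$ exactly when these particular $c_{\bfa,\bfb}$ vanish. Under (TR2$'$) the relevant sum in \eqref{eqn:coproKbeta} has no terms with $\sum_j a_j\geq2$, since any such term would be a forbidden decomposition.
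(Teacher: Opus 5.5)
Your proof is correct and follows the paper's argument. Both reduce, via Proposition~\ref{prop:qtscriterion} and Corollary~\ref{cor:criterion}, to the vanishing of $c_{\bfa,\bfb}$; both use Theorem~\ref{copro} to kill the pairs with the wrong supports, and exclude the remaining pairs as forbidden decompositions by Proposition~\ref{prop:nobaddec}. The only difference is that you argue by contradiction, whereas the paper splits directly into the two support cases.
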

\begin{proof}
Let $\bfa,\bfb\in\bN^N\setminus\{\bfze\}$ with $|\bfa|+|\bfb|=\beta_k$ and
$\sum_j a_j\geq2$. If $\supp(\bfa)\not\subseteq[1,k-1]$ or
$\supp(\bfb)\not\subseteq[k+1,N]$, then $c_{\bfa,\bfb}=0$ by Theorem \ref{copro},
so $E_{\beta_k}$ does not occur in $E^{\bfb}E^{\bfa}$ by Corollary
\ref{cor:criterion}. If instead both inclusions hold, then $|\bfa|+|\bfb|=\beta_k$
is an identity of the shape \eqref{eqn:baddec}, which Proposition
\ref{prop:nobaddec} excludes. Hence the condition of Proposition \ref{prop:qtscriterion} holds for every such pair.
\end{proof}

\subsection{Reduction to a statement on root systems}
\label{sec:reduction}
Throughout this subsection we write
\begin{align*}
\mathcal{T}_{\bf i}:=\mathrm{span}\{F^{\bf i}_{\beta_k}K_{\beta_k}\mid 1\leq k\leq N\},
\end{align*}
the calligraphic letter being chosen so as
not to conflict with the braid group automorphisms $T_i$ of Theorem \ref{thm:Lusztigquantumrep}, and we write forbidden decompositions as pairs $(\bfa,\bfb)$ of exponent vectors, as after
Proposition \ref{prop:nobaddec}. By Remark \ref{p74}, $\mathcal{T}_{\bf i}$ depends only on the commutation class
of ${\bf i}$.

\begin{prop}\label{prop:onlyifdir}
Suppose that some ${\bf i}'$ in the commutation class of ${\bf i}$ admits, for some $k$,
a forbidden decomposition $(\bfa,\bfb)$ of $\beta_k$ such that the coefficient of
$e_{\beta_k}$ in the expansion of $e^{\bfb}e^{\bfa}$ in the PBW basis of
$U(\mathfrak{n}^+)$ is different from zero; here the $e_\beta\in\g_\beta$, $\beta\in\Dp$, are elements of a Chevalley basis of $\g$, and the roots $\beta_k$ and the monomials $e^{\bfa}=e_{\beta_1}^{a_1}\cdots e_{\beta_N}^{a_N}$ are defined by means of ${\bf i}'$. Then $\mathcal{T}_{\bf i}$ is not a quantum tangent
space.
\end{prop}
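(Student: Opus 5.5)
By Remark \ref{p74}, $\mathcal{T}_{\bf i}=\mathcal{T}_{{\bf i}'}$, so we may replace ${\bf i}$ by ${\bf i}'$ and assume ${\bf i}={\bf i}'$. By Proposition \ref{prop:qtscriterion} it then suffices to show that $E_{\beta_k}$ occurs with nonzero coefficient in the PBW expansion of $E^{\bfb}E^{\bfa}$ in $\Uplus$. A forbidden decomposition satisfies $|\bfa|+|\bfb|=\beta_k$, $\bfa,\bfb\neq\bfze$ and $\sum_j a_j\geq 2$, so it is one of the pairs covered by that criterion. The task is therefore to pass from the classical statement in $U(\mathfrak n^+)$ to the quantum one, by specializing $q\to1$.

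We work in Lusztig's integral form $U^+_{\mathcal A}$, $\mathcal A=\mathbb Z[q,q^{-1}]$. There the divided powers $E^{(\bfc)}=E_{\beta_1}^{(c_1)}\cdots E_{\beta_N}^{(c_N)}$ form an $\mathcal A$-basis, and the ordinary monomials $E^{\bfc}$ form a $\mathbb Q[q,q^{-1}]$-basis of the localized form. Write
\[E^{\bfb}E^{\bfa}=\sum_{\bfc}\lambda_{\bfc}(q)E^{\bfc},\qquad \lambda_{\bfc}(q)\in\mathbb Q[q,q^{-1}].\]
These coefficients are Laurent polynomials by the integrality statement of Theorem \ref{copro}. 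It is enough to prove $\lambda_{\bfe_k}(1)\neq0$.

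For this I would use the classical limit of \cite{Lusztig90_2,DCKP}. Setting $q=1$ in $U^+_{\mathcal A}\otimes_{\mathcal A}\mathbb Q$ (with $K_i\mapsto1$) gives $U_{\mathbb Q}(\mathfrak n^+)$. Under this map the quantum root vector $E_{\beta_j}$ goes to $\pm e_{\beta_j}$ for a Chevalley basis element $e_{\beta_j}$. This holds because the braid automorphisms $T_i$ specialize to the classical automorphisms $\exp(\mathrm{ad}\,e_i)\exp(-\mathrm{ad}\,f_i)\exp(\mathrm{ad}\,e_i)$, which map Chevalley basis vectors to Chevalley basis vectors up to sign. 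The specialized PBW monomials $E^{\bfc}|_{q=1}$ are then, up to sign, the classical PBW monomials $e^{\bfc}$, which form a basis of $U(\mathfrak n^+)$. Specializing the displayed identity therefore gives
\[\pm e^{\bfb}e^{\bfa}=\sum_{\bfc}\pm\lambda_{\bfc}(1)\,e^{\bfc},\]
and by the hypothesis the coefficient of $e_{\beta_k}$ on the left is nonzero. Hence $\lambda_{\bfe_k}(1)\neq0$, so $\lambda_{\bfe_k}(q)\neq0$. The same then holds after specializing $q$ to any $q_0$ outside the finitely many zeros, as discussed at the beginning of Section \ref{sec:qts}.

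The main obstacle is justifying the specialization. The signs and normalizations in $E_{\beta}|_{q=1}=\pm e_\beta$ are harmless, since only non-vanishing matters. The real content is that specialization commutes with expansion in the PBW basis. This needs the PBW monomials to be a basis of the $\mathbb Q[q,q^{-1}]$-form compatible with the $q=1$ quotient, i.e. the classical limit $U^+_{\mathcal A}/(q-1)\cong U_{\mathbb Z}(\mathfrak n^+)$ in which Lusztig's root vectors go to root vectors. I would cite this from \cite{DCKP} and \cite[Ch.~41]{LusztisLeabh} rather than reprove it.
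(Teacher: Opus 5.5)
Your proposal is correct and follows the paper's own argument. You pass to ${\bf i}'$ via Remark \ref{p74}, reduce via Proposition \ref{prop:qtscriterion} to the occurrence of $E_{\beta_k}$ in $E^{\bfb}E^{\bfa}$, and specialize the Laurent-polynomial coefficient $\lambda_{\bfe_k}$ at $q=1$, where the quantum root vectors become Chevalley basis elements up to sign. The only difference is that you use Lusztig's divided-power form where the paper uses the De Concini--Kac form, in which the ordinary PBW monomials are already a $\mathbb{Z}[q,q^{-1}]$-basis; so your ``localized form'' is unnecessary, and as stated it needs the quantum factorials inverted, since e.g.\ $[2]_q$ is not a unit in $\mathbb{Q}[q,q^{-1}]$. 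This is harmless because $E^{(\bfe_k)}=E_{\beta_k}$, so $\lambda_{\bfe_k}$ is the same in both bases.
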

\begin{proof}
Write $E^{\bfb}E^{\bfa}=\sum_{\bfc}\lambda_{\bfc}E^{\bfc}$, the $\lambda_{\bfc}$ being
Laurent polynomials in $q$ by the integrality recalled after Theorem \ref{copro}. Specializing at $q=1$, under which the positive part of the integral form of
\cite[\S 1]{DCKP} becomes $U(\mathfrak{n}^+)$, the quantum root vectors go to the elements of a Chevalley basis up
to sign, so that $\lambda_{\bfe_k}(1)$ equals, up to sign, the classical coefficient,
which is nonzero by hypothesis. Hence $\lambda_{\bfe_k}\neq0$, that is $E_{\beta_k}$
occurs in $E^{\bfb}E^{\bfa}$, and $c_{\bfa,\bfb}\neq0$ by Corollary
\ref{cor:criterion}. Since $\sum_j a_j\geq2$, Proposition \ref{prop:qtscriterion} shows
that $\mathcal{T}_{{\bf i}'}$ is not a quantum tangent space, and neither is
$\mathcal{T}_{\bf i}=\mathcal{T}_{{\bf i}'}$ by Remark \ref{p74}.
\end{proof}

The characterization we are after now reduces to a single assertion in which no quantum
group occurs, and which we prove in Section \ref{sec:towards}.

\begin{thm}\label{conj:C2}
If ${\bf i}$ is not a tangent reduced decomposition, then some order of its commutation
class admits a forbidden decomposition whose classical coefficient, in the sense of
Proposition \ref{prop:onlyifdir}, is different from zero.
\end{thm}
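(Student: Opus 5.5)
The plan follows the strategy announced in the introduction: exhibit a \emph{special triple} at the first root where tangency fails. Let ${\bf i}$ be non-tangent, with convex order $\beta_1<\dots<\beta_N$, and let $k$ be minimal such that (TR1) or (TR2) fails at $\beta_k$. Then (TR1) and (TR2) hold at $\beta_1,\dots,\beta_{k-1}$. By Proposition \ref{prop:sumfree} and Lemma \ref{lem:proj}(b), every $\Phi(p_<(\beta_j))$ with $j<k$ is abelian and lies below $\beta_j$ in the root poset, and Lemma \ref{lem:nosum} forbids roots $\mu<\beta_j$ with $\supp(\mu)\subseteq\supp(\beta_j)$ and $\mu+\beta_j\in\D$.

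The goal is a decomposition $\beta_k=\gamma_1+\gamma_2+\psi$ with $\gamma_1\leq\gamma_2<\beta_k<\psi$ and $\beta_k-\gamma_2\in\Dp$. Setting $\bfa=\bfe_{\gamma_1}+\bfe_{\gamma_2}$ and $\bfb=\bfe_\psi$ gives a forbidden decomposition. Its classical coefficient is computed in $U(\mathfrak n^+)$. Since $\gamma_1,\gamma_2<\beta_k<\psi$, the monomial $e_\psi e_{\gamma_1}e_{\gamma_2}$ straightens to $\pm N_{\psi,\gamma_1}N_{\psi+\gamma_1,\gamma_2}e_{\beta_k}$ plus PBW monomials of degree $\geq2$. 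Here $\psi+\gamma_1=\beta_k-\gamma_2$ is a root, and the structure constants $N$ are nonzero whenever the sum is a root. Lower-order commutator terms of other shapes are excluded by Proposition \ref{prop:key}. This is the content of Proposition \ref{prop:nonvanishing}. Theorem \ref{conj:C2} then follows, and Proposition \ref{prop:onlyifdir} completes necessity.

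For the construction I would split into the two failure modes. If (TR1) fails at $\beta_k$, Lemma \ref{lem:tr1reform} gives $\nu<\mu+\nu<\mu\le\beta_k$ with $\mu,\mu+\nu\in\Phi(p_<(\beta_k))$. If (TR2) fails, there are $\alpha\in\Phi(p_<(\beta_k))$ with $\alpha<_0\beta_k$ and a simple root $\alpha_r\in\supp(\beta_k-\alpha)\cap\Phi(p_<(\beta_k))$. In either case the aim is to write $\beta_k=\mu'+\nu'$ with $\mu'<\beta_k<\nu'$, which Lemma \ref{lem:proj}(a) relates to membership in the projection, and then to refine $\mu'$ as $\gamma_1+\gamma_2$. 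The refinement comes from walking along root strings through $\beta_k$ in the direction of $\alpha_r$ or $\nu$. Lemma \ref{lem:principal}, which describes $\Phi(p_<(\beta_j))$ for $j<k$ as the roots below $\beta_j$ whose coefficient on some simple root is $1$, controls where the intermediate roots fall in the order. The convexity of $<$ from Proposition \ref{pp} places each partial sum between its summands.

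The main obstacle is guaranteeing the order constraints $\gamma_1\le\gamma_2<\beta_k<\psi$ together with $\beta_k-\gamma_2\in\D$ simultaneously, and doing so within the order itself. Commutation moves, which Proposition \ref{prop:comminv} allows, are available if needed. My first attempt would take $\psi$ to be the last root of an $\alpha_r$-string or $\nu$-string through $\beta_k$, and choose $\gamma_2$ maximal among predecessors with $\beta_k-\gamma_2$ a root. When strings have length at most $3$, convexity then forces the ordering. In $G_2$ the strings of length $4$ break this argument, so there I would simply enumerate the commutation classes of reduced words for $w_0$ (there are two) and exhibit special triples by hand, as in Proposition \ref{prop:FG}.
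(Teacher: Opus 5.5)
Your reduction to special triples via Proposition \ref{prop:nonvanishing} and your treatment of $G_2$ agree with the paper. The gap is the construction of a special triple at the earliest failing root $\beta_k$ in the other types. Your third and fourth paragraphs only describe where to look (``walking along root strings'', ``convexity then forces the ordering''); they contain no argument.

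Your ``first attempt'' does pick the right root, $\gamma_2=\max M(\beta_k)$, but two facts must then be proved.
\begin{enumerate}
\item[(i)] If $\delta=\beta_k-\gamma_2$ splits as $\gamma_1+\psi$ with $\gamma_1<\psi$, then automatically $\gamma_1\leq\gamma_2$ and $\psi>\beta_k$. Indeed, if $\gamma_2<\gamma_1$, the Jacobi identity (Lemma \ref{lem:splitroot}) makes $\gamma_1+\gamma_2$ or $\gamma_2+\psi$ a root. Each case either produces an element of $M(\beta_k)$ beyond $\gamma_2$ or, via convexity, clashes with $\gamma_1<\psi$ (Proposition \ref{prop:maxsplit}).
\item[(ii)] More importantly, $\delta$ is \emph{not simple} at the earliest failing root. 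If it were, no special triple with this $\gamma_2$ would exist and your recipe would stall. This is the real content of the proof, and your proposal does not address it.
\end{enumerate}
The paper proves (ii) contrapositively (Theorem \ref{thm:starstar}). Suppose $\delta$ is simple, and let $\alpha_t$ be the unique simple root of $\Phi(p_<(\gamma_2))$ (Lemma \ref{lem:principal}). Then $X=\{\alpha\in\Dp\mid\alpha\leq_0\beta_k,\ c_t(\alpha)=1\}$ is an abelian biconvex subset of $\Phi(w_k)$ containing $\beta_k$. Hence $\Phi(p_<(\beta_k))\subseteq X$, and this inclusion forces (TR1) and (TR2) at $\beta_k$. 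The bound on string lengths enters only in Lemma \ref{lem:xgamma}: there it shows that the elements $\delta+x$ of $X\setminus\Phi(p_<(\gamma_2))$ precede $\beta_k$, so that $X\subseteq\Phi(w_k)$. It is not a direct ``convexity forces the ordering'' step. The failure witnesses from Lemma \ref{lem:tr1reform} or from (TR2), which you propose to start from, are never used to build the triple.

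Your other description goes in the wrong direction: you refine the \emph{preceding} summand $\mu'$ of $\beta_k=\mu'+\nu'$ as $\gamma_1+\gamma_2$. A special triple needs the \emph{following} part $\beta_k-\gamma_2=\gamma_1+\psi$ to be a root. A splitting $\mu'=\gamma_1+\gamma_2$ with $\gamma_1<\mu'<\gamma_2$ guarantees neither this nor $\gamma_2<\beta_k$. For instance, if $\psi+\gamma_2$ is a root but $\psi+\gamma_1$ is not, then $e_\psi e_{\gamma_1}e_{\gamma_2}=e_{\gamma_1}e_{\gamma_2}e_\psi+N_{\psi,\gamma_2}e_{\gamma_1}e_{\psi+\gamma_2}$. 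Both terms are already PBW-ordered, and the coefficient of $e_{\beta_k}$ vanishes.

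A minor point: Proposition \ref{prop:key} says nothing about $e_\psi e_{\gamma_1}e_{\gamma_2}$, since neither of its hypotheses holds there. The nonvanishing comes from the direct straightening in Proposition \ref{prop:nonvanishing}, which uses $\gamma_2<\psi+\gamma_1$ by convexity.
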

Theorem \ref{conj:C2} is proved in Section \ref{sec:towards} (Theorem \ref{thm:C2ABCDE} and Proposition
\ref{prop:FG}). In every case the decomposition is found in the order induced by ${\bf i}$
itself, and is a special triple in the sense of Definition \ref{def:goodtriple} below.

We can now state the main result of the paper.
\begin{thm}\label{thm:combinatorialqts}
Let $w_0=s_{i_1}\cdots s_{i_N}$ be a reduced decomposition. Then the set
$\{F^{\bf i}_{\beta_k}K_{\beta_k}\mid 1\leq k\leq N\}$ spans a quantum
tangent space if and only if ${\bf i}$ is a tangent reduced decomposition.
\end{thm}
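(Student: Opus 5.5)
The proof assembles results already established, splitting into the two implications.

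For sufficiency, assume ${\bf i}$ is tangent. By Corollary \ref{conj:C1}, its commutation class contains a reduced word ${\bf i}'$ whose convex order satisfies (TR2$'$); concretely, Theorem \ref{t43} makes ${\bf i}$ commutation equivalent to a canonical tangent word, and Proposition \ref{prop:canonicalTR2p} gives (TR2$'$) for it. Corollary \ref{cor:tangentimpliesqts} then shows that $\mathcal{T}_{{\bf i}'}$ is a quantum tangent space. By Remark \ref{p74}, the quantum root vectors, hence the spaces $\mathcal{T}_{\bf i}$ and $\mathcal{T}_{{\bf i}'}$, coincide. The conditions (TR1) and (TR2) in the statement are exactly tangency as defined in Subsection \ref{tco}, with $\overline{w}_k=p_<(\beta_k)$.

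For necessity, argue by contraposition. Suppose ${\bf i}$ is not tangent. Theorem \ref{conj:C2} gives some ${\bf i}'$ in the commutation class of ${\bf i}$ and a forbidden decomposition $(\bfa,\bfb)$ of some $\beta_k$ with nonzero classical coefficient of $e_{\beta_k}$ in $e^{\bfb}e^{\bfa}$. Proposition \ref{prop:onlyifdir} then shows that $\mathcal{T}_{\bf i}$ is not a quantum tangent space. This uses the integral form and specialization at $q=1$, together with Corollary \ref{cor:criterion} and Proposition \ref{prop:qtscriterion}. If $q$ is specialized numerically, we also need the remark at the start of Section \ref{sec:qts}: the relevant nonzero Laurent polynomial must not vanish at $q_0$.

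The only substantive obstacle is Theorem \ref{conj:C2}, which is proved later in Section \ref{sec:towards}. Its strategy is as follows. Take the first root $\beta_k$ at which (TR1) or (TR2) fails. Proposition \ref{prop:sumfree} and Lemma \ref{lem:principal} describe the projections at the earlier roots. One then produces a special triple $\beta_k=\gamma_1+\gamma_2+\psi$ with $\gamma_1\le\gamma_2<\beta_k<\psi$ and $\beta_k-\gamma_2$ a root. For such a triple, the classical coefficient is a product of two nonzero structure constants, by Proposition \ref{prop:nonvanishing}; type $G_2$ is handled separately. Granting that theorem, the main statement follows formally from the two paragraphs above.
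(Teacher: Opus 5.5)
Your proposal is correct and matches the paper's proof: sufficiency goes through Corollary \ref{conj:C1}, Corollary \ref{cor:tangentimpliesqts} and Remark \ref{p74}, and necessity through Theorem \ref{conj:C2} combined with Proposition \ref{prop:onlyifdir}. Your outline of how Theorem \ref{conj:C2} is obtained (earliest failing root, special triples via Proposition \ref{prop:nonvanishing}, $G_2$ handled separately) also agrees with Section \ref{sec:towards}.
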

\begin{proof}
Assume that ${\bf i}$ is tangent. By Corollary \ref{conj:C1} its commutation class contains an
order satisfying (TR2$'$), so that $\mathcal{T}_{\bf i}$ is a quantum tangent space by Corollary
\ref{cor:tangentimpliesqts} and Remark \ref{p74}. If ${\bf i}$ is not tangent, Theorem
\ref{conj:C2} and Proposition \ref{prop:onlyifdir} show that $\mathcal{T}_{\bf i}$ is not a
quantum tangent space.
\end{proof}
In particular, in types $E_8$, $F_4$ and $G_2$ no reduced decomposition of $w_0$ gives rise to a quantum tangent
space (see the remark after Theorem \ref{t43}).

\section{\texorpdfstring{Proof of Theorem \ref{conj:C2}}{Proof of the reduction theorem}}\label{sec:towards}
Half of the statement is
available in general: if ${\bf i}$ is not tangent then, by Theorem
\ref{thm:tr2ptangent}, the order it induces does not satisfy (TR2$'$), so that by
Proposition \ref{prop:nobaddec} some $\beta_k$ admits a forbidden decomposition. What is
missing is that one of these decompositions has a nonzero classical coefficient, in the
sense of Proposition \ref{prop:onlyifdir}. The coefficient is easily computed for the
forbidden decompositions of the simplest possible shape, those with two summands
preceding $\beta_k$ and one following it, provided a mild extra condition holds. We give
these a name.

\begin{defn}\label{def:goodtriple}
Let $<$ be a convex order on $\Dp$ and let $\beta\in\Dp$. A \emph{special triple at $\beta$}
is a triple $(\gamma_1,\gamma_2,\psi)$ of positive roots, not necessarily distinct, such
that
\begin{align*}
\beta=\gamma_1+\gamma_2+\psi,\qquad \gamma_1\leq\gamma_2<\beta<\psi,\qquad
\beta-\gamma_2\in\Dp .
\end{align*}
We say that $<$ \emph{admits a special triple} if there is a special triple at some
$\beta\in\Dp$.
\end{defn}
A special triple at $\beta=\beta_k$ is in particular a forbidden decomposition of $\beta_k$,
the two summands $\gamma_1,\gamma_2$ preceding $\beta_k$ and $\psi$ following it; the
extra condition is that $\beta_k-\gamma_2=\gamma_1+\psi$ be a root. The next proposition
computes the classical coefficient of a special triple; the rest of the section produces
special triples.

\begin{prop}\label{prop:nonvanishing}
Let $(\gamma_1,\gamma_2,\psi)$ be a special triple at $\beta_k$. Then the
coefficient of $e_{\beta_k}$ in the expansion of $e_{\psi}e_{\gamma_1}e_{\gamma_2}$ in
the PBW basis of $U(\mathfrak{n}^+)$ equals
$N_{\psi,\gamma_1}N_{\beta_k-\gamma_2,\,\gamma_2}\neq0$, where the structure constants $N_{\a,\b}$ are defined by $[e_\a,e_\b]=N_{\a,\b}e_{\a+\b}$, with $N_{\a,\b}=0$ if $\a+\b\notin\D$. Consequently
$\mathcal{T}_{\bf i}$ is not a quantum tangent space.
\end{prop}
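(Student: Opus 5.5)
The plan is to compute the coefficient directly in $U(\mathfrak n^+)$, using the classical counterpart of Proposition \ref{prop:key}, and then feed the result into Proposition \ref{prop:onlyifdir}. In the notation of Section \ref{sec:reduction}, the special triple is the forbidden decomposition $(\bfa,\bfb)$ with $\bfa=\bfe_{h_1}+\bfe_{h_2}$ and $\bfb=\bfe_{m}$, where $\gamma_1=\beta_{h_1}$, $\gamma_2=\beta_{h_2}$, $\psi=\beta_m$ and $h_1\le h_2<k<m$. Then $e^{\bfa}=e_{\gamma_1}e_{\gamma_2}$ is already an ordered PBW monomial; if $\gamma_1=\gamma_2$ it is $e_{\gamma_1}^2$. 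Also $\sum_j a_j=2$, so once the coefficient of $e_{\beta_k}$ in $e^{\bfb}e^{\bfa}=e_\psi e_{\gamma_1}e_{\gamma_2}$ is shown to be nonzero, Proposition \ref{prop:onlyifdir} gives the final assertion.

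First I will establish the classical analogue of Lemma \ref{lem:interval} and Proposition \ref{prop:key}. Let $1\le m\le n\le N$. By the crystallographic form of Proposition \ref{pp}, a sum of two roots lies between them in $<$. Hence $\{\beta_m,\dots,\beta_n\}$ is closed, and $\mathfrak n[m,n]=\bigoplus_{m\le j\le n}\g_{\beta_j}$ is a Lie subalgebra. Its enveloping algebra, the subalgebra generated by $e_{\beta_m},\dots,e_{\beta_n}$, therefore has the ordered monomials with support in $[m,n]$ as a basis, by the classical PBW theorem. With this, the proof of Proposition \ref{prop:key} goes through verbatim with $X=e$. So $e_{\beta_k}$ never occurs in $e^{\bfu}e^{\bfv}$ when $\supp(\bfu)$ meets $[1,k-1]$. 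By linearity, the same holds for $e_{\gamma}\,y$ with $\gamma<\beta_k$ and $y$ arbitrary: expand $y$ in the PBW basis first. Alternatively, one could specialize the quantum statement at $q=1$ through the integral form, but the direct argument is cleaner.

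Next comes the computation. Write
\[e_\psi e_{\gamma_1}=e_{\gamma_1}e_\psi+N_{\psi,\gamma_1}e_{\psi+\gamma_1},\]
so that
\[e_\psi e_{\gamma_1}e_{\gamma_2}=e_{\gamma_1}(e_\psi e_{\gamma_2})+N_{\psi,\gamma_1}\,e_{\beta_k-\gamma_2}e_{\gamma_2}.\]
The first summand starts with $e_{\gamma_1}$ and $\gamma_1<\beta_k$, so it contributes nothing to $e_{\beta_k}$ by the previous step. In the second summand, $\beta_k=\gamma_2+(\beta_k-\gamma_2)$ is a sum of two roots, so convexity places $\beta_k$ strictly between them. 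Since $\gamma_2<\beta_k$, this forces $\beta_k-\gamma_2>\beta_k>\gamma_2$, so $e_{\beta_k-\gamma_2}e_{\gamma_2}$ is out of PBW order. Rewriting it gives
\[e_{\beta_k-\gamma_2}e_{\gamma_2}=e_{\gamma_2}e_{\beta_k-\gamma_2}+N_{\beta_k-\gamma_2,\gamma_2}e_{\beta_k}.\]
Here $e_{\gamma_2}e_{\beta_k-\gamma_2}$ is an ordered PBW monomial of degree two, so it does not involve $e_{\beta_k}$. The coefficient is therefore exactly $N_{\psi,\gamma_1}N_{\beta_k-\gamma_2,\gamma_2}$.

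Both structure constants are nonzero: $\psi+\gamma_1=\beta_k-\gamma_2$ and $(\beta_k-\gamma_2)+\gamma_2=\beta_k$ are roots, and in a Chevalley basis $N_{\a,\b}=\pm(r+1)\neq0$ whenever $\a+\b\in\D$. The main point needing care is the first step, namely justifying the classical version of Proposition \ref{prop:key}. This rests only on the closedness of intervals of a convex order, which makes the interval subalgebras enveloping algebras with their own PBW bases.
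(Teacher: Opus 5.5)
Your proposal is correct and follows the paper's computation: in both, the $e_{\beta_k}$-coefficient comes only from $N_{\psi,\gamma_1}e_{\beta_k-\gamma_2}e_{\gamma_2}=N_{\psi,\gamma_1}\bigl(e_{\gamma_2}e_{\beta_k-\gamma_2}+N_{\beta_k-\gamma_2,\gamma_2}e_{\beta_k}\bigr)$, and Proposition \ref{prop:onlyifdir} gives the final claim. The only difference is the summand $e_{\gamma_1}e_\psi e_{\gamma_2}$: you dispose of it with a classical analogue of Proposition \ref{prop:key}, which is valid but unnecessary, since the paper simply writes it as $e_{\gamma_1}e_{\gamma_2}e_\psi+N_{\psi,\gamma_2}e_{\gamma_1}e_{\psi+\gamma_2}$, and both terms are already ordered PBW monomials because $\gamma_1\le\gamma_2<\psi+\gamma_2<\psi$ by convexity (the second term vanishing when $\psi+\gamma_2$ is not a root).
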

\begin{proof}
Since $[e_\psi,e_{\gamma_i}]=N_{\psi,\gamma_i}e_{\psi+\gamma_i}$, we have
\begin{align*}
e_{\psi}e_{\gamma_1}e_{\gamma_2}=e_{\gamma_1}e_{\gamma_2}e_{\psi}
+N_{\psi,\gamma_2}\,e_{\gamma_1}e_{\psi+\gamma_2}
+N_{\psi,\gamma_1}\,e_{\psi+\gamma_1}e_{\gamma_2}.
\end{align*}
If $\psi+\gamma_2$ is not a root the second summand vanishes; otherwise, by convexity $\gamma_2<\psi+\gamma_2$, so $\gamma_1<\psi+\gamma_2$ and the second
summand is already a PBW monomial. Again by convexity $\beta_k$ lies between
$\psi+\gamma_1=\beta_k-\gamma_2$ and $\gamma_2$, and $\gamma_2<\beta_k$, whence
$\gamma_2<\psi+\gamma_1$ and
$e_{\psi+\gamma_1}e_{\gamma_2}=e_{\gamma_2}e_{\psi+\gamma_1}
+N_{\psi+\gamma_1,\gamma_2}e_{\beta_k}$. The stated coefficient follows, and it is
nonzero because $\psi+\gamma_1=\beta_k-\gamma_2$ and $(\psi+\gamma_1)+\gamma_2=\beta_k$
are roots.
The argument covers the case $\gamma_1=\gamma_2$ verbatim. The last assertion is Proposition \ref{prop:onlyifdir}.
\end{proof}
\begin{prop}\label{prop:FG}
Let $\mathfrak g$ be of type $G_2$, with $\a_1$ short. Then every convex order admits a special triple, and
Theorem \ref{conj:C2} holds.
\end{prop}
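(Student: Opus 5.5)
Since the statement is a finite check, the plan is to verify it directly. In type $G_2$ the longest element $w_0$ has exactly two reduced words, $(1,2,1,2,1,2)$ and $(2,1,2,1,2,1)$. Since $s_1s_2\neq s_2s_1$, each is its own commutation class, so there are exactly two convex orders on $\Dp$. For each order I will compute the sequence $\beta_1<\dots<\beta_6$ via \eqref{ordb} and exhibit a special triple in the sense of Definition \ref{def:goodtriple}. The Cartan integers $\langle\a_2,\a_1^\vee\rangle=-3$ and $\langle\a_1,\a_2^\vee\rangle=-1$ suffice.

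For $(1,2,1,2,1,2)$ the computation gives
\[\a_1<3\a_1+\a_2<2\a_1+\a_2<3\a_1+2\a_2<\a_1+\a_2<\a_2 .\]
For example, $\beta_4=s_1(3\a_1+2\a_2)=3\a_1+2\a_2$, because $\langle 3\a_1+2\a_2,\a_1^\vee\rangle=6-6=0$. At $\beta=\beta_4=3\a_1+2\a_2$ I take $\gamma_1=\a_1$, $\gamma_2=2\a_1+\a_2$ and $\psi=\a_2$. Then $\gamma_1+\gamma_2+\psi=\beta$, and $\gamma_1<\gamma_2<\beta<\psi$ in this order. Moreover $\beta-\gamma_2=\a_1+\a_2\in\Dp$, so this is a special triple.

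For $(2,1,2,1,2,1)$ the computation gives
\[\a_2<\a_1+\a_2<3\a_1+2\a_2<2\a_1+\a_2<3\a_1+\a_2<\a_1 .\]
At $\beta=\beta_3=3\a_1+2\a_2$ I take $\gamma_1=\gamma_2=\a_1+\a_2$ and $\psi=\a_1$. Then $\gamma_1=\gamma_2<\beta<\psi$, and $\beta-\gamma_2=2\a_1+\a_2\in\Dp$, so this is again a special triple. Repeated roots are allowed by Definition \ref{def:goodtriple}.

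To deduce Theorem \ref{conj:C2} in type $G_2$, note that by Theorem \ref{t43} there are no tangent reduced decompositions in type $G_2$. So the hypothesis of Theorem \ref{conj:C2} applies to both words. In each case the special triple found above is a forbidden decomposition in the order induced by ${\bf i}$ itself. By Proposition \ref{prop:nonvanishing}, its classical coefficient is $N_{\psi,\gamma_1}N_{\beta-\gamma_2,\gamma_2}\neq0$, which is exactly what Theorem \ref{conj:C2} requires.

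There is no real obstacle here. The only step requiring care is the computation of the root sequences, in particular the $s_1$-images of roots involving $\a_2$, where the coefficient $3$ enters.
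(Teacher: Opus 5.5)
Your proof is correct and follows the paper's argument: list the two convex orders of $G_2$, exhibit a special triple in each, and conclude with Proposition \ref{prop:nonvanishing}. The only differences are minor. For the first order the paper uses the triple $(\a_1,\a_1,\a_2)$ at $2\a_1+\a_2$, while your triple $(\a_1,2\a_1+\a_2,\a_2)$ at $3\a_1+2\a_2$ is equally valid. Also, the appeal to Theorem \ref{t43} is unnecessary, since you establish the conclusion for every order.
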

\begin{proof}
The two convex orders are $\a_1<3\a_1+\a_2<2\a_1+\a_2<3\a_1+2\a_2<\a_1+\a_2<\a_2$ and its reverse. In the
first, $(\a_1,\a_1,\a_2)$ is a special triple at $2\a_1+\a_2$, as $(2\a_1+\a_2)-\a_1=\a_1+\a_2$ is a root; in the
second, $(\a_1+\a_2,\a_1+\a_2,\a_1)$ is a special triple at $3\a_1+2\a_2$, as
$(3\a_1+2\a_2)-(\a_1+\a_2)=2\a_1+\a_2$ is a root. Proposition \ref{prop:nonvanishing} concludes.
\end{proof}
For the other types special triples are produced by the following construction. We shall use an elementary
fact.
\begin{lem}\label{lem:splitroot}
Let $\gamma,\sigma\in\Dp$ be such that $\gamma+\sigma\in\Dp$, and let $\sigma=\mu+\nu$ with
$\mu,\nu\in\Dp$. Then $\gamma+\mu$ or $\gamma+\nu$ is a root.
\end{lem}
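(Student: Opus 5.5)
The natural tool is the Jacobi identity together with the root-string structure in $\mathfrak g$. I would argue at the level of the Lie algebra: since $\gamma+\sigma\in\Dp$ and $\sigma=\mu+\nu$, choose root vectors $e_\gamma,e_\mu,e_\nu$ of a Chevalley basis. Then $[e_\mu,e_\nu]=N_{\mu,\nu}e_\sigma$ with $N_{\mu,\nu}\neq0$, because $\mu+\nu=\sigma$ is a root. Likewise $[e_\gamma,e_\sigma]=N_{\gamma,\sigma}e_{\gamma+\sigma}\neq0$, because $\gamma+\sigma$ is a root. Hence
\[
[e_\gamma,[e_\mu,e_\nu]]=N_{\mu,\nu}N_{\gamma,\sigma}\,e_{\gamma+\sigma}\neq0 .
\]

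Next I apply the Jacobi identity:
\[
[e_\gamma,[e_\mu,e_\nu]]=[[e_\gamma,e_\mu],e_\nu]+[e_\mu,[e_\gamma,e_\nu]].
\]
If neither $\gamma+\mu$ nor $\gamma+\nu$ were a root, both brackets $[e_\gamma,e_\mu]$ and $[e_\gamma,e_\nu]$ would lie in $\g_{\gamma+\mu}$ and $\g_{\gamma+\nu}$ respectively. Since $\gamma+\mu$ and $\gamma+\nu$ are nonzero (all roots involved are positive) and not roots, these spaces are zero. The right-hand side would then vanish, contradicting the display above. So at least one of $\gamma+\mu$ and $\gamma+\nu$ is a root.

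The only point needing care is that $N_{\a,\b}\neq0$ whenever $\a+\b$ is a root. This is the standard fact $[\g_\a,\g_\b]=\g_{\a+\b}$ for roots $\a,\b$ with $\a+\b\in\D$, equivalently $N_{\a,\b}=\pm(r+1)$ with $r$ the length of the $\b$-string below $\a$. I do not expect any real obstacle. If a purely combinatorial proof were preferred, the same argument can be run in the reflection representation using $(\gamma,\sigma)<0$ whenever $\gamma+\sigma$ is a root, but the Lie-theoretic route is shorter and works uniformly in all types, including the non-simply-laced ones and $G_2$.
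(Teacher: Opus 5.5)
Your proof is correct and is essentially the paper's argument: the bracket $[e_\gamma,[e_\mu,e_\nu]]$ is nonzero because $\mu+\nu$ and $\gamma+\mu+\nu$ are roots, and the Jacobi identity then forces $\gamma+\mu$ or $\gamma+\nu$ to lie in $\Phi\cup\{0\}$, with positivity excluding $0$. Your closing aside is wrong, though: $\gamma+\sigma\in\Phi$ does not imply $(\gamma,\sigma)<0$ in non-simply-laced types (in $B_2$, $\alpha_2+(\alpha_1+\alpha_2)$ is a root while $(\alpha_2,\alpha_1+\alpha_2)=0$), but this remark plays no role in your proof.
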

\begin{proof}
The Jacobi identity $[e_\gamma,[e_\mu,e_\nu]]=[[e_\gamma,e_\mu],e_\nu]+[e_\mu,[e_\gamma,e_\nu]]$ shows that if $\gamma,\mu,\nu$ are arbitrary roots
such that $\mu+\nu$ and $\gamma+\mu+\nu$ are roots, then $\gamma+\mu$ or $\gamma+\nu$ lies in
$\D\cup\{0\}$, since the left-hand side is nonzero. For positive roots the value $0$ cannot occur.
\end{proof}

Special triples may be produced canonically. Given a non-simple $\beta\in\Dp$, put
\begin{align*}
M(\beta)=\{\mu\in\Dp\mid \mu<\beta,\ \beta-\mu\in\Dp\},
\end{align*}
a nonempty set: writing $\beta$ as a sum of two positive roots, convexity places $\beta$
between them, so that one of them precedes it. By Lemma \ref{lem:proj}(a),
$M(\beta)\subseteq \Phi(p_<(\beta))$.

\begin{prop}\label{prop:maxsplit}
Let $\beta\in\Dp$ be non-simple, let $\gamma_2$ be the largest element of $M(\beta)$ with
respect to $<$, and put $\delta=\beta-\gamma_2$. If $\delta=\gamma_1+\psi$ with $\gamma_1,\psi\in\Dp$ and
$\gamma_1<\psi$, then $\gamma_1\leq\gamma_2$, and $(\gamma_1,\gamma_2,\psi)$ is a special triple at $\beta$.
In particular, if $\delta$ is not simple, then $\mathcal{T}_{\bf i}$ is not a quantum tangent space.
\end{prop}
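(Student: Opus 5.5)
The statement has three parts: $\gamma_1\leq\gamma_2$, the special-triple conditions, and the consequence when $\delta$ is not simple. I will check them in that order, using only convexity and the maximality of $\gamma_2$ in $M(\beta)$.

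\emph{The special-triple conditions.} We have $\beta=\gamma_1+\psi+\gamma_2$ by construction, and $\beta-\gamma_2=\delta\in\Dp$ because $\gamma_2\in M(\beta)$. Also $\gamma_2<\beta$, again since $\gamma_2\in M(\beta)$. It remains to show $\gamma_2\leq\beta$'s neighbours are correctly placed, namely $\gamma_1\leq\gamma_2$ and $\beta<\psi$.

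\emph{Position of $\delta$.} Since $\beta=\gamma_2+\delta$, convexity (Proposition \ref{pp}) places $\beta$ strictly between $\gamma_2$ and $\delta$. As $\gamma_2<\beta$, this forces $\beta<\delta$.

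\emph{Position of $\psi$.} Since $\delta=\gamma_1+\psi$ with $\gamma_1<\psi$, convexity gives $\gamma_1<\delta<\psi$. Together with the previous step, $\beta<\delta<\psi$, so $\beta<\psi$.

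\emph{The inequality $\gamma_1\leq\gamma_2$.} This is the step that needs an idea. I apply Lemma \ref{lem:splitroot} to $\gamma=\gamma_2$ and $\sigma=\delta=\gamma_1+\psi$. Since $\gamma_2+\delta=\beta$ is a root, either $\gamma_2+\gamma_1$ or $\gamma_2+\psi$ is a root. I treat the two cases separately.

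\begin{enumerate}
\item \emph{Case $\gamma_2+\psi\in\Dp$.} Then $\beta-\gamma_1=\gamma_2+\psi$ is a root. Suppose $\gamma_1>\gamma_2$, aiming for a contradiction.
\begin{itemize}
\item If $\gamma_1<\beta$, then $\gamma_1\in M(\beta)$. This contradicts the maximality of $\gamma_2$.
\item If $\gamma_1\geq\beta$, convexity for $\beta=\gamma_1+(\gamma_2+\psi)$ puts $\beta$ strictly between $\gamma_1$ and $\gamma_2+\psi$. Since $\gamma_1\neq\beta$, we get $\gamma_1>\beta$, and hence $\gamma_2+\psi<\beta$. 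But convexity for $\gamma_2+\psi$ places it strictly between $\gamma_2$ and $\psi$, so $\gamma_2+\psi>\gamma_2$. Thus $\gamma_2+\psi\in M(\beta)$ and it is larger than $\gamma_2$, again contradicting maximality.
\end{itemize}
\item \emph{Case $\gamma_1+\gamma_2\in\Dp$.} Then $\beta-\psi=\gamma_1+\gamma_2$ is a root. Suppose $\gamma_1>\gamma_2$. Convexity places $\gamma_1+\gamma_2$ strictly between $\gamma_2$ and $\gamma_1$, so $\gamma_2<\gamma_1+\gamma_2$.
\begin{itemize}
\item If $\gamma_1+\gamma_2<\beta$, then $\gamma_1+\gamma_2\in M(\beta)$, because $\beta-(\gamma_1+\gamma_2)=\psi$ is a root. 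This contradicts the maximality of $\gamma_2$.
\item Otherwise $\gamma_1+\gamma_2>\beta$ (equality is impossible since $\psi\neq0$). Now use $\beta=(\gamma_1+\gamma_2)+\psi$ with $\psi>\beta$. Convexity requires $\beta$ to lie strictly between $\gamma_1+\gamma_2$ and $\psi$, but both lie above $\beta$, a contradiction.
\end{itemize}
\end{enumerate}

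In both cases $\gamma_1\leq\gamma_2$, so $(\gamma_1,\gamma_2,\psi)$ is a special triple at $\beta$.

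\emph{The final assertion.} If $\delta$ is not simple, write $\delta$ as a sum of two positive roots and label them $\gamma_1<\psi$. By the above this gives a special triple at $\beta$, and Proposition \ref{prop:nonvanishing} then shows that $\mathcal{T}_{\bf i}$ is not a quantum tangent space.
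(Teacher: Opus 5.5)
Your proof is correct and follows essentially the paper's argument: Lemma \ref{lem:splitroot} applied to $\delta=\gamma_1+\psi$, then a case analysis on whether $\gamma_1+\gamma_2$ or $\gamma_2+\psi$ is a root, with each subcase contradicting either the maximality of $\gamma_2$ in $M(\beta)$ or convexity. Proving $\beta<\delta<\psi$ before the case analysis, as you do, only streamlines the contradictions slightly. In the final assertion you should add, as the paper does, that the two summands of a non-simple $\delta$ are distinct (twice a root is never a root), so that they can indeed be labeled $\gamma_1<\psi$.
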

\begin{proof}
Note that $\beta<\delta$ by convexity, as $\gamma_2<\beta$. Suppose $\gamma_2<\gamma_1$. By Lemma
\ref{lem:splitroot}, applied to $\delta=\gamma_1+\psi$, one of $\gamma_1+\gamma_2$ and $\gamma_2+\psi$ is a root.
\par If $\zeta:=\gamma_1+\gamma_2$ is a root, then $\beta=\zeta+\psi$, and convexity places $\zeta$ strictly
between $\gamma_2$ and $\gamma_1$, so $\gamma_2<\zeta<\gamma_1<\psi$. If $\zeta<\beta$, then
$\zeta\in M(\beta)$ and $\zeta>\gamma_2$, against the choice of $\gamma_2$; if $\zeta>\beta$, then $\psi<\beta$,
against $\psi>\zeta>\beta$.
\par If $\xi:=\gamma_2+\psi$ is a root, then $\beta=\xi+\gamma_1$. If $\gamma_1<\beta$, then
$\gamma_1\in M(\beta)$ and $\gamma_1>\gamma_2$, against the choice of $\gamma_2$. If $\gamma_1>\beta$, then
$\xi<\beta$, so that $\xi\in M(\beta)$ and $\xi<\gamma_2$ by maximality; convexity places $\xi$ strictly
between $\gamma_2$ and $\psi$, whence $\psi<\xi<\gamma_2<\beta<\gamma_1$, against $\gamma_1<\psi$.
\par Hence $\gamma_1\leq\gamma_2<\beta$. Convexity places $\delta$ strictly between $\gamma_1$ and $\psi$, so
$\gamma_1<\delta<\psi$ and $\psi>\delta>\beta$; finally $\beta-\gamma_2=\delta$ is a root. Thus
$(\gamma_1,\gamma_2,\psi)$ is a special triple at $\beta$, and the last assertion follows from Proposition
\ref{prop:nonvanishing}, since a non-simple $\delta$ is the sum of two positive roots, which are distinct,
twice a root being never a root.
\end{proof}

\begin{remark}\label{rem:tr2triplecheck}
Let $\beta_{k_0}$ be the \emph{earliest failing root} of a non-tangent convex order, the first root at
which (TR1) or (TR2) fails. It is not simple: at a simple root $\beta_k=\a_i$ one has $p_<(\beta_k)=s_i$, and
both conditions hold trivially. By Proposition \ref{prop:maxsplit}, Theorem \ref{conj:C2} holds for the
order under consideration as soon as $\beta_{k_0}-\max M(\beta_{k_0})$ is not simple; that this is always the
case outside type $G_2$ is the content of Theorem \ref{thm:starstar}.
\end{remark}

\subsection{\texorpdfstring{Types other than $G_2$}{Types other than G2}}\label{sec:ABCDE}
Throughout this subsection $\D$ is not of type $G_2$. The only property of these types that we
use is that root strings have length at most three, which fails only in type $G_2$; consequently, for roots $\mu\neq\pm\nu$
with $\mu+\nu\in\D$,
\begin{itemize}
\item[(S)] $\langle\mu,\nu^\vee\rangle\leq0$,
\end{itemize}
since the $\nu$-string $\mu-p\nu,\dots,\mu+q\nu$ through $\mu$ satisfies
$\langle\mu,\nu^\vee\rangle=p-q$, contains $\mu+\nu$ and has length at most three, so that $q\geq1$
and $p+q\leq2$.
\par We fix $k$ such that (TR1) and (TR2) hold at $\beta_1,\dots,\beta_{k-1}$, the root
$\beta_k$ is not simple, and $\delta=\beta_k-\gamma_2$ is simple, where
$\gamma_2=\max M(\beta_k)=\beta_i$ with $i<k$. We put
\begin{equation}\label{defP}
P_2=\Phi(p_<(\gamma_2)),\qquad P=\Phi(p_<(\beta_k)),
\end{equation}
the projections at $\gamma_2$ and at $\beta_k$, and we denote by $c_\delta$ the coefficient of the simple root
$\delta$. Note that $\beta_k<\delta$ by convexity, and that $\delta\notin P_2$: indeed $\delta$ follows $\gamma_2=\beta_i$, hence
it does not lie in $\Phi(w_i)\supseteq P_2$.
\par The strategy is the following. Both {\rm (TR1)} and {\rm (TR2)} at $\beta_k$ are assertions about the
single set $P$, the smallest biconvex subset of $\Phi(w_k)$ containing $\beta_k$ (Proposition \ref{p32}). The
inductive hypothesis describes $P_2$ completely: it consists of the positive roots $\a\leq_0\gamma_2$ with
coefficient one on a certain simple root $\a_t$ (Lemma \ref{lem:Qprops}). The natural candidate for $P$ is the
analogous set at $\beta_k$,
\begin{equation}\label{defX}
X=\{\a\in\Dp\mid \a\leq_0\beta_k,\ c_t(\a)=1\},
\end{equation}
which contains $P_2$, its other elements being of the form $\delta+x$ with $x\in P_2$ (Lemma \ref{lem:Xdelta}). We
show that $X$ is an abelian biconvex subset of $\Phi(w_k)$ containing $\beta_k$ (Proposition \ref{prop:Xbiconvex});
minimality then gives $P\subseteq X$, from which {\rm (TR1)}, {\rm (TR2)} at $\beta_k$ and $P=X$ follow
(Theorem \ref{thm:starstar}). The type of $\D$ enters only through (S), in Lemma \ref{lem:xgamma}, which places
$X$ inside $\Phi(w_k)$.

\par\noindent{\bf Example.} Let $\D$ be of type $D_4$, with $\a_2$ the central
simple root, and let $<$ be the convex order induced by
$w_0=s_1s_2s_3s_4s_2s_1s_2s_3s_2s_4s_2s_3$, namely
\begin{align*}
&\a_1<\a_1{+}\a_2<\a_1{+}\a_2{+}\a_3<\a_1{+}\a_2{+}\a_4<\a_1{+}\a_2{+}\a_3{+}\a_4<\theta
<\\&\a_2<\a_2{+}\a_4<\a_4<\a_2{+}\a_3{+}\a_4<\a_2{+}\a_3<\a_3 ,
\end{align*}
where $\theta=\a_1+2\a_2+\a_3+\a_4$. Conditions {\rm (TR1)} and {\rm (TR2)} hold at
$\beta_1,\dots,\beta_5$, and at $\beta_6=\theta$ one has
$M(\theta)=\{\a_1{+}\a_2,\ \a_1{+}\a_2{+}\a_3,\ \a_1{+}\a_2{+}\a_4,\ \gamma_2\}$ with
$\gamma_2=\a_1{+}\a_2{+}\a_3{+}\a_4$, so that $\delta=\theta-\gamma_2=\a_2$ is simple. Here
\[
P_2=\{\a_1,\ \a_1{+}\a_2,\ \a_1{+}\a_2{+}\a_3,\ \a_1{+}\a_2{+}\a_4,\ \gamma_2\},
\]
the simple root $\a_t$ is $\a_1$, and $X=P_2\cup\{\theta\}$ is the set of all the positive roots with
coefficient one on $\a_1$; one checks that $P=X$.

\begin{lem}\label{lem:principal}
Suppose that {\rm (TR1)} and {\rm (TR2)} hold at $\beta_1,\dots,\beta_j$. Then
$\Phi(p_<(\beta_j))$ contains exactly one simple root $\a_t$, and
\begin{align*}
\Phi(p_<(\beta_j))=\{\a\in\Dp\mid \a\leq_0\beta_j,\ c_t(\a)=1\},
\end{align*}
where $c_t$ denotes the coefficient of $\a_t$.
\end{lem}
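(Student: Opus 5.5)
We prove Lemma \ref{lem:principal} directly from Proposition \ref{prop:sumfree}, which applies since (TR1) and (TR2) hold at $\beta_1,\dots,\beta_j$. Put $P=\Phi(p_<(\beta_j))$. Then $P$ is abelian and $P\subseteq\beta_j-Q^+$. By Lemma \ref{lem:simpleprec}(b), applied to $\gamma=\beta_j\in P$, the set $P$ contains some simple root $\a_t\in\supp(\beta_j)$.

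\textbf{Step 1: $c_t(\beta_j)=1$ and $\a_t$ is the only simple root in $P$.} If $\beta_j=\a_t$ there is nothing to prove, since then $P=\{\a_t\}$. Otherwise $\a_t<_0\beta_j$, and (TR2) at $\beta_j$ gives $\supp(\beta_j-\a_t)\cap P=\emptyset$.
\begin{itemize}
\item If $c_t(\beta_j)\geq2$, then $\a_t\in\supp(\beta_j-\a_t)$, a contradiction. Hence $c_t(\beta_j)=1$.
\item Any other simple root $\a_s\in P$ lies in $P\subseteq\beta_j-Q^+$, so $\a_s\in\supp(\beta_j)\setminus\{\a_t\}\subseteq\supp(\beta_j-\a_t)$. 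This contradicts (TR2), so $\a_t$ is the unique simple root in $P$.
\end{itemize}

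\textbf{Step 2: $P\subseteq X:=\{\a\leq_0\beta_j\mid c_t(\a)=1\}$.} Let $\a\in P$. We know $\a\leq_0\beta_j$, so $c_t(\a)\leq1$. By Lemma \ref{lem:simpleprec}(b), $P$ contains a simple root in $\supp(\a)$, and by Step 1 this root is $\a_t$. Hence $c_t(\a)\geq1$, so $c_t(\a)=1$.

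\textbf{Step 3: $X\subseteq P$.} This is the main obstacle. Take $\a\in X$, so $\beta_j=\a+\nu$ with $\nu\in Q^+$ and $c_t(\nu)=0$. We argue by induction on $\height(\beta_j)-\height(\a)$; the base case $\a=\beta_j$ is trivial.

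Since $\a<_0\beta_j$ are both roots, there is a chain of roots from $\a$ to $\beta_j$ in which each step adds a simple root. So it suffices to show: if $\eta\in P$ and $\eta-\a_s\in X$ is a root with $s\neq t$, then $\eta-\a_s\in P$. Indeed, the roots of such a chain have $c_t=1$, and we apply this statement downward from $\beta_j$.

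To prove it, write $\eta=(\eta-\a_s)+\a_s$. The set $P$ is biconvex, so its complement is closed. Since $\eta\in P$, at least one of $\eta-\a_s$ and $\a_s$ lies in $P$. But $\a_s\notin P$ by Step 1, because $s\neq t$. Hence $\eta-\a_s\in P$.

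This settles the inclusion; note it uses only biconvexity of $P$, not the order $<$. With it the lemma follows, giving $P=X$.

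One point to verify is the chain claim: for roots $\a<_0\beta$, some $\beta-\a_s$ with $\a_s\in\supp(\beta-\a)$ is a root $\geq_0\a$. This is the standard fact that the interval $[\a,\beta]_0$ is connected through simple-root steps. If needed, I would justify it via $(\beta-\a,\beta-\a)>0$: this forces some $\a_s\in\supp(\beta-\a)$ with $(\beta,\a_s)>0$ or $(\a,\a_s)<0$, which lets us step down from $\beta$ or up from $\a$. In the latter case we induct from the other end instead, replacing $\a$ by $\a+\a_s$, and the same biconvexity argument applies.
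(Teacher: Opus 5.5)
Your proposal is correct and follows essentially the same route as the paper: Proposition \ref{prop:sumfree} together with (TR2) at $\beta_j$ yields the unique simple root $\a_t$ with $c_t(\beta_j)=1$, and co-closedness of $\Phi(p_<(\beta_j))$ is then propagated down a chain of simple-root steps from $\beta_j$ to $\a$. The one minor difference is that you prove $c_t(\a)=1$ for $\a\in\Phi(p_<(\beta_j))$ via Lemma \ref{lem:simpleprec}(b) and uniqueness of $\a_t$, whereas the paper applies (TR2) directly to each such $\a$; both arguments are valid.
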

\begin{proof}
Put $\Phi_j=\Phi(p_<(\beta_j))$. By Proposition \ref{prop:sumfree} one has $\Phi_j\subseteq\beta_j-Q^+$,
and by Lemma \ref{lem:simpleprec}(b) $\Phi_j$ contains a simple root $\a_t$. For $\a\in \Phi_j$ with
$\a\neq\beta_j$ one has $\a<_0\beta_j$, and (TR2) at $\beta_j$ gives
$\a_t\notin\supp(\beta_j-\a)$, that is $c_t(\a)=c_t(\beta_j)$. If $\beta_j\neq\a_t$, the
choice $\a=\a_t$ gives $c_t(\beta_j)=1$, which is obvious if $\beta_j=\a_t$. Hence every
element of $\Phi_j$ has coefficient one on $\a_t$, and in particular $\a_t$ is the only simple root
in $\Phi_j$.
\par Conversely, let $\a\in\Dp$ with $\a\leq_0\beta_j$ and $c_t(\a)=1$. Recall that two
positive roots $\a\leq_0\beta$ can be joined by a chain
$\a=\a^0\leq_0\a^1\leq_0\dots\leq_0\a^m=\beta$ of positive roots in which each difference
$\a^h-\a^{h-1}$ is a simple root\footnote{There is nothing to prove if $\a=\beta$. Otherwise write $\beta-\a=\sum_jc_j\a_j$; as
$(\beta-\a,\beta-\a)>0$, some $j$ with $c_j>0$ satisfies $(\beta-\a,\a_j)>0$, so that either
$(\a,\a_j)<0$ and $\a+\a_j$ is a root, or $(\beta,\a_j)>0$ and $\beta-\a_j$ is a root. In
either case one obtains a positive root lying between $\a$ and $\beta$ and differing from one
of them by $\a_j$ (note that $\beta\neq\a_j$, as $\beta-\a_j-\a\in Q^+$), and one concludes by
induction on $\height(\beta)-\height(\a)$.}. For the
chain from $\a$ to $\beta_j$ all the terms have coefficient one on $\a_t$, so that the simple
roots $\a^h-\a^{h-1}$ are different from $\a_t$ and do not lie in $\Phi_j$. As $\Phi_j$ is co-closed,
$\a^h\in \Phi_j$ implies $\a^{h-1}\in \Phi_j$, and descending induction from $\a^m=\beta_j$ gives
$\a\in \Phi_j$.
\end{proof}

\begin{lem}\label{lem:Qprops}
\begin{enumerate}
\item[(a).] The set $P_2$ is abelian and contained in $\gamma_2-Q^+$.
\item[(b).] $\Phi(p_<(y))\subseteq P_2$ for every $y\in P_2$.
\item[(c).] The set $P_2$ contains exactly one simple root $\a_t$, and
$P_2=\{\a\in\Dp\mid \a\leq_0\gamma_2,\ c_t(\a)=1\}$.
\end{enumerate}
\end{lem}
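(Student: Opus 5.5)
The plan is to apply the three general results just proved, at the index $i$ with $\gamma_2=\beta_i$. The standing hypothesis of this subsection is that (TR1) and (TR2) hold at $\beta_1,\dots,\beta_{k-1}$, and we have $i<k$. Hence (TR1) and (TR2) hold at $\beta_1,\dots,\beta_i$, and every hypothesis needed at $\gamma_2$ is available.

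For (a), apply Proposition \ref{prop:sumfree} with $k$ replaced by $i$. It gives directly that $P_2=\Phi(p_<(\beta_i))$ is abelian and contained in $\beta_i-Q^+=\gamma_2-Q^+$.

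For (b), apply Lemma \ref{lem:proj}(b) at $\beta=\gamma_2$. Its only hypothesis is (TR1) at $\gamma_2$, which holds because $i<k$. It yields $\Phi(p_<(y))\subseteq\Phi(p_<(\gamma_2))=P_2$ for every $y\in P_2$.

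For (c), apply Lemma \ref{lem:principal} with $j=i$. It produces a unique simple root $\a_t\in P_2$ and the description $P_2=\{\a\in\Dp\mid \a\leq_0\gamma_2,\ c_t(\a)=1\}$.

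No step is a genuine obstacle. The only thing to check carefully is that the inductive hypotheses are really available at $\gamma_2$, that is, that $\gamma_2$ precedes $\beta_k$ strictly. This holds because $\gamma_2\in M(\beta_k)$, and $M(\beta_k)$ consists by definition of roots $\mu<\beta_k$.
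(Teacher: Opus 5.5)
Your proposal is correct and is essentially the paper's own argument. Since $\gamma_2=\beta_i$ with $i<k$, conditions (TR1) and (TR2) hold at $\beta_1,\dots,\beta_i$. Parts (a), (b) and (c) are then Proposition \ref{prop:sumfree}, Lemma \ref{lem:proj}(b) and Lemma \ref{lem:principal}, each applied at $\gamma_2$.
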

\begin{proof}
Conditions (TR1) and (TR2) hold at $\beta_1,\dots,\beta_i$, so that (a) and (b) are Proposition
\ref{prop:sumfree} and Lemma \ref{lem:proj}(b) at $\gamma_2$, and (c) is Lemma
\ref{lem:principal} at $\gamma_2$.
\end{proof}
From now on $\a_t$ denotes the simple root of Lemma \ref{lem:Qprops}(c), and $X$ the set defined above. As
$\delta\notin P_2$, we have $\delta\neq\a_t$, so that $c_t(\delta)=0$ and $c_t(\beta_k)=c_t(\gamma_2)=1$; in particular
$\beta_k\in X$ and $\delta\notin X$. Since $\beta_k=\gamma_2+\delta$, an element of $X$ lies below $\gamma_2$ if and
only if its coefficient on $\delta$ is smaller than $c_\delta(\beta_k)$, so that, by Lemma \ref{lem:Qprops}(c),
\[
P_2=\{\a\in X\mid c_\delta(\a)<c_\delta(\beta_k)\},\qquad X\setminus P_2=\{\a\in X\mid c_\delta(\a)=c_\delta(\beta_k)\}.
\]

\begin{lem}\label{lem:Xdelta}
For every $y\in X\setminus P_2$, the difference $y-\delta$ is a positive root lying in $P_2$.
\end{lem}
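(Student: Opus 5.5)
The plan is to descend from $\beta_k$ to $y$ along a chain of positive roots whose successive differences are simple roots. Along the way we keep track of the fact that subtracting $\delta$ still gives a root. Membership in $P_2$ will then come for free from the description in Lemma \ref{lem:Qprops}(c).

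First the set-up. Let $y\in X\setminus P_2$, so that $y\leq_0\beta_k$, $c_t(y)=1$ and $c_\delta(y)=c_\delta(\beta_k)$. As in the footnote to Lemma \ref{lem:principal}, choose a chain
\[
y=\a^0\leq_0\a^1\leq_0\dots\leq_0\a^m=\beta_k
\]
of positive roots in which each $\a^h-\a^{h-1}=\a_{j_h}$ is a simple root. All terms of the chain lie in $X$, because $c_t$ is monotone along the chain and equals $1$ at both ends. They also have the same $\delta$-coefficient $c_\delta(\beta_k)$. Hence no $\a_{j_h}$ equals $\a_t$ or $\delta$.

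Next I prove by descending induction on $h$ that $\a^h-\delta$ is a positive root. For $h=m$ this holds because $\beta_k-\delta=\gamma_2$. Assume $\a^h-\delta\in\Dp$ and apply the Jacobi-identity argument of Lemma \ref{lem:splitroot}, which is valid for arbitrary roots, to
\[
\gamma=-\a_{j_h},\qquad \mu=\a^h-\delta,\qquad \nu=\delta .
\]
Here $\mu+\nu=\a^h$ and $\gamma+\mu+\nu=\a^{h-1}$ are both roots. So $\gamma+\mu=\a^{h-1}-\delta$ or $\gamma+\nu=\delta-\a_{j_h}$ lies in $\D\cup\{0\}$. The second option is impossible, since $\delta$ and $\a_{j_h}$ are distinct simple roots. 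Hence $\a^{h-1}-\delta\in\D\cup\{0\}$. It is neither $0$ nor negative, because its coefficient on $\a_t$ is $c_t(\a^{h-1})-c_t(\delta)=1$. This completes the induction, and at $h=0$ it gives $y-\delta\in\Dp$.

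Finally, $y-\delta\leq_0\beta_k-\delta=\gamma_2$, since $y\leq_0\beta_k$, and $c_t(y-\delta)=c_t(y)=1$ because $c_t(\delta)=0$. By Lemma \ref{lem:Qprops}(c), $y-\delta\in P_2$.

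The only delicate point is the inductive step. One must make sure the Jacobi argument applies with a negative root $\gamma$, and that the degenerate outcomes ($0$, or a negative root) are excluded. Both are handled by the $\a_t$-coefficient bookkeeping above, so I expect no use of condition (S) here.
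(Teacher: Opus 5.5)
Your proof is correct and follows essentially the same approach as the paper. Both arguments run a descending induction along a simple-root chain from $y$ up to $\beta_k$, apply the general form of Lemma~\ref{lem:splitroot} with the negative root $-\a_j$ to rule out $\delta-\a_j$, use $c_t=1$ to exclude $0$ and negative roots, and conclude membership in $P_2$ via Lemma~\ref{lem:Qprops}(c). The only difference is cosmetic: you fix the whole chain in advance, whereas the paper inducts on $\height(y)$ one step at a time.
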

\begin{proof}
We argue by descending induction on $\height(y)$. If $y=\beta_k$, then $y-\delta=\gamma_2$ and there is nothing
to prove. Otherwise there is a simple root $\a_j$ such that $y+\a_j$ is a positive root and
$y+\a_j\leq_0\beta_k$, by the fact recalled in the proof of Lemma \ref{lem:principal}. The coefficients of
$y+\a_j$ on $\a_t$ and on $\delta$ lie between those of $y$ and those of $\beta_k$, which are equal; hence
$\a_j\neq\a_t,\delta$ and $y+\a_j\in X\setminus P_2$, so that $x':=y+\a_j-\delta$ is a root by induction. The general
form of Lemma \ref{lem:splitroot}, applied to $-\a_j$ and to the root $y+\a_j=\delta+x'$, shows that $\delta-\a_j$
or $x'-\a_j$ lies in $\D\cup\{0\}$. The former is impossible, $\delta$ and $\a_j$ being distinct simple roots, and
$x'-\a_j=y-\delta$ is not zero, as $c_t(y-\delta)=1$. Hence $y-\delta$ is a root, positive because $c_t(y-\delta)=1$.
Finally $y\leq_0\beta_k=\gamma_2+\delta$ gives $y-\delta\leq_0\gamma_2$, so that
$y-\delta\in P_2$ by Lemma \ref{lem:Qprops}(c).
\end{proof}

\begin{lem}\label{lem:xgamma}
Let $y\in X\setminus(P_2\cup\{\beta_k\})$ and $x=y-\delta$. Then $(x,\gamma_2)>0$; consequently
$\gamma_2-x\in\Dp$ and $y<\beta_k$. In particular $X\subseteq \Phi(w_k)$.
\end{lem}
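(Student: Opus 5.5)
The first step is to identify $x$. By Lemma \ref{lem:Xdelta}, $x=y-\delta$ is a positive root lying in $P_2$. We have $x\neq\gamma_2$, since $y\neq\beta_k$. Moreover $x<\gamma_2$ in the convex order, because $x\in P_2\subseteq\Phi(w_i)$ and $x\neq\gamma_2=\beta_i$. By Lemma \ref{lem:Qprops}(a), $P_2$ is abelian, so $x+\gamma_2\notin\D$. Hence $(x,\gamma_2)<0$ is impossible, and the real content of the first assertion is to exclude $(x,\gamma_2)=0$.

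To get a lower bound I use $\beta_k=\gamma_2+\delta$, so that $(x,\beta_k)=(x,\gamma_2)+(x,\delta)$. Since $x<\gamma_2<\beta_k$ and $\supp(x)\subseteq\supp(\gamma_2)\subseteq\supp(\beta_k)$, the last assertion of Lemma \ref{lem:nosum} applies, using (TR1) and (TR2) at $\beta_1,\dots,\beta_{k-1}$. It shows that $x+\beta_k$ is not a root, whence $(x,\beta_k)\geq0$. On the other hand $x+\delta=y$ is a root with $x\neq\pm\delta$, so (S) gives $(x,\delta)\leq0$. If $(x,\delta)<0$ we conclude at once that $(x,\gamma_2)\geq-(x,\delta)>0$.

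The main obstacle is the residual case $(x,\delta)=0$. There the $\delta$-string through $x$ is $x-\delta,x,x+\delta$, so $x-\delta$ is a root; it has $c_t=1$ and lies below $\gamma_2$, hence belongs to $P_2$ by Lemma \ref{lem:Qprops}(c). My first attempt is to exploit this extra element of $P_2$. Abelianity applied to $x-\delta$ and $\gamma_2$ gives $(x-\delta,\gamma_2)\geq0$. Lemma \ref{lem:nosum} applied to $\mu=x-\delta$ shows that $(x-\delta)+\beta_k=x+\gamma_2$ is not a root. I would combine these with (S) for the pair $(\gamma_2,\delta)$, and with the length constraints forced by a string of length three (here $\delta$ is short and $x$ is long), to derive $(x,\gamma_2)\neq0$. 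Failing that, I would iterate the argument down the chain $x-\delta,x-2\delta,\dots$ inside $P_2$. Once $(x,\gamma_2)>0$ is established, $\gamma_2-x\in\D$, and it is positive since $x\in\gamma_2-Q^+$ and $x\neq\gamma_2$.

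Finally, $\gamma_2=x+(\gamma_2-x)$ with $x<\gamma_2$, so convexity gives $\gamma_2<\gamma_2-x$. Now $\beta_k=y+(\gamma_2-x)$, and convexity places $\beta_k$ between $y$ and $\gamma_2-x$. Suppose $y>\beta_k$. Then $\gamma_2-x<\beta_k$, and $\beta_k-(\gamma_2-x)=y$ is a root, so $\gamma_2-x\in M(\beta_k)$. But $\gamma_2-x>\gamma_2=\max M(\beta_k)$, a contradiction; hence $y<\beta_k$. For the inclusion $X\subseteq\Phi(w_k)$, recall that $X$ is the disjoint union of $P_2$, $\{\beta_k\}$ and the set of such $y$. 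We have $P_2\subseteq\Phi(w_i)\subseteq\Phi(w_k)$, $\beta_k\in\Phi(w_k)$, and every such $y$ precedes $\beta_k$, so $X\subseteq\{\beta_1,\dots,\beta_k\}=\Phi(w_k)$.
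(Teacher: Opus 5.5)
Your opening reductions are correct. Abelianity of $P_2$ gives $(x,\gamma_2)\geq0$, Lemma \ref{lem:nosum} gives $(x,\beta_k)\geq0$, and (S) gives $(x,\delta)\leq0$, so $(x,\gamma_2)>0$ whenever $(x,\delta)<0$. Your last paragraph, deriving $y<\beta_k$ from the maximality of $\gamma_2$ in $M(\beta_k)$ and then $X\subseteq\Phi(w_k)$, is exactly the paper's.

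The gap is the residual case $(x,\delta)=0$. There $(x,\gamma_2)=(x,\beta_k)$, and you must exclude $x$ being orthogonal to $\delta$, $\gamma_2$ and $\beta_k$ at once. This is the whole content of the first assertion, and you only list ingredients you would try; they do not suffice:
\begin{itemize}
\item The fallback of iterating along $x-\delta,x-2\delta,\dots$ is vacuous. Since $\langle x,\delta^\vee\rangle=0$, the $\delta$-string through $x$ is exactly $x-\delta,x,x+\delta$.
\item Your length remark is wrong. From $|x+\delta|^2=|x|^2+|\delta|^2$, both $x$ and $\delta$ must be short.
\item Lemma \ref{lem:nosum} at $\mu=x-\delta$ only returns the known fact $x+\gamma_2\notin\D$.
\item Under $(x,\gamma_2)=0$, abelianity for the pair $(x-\delta,\gamma_2)$ only returns $(\delta,\gamma_2)\leq0$, which (S) already gives.
\item Combining these with $(x-\delta,\beta_k)=-(\delta,\delta)-(\delta,\gamma_2)$ excludes $\langle\gamma_2,\delta^\vee\rangle\in\{0,-1\}$. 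It leaves $\langle\gamma_2,\delta^\vee\rangle=-2$ open.
\end{itemize}

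The missing idea is to test $y$ itself, rather than $x$ or $x-\delta$, against $\gamma_2$ and $\beta_k$. This uses the identities $y+\gamma_2=x+\beta_k$ and $y-\beta_k=x-\gamma_2$.

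Assume $(x,\gamma_2)=0$, so that in your residual case $x$ is orthogonal to $\delta,\gamma_2,\beta_k$. Then $s_{\gamma_2}$ fixes $x$ and interchanges $x+\gamma_2$ and $x-\gamma_2$, so $x-\gamma_2\notin\D$ as well. Also $x+\beta_k\notin\D$ by Lemma \ref{lem:nosum}. Now $(y,\gamma_2)=(\delta,\gamma_2)\leq0$ by (S), and $(y,\beta_k)=(\delta,\delta)+(\delta,\gamma_2)$.
\begin{itemize}
\item If $(\delta,\gamma_2)<0$, then $y+\gamma_2=x+\beta_k$ is a root, a contradiction.
\item If $(\delta,\gamma_2)=0$, then $(y,\beta_k)>0$ with $y\neq\beta_k$. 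Hence $y-\beta_k=x-\gamma_2$ is a root, again a contradiction.
\end{itemize}
This closes the gap, and it is how the paper argues. Your case split on $(x,\delta)$, versus the paper's split on $(x,\gamma_2)=0$, is immaterial: both lead to this same configuration.
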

\begin{proof}
By Lemma \ref{lem:Xdelta} $x\in P_2$, and $x\neq\gamma_2$ as $y\neq\beta_k$; moreover $x+\delta=y$ is a root.
As $x+\gamma_2$ is not a root, $P_2$ being abelian, $(x,\gamma_2)\geq0$; and $\beta_k+x$
is not a root by Lemma \ref{lem:nosum}, since $x<\beta_k$ (as $x\in P_2\subseteq \Phi(w_i)$) and
$\supp(x)\subseteq\supp(\gamma_2)\subseteq\supp(\beta_k)$ by Lemma \ref{lem:Qprops}(a),
whence also $(x,\beta_k)\geq0$.
\par Suppose $(x,\gamma_2)=0$. Then $s_{\gamma_2}$ fixes $x$ and interchanges $x+\gamma_2$
with $x-\gamma_2$, so that $x-\gamma_2$ is not a root either. Moreover
$(x,\delta)=(x,\beta_k)-(x,\gamma_2)\geq0$, while $\langle x,\delta^\vee\rangle\leq0$ by (S),
the sum $x+\delta$ being a root and $x\neq\delta$ because $\delta\notin P_2$; hence $(x,\delta)=(x,\beta_k)=0$.
Since $x$ is orthogonal to $\delta$, $\gamma_2$ and $\beta_k$,
\begin{align*}
(y,\gamma_2)=(\delta,\gamma_2),\qquad
(y,\beta_k)=(\delta,\beta_k)=(\delta,\delta)+(\delta,\gamma_2),
\end{align*}
and $(\delta,\gamma_2)\leq0$ by (S), $\delta+\gamma_2=\beta_k$ being a root and $\delta\neq\gamma_2$. If
$(\delta,\gamma_2)<0$, then $(y,\gamma_2)<0$ and $y+\gamma_2=\beta_k+x$ is a root,
which is excluded. If $(\delta,\gamma_2)=0$, then $(y,\beta_k)=(\delta,\delta)>0$ and
$y\neq\beta_k$, so that $y-\beta_k=x-\gamma_2$ is a root, which
is excluded as well. Hence $(x,\gamma_2)>0$, and $\gamma_2-x$ is a
root, positive because $x<_0\gamma_2$.
\par The root $\gamma_2$ is the largest element of $P_2$, as
$P_2\subseteq \Phi(w_i)$ and $\gamma_2=\beta_i$, so $x<\gamma_2$; convexity applied to
$\gamma_2=x+(\gamma_2-x)$ places $\gamma_2$ strictly between the two summands, whence
$\gamma_2<\gamma_2-x$. Now
\begin{align*}
\beta_k=(\delta+x)+(\gamma_2-x)
\end{align*}
is a decomposition of $\beta_k$ into two positive roots, so exactly one of them precedes
$\beta_k$. Were it $\gamma_2-x$, we would have $\gamma_2-x\in M(\beta_k)$, because
$\beta_k-(\gamma_2-x)=\delta+x\in\Dp$, and the maximality of $\gamma_2$ would give
$\gamma_2-x\leq\gamma_2$, against $\gamma_2<\gamma_2-x$. Hence $y=\delta+x<\beta_k$.
\par Finally $P_2\subseteq \Phi(w_i)\subseteq \Phi(w_k)$, while every element of $X\setminus P_2$ is
either $\beta_k$ or precedes it by the above; so $X\subseteq \Phi(w_k)$.
\end{proof}

\begin{lem}\label{lem:alphat}
\begin{enumerate}
\item[(a).] Every $y\in X\setminus\{\beta_k\}$ precedes $\beta_k$, and $\a_t\in \Phi(p_<(y))$.
\item[(b).] Let $\eta\in\Dp$ with $\eta<\beta_k$, and let $\eta=y+z$ with $y,z\in\Dp$ and
$y<\eta<z$. Then $\Phi(p_<(y))\subseteq \Phi(p_<(\eta))$, and if $\a_t\in \Phi(p_<(\eta))$, then $c_t(z)=0$.
\end{enumerate}
\end{lem}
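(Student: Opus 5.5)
For (a), I will split $y\in X\setminus\{\beta_k\}$ according to whether $y\in P_2$. If $y\in P_2$, then $y\in\Phi(w_i)$, so $y\leq\gamma_2<\beta_k$. Otherwise Lemma \ref{lem:xgamma} gives $y<\beta_k$ directly.

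For the statement $\a_t\in\Phi(p_<(y))$ with $y\in P_2$, I will use Lemma \ref{lem:Qprops}(b). It gives $\Phi(p_<(y))\subseteq P_2$. By Lemma \ref{lem:simpleprec}(b), $\Phi(p_<(y))$ contains a simple root, and the only simple root in $P_2$ is $\a_t$ by Lemma \ref{lem:Qprops}(c). For $y\in X\setminus(P_2\cup\{\beta_k\})$, I will write $y=\delta+x$ with $x=y-\delta\in P_2$ (Lemma \ref{lem:Xdelta}). Since $\delta$ is simple and $y<\beta_k<\delta$, we have $x<y<\delta$ by convexity. Lemma \ref{lem:proj}(a) then gives $x\in\Phi(p_<(y))$. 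Because $y$ precedes $\beta_k$, (TR1) holds at $y$, so Lemma \ref{lem:proj}(b) gives $\Phi(p_<(x))\subseteq\Phi(p_<(y))$. The case already treated gives $\a_t\in\Phi(p_<(x))$, which finishes (a).

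For (b), since $\eta<\beta_k$, conditions (TR1) and (TR2) hold at $\eta$ and at all roots preceding it. Lemma \ref{lem:proj}(a) applied to $\eta=y+z$ with $y<\eta<z$ gives $y\in\Phi(p_<(\eta))$. Lemma \ref{lem:proj}(b) at $\eta$ then yields $\Phi(p_<(y))\subseteq\Phi(p_<(\eta))$.

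Now assume $\a_t\in\Phi(p_<(\eta))$. By Lemma \ref{lem:principal} at $\eta$, $\a_t$ is the unique simple root of $\Phi(p_<(\eta))$, and every element of $\Phi(p_<(\eta))$ has $c_t=1$; in particular $c_t(\eta)=1$ and $c_t(y)=1$. Hence $c_t(z)=c_t(\eta)-c_t(y)=0$.

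The only delicate point is whether Lemma \ref{lem:principal} really applies at $\eta$, i.e.\ whether the hypotheses hold at every root up to $\eta$. They do, because $\eta<\beta_k$ and (TR1), (TR2) hold at $\beta_1,\dots,\beta_{k-1}$. Once this is checked, both parts are short consequences of the earlier lemmas.
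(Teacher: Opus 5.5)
Your proof is correct and follows the paper's argument essentially step for step. The one deviation is at the end of (b): you obtain $c_t(z)=0$ from Lemma~\ref{lem:principal} at $\eta$, whereas the paper applies (TR2) at $\eta$ directly to $y<_0\eta$, using $\eta-y=z$ and $y\in\Phi(p_<(\eta))$, which gives $\a_t\notin\supp(z)$ without passing through Proposition~\ref{prop:sumfree}. Both routes are valid, because (TR1) and (TR2) hold at every root preceding $\beta_k$.
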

\begin{proof}
(a). Let $y\in X\setminus\{\beta_k\}$. If $y\in P_2$, then $y\leq\gamma_2<\beta_k$, and $\Phi(p_<(y))\subseteq P_2$ by Lemma
\ref{lem:Qprops}(b); by Lemma \ref{lem:simpleprec}(b) the set $\Phi(p_<(y))$ contains a simple
root, which lies in $P_2$ and therefore equals $\a_t$. Otherwise $y\in X\setminus(P_2\cup\{\beta_k\})$, so that
$x:=y-\delta\in P_2$ by Lemma \ref{lem:Xdelta}, and $y<\beta_k$ by Lemma \ref{lem:xgamma}, so that {\rm (TR1)}
holds at $y$. Convexity applied to $y=x+\delta$ places $y$ strictly between $x$ and $\delta$, and
$y<\beta_k<\delta$, so that $x<y<\delta$. Lemma \ref{lem:proj}(a) gives $x\in \Phi(p_<(y))$, and then
Lemma \ref{lem:proj}(b) gives $\Phi(p_<(x))\subseteq \Phi(p_<(y))$; as in  the first case,
$\a_t\in \Phi(p_<(x))$.
\par (b). By Lemma \ref{lem:proj}(a) $y\in \Phi(p_<(\eta))$, so that
$\Phi(p_<(y))\subseteq \Phi(p_<(\eta))$ by Lemma \ref{lem:proj}(b). Moreover $y<_0\eta$ with
$\eta-y=z$; so {\rm (TR2)} at $\eta$ gives $\supp(z)\cap \Phi(p_<(\eta))=\emptyset$, that is $\a_t\notin\supp(z)$.
\end{proof}

\begin{prop}\label{prop:Xbiconvex} Let $P, X$ be as in \eqref{defP}, \eqref{defX}, respectively.
The set $X$ is an abelian biconvex subset of $\Phi(w_k)$ containing $\beta_k$. Consequently $P\subseteq X$; in
particular $P$ is abelian.
\end{prop}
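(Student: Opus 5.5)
The plan is to check, in turn, that $X$ contains $\beta_k$, that $X\subseteq\Phi(w_k)$, that $X$ is co-closed, and that $X$ is abelian (hence closed). Then $X$ is biconvex, and the inclusion $P\subseteq X$ follows from the minimality of $P=\Phi(p_<(\beta_k))$ among biconvex subsets of $\Phi(w_k)$ containing $\beta_k$ (Proposition \ref{p32}). Since a subset of an abelian set is abelian, $P$ is then abelian.

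The first two properties are already available. We saw right after Lemma \ref{lem:Qprops} that $c_t(\beta_k)=1$, so $\beta_k\in X$. The inclusion $X\subseteq\Phi(w_k)$ is the last assertion of Lemma \ref{lem:xgamma}.

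Co-closedness is immediate from the definition of $X$. Suppose $\mu+\nu\in X$ with $\mu,\nu\in\Dp$. Then $c_t(\mu)+c_t(\nu)=1$, so one of them, say $\mu$, has $c_t(\mu)=1$. Moreover $\mu\leq_0\mu+\nu\leq_0\beta_k$, hence $\mu\in X$. This is exactly the statement that $\Dp\setminus X$ is closed.

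The main step is that $X$ is abelian. Let $y,y'\in X$ and suppose $\eta=y+y'\in\Dp$; two cases arise.
\begin{enumerate}
\item[(i)] One of them, say $y'$, equals $\beta_k$. Then $y\neq\beta_k$, since $2\beta_k$ is not a root. So $y<\beta_k$ by Lemma \ref{lem:alphat}(a), and $\supp(y)\subseteq\supp(\beta_k)$ because $y\leq_0\beta_k$. Under the standing hypothesis that (TR1) and (TR2) hold at $\beta_1,\dots,\beta_{k-1}$, Lemma \ref{lem:nosum} forbids $y+\beta_k$ from being a root, a contradiction.
\item[(ii)] Both $y,y'$ lie in $X\setminus\{\beta_k\}$. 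Then both precede $\beta_k$ by Lemma \ref{lem:alphat}(a). By convexity $\eta$ lies strictly between them, say $y<\eta<y'$, so $\eta<\beta_k$. Lemma \ref{lem:alphat}(a) gives $\a_t\in\Phi(p_<(y))$. Lemma \ref{lem:alphat}(b), applied to $\eta=y+y'$ with $z=y'$, gives $\Phi(p_<(y))\subseteq\Phi(p_<(\eta))$, so $\a_t\in\Phi(p_<(\eta))$. The same lemma then forces $c_t(y')=0$, contradicting $y'\in X$.
\end{enumerate}
Hence $(X+X)\cap\D=\emptyset$; in particular $X$ is closed, and together with co-closedness $X$ is biconvex.

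The only delicate point is the abelian property. The definition of $X$ does not by itself rule out a sum of two elements being a root with $c_t=2$: such roots exist when $\a_t$ is not cominuscule. The argument therefore rests on the order-theoretic input of Lemma \ref{lem:alphat} and Lemma \ref{lem:nosum}, that is, on the inductive tangency at the roots preceding $\beta_k$, which is where (S) entered through Lemma \ref{lem:xgamma}.
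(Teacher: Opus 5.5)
Your proof is correct and follows the paper's argument: $\beta_k\in X$, then Lemma \ref{lem:xgamma}, then co-closedness via $c_t$, then abelianness via Lemma \ref{lem:alphat}, then minimality of $P$. The only difference is that you treat $y'=\beta_k$ separately via Lemma \ref{lem:nosum}, whereas the paper absorbs that case into the general one: Lemma \ref{lem:alphat}(b) allows $z=\beta_k$, since convexity still gives $y<\eta<\beta_k$.
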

\begin{proof}
We have seen that $\beta_k\in X$, and $X\subseteq \Phi(w_k)$ by Lemma \ref{lem:xgamma}. Suppose
that $y_1,y_2\in X$ and that $\eta=y_1+y_2$ is a root; then $y_1\neq y_2$, and convexity places $\eta$ strictly
between them; label them so that $y_1<\eta<y_2$. Then $\eta<y_2\leq\beta_k$ and $y_1\neq\beta_k$, so that $\a_t\in \Phi(p_<(y_1))\subseteq \Phi(p_<(\eta))$ by
Lemma \ref{lem:alphat}, whose part (b) then gives $c_t(y_2)=0$, against $y_2\in X$. Hence $X$ is abelian, and a fortiori closed. It is co-closed: if $y\in X$ and $y=y'+y''$ with
$y',y''\in\Dp$, then $c_t(y')+c_t(y'')=1$, so that one of the two summands, say $y'$, has coefficient one on
$\a_t$; as $y'\leq_0y\leq_0\beta_k$, we get $y'\in X$. Thus $X$ is biconvex. As $P$
is the smallest biconvex subset of $\Phi(w_k)$ containing $\beta_k$, we get $P\subseteq X$.
\end{proof}

\begin{thm}\label{thm:starstar}
Let $\beta_k$ be a non-simple root. Assume that {\rm (TR1)} and {\rm (TR2)} hold at all the roots preceding
$\beta_k$, and that $\beta_k-\max M(\beta_k)$ is simple. Then {\rm (TR1)} and {\rm (TR2)} hold at $\beta_k$, and
$\Phi(p_<(\beta_k))=X$.
\end{thm}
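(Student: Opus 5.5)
Everything needed is already in place: the standing hypotheses of Subsection \ref{sec:ABCDE} are exactly those of the theorem. Proposition \ref{prop:Xbiconvex} gives $P\subseteq X$, so the plan is to prove the reverse inclusion $X\subseteq P$ and then read off (TR1) and (TR2) from the description of $X$.

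\emph{Step 1: $X\subseteq P$.} I would copy the second half of the proof of Lemma \ref{lem:principal}. Since $P$ contains $\beta_k$, Lemma \ref{lem:simpleprec}(b) gives a simple root in $P\subseteq X$. The only simple root in $X$ is $\a_t$, since every element of $X$ has $c_t=1$, so $\a_t\in P$. Now take $\a\in X$. Join $\a$ to $\beta_k$ by a chain $\a=\a^0\leq_0\dots\leq_0\a^m=\beta_k$ whose successive differences are simple roots. Every term of the chain has $c_t=1$, so the simple differences are different from $\a_t$ and hence lie outside $P$. As $P$ is co-closed, $\a^h\in P$ forces $\a^{h-1}\in P$, and descending induction gives $\a\in P$. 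Therefore $P=X$.

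\emph{Step 2: (TR2) at $\beta_k$.} Let $\a\in P=X$ with $\a<_0\beta_k$. Then $c_t(\beta_k-\a)=0$, so $\a_t\notin\supp(\beta_k-\a)$. Any other simple root lies outside $X=P$. Hence $\supp(\beta_k-\a)\cap P=\emptyset$.

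\emph{Step 3: (TR1) at $\beta_k$.} I would argue by contradiction using Lemma \ref{lem:tr1reform}. If (TR1) fails, that lemma gives $\mu,\nu$ with $\nu<\mu+\nu<\mu\leq\beta_k$, $\mu,\mu+\nu\in P$ and $\nu\notin P$. Since $c_t(\mu)=c_t(\mu+\nu)=1$, we get $c_t(\nu)=0$. Also $\nu\leq_0\mu+\nu\leq_0\beta_k$, and the inequality is strict, so $\mu+\nu\neq\beta_k$. Since $\mu+\nu$ precedes $\beta_k$, (TR1) and (TR2) hold at $\mu+\nu$ by hypothesis. Because $\mu+\nu\in X\setminus\{\beta_k\}$, Lemma \ref{lem:alphat}(a) gives $\a_t\in\Phi(p_<(\mu+\nu))$. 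Lemma \ref{lem:alphat}(b) applies only with the summand that precedes $\eta=\mu+\nu$ playing the role of $y$, and here that summand is $\nu$. So I would instead use directly the ingredients of its proof. By Lemma \ref{lem:proj}(a) at $\eta$, $\nu\in\Phi(p_<(\eta))$. Then (TR2) at $\eta$, applied to $\nu<_0\eta$, gives $\supp(\mu)\cap\Phi(p_<(\eta))=\emptyset$. But $\a_t\in\supp(\mu)$ and $\a_t\in\Phi(p_<(\eta))$, a contradiction. This also re-proves that $\mu\neq\beta_k$ is not needed separately.

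The main obstacle is Step 3, in particular checking the orientation in Lemma \ref{lem:proj}(a). Its hypothesis requires $\nu<\eta<\mu$, so that the earlier summand $\nu$ lands in $\Phi(p_<(\eta))$, and this is exactly what the configuration \eqref{eqn:tr1config} provides. Steps 1 and 2 are routine.
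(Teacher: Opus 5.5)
Your proof is correct and is essentially the paper's: $P\subseteq X$ comes from Proposition \ref{prop:Xbiconvex}, (TR2) from $c_t(\beta_k-\nu)=0$, and (TR1) from Lemma \ref{lem:tr1reform} together with Lemma \ref{lem:alphat}. In Step 3 no detour is needed, since Lemma \ref{lem:alphat}(b) applies as stated with $y=\nu$, $z=\mu$ and gives $c_t(\mu)=0$ directly; this is how the paper uses it, and your inline argument is just its proof. The one difference is that you obtain $P=X$ first, by running the chain and co-closedness argument of Lemma \ref{lem:principal} on $P\subseteq X$, whereas the paper proves (TR1) and (TR2) at $\beta_k$ and then invokes Lemma \ref{lem:principal}; your ordering is slightly more economical, because it avoids going back through Proposition \ref{prop:sumfree} at $\beta_k$.
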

\begin{proof}
By Proposition \ref{prop:Xbiconvex}, $P\subseteq X$; and the only simple root lying in $X$, hence possibly in
$P$, is $\a_t$.
\par {\rm (TR2)}. Let $\nu\in P$ with $\beta_k-\nu\in Q^+\setminus\{0\}$. As $\nu\in X$, one has
$c_t(\beta_k-\nu)=0$, so that $\supp(\beta_k-\nu)\cap P=\emptyset$.
\par {\rm (TR1)}. Otherwise, by Lemma \ref{lem:tr1reform}, there are $\mu,\nu\in\Dp$ with
$\nu<\mu+\nu<\mu\leq\beta_k$ and $\mu,\mu+\nu\in P\subseteq X$. As $\mu+\nu\in X\setminus\{\beta_k\}$, Lemma
\ref{lem:alphat}(a) gives $\a_t\in \Phi(p_<(\mu+\nu))$, and Lemma \ref{lem:alphat}(b), applied to
$\eta=\mu+\nu=\nu+\mu$, gives $c_t(\mu)=0$, against $\mu\in X$.
\par Finally, the conditions (TR1), (TR2) hold at $\beta_1,\dots,\beta_k$, so that Lemma \ref{lem:principal} describes $P$ by
means of its only simple root; as $P\subseteq X$, that root is $\a_t$, and then $P=X$ by the definition of $X$.
\end{proof}

We may now draw the following conclusion.
\begin{thm}\label{thm:C2ABCDE}
Theorem \ref{conj:C2} holds when $\mathfrak g$ is not of type $G_2$. More
precisely, let $<$ be a non-tangent convex order, let $\beta_{k_0}$ be its earliest
failing root and put $\gamma_2=\max M(\beta_{k_0})$. Then there are $\gamma_1,\psi\in\Dp$
such that $(\gamma_1,\gamma_2,\psi)$ is a special triple at $\beta_{k_0}$, so that
$\mathcal T_{\bf i}$ is not a quantum tangent space.
\end{thm}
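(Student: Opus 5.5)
We argue by contradiction using Theorem \ref{thm:starstar} and Proposition \ref{prop:maxsplit}. Let $<$ be non-tangent and $\beta_{k_0}$ its earliest failing root. By Remark \ref{rem:tr2triplecheck}, $\beta_{k_0}$ is not simple, so $M(\beta_{k_0})\neq\emptyset$ and $\gamma_2=\max M(\beta_{k_0})$ is defined. Put $\delta=\beta_{k_0}-\gamma_2\in\Dp$.

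First suppose $\delta$ is simple. Since $\beta_{k_0}$ is the earliest failing root, (TR1) and (TR2) hold at every root preceding $\beta_{k_0}$. Together with the non-simplicity of $\beta_{k_0}$ and the simplicity of $\delta$, these are exactly the hypotheses of Theorem \ref{thm:starstar}. That theorem then gives (TR1) and (TR2) at $\beta_{k_0}$, which contradicts the choice of $\beta_{k_0}$ as a failing root. This is the step that uses that $\D$ is not of type $G_2$, through property (S) in Lemma \ref{lem:xgamma}. Hence $\delta$ is not simple.

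Since $\delta$ is not simple, it is a sum of two positive roots. Choose such a decomposition $\delta=\gamma_1+\psi$. The two summands are distinct, because twice a root is never a root, so after relabeling we may assume $\gamma_1<\psi$. Proposition \ref{prop:maxsplit} then gives $\gamma_1\leq\gamma_2$ and shows that $(\gamma_1,\gamma_2,\psi)$ is a special triple at $\beta_{k_0}$. By Proposition \ref{prop:nonvanishing}, the classical coefficient of this forbidden decomposition is $N_{\psi,\gamma_1}N_{\beta_{k_0}-\gamma_2,\gamma_2}\neq0$. This decomposition lives in the order induced by ${\bf i}$ itself, so it proves Theorem \ref{conj:C2} outside type $G_2$, and Proposition \ref{prop:onlyifdir} shows that $\mathcal T_{\bf i}$ is not a quantum tangent space.

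The main obstacle has already been handled in the preceding results, namely Theorem \ref{thm:starstar}. Once that is available, the only point needing care here is that its hypotheses are met: the inductive hypothesis (TR1)/(TR2) at all preceding roots holds by the minimality of $k_0$, and $\beta_{k_0}$ is non-simple by Remark \ref{rem:tr2triplecheck}. With both checked, the argument reduces to the short dichotomy above on whether $\delta$ is simple.
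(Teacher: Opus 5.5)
Your proposal is correct and follows essentially the same route as the paper's proof. You show $\beta_{k_0}$ is non-simple by Remark \ref{rem:tr2triplecheck}, rule out a simple $\delta=\beta_{k_0}-\gamma_2$ via Theorem \ref{thm:starstar} and the minimality of $k_0$, and then apply Proposition \ref{prop:maxsplit} to a splitting $\delta=\gamma_1+\psi$ with $\gamma_1<\psi$, concluding with Propositions \ref{prop:nonvanishing} and \ref{prop:onlyifdir}.
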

\begin{proof}
The root $\beta_{k_0}$ is not simple by Remark \ref{rem:tr2triplecheck}, and $\delta=\beta_{k_0}-\gamma_2$ is not
simple by Theorem \ref{thm:starstar}. Writing $\delta=\gamma_1+\psi$ with $\gamma_1<\psi$, Proposition
\ref{prop:maxsplit} concludes.
\end{proof}

\vskip5pt
\noindent{\bf AI-generated content disclosure.}

Claude has been used to double-check the literature on our results of Section \ref{sec:coproduct}, and to refine condition (TR2) into condition (TR2$'$).\par Of course, we take full responsibility for the final content of this work.

\vskip5pt
{\bf Acknowledgments.} This work was partially supported by the NSERC grant ``Combinatorics of infinite Coxeter
groups'' held by C.~Hohlweg. \par
A.C. acknowledges support by the GA\v{C}R EXPO project ``Homotopick\'e a homologick\'e metody a n\'astroje \'uzce souvisej\'{\i}c\'{\i} s matematickou fyzikou'', by MSCA doctoral network 2022 CaLiForNIA, by an INdAM Research grant, and finally by ``Implementation of cutting-edge research and its application as part of the Scientific Center of Excellence for Quantum and Complex Systems, and Representations of Lie Algebras'', Grant No. PK.1.1.10.0004, co-financed by the European Union through the European Regional Development Fund -- Competitiveness and Cohesion Programme 2021--2027. A.C. is grateful for the hospitality offered by Sapienza University of Rome. We wish to thank Giovanna Carnovale, Fabio Gavarini and R\'eamonn \'O Buachalla for useful discussions.

\end{document}